\documentclass[11pt]{amsart}
\usepackage[utf8]{inputenc}
\usepackage{amsmath, palatino, mathpazo, amsfonts, amssymb, mathtools}
\usepackage[mathscr]{eucal}
\usepackage[all]{xy}
\usepackage{datetime}
\usepackage{amsthm}
\usepackage{comment}
\usepackage{multicol}
\usepackage{graphicx}
\usepackage{tikz-cd}
\usepackage{subcaption}
\usepackage{enumitem}
\usetikzlibrary{calc}
\usepackage[backref=page]{hyperref}
\usetikzlibrary{decorations.markings, arrows.meta, calc}
\usepackage{marginnote}

\newtheorem{theorem}{Theorem}[section]
\newtheorem{lemma}[theorem]{Lemma}
\newtheorem{proposition}[theorem]{Proposition}
\newtheorem{corollary}[theorem]{Corollary}

\theoremstyle{remark}
\newtheorem{conjecture}[theorem]{Conjecture}
\newtheorem{remark}[theorem]{Remark}
\newtheorem{example}[theorem]{Example}
\newtheorem{definition}[theorem]{Definition}

\numberwithin{equation}{section}

\newcommand{\Sym}{{\rm Sym}}

\title{A theory of generalized Lam\'e curves}
\author{You-Cheng Chou}
\address{June E Huh Center for Mathematical Challenge}
\email{bensonchou72@gmail.com, bensonchou@kias.re.kr}

\author{Chin-Lung Wang}
\address{Department of Mathematics, National Taiwan University}
\email{dragon@math.ntu.edu.tw}

\author{Po-Sheng Wu}
\address{Department of Mathematics, National Taiwan University}
\email{d10221002@ntu.edu.tw}

\date{\today}
\begin{document}

\begin{abstract}
We study the generalized Lam\'e equation (GLE) on an elliptic curve $E_\tau$ with multiple regular singularities $\mathbf{p} = (p_i)_{i = 1}^r$ of weights $\mathbf{n} = (n_i)_{i = 1}^r$, $n_i \in \mathbf{C}$. We construct two fundamental algebraic curves:

(i) The generalized Lam\'e curve (GLC): For total weight $n:=\sum\nolimits_{i=1}^r n_i\in\mathbb Z_{\geq 0}$, by analyzing the quasi-periodic solutions, we construct $\mathcal{Y}_{\mathbf{n}, \mathbf{p}}(\tau)$, which lies in an affine bundle over $\Sym^n E_\tau$ and parametrizes the generalized Hermite--Halphen (GHH) ansatz. 

(ii) The log-free curve: For each $i$, the constraint $n_i\in\tfrac{1}{2}\mathbb{N}$ gives a polynomial equation in the accessory parameters. This leads to a non-complete intersection variety $V_{\mathbf{n}, \mathbf{p}}(\tau)$ when all $n_i\in\tfrac{1}{2} \mathbb{N}$. We prove that $V_{\mathbf{n},\mathbf{p}}(\tau)$ is a reduced curve via its relation with GLC and by analyzing the reducedness of the special fiber in a flat limit, confirming and strengthening a conjecture made by the second author in \cite{Wang_2020}.

The central technical achievement of this work is the structural analysis of the GLC as an algebraic family over the pole configuration space $\mathbf{p}$. We control this global geometry via the addition map
\[
\sigma_{\mathbf{n},\mathbf{p}} \colon \Sym^n E_\tau \longrightarrow E_\tau,
\]
yielding a generically finite, universal degree formula. Remarkably, the geometry of boundary degenerations under pole collisions perfectly mirrors the tensor algebra of $\mathfrak{sl}_2(\mathbb{C})$-modules within the BGG category $\mathcal{O}$. We explicitly obtain the special fibers of this family by fixing a generic image $c$ and the degree of the addition map during specialization—a construction we formalize as the degeneration theorem. Furthermore, we show that these pointwise limits lie within GLC components of lower weights, and that these boundary components enhance to establish the global flatness of the GLC over both the pole configuration and weight spaces.

To study the monodromy of the GLE, we develop a framework of twisted isomonodromic deformations, where the ``twist'' accounts for the monodromy representation being defined only up to an explicit scalar. Generalizing the theory of pre-modular forms \cite{Lin_Wang_2017}, we construct $(\mathbf{n}, \mathbf{p})$-deformed pre-modular forms parameterized by this twisted monodromy data $(t,s)$. Their vanishing solves the monodromy problem, and they factorize along boundary strata via the degeneration theorem. Iterating these deformations through the boundary allows any arbitrary configuration to be continuously deformed to the classical Lam\'e equation.

Finally, we apply the asymptotic scaling technique, originally developed for the study of the addition map, to general symmetric elliptic equilibrium systems. This yields a rational limit that extracts the intersection multiplicities of local root clusters, and this structural rigidity completely solves the Treibich conjecture \cite{Treibich_2025} for $r=2$ symmetric pairs, including generalizations up to $r \leq 4$. Furthermore, we conjecture a general formula enumerating symmetric finite-gap KdV potentials for all $r$.
\end{abstract}

\maketitle

\tableofcontents

\maketitle

\sloppy

\vspace{1em}
\noindent \textbf{Notation and conventions.} Throughout this paper, we adopt the following standard notations:
\begin{itemize}[leftmargin=*]
    \item Let $\Lambda_\tau = \mathbb{Z} + \mathbb{Z}\tau$ be the lattice with $\tau \in \mathbb{H} = \{ \tau \in \mathbb{C} \ \mid \ \Im (\tau) >0 \}$ and $E_\tau = \mathbb{C}/\Lambda_\tau$ be the associated elliptic curve. $[p] \in E_\tau$ denotes the canonical projection of $p \in \mathbb{C}$.
    \item We suppress $\tau$ in the Weierstrass functions: 
    \[
    \wp(z) := \wp(z;\tau), \quad \zeta(z) := \zeta(z;\tau), \quad \sigma(z) := \sigma(z;\tau), \quad \eta_i := 2 \zeta( \tfrac{1}{2} \omega_i ).
    \]
    \item We use boldface letters (e.g., $\mathbf{a}, \mathbf{n}, \mathbf{p}$) to denote finite tuples, with their components written in standard font (e.g., $a_\mu, n_i, p_i$).
    \item $\{ \dots \} \in \Sym^n X$ denotes an unordered symmetric class. In particular, a point in $\Sym ^n E_{\tau}$ is denoted by $\{[\mathbf{a}]\}$. For elements $\{ \mathbf{a} \} \in \Sym^n X$ and $\{ \mathbf{b} \} \in \Sym^m X$, their natural multiplication is denoted by $\{ \mathbf{a} \} \times \{ \mathbf{b} \} \in \Sym^{n+m} X$. 
\end{itemize}

\section{Introduction}

The classical Lam\'e equation on an elliptic curve occupies a distinguished position at the intersection of differential equations, algebraic geometry, and integrable systems. Its solutions exhibit special algebraic and arithmetic properties reflected in the geometry of auxiliary algebraic curves, commonly referred to as Lam\'e or spectral curves. These features persist, in a more intricate form, for differential equations on elliptic curves with multiple singularities.

\subsection{Generalized Lam\'e equations}
We develop a systematic algebro-geometric theory of generalized Lam\'e equations (GLE) on elliptic curves and their associated generalized Lam\'e curves (GLC). Let $E_\tau=\mathbb{C}/\Lambda_\tau$ be an elliptic curve. A generalized Lam\'e equation is a second-order differential equation
\begin{equation}\label{eq:GLE}
\frac{d^2 w}{dz^2}-Q(z)w=0,
\end{equation}
whose potential is an elliptic function with prescribed principal parts
\begin{equation}\label{eq:potential}
Q(z)=\sum_{i=1}^r n_i(n_i+1)\wp(z-p_i)+\sum_{i=1}^r A_i\zeta(z-p_i)+B,
\qquad \sum_{i=1}^r A_i=0.
\end{equation}
Here, $p_1,\dots,p_r \in \mathbb{C}$ are regular-sigularities, $n_i\in\mathbb{C}$ are fixed weights, and $(A_i,B)\in \mathbb{C}^{r+1}$ are accessory parameters. In the literature, this equation is also regarded as a meromorphic projective connection of Fuchsian type \cite{Iwasaki_1991, Kawai_1996}.

While it suffices to view the singularities $p_i$ on $E_\tau$ for a fixed equation, studying moduli problems---such as isomonodromic deformations---requires lifting them to the universal cover $\mathbb{C}$ to track their continuous trajectories.

Our primary focus is on quasi-periodic solutions, which take the exact form of the generalized Hermite--Halphen (GHH) ansatz (see \eqref{eq:GHH} below). As a direct consequence of Theorem~\ref{t:double_multi_sol}, we impose the integrality condition
\[
n := \sum_{i=1}^r n_i \in \mathbb{Z}_{\geq 0}.
\]
This assumption is essential for the existence of the multi-valued ansatz globally defined on the complex plane.

\subsubsection{Log-free varieties}

When some $n_i\in\frac12\mathbb{N}$, the local Frobenius solutions of \eqref{eq:GLE} may contain logarithmic terms. The vanishing of the corresponding logarithmic obstructions imposes polynomial conditions on the accessory parameters and defines the log-free locus. These log-free GLEs play a central role in monodromy problems and arise naturally in the study of singular Liouville equations on elliptic curves. 

Specifically, assign $\deg A_j = 1$ for all $j$ and $\deg B = 2$. For each $\ell_i := 2n_i \in \mathbb{N}$, the corresponding obstruction polynomial has degree $\ell_i + 1$ and takes the form (let $\ell = \ell_i$):
\[
F_\ell(A, B) = q_\ell(A, B) + \dots = \frac{(-1)^\ell}{(\ell !)^2}\prod_{j = 0}^\ell (A - (\ell - 2j)B^{1/2}) + \dots
\]
The top-degree term explicitly exhibits the branching structure of $\mathfrak{sl}_2(\mathbb{C})$ representations. While originally discovered in \cite{Wang_2020} via direct computation, a more concise $\mathfrak{sl}_2(\mathbb{C})$ representation-theoretic proof is given in Proposition \ref{p:topterm}, which is the starting point of the BGG category study in \ref{ss:BGG}. 

An affine log-free variety $V_{\mathbf{n},\mathbf{p}}(\tau) \subset \mathbb{C}^{r + 1}$ is then defined by the equations $F_{\ell_i}(A, B) = 0$ for indices where $n_i \in \tfrac12 \mathbb{N}$. In the extreme case where $n_i \in \tfrac12 \mathbb{N}$ for all $i = 1, \ldots, r$, coupled with the constraint $\sum A_i = 0$, this yields exactly $r + 1$ equations in $r + 1$ variables. Remarkably, it is only in this extreme case that these equations may fail to intersect transversely (cf.~Remark~\ref{r:CI}), obscuring the actual geometric structure.

It was conjectured in \cite{Wang_2020} that $V_{\mathbf{n},\mathbf{p}}(\tau)$ for $\mathbf{n} \in (\frac12 \mathbb{N})^r$ should contain curve components. Proving this conjecture was one of the original motivations for initiating this project around 2017. The solution obtained here is far more involved than originally anticipated. Resolving this conjecture requires constructing the moduli space of the GHH ansatz---which natively forms a curve---and proving the existence of a non-constant (generically 2:1) morphism from this moduli space to $V_{\mathbf{n},\mathbf{p}}(\tau)$.

\subsubsection{Generalized Hermite--Halphen ansatz}
A key conceptual contribution of this work is the identification of the generalized Hermite--Halphen (GHH) ansatz as the structural mechanism governing log-free generalized Lam\'e equations. The ansatz is defined for any total weight $n \in \mathbb{Z}_{\geq 0}$. For strictly positive weight $n>0$, it takes the form
\begin{equation}\label{eq:GHH}
w_{\mathbf{n},\mathbf{p} ; \mathbf{a}}(z)=
\exp\!\left(
\frac{z}{n}\sum_{i=1}^r n_i\sum_{\mu=1}^n\zeta(a_\mu-p_i)
\right)\,
\frac{\prod_{\mu=1}^n\sigma(z-a_\mu)}
{\prod_{i=1}^r\sigma(z-p_i)^{n_i}},
\end{equation}
where $\mathbf{a} = (a_1,\dots,a_n) \in \mathbb{C}^n$ whose projections define an unordered symmetric class $\{[\mathbf{a}]\} := \{[a_1], \dots, [a_n]\} \in \Sym^n E_\tau$. In the boundary case of zero total weight ($n=0$), the numerator product is empty, and the ansatz simplifies to a form parameterized by a free constant $h \in \mathbb{C}$:
\begin{equation}\label{eq:GHH_zero}
w_{\mathbf{n},\mathbf{p}; h}(z)= \frac{\exp(hz)} {\prod_{i=1}^r\sigma(z-p_i)^{n_i}}.
\end{equation}

Proposition~\ref{exist_ansatz_sol} shows that the GHH ansatz is not merely a source of explicit solutions but is in fact \emph{necessary}: any generalized Lamé equation with $\mathbf{n} \in (\frac12 \mathbb{N})^r$ which admits only log–free solutions must possess a solution of the form \eqref{eq:GHH}, possibly after lowering the total weight. This result already encodes the compactification behavior required for the construction of generalized Lamé curves and provides the conceptual bridge between analytic log–free conditions and algebraic geometry on $\Sym^n E_\tau$.

\subsubsection{Generalized Lam\'e curves and deformation}
Motivated by this observation, we define $\mathcal{Y}_{\mathbf{n},\mathbf{p}}(\tau)$ to be the moduli space of GHH data corresponding to log-free generalized Lam\'e equations, and we denote by $\overline{\mathcal{Y}}_{\mathbf{n}, \mathbf{p}}(\tau)$ its natural compactification. These spaces define the \emph{generalized Lam\'e curves}. A central theme of this work is the study of $\overline{\mathcal{Y}}_{\mathbf{n},\mathbf{p}}(\tau)$ as an algebraic family as the poles $p_i$ vary and collide, with the aim of understanding degenerations of generalized Lam\'e equations through algebro-geometric methods.

Associated to the GHH data is the $(\mathbf{n},\mathbf{p})$-shifted addition map
\begin{equation}\label{eq:summation_intro}
\sigma_{\mathbf{n},\mathbf{p}} \colon \Sym^n E_\tau \longrightarrow E_\tau, \quad \{[\mathbf{a}]\} \longmapsto \Big[\sum_{\mu=1}^n a_\mu - \sum_{i=1}^r n_i p_i \Big],
\end{equation}
which plays a fundamental role in controlling the global geometry of generalized Lam\'e curves. The behavior of this map under deformation and degeneration provides the main geometric input for the results stated below.

\subsection{Main results} 
Here we summarize the principal results. The overarching strategy is to establish the geometric structure of generalized Lam\'e equations on the generic fiber, and then use the degeneration theorem to explicitly analyze their properties via pole collisions.

\subsubsection{Existence of GLC for general parameters}
For fixed weights $\mathbf{n} = (n_i)$ with total weight $n \in \mathbb{Z}_{\geq 0}$, and a pole configuration $\mathbf{p} = (p_i) \in \mathbb{C}^r$, we construct the moduli space $\mathcal{Y}_{\mathbf{n},\mathbf{p}}(\tau)$ of the GHH ansatz. 

Recall the coefficient of $z$ in the exponential factor:
\[
h = \frac{1}{n}\sum_{i=1}^r \sum_{\mu=1}^n n_i\zeta(a_\mu-p_i).
\]
Strictly speaking, the full GHH data is captured by the equivalence class $[(\mathbf{a}, h)]$, which naturally lies in an affine bundle over $\Sym^n E_\tau$. While much of the geometric structure can be studied purely via $\{[\mathbf{a}]\} \in \Sym^n E_\tau$, the introduction of $h$ is required not only to study the compactification of the moduli space within the total space of the corresponding projective bundle, but also to ensure that the morphism $\mathcal{Y}_{\mathbf{n},\mathbf{p}}(\tau) \to V_{\mathbf{n},\mathbf{p}}(\tau)$ extends to this compactification.

The natural projection $\pi \colon \overline{\mathcal{Y}}_{\mathbf{n},\mathbf{p}}(\tau) \to \overline{Y}_{\mathbf{n},\mathbf{p}}(\tau)$, which forgets the $h$ parameter, is an isomorphism on the interior and finite on the boundary. The interior base space $Y_{\mathbf{n},\mathbf{p}}(\tau) \subset \Sym^n E_\tau$ is explicitly defined as the quasi-projective variety cut out by the system of residue equations. For $\mu = 1, \dots, n$:
\begin{equation}
\sum_{i=1}^{r} \sum_{\substack{\nu=1 \\ \nu \neq \mu}}^{n} n_{i} \Big( \zeta(a_{\mu} - a_{\nu}) - \zeta(a_{\mu} - p_{i}) + \zeta(a_{\nu} - p_{i}) \Big) = 0.
\end{equation}

\begin{theorem}[cf.~Theorem~\ref{t:GLC}]
$Y_{\mathbf{n}, \mathbf{p}}(\tau)$ and $\overline{Y}_{\mathbf{n}, \mathbf{p}}(\tau)$ both consist of a finite union of curves.
\end{theorem}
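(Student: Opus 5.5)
We have to show that every irreducible component of $Y_{\mathbf{n},\mathbf{p}}(\tau)$ (and of its closure $\overline{Y}_{\mathbf{n},\mathbf{p}}(\tau)$) has dimension exactly one. The plan is to obtain the lower bound from a count of equations and the upper bound from a finiteness statement for the addition map $\sigma_{\mathbf{n},\mathbf{p}}$.

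\emph{Lower bound.} The $n$ residue equations are not independent. Write $R_\mu$ for the left-hand side of the $\mu$-th equation.
\begin{itemize}
\item The terms $\zeta(a_\mu-a_\nu)$ are antisymmetric in $(\mu,\nu)$, so they cancel in $\sum_\mu R_\mu$.
\item The remaining terms give $\sum_\mu\sum_{\nu\ne\mu}\big(-\zeta(a_\mu-p_i)+\zeta(a_\nu-p_i)\big)=0$ for each $i$.
\end{itemize}
Hence $\sum_\mu R_\mu\equiv 0$. Each $R_\mu$ is also invariant under $a_\nu\mapsto a_\nu+\omega$: the quasi-periodicity shifts cancel because $\sum_i n_i\,\eta(\omega)\big(-1+1\big)$ and the $\zeta(a_\mu-a_\nu)$ shift is compensated using $\sum_{\nu\neq\mu}$. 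This has to be checked carefully, and it is where $n\in\mathbb Z$ enters. So the $R_\mu$ are well-defined meromorphic functions on (a cover of) $\Sym^nE_\tau$ off the diagonals and the loci $a_\mu=p_i$. Locally near any point of $Y$ this gives at most $n-1$ independent equations in $n$ variables. Consequently every irreducible component of $Y$ has dimension $\ge 1$, provided $Y\neq\emptyset$. Nonemptiness I would get from the existence result (Theorem~\ref{t:double_multi_sol} / Proposition~\ref{exist_ansatz_sol}), which produces GHH solutions for suitable accessory parameters.

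\emph{Upper bound.} Here I use $\sigma_{\mathbf{n},\mathbf{p}}$. The claim to prove is that the restriction of $\sigma_{\mathbf{n},\mathbf{p}}$ to $Y$ has finite fibres. Fix $c\in E_\tau$ and consider $Y\cap\sigma^{-1}(c)$, a subvariety of the fibre $\sigma^{-1}(c)\cong\mathbb P^{n-1}$. The key point is that along $\sigma^{-1}(c)$ the GHH ansatz $w_{\mathbf{n},\mathbf{p};\mathbf{a}}$ has fixed multiplicative quasi-periodicity type in the $\omega_1$ direction, up to the $h$ data. So a positive-dimensional family in a fibre would give a positive-dimensional family of GLEs all sharing a common-type quasi-periodic solution. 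I would rule this out by showing that the potential $Q=w''/w$ determines $\{[\mathbf{a}]\}$ up to finitely many choices, and that on a fibre of $\sigma$ the map $\mathbf{a}\mapsto(A_i,B)$ is injective up to the involution $\mathbf{a}\mapsto-\mathbf{a}+\text{const}$. Then the resulting family in $V_{\mathbf{n},\mathbf{p}}(\tau)$ would be a complete subvariety of the affine space $\mathbb C^{r+1}$, hence finite.

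This is the step I expect to be the main obstacle. It needs properness: one must show that points of a fibre curve cannot escape to the boundary of $Y$ (collisions $a_\mu=a_\nu$ or $a_\mu\to p_i$) in a way that leaves $(A,B)$ bounded. The first thing I would try is an asymptotic scaling analysis at collisions: blow up the collision, and show that the leading-order residue equations force $h$, and hence $B$, to diverge. With finite fibres over the one-dimensional base $E_\tau$, $\dim Y\le 1$.

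\emph{Closure.} $\overline{Y}_{\mathbf{n},\mathbf{p}}(\tau)$ is the Zariski closure of $Y$ in $\Sym^nE_\tau$ and $Y$ is quasi-projective. The closure therefore has the same components, each of dimension one, together with finitely many boundary points that lie on the closures of those components. Hence both $Y_{\mathbf{n},\mathbf{p}}(\tau)$ and $\overline{Y}_{\mathbf{n},\mathbf{p}}(\tau)$ are finite unions of curves, by Noetherianity.
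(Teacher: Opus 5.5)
Your lower bound (the $n$ residue equations sum to zero) is exactly the paper's. The upper bound, however, rests on a claim that is false in general. The restriction $\sigma_{\mathbf{n},\mathbf{p}}|_{Y}$ need not have finite fibres, and the potential $Q$ does not determine $\{[\mathbf{a}]\}$ up to finitely many choices.

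The paper's own Example~\ref{eg:GLC_half_periods} is a counterexample. Take $\mathbf{n}=(\tfrac12)^4$ with $\mathbf{p}$ the half periods. The component $\Delta_0=\{a_1\equiv -a_2\}$ lies in a single fibre of $\sigma_{\mathbf{n},\mathbf{p}}$. A direct check with the quasi-periodicity of $\zeta$ shows that along $\Delta_0$ the quantities $h=-\tfrac14\sum_i\eta_i$, every $A_i$, and $B$ are all constant.

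This is the $\mathbb{P}^1$-fibre case of Proposition~\ref{generic-2-to-1}. When the monodromy is $\pm\mathrm{Id}$, every solution of the GLE is of ansatz form, so a whole $\mathbb{P}^1$ of root configurations shares the same $Q$. At best you can hope for finiteness of $Y^c$ for generic $c$. Even that does not exclude a two-dimensional component sitting inside one special fibre of $\sigma_{\mathbf{n},\mathbf{p}}$.

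Your properness step also runs the wrong way. A complete curve $\overline{C}$ is forced to a point of $\mathbb{C}^{r+1}$ only if $(A,B)$ stays \emph{bounded} on $\overline{C}\setminus C$. Showing that $h$ and $B$ blow up at collisions would only make the image a non-complete affine curve, which contradicts nothing. The blow-up claim is also false. By Proposition~\ref{prop_no_mixed}, clusters of multiplicity $2n_j+1$ give Type-II boundary points where $h$ stays finite (Example~\ref{ex:primitive_two_h}). In contrast, $h=\infty$ occurs only at the finitely many Type-I points, which lie over finitely many values of $c$ (cf.\ the divisor $D$ of Lemma~\ref{l:char_poly}).

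A repair along your lines would run as follows.
\begin{enumerate}
\item For $c$ avoiding those values, $(A,B)$ has finite limits at all boundary points of $\overline{Y}^c$ (Theorem~\ref{special-pt-II}).
\item Hence any curve in $\overline{Y}^c$ is contracted to a point of $V_{\mathbf{n},\mathbf{p}}(\tau)$, so it is a $\mathbb{P}^1$-fibre.
\item A $\mathbb{P}^1$-fibre forces $(t,s)\in(\tfrac12\mathbb{Z}/\mathbb{Z})^2$, hence $c\in E_\tau[2]$, so $Y^c$ is finite for all other generic $c$.
\item A surface inside a single fibre of $\sigma_{\mathbf{n},\mathbf{p}}$ is ruled out separately. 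On its normalization, the poles of $(A,B)$ are confined to finitely many points, so by Hartogs $(A,B)$ is constant there. This contradicts that fibres of $Y\to V$ are at most $\mathbb{P}^1$.
\end{enumerate}

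The paper avoids all of this and bounds the dimension directly. It writes the equations as $\sum_i\mathbf{A}_i\mathbf{y}_i=\mathbf{0}$ and proves $\mathrm{rank}(\mathbf{A}_i)=n-1$ by polynomial interpolation, with a separate rank bound when some $x_{\mu,i}$ coincide. It then counts $(n+r-1)-(r-1)-(n-1)=1$.
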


We refer to this base projection as the \emph{underlying generalized Lam\'e curve (GLC)}, distinguishing it from the total \emph{generalized Lam\'e curve} $\overline{\mathcal{Y}}_{\mathbf{n}, \mathbf{p}}(\tau)$. In the boundary case of zero total weight ($n=0$), the base space $Y_{\mathbf{n}, \mathbf{p}}(\tau)$ degenerates to a point, and $\mathcal{Y}_{\mathbf{n},\mathbf{p}}(\tau)$ is entirely parameterized by the free parameter $h$.

\subsubsection{Global flatness and the BGG category $\mathcal{O}$ dictionary} \label{ss:BGG}
A central geometric achievement of this work is establishing that the generalized Lam\'e curves assemble into a universally flat family.

\begin{theorem}[cf.~Theorem~\ref{thm:universal_flatness}]
There exists a globally flat family
\[
    \overline{\mathcal{Y}} \longrightarrow \mathbb{C}^r \times \Big\{ \mathbf{n} \in \mathbb{C}^r \;\Big|\; \sum_{i=1}^r n_i = n \Big\}
\]
whose generic fiber over $(\mathbf{n},\mathbf{p})$ is canonically isomorphic to the generalized Lam\'e curve $\overline{\mathcal{Y}}_{\mathbf{n}, \mathbf{p}}(\tau)$.
\end{theorem}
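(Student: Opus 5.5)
The plan is to construct $\overline{\mathcal{Y}}$ as the scheme-theoretic closure of the generic locus and then check flatness over the base $S := \mathbb{C}^r \times \{\mathbf{n} : \sum n_i = n\}$. This base is smooth and irreducible of dimension $2r-1$.

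\textbf{Step 1: the ambient family.} Over $S$ (lifted to $\mathbb{C}^r$ for the poles), I would form the relative affine bundle over $\Sym^n E_\tau \times S$ recording $(\{[\mathbf{a}]\}, h)$, together with its fibrewise projective compactification $\mathbb{P} \to \Sym^n E_\tau \times S$. The residue equations are written with $\zeta$, which is not elliptic, so they do not descend to $\Sym^n E_\tau$ as they stand. I would clear them by replacing each equation by its symmetric elliptic combinations, for instance using $\zeta(a_\mu-a_\nu)-\zeta(a_\mu-p_i)+\zeta(a_\nu-p_i)$, which is elliptic in each variable because the linear terms cancel in total weight. Multiplying through by $\prod \sigma$-factors then yields equations depending algebraically on $(\mathbf{n},\mathbf{p})$, together with the relation defining $h$. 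This defines a closed subscheme $\mathcal{Z} \subset \mathbb{P}$. I would take $S^\circ \subset S$ to be the dense open set where Theorem~\ref{t:GLC} applies and the fibre is the GLC with no embedded components, and define $\overline{\mathcal{Y}}$ as the closure of $\mathcal{Z}|_{S^\circ}$.

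\textbf{Step 2: flatness criterion.} The base $S$ is smooth and irreducible, and $\overline{\mathcal{Y}}$ is the closure of its restriction to a dense open set, so every associated point of $\overline{\mathcal{Y}}$ dominates $S$. Over a smooth curve this already gives flatness, by torsion-freeness. To pass to higher-dimensional $S$, I would use miracle flatness. First I would show that $\overline{\mathcal{Y}}$ is Cohen--Macaulay of pure dimension $\dim S + 1$, and then that every fibre has pure dimension $1$. Equidimensionality of fibres is proved pointwise. Over $S^\circ$ it is Theorem~\ref{t:GLC}. Over the boundary strata (pole collisions $p_i \to p_j$ and special weights) I would use the addition map $\sigma_{\mathbf{n},\mathbf{p}}$. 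Fixing a generic value $c$ of $\sigma_{\mathbf{n},\mathbf{p}}$ cuts the family down to a finite scheme, whose length equals the universal degree $\deg \sigma_{\mathbf{n},\mathbf{p}}$. The degeneration theorem (asymptotic scaling of clustered roots $a_\mu$ near colliding poles) identifies the limit points with points of GLC components of lower weights, with multiplicities matching the $\mathfrak{sl}_2$ tensor decomposition in category $\mathcal{O}$. Because the degree is constant, the fibre of $\overline{\mathcal{Y}} \to S \times E_\tau$ over generic $c$ is finite of constant length. This gives flatness of the composite family over a dense open subset of $S \times E_\tau$, and by upper semicontinuity also shows that the special fibre has no $0$-dimensional or $2$-dimensional components.

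\textbf{Step 3: closing the gaps.} Constant length over generic $c$ does not rule out embedded points, or extra vertical components sitting over finitely many special values of $c$ in the special fibre. To exclude these I would show that the lower-weight GLC components, which are known to be reduced curves by induction on $r$ and on the weight, exhaust the special fibre. The count is by degree: the sum of the degrees of the boundary components over $c$ equals $\deg \sigma_{\mathbf{n},\mathbf{p}}$, and this equality leaves no room for further pieces. It also gives reducedness generically along each component, so the special fibre is a reduced curve and hence Cohen--Macaulay of dimension $1$. With Cohen--Macaulay fibres and a smooth base, the local criterion of flatness, applied along a regular sequence of base parameters, gives global flatness.

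\textbf{Main obstacle.} The hardest step is Step 3, in particular for the $h$-coordinate at the points at infinity of $\mathbb{P}$. There the degeneration theorem controls only $\{[\mathbf{a}]\}$, and the map $\pi$ is merely finite, not an isomorphism. I would first attempt to bound the length of the fibre of $\pi$ at boundary points uniformly in $(\mathbf{n},\mathbf{p})$. I would then match it against the explicit $n=0$ description, where $h$ is free, to confirm that no embedded points appear.
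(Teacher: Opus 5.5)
There is a genuine gap, and it sits exactly where you pass from one-parameter control to flatness over the $(2r-1)$-dimensional base.

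\textbf{The flatness criterion in Step 3 is false.} You claim that Cohen--Macaulay fibres of the expected dimension over a smooth base, plus the local criterion, give flatness. They do not. The local criterion needs the base parameters to form a regular sequence on $\mathcal{O}_{\overline{\mathcal{Y}}}$. That is what Cohen--Macaulayness of the \emph{total space} would supply via miracle flatness, and you never establish it. Being the reduced closure of the generic part does not substitute for it.

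Take $X=\{z=w=0\}\cup\{x=y=0\}\subset\mathbb{C}^4$, mapped finitely to $\mathbb{C}^2$ by $(x+z,\,y+w)$. Both components dominate the base, and all fibres are $0$-dimensional, hence Cohen--Macaulay. The generic fibre has length $2$, but the fibre over the origin is $\mathbb{C}[x,y]/(x,y)^2$, of length $3$.

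\textbf{The same example shows what Step 2 misses.} The flat limit along every line through the origin has the correct length $2$, but it depends on the line. The fibre of the closure is the union of all these limits. The degeneration theorem computes the limit only along particular one-parameter paths: a symmetric collision of two poles, or a weight shift. So conservation of $\deg\sigma_{\mathbf{n},\mathbf{p}}$ along such paths says nothing about the fibre of $\overline{\mathcal{Y}}$ over strata of codimension $\ge 2$. Examples are several simultaneous collisions, a collision at a half-integral weight, or $\mathbf{p}\to\mathbf{0}$. Showing that every direction of approach yields the same limit scheme is the real content of the theorem, and your proposal has no mechanism for it.

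\textbf{The reducedness route cannot close this either.}
\begin{enumerate}
\item Over deeper strata the special fibres are genuinely non-reduced. By Corollary~\ref{Cor:special_fiber_p=0}, the component $\{0^k\}\times\overline{\mathcal{Y}}_{n-k,0}(\tau)$ appears with multiplicity $m_k(\mathbf{n})$, which equals $k+1$ already for $r=3$ and generic weights.
\item Lower-weight GLCs need not be reduced (Example~\ref{eg:GLC_half_periods}), so ``known to be reduced by induction'' is not available.
\item A degree count over generic $c$ cannot see embedded points or curves contracted by $\sigma_{\mathbf{n},\mathbf{p}}$. So the equality of degrees does not ``leave no room for further pieces,'' which your final paragraph essentially concedes.
\end{enumerate}

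\textbf{How the paper handles this.} The paper replaces the abstract criterion with explicit local deformation theory.
\begin{enumerate}
\item Over a one-parameter curve in the base, the closure is automatically flat.
\item Its special fibre is identified by power-series and Puiseux deformations of every point of the predicted fibre, with the number of branches equal to the multiplicities.
\item Over the full base, it builds multivariable deformations by induction on the codimension of the stratum, varying one parameter at a time. It uses that multiplicities multiply under successive degenerations, so the limit is the BGG-predicted fibre however the stratum is approached.
\end{enumerate}
This direct control of the closure's fibre over deep strata is the ingredient your proposal lacks.
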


We control this global geometry via the $(\mathbf{n},\mathbf{p})$-shifted addition map
\[
\sigma_{\mathbf{n},\mathbf{p}} \colon \Sym^n E_\tau \longrightarrow E_\tau, \quad \{[\mathbf{a}]\} \longmapsto \Big[\sum_{\mu=1}^n a_\mu - \sum_{i=1}^r n_i p_i \Big].
\]
For a generic target $c \in E_\tau$, its restriction to $\overline{Y}_{\mathbf{n}, \mathbf{p}}(\tau)$ is generically finite of a universal combinatorial degree (cf.~Theorem~\ref{thm_deg}). We denote the corresponding fibers over a fixed target $c \in E_\tau$ by
\[
    \overline{Y}_{\mathbf{n},\mathbf{p}}^c(\tau) := \overline{Y}_{\mathbf{n},\mathbf{p}}(\tau) \cap \sigma^{-1}_{\mathbf{n},\mathbf{p}}(c), \quad \text{and} \quad \overline{\mathcal{Y}}_{\mathbf{n},\mathbf{p}}^c(\tau) := \overline{\mathcal{Y}}_{\mathbf{n},\mathbf{p}}(\tau) \cap \pi^{-1}\sigma^{-1}_{\mathbf{n},\mathbf{p}}(c).
\]
The conservation of this degree enforces rigid structural formulas at the boundary. By specializing parameters, we uncover that the special fibers translate perfectly into the formal Grothendieck ring $R[q]$ of the BGG category $\mathcal{O}$.

To each singular pole $p_i$ with weight $n_i$, we associate a highest-weight module. Let $[M_{2n_i}]$ denote the class of the Verma module (for generic $n_i$), and $[L_{2n_i}]$ denote its finite-dimensional irreducible quotient (for $n_i \in \frac{1}{2}\mathbb{Z}_{\geq 0}$). The formal variable $q$ tracks collision multiplicity, where $q^k$ corresponds geometrically to a collision cluster $\{p^k\}$.

The boundary behavior of the generalized Lam\'e curve is governed by two rules (cf.~Definition~\ref{def:BGG}):

\vspace{0.5em}
\noindent \textbf{1. Decomposition rule ($\mathbf{n}$-direction):} For $n_1 \in \frac{1}{2}\mathbb{Z}_{\geq 0}$, the geometric splitting of the curve corresponds exactly to the algebraic BGG resolution:
\[
    [M_{2n_1}] = [L_{2n_1}] + q^{2n_1+1} [M_{-2n_1-2}].
\]
Geometrically, the moduli space decomposes into a primary component of weight $n$ and a boundary stratum where $2n_1+1$ roots collide at $p_1$:
\begin{theorem}[Decomposition theorem, cf.~Theorem~\ref{thm_decom}]
Assume $n_1 \in \frac{1}{2}\mathbb{Z}_{\geq 0}$, and $n_i$ are generic for $i \ge 2$. We have:
\[
\begin{split}
    \lim_{\delta \rightarrow 0}\overline{\mathcal{Y}}_{(n_1+\delta,n_2-\delta,n_3,\dots,n_r),\mathbf{p} }^c(\tau) &= \overline{\mathcal{Y}}_{\mathbf{n},\mathbf{p} }^c(\tau) \\
    &\quad \cup \ \{p_1^{2n_1+1}\} \times \overline{\mathcal{Y}}^{c'}_{(-n_1-1,n_2,\dots,n_r),\mathbf{p}} (\tau),
\end{split}
\]
where $c' := c - (2n_1+1) [p_1]$.
\end{theorem}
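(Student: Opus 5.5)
The plan is to track the points of the fibers $\overline{\mathcal{Y}}^c_{\mathbf{n}_\delta,\mathbf{p}}(\tau)$, where $\mathbf{n}_\delta := (n_1+\delta,n_2-\delta,n_3,\dots,n_r)$, as $\delta\to 0$ with $c$ generic and fixed. I will sort their limits according to how many roots $a_\mu$ tend to $p_1$, and then close the argument by counting degrees with the universal degree formula (Theorem~\ref{thm_deg}). Since the total weight $n$ does not change along the family, the degree $d(\mathbf{n}_\delta)$ of $\sigma_{\mathbf{n}_\delta,\mathbf{p}}$ restricted to $\overline{Y}_{\mathbf{n}_\delta,\mathbf{p}}(\tau)$ is constant for $\delta\neq 0$ small. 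The limit scheme therefore has length $d(\mathbf{n}_\delta)$, counted with multiplicity, and this is the conserved quantity I will use. Step 1 is the elementary identity $\deg(\mathbf{n}) + \deg(-n_1-1,n_2,\dots,n_r) = \deg(\mathbf{n}_\delta)$, where the second term is computed in weight $n-2n_1-1$. I will read it off from the combinatorial degree formula, and it is exactly the Verma character identity $[M_{2n_1}]=[L_{2n_1}]+q^{2n_1+1}[M_{-2n_1-2}]$ specialized at the level of degrees.

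Step 2 handles the limit points with no roots at $p_1$. Suppose $\{[\mathbf{a}(\delta)]\}$ converges to $\{[\mathbf{a}]\}$ with all $a_\mu\neq p_1$, and let $h(\delta)$ converge. The residue equations and $h$ depend continuously on $\mathbf{n}$ away from the poles, so the limit lies in $\overline{\mathcal{Y}}^c_{\mathbf{n},\mathbf{p}}(\tau)$. Conversely, take a generic point of $\overline{\mathcal{Y}}^c_{\mathbf{n},\mathbf{p}}$. By Theorem~\ref{t:GLC} the curve is reduced and $c$ is generic, so the fiber is finite and reduced there. The implicit function theorem then deforms that point into the $\delta$-family. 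This gives the first component with multiplicity one.

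Step 3 treats the case where $k\ge1$ roots collide at $p_1$. I would use the asymptotic scaling technique: write $a_\mu = p_1 + \epsilon x_\mu$ for $\mu\le k$, with $\epsilon=\epsilon(\delta)\to0$, and expand the residue equations. The leading order is a rational (Stieltjes-type) equilibrium system
\[
\sum_{\nu\le k,\nu\neq\mu}\frac{1}{x_\mu-x_\nu} - \frac{n_1+\delta}{x_\mu}+(\text{lower order}) = 0,
\]
which is equivalent to polynomial solutions of a degenerate hypergeometric (Laguerre-type) equation. Summing over $\mu$ shows that a nontrivial cluster needs $k(k-1)/2 - k(n_1+\delta)\to 0$ at leading order, that is, $k = 2n_1+1$. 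The remaining roots then see $p_1$ with effective weight $n_1 - k = -n_1-1$, because the factor $\sigma(z-p_1)^{k}$ absorbed into the numerator shifts the local exponent. The outer roots therefore satisfy the residue system for $(-n_1-1,n_2,\dots,n_r)$ with target $c-(2n_1+1)[p_1]$, and $h$ passes to the limit compatibly. This places every such limit in the second component. Clusters at other poles, or at points which are not poles, are excluded because the $n_i$ with $i\ge2$ are generic, so the corresponding rational systems have no solutions.

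Step 4 shows that the whole second component actually occurs. Starting from a generic point of $\overline{\mathcal{Y}}^{c'}_{(-n_1-1,\dots),\mathbf{p}}$, I need to produce a $\delta$-family converging to it. This is the main obstacle. The plan is to solve the combined system in the scaled variables $(x,\mathbf{a}',\epsilon)$ with $\epsilon$ tied to $\delta$, and to show that its Jacobian is invertible at the rational cluster solution. That solution is unique up to scaling, since it consists of the roots of a specific Laguerre polynomial. Once the Jacobian is invertible, the second component also appears with multiplicity one. I expect to fix the relation between $\epsilon$ and $\delta$ as $\delta\sim\epsilon^{2n_1+1}$ by matching the subleading terms. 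Finally, Steps 2 to 4 give set-theoretic inclusion in both directions together with multiplicity one on each component, and the degree identity of Step 1 then forces equality as limit schemes, with no extra embedded or boundary contributions.
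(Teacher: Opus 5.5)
Your outline is the route the paper has in mind. Its only comment is that Theorem~\ref{thm_decom} is proved like Theorem~\ref{thm:underlying_dege_generic}: deform each special-fibre point by explicit series, then use the degree formula of Theorem~\ref{thm_deg} to exclude anything else. However, the two ``multiplicity one'' claims that your final squeeze relies on are not established.

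\textbf{Step 4.} The plan as stated is inconsistent. With unknowns $(x,\mathbf a',\epsilon)$ over the $\delta$-line, an invertible Jacobian would make $\epsilon$ analytic in $\delta$. That contradicts your own expectation $\delta\sim\epsilon^{2n_1+1}$.

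For $n_1>0$ this Jacobian is in fact singular. Multiply the cluster equations by $\epsilon$ and set $\epsilon=0$. Then the cluster rows do not involve $\mathbf a'$ (or $h$). Their $x$-derivative has image in the hyperplane $\sum_\mu x_\mu w_\mu=0$; this follows by differentiating the $x$-independent sum $\sum_\mu x_\mu\bigl(\sum_{\nu\neq\mu}\frac{1}{x_\mu-x_\nu}-\frac{n_1+\delta}{x_\mu}\bigr)$. Their $\epsilon$-derivative is the same for every $\mu$ in the cluster, so it also lies in that hyperplane, because $\sum_\mu x_\mu=0$ for the roots of unity of Lemma~\ref{Lemma_sol_x}.

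What does work is to take $\epsilon$ as the parameter and $\delta$ as an unknown. The $\delta$-column $-1/x_\mu$ pairs with $(x_\mu)$ to $-(2n_1+1)\neq 0$. This gives a unique branch, granting the outer block (see below). By the cyclic symmetry of the cluster, $\delta$ is a power series in $\epsilon^{2n_1+1}$ along this branch. The multiplicity of the limit point in $\Sym^n E_\tau$ is then the $j$ with $\delta=C\epsilon^{j(2n_1+1)}+\cdots$, $C\neq 0$.

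So ``multiplicity one'' is exactly the nonvanishing of the $\epsilon^{2n_1+1}$-coefficient, which you defer to ``matching subleading terms''. It plays the role of $M_k\neq 0$ in Corollary~\ref{cor_boundary}, and it is not automatic. One way to see what it is: compare $e_{2n_1+1}$ of the cluster with the Frobenius coefficient $c_{2n_1+1}$ of the solution at exponent $-(n_1+\delta)$ at $p_1$. Its recursion denominator is $-2(2n_1+1)\delta$. This shows the coefficient is a nonzero multiple of $F_{\mathbf n,\mathbf p;1}(A_{\mathbf n',\mathbf p;1},B_{\mathbf n',\mathbf p})$, where $\mathbf n'=(-n_1-1,n_2,\dots,n_r)$. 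That quantity vanishes exactly where the $\mathbf n'$-ansatz is also log-free at $p_1$, which is the Type-II condition of Theorem~\ref{special-pt-II}. This is where the genericity of $c$ has to be used.

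\textbf{Step 2 and the outer block of Step 4.} Theorem~\ref{t:GLC} only gives that $\overline{Y}_{\mathbf n,\mathbf p}(\tau)$ is one-dimensional, not that it is reduced. The paper stresses that these curves can be singular or non-reduced; Example~\ref{eg:GLC_half_periods} has a double component. So neither your implicit function theorem nor ``multiplicity one on each component'' follows as stated.

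The paper's input at this point has two parts, both from the proof of Theorem~\ref{thm:underlying_dege_generic}. First, the matrix $M$ has corank one at generic points, obtained by degenerating to a Type-I point via Lemma~\ref{lemma_boundary}. Second, a Vieta computation shows the translation kernel is transverse to $\sum_\mu a_\mu=c$. You need this for both $\mathbf n$ and $\mathbf n'$.

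Alternatively, for the first component you can use conservation of number. Near an interior point, the $\delta$-family is cut out by $n$ equations in $n+1$ variables, so it is flat where it is finite. Step~1 then forces all multiplicities, including $j=1$ above, once you know two things: the fibre $\overline{Y}^{c'}_{\mathbf n',\mathbf p}(\tau)$ is reduced, and each of its points is a limit at all. The second holds because $\delta(\epsilon)\not\equiv 0$, since $\overline{Y}^{c}_{\mathbf n,\mathbf p}(\tau)$ is finite.
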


\vspace{0.5em}
\noindent \textbf{2. Generic tensor rule ($\mathbf{p}$-direction):} For general parameters, the collision of poles $p_1 \to p_2$ corresponds to the tensor product of Verma modules:
\[
    [M_{2n_1}] \otimes [M_{2n_2}] = \sum_{k=0}^\infty q^k [M_{2n_1+2n_2-2k}].
\]
During pole collision, the root configuration partitions into two scales: a local rational limit zooming into the collision, $\{p_{12}^k\}$, and the remaining coordinates converging to a lower-weight GLC, $\overline{\mathcal{Y}}_{\mathbf{n}',\mathbf{p}'}(\tau)$. To be precise,
define the shift operator 
\[
S_v \colon [(\mathbf{a}, h)] \mapsto [(\mathbf{a}, h+v)]
\]
on $\overline{\mathcal{Y}}_{\mathbf{n},\mathbf{p}}(\tau)$. For $\omega \in \Lambda_\tau$, let $\mathbf{p}_{12, \epsilon}^\omega := (p_{12} + \omega + \epsilon, p_{12} - \epsilon, p_3, \dots, p_r)$. As $\epsilon \to 0$, the limit cleanly fractures into a union of boundary strata shifted by the quasi-period $-n_1\eta(\omega)$:
\begin{theorem}[Degeneration theorem, cf.~Theorem~\ref{thm:dege_generic}]
\[
\lim_{\epsilon \to 0}\overline{\mathcal{Y}}_{\mathbf{n},\mathbf{p}_{12, \epsilon}^\omega}^c(\tau) =
\bigcup_{k=0}^{\infty} \{p_{12}^k\} \times S_{-n_1 \eta(\omega)} \Big( \overline{\mathcal{Y}}^{c_k}_{\mathbf{n}_{12}^{(k)},\mathbf{p}_{12}}(\tau) \Big),
\]
where $c_k := c - k[p_{12}]$, $\mathbf{p}_{12}=(p_{12},p_3,\dots,p_r)$ and $\mathbf{n}_{12}^{(k)} := (n_1+n_2-k,n_3,\dots,n_r)$.
\end{theorem}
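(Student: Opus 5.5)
We prove the degeneration theorem by a two-scale asymptotic analysis of the residue equations, followed by a degree count along the addition map that rules out any further limit points.

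\emph{Setup.} Fix a generic target $c$ and take a family of points $[(\mathbf{a}(\epsilon),h(\epsilon))] \in \overline{\mathcal{Y}}^c_{\mathbf{n},\mathbf{p}^\omega_{12,\epsilon}}(\tau)$. Use compactness of $\Sym^n E_\tau$ and of the projective bundle to pass to a limit. Lift the roots to $\mathbb{C}$ near the collision point. Split the indices $\{1,\dots,n\}$ into the cluster $I$, consisting of roots with $a_\mu(\epsilon)\to p_{12}$ at rate $O(\epsilon)$, and the complement $J$. Let $k=|I|$.

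\emph{Outer scale.} For $\mu\in J$, expand the residue equations using
\[
n_1\zeta(a_\mu-p_{12}-\omega-\epsilon)+n_2\zeta(a_\mu-p_{12}+\epsilon) = (n_1+n_2)\zeta(a_\mu-p_{12}) - n_1\eta(\omega) + O(\epsilon).
\]
The cluster $I$ contributes $k\,\zeta(a_\mu-p_{12})$ to leading order. Hence the roots in $J$ satisfy the residue system of weight $\mathbf{n}^{(k)}_{12}$ at the poles $\mathbf{p}_{12}$. The quasi-period $\eta(\omega)$ shows up only through $h$, which becomes $h' - n_1\eta(\omega)$. This explains the shift $S_{-n_1\eta(\omega)}$.

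Next, the addition map. We have $\sum a_\mu = k p_{12} + \sum_{J} a_\nu + O(\epsilon)$, and $n_1(p_{12}+\omega)+n_2 p_{12}\equiv (n_1+n_2)p_{12} + n_1\omega$. The term $n_1\omega$ must be absorbed consistently with the definition of $\sigma$. This needs checking: it requires either $n_1\omega\in\Lambda_\tau$ or the convention fixed earlier in the paper for lifts. Granting it, the target becomes $c_k = c-k[p_{12}]$.

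\emph{Inner scale.} Rescale by writing $a_\mu = p_{12}+\epsilon x_\mu$ for $\mu\in I$. The leading-order residue equations become the rational equilibrium system
\[
\sum_{\nu\in I,\nu\ne\mu}\frac{1}{x_\mu-x_\nu} = \frac{n_1}{x_\mu-1}+\frac{n_2}{x_\mu+1}.
\]
For generic $n_1,n_2$ this system has finitely many solutions, matching the Verma tensor rule $[M_{2n_1}]\otimes[M_{2n_2}]=\sum_k q^k[M_{2n_1+2n_2-2k}]$. In the limit these inner configurations collapse to the point $\{p_{12}^k\}$. This proves the inclusion of the limit into the right-hand side. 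Limits escaping to the boundary of the $h$-compactification are handled in the same way by tracking the $\zeta$-poles.

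\emph{Reverse inclusion and multiplicities.} This is the main obstacle. For each $k$ and each point of $\overline{\mathcal{Y}}^{c_k}_{\mathbf{n}^{(k)}_{12},\mathbf{p}_{12}}$, we must show that it lifts to a family of points for small $\epsilon$. The plan is to glue a nondegenerate rational inner solution to a nondegenerate outer point using the implicit function theorem, treating the coupled system as a smooth perturbation in $\epsilon$. The Jacobian is block triangular to leading order, so invertibility of each block gives existence. Nondegeneracy of the outer block is available for generic $c$ because $\sigma$ restricted to the curve is finite and étale over generic $c$ (Theorem~\ref{thm_deg}).

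The key check is that the universal degree formula of Theorem~\ref{thm_deg} is conserved. Write $\deg(\mathbf{n},\mathbf{p})$ for this degree. We need
\[
\deg(\mathbf{n},\mathbf{p}) = \sum_k N_k(n_1,n_2)\cdot \deg\bigl(\mathbf{n}^{(k)}_{12},\mathbf{p}_{12}\bigr),
\]
where $N_k$ is the number of inner rational solutions. Only finitely many $k$ contribute, namely those with $\deg\ne 0$. Once the count matches, no limit points are missing and every component appears with multiplicity one. The degree identity itself should follow from the combinatorial formula of Theorem~\ref{thm_deg} together with the classical Stieltjes–Heine count for $N_k$.
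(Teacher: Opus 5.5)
Your architecture matches the paper's. You build one branch through each point of the special fiber by a block-triangular perturbation argument: an inner block $J_g$, an outer block coming from the lower-weight curve, and the sum constraint killing the translation direction. You then close up against Theorem~\ref{thm_deg} with the hockey-stick identity $\sum_{k}\binom{n-k+r-1}{r}_{\rm comb}=\binom{n+r}{r+1}$, applied termwise in the inclusion--exclusion formula if some $n_i$, $i\ge 3$, are half-integral.

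Two features of your version are cleaner than the paper's. If you set up the implicit function theorem in residue form, with $h$ as an extra unknown and the inner equations multiplied by $\epsilon$, it becomes a regular perturbation whose $\epsilon=0$ Jacobian is block diagonal; this is tidier than the paper's shifted-index power series. Your derivation of $S_{-n_1\eta(\omega)}$ from $\zeta(z-\omega)=\zeta(z)-\eta(\omega)$ is also more explicit than the paper's proof, which works only on $\overline{Y}$, where the shift is invisible.

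Once the count matches, your separate two-scale proof of ``limit $\subset$ RHS'' is superfluous. As written it would not suffice anyway: it ignores intermediate scales $\epsilon\ll|a_\mu-p_{12}|\ll 1$ and sub-clusters at $p_{12}\pm\epsilon$. These are excluded only through the hypotheses on $n_1,n_2,n_1+n_2$.

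The gap is that the two nondegeneracy statements you invoke are the actual content of the proof, and neither is established.

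\emph{Inner block.} You posit a ``nondegenerate'' inner solution and defer $N_k$ to Heine--Stieltjes. That gives at most uniqueness of the Jacobi polynomial $P_k^{(-2n_1-1,-2n_2-1)}$. You also need its roots to be distinct and different from $\pm1$, and $J_g$ to be invertible there. This is Lemma~\ref{lemma_principal}. The operator $\mathcal{L}$ is triangular on $\mathbb{C}[z]_{\le k-1}$ with leading coefficients $(d-k)(d+k-1-2n_1-2n_2)\neq 0$. The hypothesis $n_1,n_2,n_1+n_2\notin\frac12\mathbb{Z}_{\ge0}$ keeps $(n_1,n_2)$ out of $E_k$ for every $k\le n$. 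This yields $N_k=1$, without which neither your implicit function theorem step nor your degree identity goes through.

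\emph{Outer block.} Theorem~\ref{thm_deg} counts points with multiplicity; it does not say that $\sigma$ is \'etale over generic $c$. That requires $\overline{Y}_{\mathbf{n}^{(k)}_{12},\mathbf{p}_{12}}(\tau)$ to be generically reduced, which is not automatic (compare the component $2\Delta_0$ in Example~\ref{eg:GLC_half_periods}). The paper supplies this as the claim that $M$ has rank $n-k-1$ at a generic point, proved by degenerating to a Type-I boundary point.

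\emph{The $n_1\omega$ worry.} This is a convention artifact. The proof imposes the unshifted constraint $\sum_\mu a_\mu=c$ in \eqref{eqn:degeneration}, which is what produces $c_k=c-k[p_{12}]$; there $n_1\omega$ never enters. With the shifted map $\sigma_{\mathbf{n},\mathbf{p}}$, the $k[p_{12}]$ cancels against the drop in weight, and the target becomes $c+[n_1\omega]$ for every $k$. That is a harmless uniform relabeling of a generic $c$, so you never need $n_1\omega\in\Lambda_\tau$.
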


The proof of the global flatness theorem is constructive. By iteratively applying these two rules at boundary strata, we explicitly build local power series deformations to establish flatness over the higher-dimensional parameter space. The primary significance of this result is that it dictates the exact factorization of the pre-modular form along the special fibers, which is essential to solving the monodromy problem.

\subsubsection{Curveness proof of log-free variety}
For the exceptional case $\mathbf{n} \in (\tfrac{1}{2}\mathbb{N})^r$, we solve a conjecture proposed by Wang \cite{Wang_2020}, which states that the log-free variety $V_{\mathbf{n},\mathbf{p}}(\tau)$ contains one-dimensional curve components. 

Our proof relies on the natural projection from the generalized Lam\'e curve:
\[
    \pi \colon \mathcal{Y}_{\mathbf{n},\mathbf{p}}(\tau) \longrightarrow V_{\mathbf{n},\mathbf{p}}(\tau).
\]
By tracking its asymptotic behavior near the infinity divisor $B=\infty$, we establish that $\pi$ maps the (Type-I) boundary points of the GLC surjectively onto all possible combinatorial branches of $V_{\mathbf{n},\mathbf{p}}(\tau)$. This exhaustive geometric realization not only proves Wang's conjecture in full generality, but parameterizes the branches up to a natural involution. This complete characterization of the boundary directly enables the explicit computation of the Hilbert quasi-polynomial for the compactified log-free stack $\overline{V}_{\mathbf{n},\mathbf{p}}(\tau)$.

Finally, we significantly strengthen the result by eliminating the possibility of zero-dimensional components:
\begin{theorem}[cf.~Theorem~\ref{thm:log_free_curveness}]
$\overline{V}_{\mathbf{n},\mathbf{p}}(\tau)$ is a reduced curve without isolated or embedded points.
\end{theorem}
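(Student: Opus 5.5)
The plan is to realize $\overline{V}_{\mathbf{n},\mathbf{p}}(\tau)$, scheme-theoretically, as a flat limit of complete intersections whose special fiber we can identify and show to be reduced. Since reducedness and the absence of embedded or isolated points are open conditions in flat families (more precisely, a flat family whose special fiber is a reduced pure one-dimensional scheme has reduced pure one-dimensional nearby fibers), we reduce to a degenerate configuration.

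\emph{Step 1: dimension and set-theoretic description.} By the projection $\pi\colon \overline{\mathcal{Y}}_{\mathbf{n},\mathbf{p}}(\tau)\to \overline{V}_{\mathbf{n},\mathbf{p}}(\tau)$ and Proposition~\ref{exist_ansatz_sol}, every log-free GLE possesses a GHH solution, possibly after lowering the total weight. Hence $\overline{V}_{\mathbf{n},\mathbf{p}}(\tau)$ is set-theoretically the union of images of the curves $\overline{\mathcal{Y}}_{\mathbf{n}',\mathbf{p}}(\tau)$ for the lowered weights. Since these are finite unions of curves (Theorem~\ref{t:GLC}) and $\pi$ is non-constant and generically $2{:}1$ on each component, $\overline{V}$ is set-theoretically pure one-dimensional. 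In particular, no isolated points occur set-theoretically. The boundary analysis at $B=\infty$ shows that the Type-I boundary points surject onto the combinatorial branches, so the points at infinity are accounted for and each branch is hit by some curve component.

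\emph{Step 2: degree count.} Next I will compute the degree of $\overline{V}$ in two ways. From the equations, using the top-degree terms $q_{\ell_i}$ of Proposition~\ref{p:topterm}, I will compute the Hilbert (quasi-)polynomial of the compactified log-free scheme; its leading coefficient gives the degree counted with multiplicity at infinity. From the GLC side, the number of branches at infinity, each counted once as the image of Type-I boundary points modulo the involution, gives the degree of the reduced curve $\pi(\overline{\mathcal{Y}})$. If the two numbers agree, the generic multiplicity of every component is $1$, so $\overline{V}$ is generically reduced.

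\emph{Step 3: no embedded points.} This is the main obstacle, because $\overline{V}$ is not a complete intersection (Remark~\ref{r:CI}), so Cohen--Macaulayness is not automatic. I will first use degeneration: collide all poles to a single point via the degeneration theorem (Theorem~\ref{thm:dege_generic}) together with the decomposition theorem. Then I will take the flat family of log-free schemes over the pole configuration space and identify the special fiber with a union of lower-weight log-free curves, indexed as in the $\mathfrak{sl}_2$ tensor rule $L_{2n_1}\otimes L_{2n_2}=\bigoplus L_{2n_1+2n_2-2k}$. Inductively these are reduced curves, and where they meet, the crossings should be transverse; I would verify this by a local computation using the asymptotic scaling of root clusters. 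A reduced special fiber forces the generic fiber to be reduced with no embedded points, since the Hilbert polynomials agree by flatness and the degree count of Step 2 leaves no room for zero-dimensional torsion.

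The base case is $r=1$, which is the classical Lam\'e curve and is known to be reduced. What I would check most carefully is flatness of the log-free family itself over the collision; I would try to deduce it from the global flatness of $\overline{\mathcal{Y}}$ (Theorem~\ref{thm:universal_flatness}) by pushing forward along $\pi$.
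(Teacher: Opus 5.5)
There is a genuine gap. Your plan never controls the scheme structure of $I=(F_{\ell_1},\dots,F_{\ell_r},\sum_iA_i)$, and that is the entire difficulty for a non-complete intersection.

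\textbf{Step 1.} This step does not exclude isolated points. Proposition~\ref{exist_ansatz_sol} only gives an inclusion of $V_{\mathbf n,\mathbf p}(\tau)$ into $\bigcup_{\mathbf n'}\pi(\mathcal Y_{\mathbf n',\mathbf p}(\tau))$. For a lowered weight $\mathbf n'$ (with $n_i'=-n_i-1$ at $k\ge 1$ poles), only the points that also satisfy the extra type-$\mathbf n$ conditions $F_{\mathbf n,\mathbf p;i}=0$ lie in $V$. That locus has expected dimension $1-k$ (see the remark after Theorem~\ref{special-pt-II}). Whether its points lie on the closure of $\pi(\mathcal Y_{\mathbf n,\mathbf p}(\tau))$ is exactly the sufficiency question the paper leaves open. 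Lying inside some curve is not the same as lying on a curve contained in $V$.

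\textbf{Step 2.} You assume the Hilbert polynomial can be read off from the top-degree parts $q_{\ell_i}$ of the generators, i.e.\ that these leading forms generate the initial ideal of $I$. For a non-complete intersection this is not automatic, since hidden $S$-polynomial relations may appear, and it is precisely what needs proof. Even granting it, a degree comparison gives only generic reducedness. Isolated and embedded points change only the constant term of the Hilbert polynomial, so the claim that ``the degree count leaves no room for zero-dimensional torsion'' does not hold.

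\textbf{Step 3.} This step does not close the gap.
\begin{itemize}
\item No flat family of log-free schemes over pole collisions is available. The coefficients $\zeta_{ij},\wp_{ij}$ of the $F_{\ell_i}$ blow up as $p_1\to p_2$, and the accessory parameters diverge (generically $A_1-A_2\sim\epsilon^{-1}$). You would need a rescaled family and an identification of its flat limit, which is the same hidden-relation problem in another guise.
\item Pushing $\overline{\mathcal Y}$ forward along $\pi$ only controls the scheme-theoretic image of $\pi$, not $V$ with the structure given by the $F_{\ell_i}$. An isolated or embedded point of $V$ off that image is invisible to it, so this step is circular.
\item Colliding all poles produces the multiplicities $m_k(\mathbf n)$ of Corollary~\ref{Cor:special_fiber_p=0}, which exceed $1$; for example $[L_1]^{\otimes 4}=[L_4]+3[L_2]+2[L_0]$. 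So a reduced, transversally crossing limit should not be expected without substantial further work.
\end{itemize}

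\textbf{What the paper does instead.} The missing idea is to degenerate the equations rather than the poles.
\begin{itemize}
\item The weighted scaling $\zeta\mapsto t\zeta$, $\wp\mapsto t^2\wp$ is a Gr\"obner degeneration. Its central fiber lies inside the naive scheme $W_0$ cut out by the $q_{2n_i}(A_i,B)$ and $\sum_iA_i$.
\item After the base change $B=C^2$, $W_0$ is a union of $F(\mathbf n)$ reduced lines away from the origin.
\item An embedded point at the origin is ruled out by showing that $C$ is a nonzerodivisor. This uses Stanley's (weak) Lefschetz property of $\mathbb C[\mathbf A]/(A_i^{2n_i+1})$ for $L=\sum_iA_i$.
\item Only then does the degree $\tfrac12 F(\mathbf n)$ from the Type-I branches (Theorem~\ref{branch}) force the initial ideal to equal the naive one. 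That degree count is the GLC input you correctly identified.
\item Reducedness of the central fiber then propagates back along the flat family to $\overline V_{\mathbf n,\mathbf p}(\tau)$.
\end{itemize}
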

To achieve this, we construct a weighted 1-parameter flat degeneration of the system. Under this degeneration, the ideal specializes to a highly degenerate central fiber defined by the top-degree terms of the log-free conditions. By using Stanley's result on the Weak Lefschetz Property for Artinian complete intersections, we verify that this naive limit scheme is exactly the initial Gr\"obner ideal, forming a reduced curve with no embedded origins. The principle of flat families then guarantees that the generic fiber $V_{\mathbf{n},\mathbf{p}}(\tau)$ inherently assumes this exact reduced curve structure.

\subsubsection{Application I: Pre-modular form, factorization, and twisted isomonodromy deformation}

We study the isomonodromic deformations of the GHH ansatz solutions over the configuration space $\mathbf{p} \in \mathbb{C}^r$. For general non-integer weights $\mathbf{n}$, the monodromy eigenvalues $\lambda_{\gamma_i}$ factorize into a path-dependent winding constant $e^{\Theta(\gamma_i)}$ and an underlying pair of parameters $(t,s) \in (\mathbb{C}/\mathbb{Z})^2$, which we call the \emph{twisted monodromy}. For an ansatz solution $[(\mathbf{a}, h)] \in \overline{\mathcal{Y}}_{\mathbf{n},\mathbf{p}}(\tau)$, this twisted data $(t,s)$ is uniquely determined by the addition map $\widetilde{\sigma}_{\mathbf{n},\mathbf{p}}:\Sym^n \mathbb{C} \rightarrow \mathbb{C}$ on the universal cover via the linear system:
\[
\begin{cases}
    \tilde{t} \omega_1 + \tilde{s} \omega_2 &= \widetilde{\sigma}_{\mathbf{n},\mathbf{p}}(\mathbf{a}), \\
    \tilde{t} \eta_1 + \tilde{s} \eta_2 &= h.
\end{cases}
\]
where $(\tilde{t}, \tilde{s}) \in \mathbb{C}^2$ project to the equivalence class $(t,s)$.

Finding the existence of solutions with prescribed twisted monodromy data $(t,s)$ poses a highly transcendental problem, as the conformal structure $\tau$ must deform non-trivially alongside the configuration $\mathbf{p}$. To systematically track this interdependence, we generalize the pre-modular form theory developed in \cite{Lin_Wang_2017} to construct an $(\mathbf{n}, \mathbf{p})$-deformed version, denoted $\Phi_{\mathbf{n},\mathbf{p};t,s}(\tau)$. A crucial feature of this construction is that the degeneration of the special fibers translates directly into an explicit algebraic factorization:
\begin{corollary}(cf.~Corollary~\ref{cor:modular_decomp})
Let $\omega = m_1\omega_1 + m_2 \omega_2 \in \Lambda_\tau$. Then$$    \lim_{\epsilon \rightarrow 0} \Phi_{\mathbf{n},\mathbf{p}_{12, \epsilon}^\omega;t,s}(\tau) = \prod_{k=0}^{n} \Big( \Phi_{\mathbf{n}_k,\mathbf{p}_{12};t',s'}(\tau) \Big)^{m_k},$$where $\mathbf{n}_{12}^{(k)}$ and $\mathbf{p}_{12}^{\omega}$ are defined in Theorem~\ref{thm:dege_generic}, $m_k$ is the multiplicity of the corresponding component, and $t' = t + m_1 n_1, \ s'= s + m_2 n_1.$\end{corollary}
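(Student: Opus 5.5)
The factorization will follow from two inputs: a product formula for $\Phi$ over the fibers of the addition map, and the degeneration theorem (Theorem~\ref{thm:dege_generic}) applied to those fibers.

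\emph{Product formula.} I expect the $(\mathbf{n},\mathbf{p})$-deformed pre-modular form to be defined, as in \cite{Lin_Wang_2017}, as
\[
\Phi_{\mathbf{n},\mathbf{p};t,s}(\tau)=\prod_{[(\mathbf{a},h)]\in\overline{\mathcal{Y}}^{c}_{\mathbf{n},\mathbf{p}}(\tau)}\big(h-(\tilde t\eta_1+\tilde s\eta_2)\big),
\qquad c=[\tilde t\omega_1+\tilde s\omega_2],
\]
taken with multiplicities (or an equivalent normalized polynomial expression in $h$ of degree $\deg\sigma_{\mathbf{n},\mathbf{p}}|_{\overline Y}$). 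With this form, its vanishing is exactly the existence of an ansatz solution with twisted monodromy $(t,s)$, by the linear system above. Since $\Phi$ is a symmetric function of the points of $\overline{\mathcal{Y}}^c$ (essentially a norm along the finite map $\sigma_{\mathbf{n},\mathbf{p}}$), it depends continuously on the fiber as a $0$-cycle. The global flatness theorem (Theorem~\ref{thm:universal_flatness}) together with the conservation of degree (Theorem~\ref{thm_deg}) then gives
\[
\lim_{\epsilon\to0}\Phi_{\mathbf{n},\mathbf{p}^\omega_{12,\epsilon};t,s}=\prod_{x\in\lim_\epsilon\overline{\mathcal{Y}}^c}\big(h(x)-(\tilde t\eta_1+\tilde s\eta_2)\big).
\]

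\emph{Inserting the degeneration.} By Theorem~\ref{thm:dege_generic}, the limiting $0$-cycle is the union over $k$ of $\{p_{12}^k\}\times S_{-n_1\eta(\omega)}(\overline{\mathcal{Y}}^{c_k}_{\mathbf{n}^{(k)}_{12},\mathbf{p}_{12}})$, counted with multiplicities $m_k$. Here $m_k$ is the multiplicity of the rational local factor $\{p_{12}^k\}$, i.e.\ the number of local root clusters. The product therefore splits as a product over $k$, and each factor $k$ becomes a product over the lower-weight fiber with $h$ replaced by $h-n_1\eta(\omega)$. Only $k\le n$ occurs, since the weight $n-k$ must stay nonnegative. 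The remaining step is to rewrite
\[
h-n_1\eta(\omega)-(\tilde t\eta_1+\tilde s\eta_2)=h-(\tilde t'\eta_1+\tilde s'\eta_2),\qquad \tilde t'=\tilde t+m_1n_1,\quad \tilde s'=\tilde s+m_2n_1,
\]
using $\eta(\omega)=m_1\eta_1+m_2\eta_2$. This requires a consistent sign convention, which I will fix by matching the convention in Theorem~\ref{thm:dege_generic}.

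\emph{Consistency of the $c$-coordinate.} I must check that $c_k=c-k[p_{12}]$ agrees with $[\tilde t'\omega_1+\tilde s'\omega_2]$ computed with respect to $(\mathbf{n}^{(k)}_{12},\mathbf{p}_{12})$. On the universal cover,
\[
\widetilde\sigma_{\mathbf{n},\mathbf{p}^\omega_{12,\epsilon}}(\mathbf{a})=\sum_{\mu}a_\mu-n_1(p_{12}+\omega+\epsilon)-n_2(p_{12}-\epsilon)-\cdots .
\]
As $\epsilon\to0$, with $k$ roots tending to $p_{12}$, this becomes
\[
\widetilde\sigma_{\mathbf{n}^{(k)}_{12},\mathbf{p}_{12}}(\mathbf{a}')+k\,p_{12}-n_1\omega .
\]
Thus the target shifts by $-k p_{12}$ and the lift shifts by exactly $n_1\omega=n_1(m_1\omega_1+m_2\omega_2)$. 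This produces the same shift $(m_1n_1,m_2n_1)$ in $(\tilde t,\tilde s)$, compatible with the $h$-shift, and confirms that $t'$ and $s'$ are well defined modulo $\mathbb{Z}$ only jointly with the path $\omega$.

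\emph{Main obstacle.} The hardest step is justifying that the limit of the product equals the product over the limit cycle with the correct multiplicities. This requires that no points of the fiber escape to the boundary where $h=\infty$, and that the finite part of $\pi$ carries the multiplicities $m_k$. I would handle this with the flatness theorem and properness of $\overline{\mathcal{Y}}$ over the parameter space, which make $\Phi$ the norm of a finite flat family. Norms commute with specialization, so the factorization follows.
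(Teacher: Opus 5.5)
Your route is the paper's: identify $\Phi$ with $\prod\bigl(h-\tilde t\eta_1-\tilde s\eta_2\bigr)$ over the fiber, feed in Theorem~\ref{thm:dege_generic}, and read off $(t',s')$ from the shift of $\widetilde\sigma$ by $n_1\omega$ and of $h$ by $n_1\eta(\omega)$. The identification of $\Phi$ is exactly what $W_{\mathbf n,\mathbf p}(Z_{\mathbf n,\mathbf p;t,s})$ reduces to, because the $\frac1n\zeta_{\mathbf n,\mathbf p}(c)$ terms cancel on the fiber. Your discussion of exchanging limit and product goes beyond what the paper writes.

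However, your ``consistency of the $c$-coordinate'' computation is wrong as stated. With $k$ roots tending to $p_{12}$,
\[
\widetilde\sigma_{\mathbf{n},\mathbf{p}^\omega_{12,\epsilon}}(\mathbf{a})\longrightarrow k\,p_{12}+\sum_{\mu>k}a_\mu-(n_1+n_2)p_{12}-n_1\omega-\sum_{i\ge 3}n_ip_i=\widetilde\sigma_{\mathbf{n}^{(k)}_{12},\mathbf{p}_{12}}(\mathbf{a}')-n_1\omega .
\]
The contribution $kp_{12}$ of the colliding roots is exactly absorbed by the lowered weight $n_1+n_2-k$, so there is no extra $+kp_{12}$.

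This cancellation is not cosmetic. It is what makes $\widetilde\sigma_{\mathbf{n}^{(k)}_{12},\mathbf{p}_{12}}(\mathbf{a}')=\tilde t'\omega_1+\tilde s'\omega_2$ hold simultaneously with $h'=h+n_1\eta(\omega)=\tilde t'\eta_1+\tilde s'\eta_2$. Only then does every limit point carry twisted data exactly $(t',s')$, independently of $k$, so that every factor is $\Phi_{\mathbf{n}^{(k)}_{12},\mathbf{p}_{12};t',s'}$.

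With your formula the $\omega$-side would move by $n_1\omega-kp_{12}$ while the $\eta$-side moves only by $n_1\eta(\omega)$. The lower-weight points would then not have data $(t',s')$, and the data would depend on $k$. This contradicts your own next claim that $(\tilde t,\tilde s)$ shifts by exactly $(m_1n_1,m_2n_1)$.

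The source of the confusion is that $c_k=c-k[p_{12}]$ in Theorem~\ref{thm:dege_generic} refers to the unshifted sum $\sum_\mu a_\mu=c$ used in \eqref{eqn:degeneration}. In the shifted normalization that defines $(t,s)$ and $\Phi$, all lower-weight fibers lie over $[\tilde t'\omega_1+\tilde s'\omega_2]=c+[n_1\omega]$. So you should not try to match $c_k$ against the shifted addition map.

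Two smaller points. First, flatness and properness alone do not keep limit points away from $h=\infty$, since the compactification in $\mathcal{P}_n$ contains that section. What excludes escape is the degeneration theorem's identification of the limit with lower-weight fibers over a generic target. Such a target avoids the finitely many values over which Type-I points lie (cf.\ the divisor $D$ in Lemma~\ref{l:char_poly}). Second, the $k=n$ factor is the empty product, because the weight-zero addition map has empty generic fiber; this is why the paper's body version stops at $n-1$.
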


The factorization of this deformed pre-modular form under the degeneration theorem explicitly governs the system as $\mathbf{p} \to \mathbf{0}$. By implicitly differentiating $\Phi_{\mathbf{n},\mathbf{p};t,s}(\tau) = 0$, we extract the differential equation governing the twisted isomonodromic deformation. For generic twisted monodromy data (avoiding a specific locus detailed later), lifting a generic path $\mathbf{p} \to \mathbf{0}$ along this deformation leads directly to the following reduction:
\begin{theorem}[cf.~Corollary~\ref{cor:isomo_deform}]
For generic $(t,s)$, fix an initial point $[(\mathbf{a}_0, h_0)] \in \overline{\mathcal{Y}}^{t,s}_{\mathbf{n}, \mathbf{p}_0}(\tau_0)$. Following the twisted isomonodromic deformation along a generic path $\mathbf{p}_0 \to \mathbf{0}$ analytically continues this initial state to a well-defined limit point $[(\mathbf{a}^*, h^*)] \in \overline{\mathcal{Y}}^{t,s}_{k,0}(\tau^*)$ in the special fiber, reducing the system to classical case for some $0 \leq k \leq n$ and $\tau^* \in \mathbb{H}$.
\end{theorem}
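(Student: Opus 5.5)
Following the twisted isomonodromic deformation along $\mathbf{p}_0\to\mathbf{0}$ is an ODE in $(\mathbf{p},\tau)$. The plan is to control it globally through the deformed pre-modular form $\Phi_{\mathbf{n},\mathbf{p};t,s}(\tau)$, and to use the degeneration theorem together with Corollary~\ref{cor:modular_decomp} at each pole collision to pass to a lower-dimensional configuration.

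\emph{Step 1: the system of equations.} For fixed $(t,s)$, the ansatz points $[(\mathbf{a},h)]$ with twisted monodromy $(t,s)$ are cut out by the linear system $\tilde t\omega_1+\tilde s\omega_2=\widetilde{\sigma}_{\mathbf{n},\mathbf{p}}(\mathbf{a})$, $\tilde t\eta_1+\tilde s\eta_2=h$. Imposing this on the curve $\overline{\mathcal{Y}}_{\mathbf{n},\mathbf{p}}(\tau)$ leaves finitely many points. By construction, the vanishing $\Phi_{\mathbf{n},\mathbf{p};t,s}(\tau)=0$ is exactly the condition that such a point exists. Thus the deformation is the implicit curve $\{\Phi=0\}$ in $(\mathbf{p},\tau)$-space, and along it the point $[(\mathbf{a},h)]$ is determined up to the finite choice among the fiber points.

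\emph{Step 2: continuation while the poles stay distinct.} Choose a generic path in $\mathbf{p}$-space, and for generic $(t,s)$ assume $\partial_\tau\Phi\neq0$ away from a proper analytic locus. The implicit function theorem then continues $\tau(\mathbf{p})$ and $[(\mathbf{a},h)]$ analytically. The genericity of $(t,s)$ is also used to exclude the degenerate cases: it keeps $\tau$ in a compact subset of $\mathbb{H}$ modulo $SL_2(\mathbb Z)$, and it keeps the points $a_\mu$ away from the half-periods where the pre-modular form degenerates. Here I would use the $\tau$-asymptotics of $\Phi$ (its $q$-expansion leading term is nonzero for $(t,s)\notin\tfrac12\mathbb{Z}^2$) to rule out $\tau\to i\infty$ along the path.

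\emph{Step 3: collisions.} When the path reaches a first collision $p_i\to p_j$, possibly modulo a lattice vector $\omega$, Corollary~\ref{cor:modular_decomp} gives the limit $\Phi\to\prod_k\Phi_{\mathbf{n}_k,\mathbf{p}_{ij};t',s'}^{m_k}$. The continued root $\tau(\epsilon)$ therefore converges to a root of one factor $\Phi_{\mathbf{n}_{ij}^{(k)},\mathbf{p}_{ij};t',s'}$; boundedness of $\tau$ follows from Step 2 and Hurwitz's theorem. Correspondingly, by the degeneration theorem (Theorem~\ref{thm:dege_generic}), the point $[(\mathbf{a},h)]$ converges to $\{p_{ij}^k\}\times S_{-n_i\eta(\omega)}(\cdot)$ inside the special fiber. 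The shift $S$ exactly accounts for the change $(t,s)\mapsto(t',s')$, and $(t',s')$ stays generic because the shift is by a multiple of $n_i$. We then restart with $r-1$ poles and weights $\mathbf{n}_{ij}^{(k)}$. After $r-1$ collisions all poles sit at $0$, the system is a Lam\'e equation of total weight $k\le n$, and $\tau^*\in\mathbb{H}$. Since $k$ roots can be absorbed into the collision cluster at each step, the final weight $k$ may be any value in $0\le k\le n$.

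\emph{Main obstacle.} The hardest step is well-definedness of the limit. One must show that the root $\tau(\epsilon)$ converges rather than oscillating between factors or escaping to the cusp, and that the limit in the degeneration theorem, which was stated at fixed $\tau$, is uniform in $\tau$ on compacta. For the uniformity I would first use flatness of the family (Theorem~\ref{thm:universal_flatness}) over $\tau$ as well, which makes the limit locally uniform. For the single-factor selection I would then use multiplicity counting in Corollary~\ref{cor:modular_decomp}: for generic $(t,s)$ the factors have simple, distinct roots, so a continuous branch selects exactly one of them.
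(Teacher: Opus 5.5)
\textbf{Gap 1: Step~3 does not apply to the path in the statement.} On a generic path $\mathbf{p}_0\to\mathbf{0}$ (e.g.\ $\mathbf{p}(\epsilon)=\epsilon\mathbf{p}_0$), the collision loci $p_i-p_j\in\Lambda_\tau$ have real codimension two. The path therefore misses them until the endpoint, where all $r$ poles collide at once and at comparable rates. So there is no ``first collision'' and no intermediate $(r-1)$-pole configuration to restart from; restarting only makes sense on a non-generic path that stays in a diagonal after a merger. Theorem~\ref{thm:dege_generic} by itself only describes two poles merging while the others stay fixed. What you need is the fiber over $\mathbf{p}=\mathbf{0}$ of the flat family over the whole $\mathbf{p}$-base. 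It contains the limit along any path, and Corollary~\ref{Cor:special_fiber_p=0} (equivalently, the second formula of Corollary~\ref{cor:modular_decomp}) identifies it as $\bigcup_k m_k(\mathbf{n})\,(\{0^k\}\times\overline{\mathcal{Y}}_{n-k,0})$.

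Your monodromy bookkeeping also points the wrong way. You allow mergers modulo $\omega\neq0$ and restart with $(t',s')=(t+m_1n_i,s+m_2n_i)$. That would put the endpoint in $\overline{\mathcal{Y}}^{t',s'}_{k,0}$, not in $\overline{\mathcal{Y}}^{t,s}_{k,0}$ as the statement claims. The paper keeps $(t,s)$ unchanged precisely because the lifted path ends at $\mathbf{0}\in\mathbb{C}^r$, so no lattice translation ever enters.

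\textbf{Gap 2: existence and convergence of the lift, i.e.\ that $\tau$ does not escape.} The paper's key step is a sheet count at the special fiber. For $(t,s)\notin\mathcal{B}$, the classical Lam\'e pre-modular forms appearing at $\mathbf{p}=\mathbf{0}$ have nonvanishing leading constants at every cusp. Hence no $\tau$-root is lost to a cusp as $\mathbf{p}\to\mathbf{0}$, and the number of finite points of the special fiber equals the number of sheets over a neighbourhood of $\mathbf{0}$. A generic path then lifts to a trajectory with a limit there.

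You instead try to bound $\tau$ along the whole path through a $q$-expansion of the deformed $\Phi_{\mathbf{n},\mathbf{p};t,s}$. That function is not modular for $\mathbf{p}\neq\mathbf{0}$ (Remark~\ref{r:modular_form}), so neither such an expansion nor an $SL_2(\mathbb{Z})$ transport to the rational cusps is available. Even at $i\infty$, the right condition is $s\notin\tfrac12\mathbb{Z}$, not $(t,s)\notin(\tfrac12\mathbb{Z})^2$; for example $(t,s)=(\tfrac13,0)\in\mathcal{B}$. Without the sheet count, Hurwitz's theorem gives you nothing: it only tracks roots that stay in a compact subset of $\mathbb{H}$, which is exactly what has to be shown.

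A minor point: the limit factors carry exponents $m_k$, often $>1$, so the limit is not squarefree. To select a single factor you only need distinct factors to have disjoint zero sets, which holds for generic $(t,s)$.
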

\subsubsection{Application II: The extended Treibich conjecture}
As a primary application of our continuous deformation framework, we completely prove and generalize the recent Treibich conjecture \cite{Treibich_2025} in 2025. This conjecture concerns the exact counting of KdV potentials of the form:
\begin{equation}
I_{\mathbf{p}}(z) = \sum_{i=0}^3 n_i(n_i+1)\wp\Big(z - \frac{w_i}{2}\Big) + \sum_{\mu=1}^r 2\Big(\wp(z-p_\mu)+\wp(z+p_\mu)\Big).
\end{equation}
Geometrically, finding these potentials requires solving a highly constrained symmetric elliptic equilibrium system for the poles $p_\mu$. 

We continuously deform this target system by introducing generic constants $c_\mu$ to the equilibrium equations. In the asymptotic limit $c_\mu \to \infty$, the poles $p_\mu$ are forced to cluster around the four half-periods $w_i/2$. Extracting the leading-order local behavior reduces the elliptic problem to a symmetric rational limit system (setting $l = 3$ and $x = (2 n_i + 1)^2$):
\begin{equation} \label{e:Treibich_limit}
\sum_{\nu=1, \neq \mu}^{r} \left( \frac{1}{(\alpha_{\mu} - \alpha_{\nu})^l} + \frac{1}{(\alpha_{\mu} + \alpha_{\nu})^l} \right) + \frac{x}{\alpha_{\mu}^l} = c_{\mu}, \quad \text{for } \mu = 1, \dots, r.
\end{equation}

At these generic values, the $r$ poles partition into local clusters of size $k_i$ converging to the half-periods. We compute the algebraic intersection multiplicity of each cluster to be exactly $3^{k_i}(3k_i - 1)!!$. 

To determine the valid solutions for the original undeformed system, we track these configurations as the deformation is pulled back to zero ($c_\mu \to 0$) and subtract boundary degenerations. For configurations up to $r \leq 4$, we prove that there is no boundary degenerations and conjectured to $r \geq 5$. 

\begin{theorem}[Treibich count and generalizations, cf.~Corollary~\ref{cor:Treibich}]
For $r \leq 4$, the total number of solutions is uniformly independent of the weights $n_i$ and is exactly $3^r(r+1)$. This establishes a complete proof of the original Treibich conjecture ($r=2$, yielding exactly $27$ solutions). 

Furthermore, based on the combinatorial multiplicities of the limit system, we conjecture that for all $r$, the exact count is given by the sum:
\begin{equation}
\frac{1}{2^r r!} \sum_{\mathbf{k}} \frac{1}{{\rm Aut}(\mathbf{k})} \frac{4!}{( \#\mathbf{k})!}  \binom{r}{\mathbf{k}} \prod_i \Big(3^{k_i} (2k_i-1)!!\Big) = 3^r(r+1).
\end{equation}
\end{theorem}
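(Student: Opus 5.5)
The count is reduced to the rational limit system \eqref{e:Treibich_limit} through the deformation by generic constants $c_\mu$. There are three steps: (i) show that for generic $c=(c_\mu)$ the deformed symmetric equilibrium system has finitely many solutions, and that this number is independent of $c$ and equals the number at $c=0$ (for $r\le 4$); (ii) compute that generic number by letting $c_\mu\to\infty$ and summing the local cluster multiplicities; (iii) evaluate the resulting combinatorial sum in closed form.

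\textbf{Step (i).} I regard the deformed system as a family of zero-dimensional schemes on $\Sym^r$ of $E_\tau/\{\pm1\}$ (pairs $\pm p_\mu$), parametrized by $c\in\mathbb{C}^r$. The equations are elliptic functions, so they are algebraic on a compactification, and intersection theory gives a constant weighted count as long as no solution escapes to the boundary. The boundary consists of collisions $p_\mu\to\pm p_\nu$ and $p_\mu\to w_i/2$. To rule out escape near $c=0$ for $r\le4$, I would run the same asymptotic scaling at a collision: the leading-order terms of a cluster must satisfy a homogeneous rational system with $c=0$, of the form \eqref{e:Treibich_limit} with right-hand side zero (plus the analogous system for collisions of two $p_\mu$ away from half-periods). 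One then checks directly that this system has no nontrivial solutions for cluster sizes $k\le4$; it is a small weighted-homogeneous system, amenable to a resultant or Newton-polygon computation. Reducedness at $c=0$ should follow from nondegeneracy of the Jacobian of the equilibrium (Hessian of the associated energy), so that the count equals the number of actual KdV potentials.

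\textbf{Step (ii).} As $c_\mu\to\infty$, each $p_\mu$ is forced to a half-period $w_i/2$, since the left side must blow up. The solutions split into clusters of sizes $\mathbf{k}=(k_0,\dots,k_3)$ with $\sum k_i=r$. After rescaling $p_\mu-w_i/2=\lambda\alpha_\mu$ with $\lambda^{-3}\sim c$, the leading equations are exactly \eqref{e:Treibich_limit} with $l=3$ and $x=(2n_i+1)^2$. Clusters at different half-periods decouple to leading order, so the total count is a product over the half-periods of the local multiplicities. The local count of the rational system equals its Bézout number: clearing denominators gives degrees compatible with a weighted homogeneous bound, and there are no solutions at infinity or at $\alpha_\mu=0$ or $\alpha_\mu=\pm\alpha_\nu$, which a degree argument shows. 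This Bézout number is $3^{k}(2k-1)!!$ times the ordering and sign symmetries. Its independence of $x$ is what makes the final count weight-independent.

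\textbf{Step (iii).} I assemble the count by summing over the distributions of the $r$ labelled poles among the four half-periods, and divide by the symmetries $2^r r!$ (signs and labels). Checking that the sum equals $3^r(r+1)$ for $r\le4$ is a finite computation. For $r=2$, the distributions $(2,0,0,0)$ and $(1,1,0,0)$ give $27$. A generating function in $\sum_k 3^k(2k-1)!!\,t^k/(2^k k!)$, raised to the fourth power, gives the identity in general.

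The main obstacle is step (i): excluding boundary degenerations as $c\to0$. I would attack it first via the local homogeneous rational systems, because they have finitely many cluster types for $r\le4$.
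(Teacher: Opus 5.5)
Your outline matches the paper's: deform by constants, send them to a generic infinity so the poles cluster at the half-periods, multiply local cluster counts, and assemble them with a generating function. Your normalized series $\sum_k 3^k(2k-1)!!\,t^k/(2^k k!)=(1-3t)^{-1/2}$ is correct and gives $(r+1)3^r$. The last step, excluding degenerations at $c=0$ through homogeneous local systems, is also the paper's.

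\textbf{Main gap: the local multiplicity.} The genuine gap is the local multiplicity $3^k(2k-1)!!$. It is the heart of the proof, and your Step (ii) only asserts it. It is not the B\'ezout number of the system with denominators cleared, and no degree argument excludes spurious solutions at the origin, on the diagonals, or at infinity. Already for $k=2$, $l=1$ the cleared equations are $2\alpha_1^2+x(\alpha_1^2-\alpha_2^2)=\rho_1\alpha_1(\alpha_1^2-\alpha_2^2)$ and $2\alpha_2^2+x(\alpha_2^2-\alpha_1^2)=\rho_2\alpha_2(\alpha_2^2-\alpha_1^2)$. Their B\'ezout number is $9$, while the true count is $3$. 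Both vanish at $\alpha=0$, and their cubic leading forms share the factor $\alpha_1^2-\alpha_2^2$, so they also meet at the two points at infinity $[1:\pm1:0]$. For $l=3$ the B\'ezout number is $81$, against the $27$ you need.

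So the double factorial has to come from an actual analysis of the solutions. The paper does this for $l=1$ first, in three steps:
\begin{enumerate}
\item With a single nonzero right-hand side, the substitution $u_\mu=\alpha_\mu^2$ reduces the system to a polynomial ODE with a unique hypergeometric solution, giving $(2s-2)!!$ solutions.
\item For $\rho_\mu=\epsilon^\mu$, the solutions are stratified by growth rate into set partitions whose leading systems decouple into such blocks. Then $\sum_{\mathcal P}\prod_{B\in\mathcal P}(2|B|-2)!!=(2r-1)!!$, which counts perfect matchings of $\{\pm1,\dots,\pm r\}$.
\item The factor $l^r$ comes from factoring $F_l=L_x\circ\Pi_l\circ\Psi$, so the fibre is cut out by degree-$l$ hypersurfaces pulled back from the $l=1$ hyperplanes.
\end{enumerate}
You need an argument of this kind.

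\textbf{Secondary issues in Steps (i) and (ii).}

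First, the left-hand side blows up not only at the half-periods but whenever $p_\mu\to\pm p_\nu$. Clustering at the half-periods therefore holds only for a generic direction to infinity. The paper excludes the other clusters because they force relations $\sum_{\mu\in J}c_\mu-\sum_{\nu\in J'}c_\nu=O(1)$.

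Second, at the half-periods the homogeneous system is not solution-free. By Conjecture~\ref{conj_Trei_rational} (verified for $k\le 4$) it has solutions exactly when $x=\tfrac12(k-1)^2$. The exclusion at $c=0$ works only because $(2n_i+1)^2$ never takes these values. Your ``direct check'' must establish this uniformly in $x$, and that is precisely the paper's argument.

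Third, away from the half-periods your claim is false. Near a point $z$ with $2z\notin\Lambda_\tau$, the leading system on the merged cluster $\{p_\mu\}\cup\{-p_\nu\}$ is the rational Airault--McKean--Moser system $\sum_{w\neq v}(v-w)^{-3}=0$. This is solved by the equilateral triangle $\{1,\xi,\xi^2\}$ with $\xi=e^{2\pi i/3}$. So for $r=3,4$ the leading order cannot rule out triple collisions, and a next-order argument is required. For instance, you could try to show that the merged weight-$2$ poles at $\pm z$ impose more conditions than there are unknowns. The paper's proof does not address these clusters at all, so you are right to raise them, but your proposed resolution does not work.

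Finally, reducedness at $c=0$ is asserted without proof; the paper's count is with multiplicity.
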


\subsection{Relation to previous works}
Special cases of generalized Lamé equations have been extensively studied, though typically with restrictive constraints on the pole configurations.

\subsubsection{Finite-gap theory} In the context of integrable systems, classical Lamé equations and their generalizations have been deeply investigated by Airault, McKean, Moser, Treibich, Verdier, Gesztesy, Weikard, and others \cite{Airault_McKean_Moser_1977, Treibich_Verdier_1992, Gesztesy_Weikard_Picard_1996,Gesztesy_Unterkofler_Karl_2006}. These works primarily focus on integer weights at special fixed locations (often half-periods) or assume completely generic configurations, emphasizing the spectral properties of the KdV hierarchy. Treibich and Verdier investigate pole collisions in the moduli of finite gap potential, where the total weight is a triangular number that remains strictly preserved throughout the collision process. By relaxing the finite gap property, our moduli space with pole collisions can continuously relate configurations with different total weights.

\subsubsection{Isomonodromic deformations on tori} In the realm of symplectic geometry and Painlevé equations, the study of Fuchsian equations on elliptic curves was pioneered by Okamoto, who formulated Painlevé VI as an isomonodromic deformation on a torus. This was systematically generalized by Kawai \cite{Kawai_1996}, who derived the full symplectic form (cf.~\ref{ss:Kawai_Goldman} below) governing the case of moving poles. However, Kawai's analysis operates strictly over the generic configuration space $E_\tau^r \setminus \Delta$, intentionally avoiding the profound analytic singularities that occur when poles merge. Our dimension formula of the log free variety $V_{\mathbf{n},\mathbf{p}}(\tau)$ as a complete intersection when some $n_i \not\in \tfrac12 \mathbb{N}$ (cf.~Remark \ref{r:CI}) also follows from Iwasaki's work \cite{Iwasaki_1991}. However the most interesting case in our study when $n_i \in \tfrac12 \mathbb{N}$ for all $i$ is not included.  

\subsubsection{Geometric analysis} In geometric analysis, elliptic equations directly related to the generalized Lam\'e equations arise in the study of singular Liouville (mean-field) equations  \begin{equation} \label{eq:Liouville}
\Delta u \pm e^u = 8\pi \sum_{i=1}^r n_i \delta_{p_i}.
\end{equation}
In the classical Lam\'e equations, namely the case $r = 1$, C.-S.~Lin and the second author \cite{Lin_Wang_2010, Lin_Wang_2017} initiated the systematic study of unitary monodromy conditions (the case with $+$ sign in \eqref{eq:Liouville}) via pre-modular forms. This was partially extended to the generalized Lam\'e setting by the second author in \cite{Wang_2020} and it was conjectured there the existence of curve components in the associated log-free varieties, aiming at setting up a key step toward solving the unitary monodromy problem. The present work fundamentally unifies and transcends these approaches.

\subsection{Future perspectives}

While the present work establishes the algebraic foundations for the degeneration of the GLE, this structural control opens direct pathways to global problems in the geometry of moduli spaces. 

\subsubsection{The unitary monodromy problem and non-abelian Hodge theory} 
A notoriously difficult problem in the study of Fuchsian equations is characterizing the exact locus in the accessory parameter space $(\mathbf{A}, B)$ where the monodromy representation is unitary ($SU(2)$). While the Lamé equation is naturally formulated in the algebraic de Rham realization as an oper, its continuous analytic deformations are deeply connected to the Dolbeault realization via the non-abelian Hodge (NAH) correspondence. In the Dolbeault setting, the corresponding Higgs field $\theta$ lies in the Hitchin section and takes the explicit trace-free companion matrix form:
\begin{equation} \label{eq:Higgs_field}
\theta = \begin{pmatrix} 0 & 1 \\ Q(z) & 0 \end{pmatrix} dz.
\end{equation}
The accessory parameters dictate the determinant of this Higgs field, yielding the global meromorphic quadratic differential $Q(z) dz^2 = -\det(\theta)$. 

The bridge to the monodromy character variety is established by the existence of a harmonic metric $h$ solving Hitchin's self-duality equation, $F_{h} + [\theta, \theta^{*h}] = 0$. When restricted to the specific data of the Lamé oper, the off-diagonal terms of the metric equations vanish, and the system collapses into the scalar Liouville-type PDE \eqref{eq:Liouville}. For the split real form $SL(2, \mathbb{R})$, this yields the $-$ sign where the maximum principle applies, and standard algebraic stability guarantees a unique metric. In stark contrast, the unitary $SU(2)$ monodromy corresponds to the $+$ sign. Here, the maximum principle violently fails. Solutions can concentrate and bubble, destroying uniqueness and causing the continuous parametrization map from the Dolbeault side to the character variety to fold over itself. The generalized Lamé curves constructed here provide the explicit algebraic framework necessary to track these continuous real-analytic bifurcations rigorously, supplying the structural rigidity required precisely where classical NAH stability conditions break down.

\subsubsection{Symplectic geometry and isomonodromic deformations} \label{ss:Kawai_Goldman}
The accessory parameters $(\mathbf{A}, B)$ and pole positions $\mathbf{p}$ serve as canonical coordinates in symplectic geometry. By Kawai's theorem, the Riemann--Hilbert map to the character variety is a symplectic local diffeomorphism, and the monodromy-preserving deformations of the GLE on $E_\tau$ are governed by the Kawai--Goldman 2-form. In our notation, this form is given by:
\begin{equation*} 
\omega_{KG} = 2 \sum_{i=1}^r dA_i \wedge dp_i + \frac{1}{\pi i} dB \wedge d\tau - \frac{\eta_1(\tau)}{\pi i} \sum_{i=1}^r \Big(p_i dA_i \wedge d\tau + A_i dp_i \wedge d\tau\Big).
\end{equation*}
Because our framework handles the general case allowing poles to move ($dp_i \neq 0$), the full dynamics of this symplectic structure are activated. The roots of the pre-modular forms $\Phi_{\mathbf{n},\mathbf{p};t,s}(\tau)$ introduced in this theory dictate these isomonodromic flows. Most importantly, because our summation map $\sigma_{\mathbf{n}, \mathbf{p}}$ is completely flat over the unconstrained configuration space $E_\tau^r$, it imposes rigid geometric constraints on how these Hamiltonian systems and their associated symplectic leaves degenerate or foliate when poles collide ($p_i \to p_j$).

\medskip

We plan to study both the NAH unitary aspect as well as the symplectic/isomonodromic aspect in a subsequent work.

\subsection*{Acknowledgement} We are grateful to Martin Guest for discussing aspects on non-abelian Hodge theory, and to Ching-Li Chai for calling our attention on recent applications of the classical BGG complex in Hodge conjecture.

\section{Log-free condition and ansatz solution}
Given $\mathbf{n}=(n_1,\dots,n_r)\in \mathbb{C}^r $, we consider generalized Lam\'e equation 
\begin{equation} \label{e:GLE}
 \frac{d^2 w}{d z^2} - \Big( \sum_{i=1}^{r} n_i(n_i + 1) \wp(z - p_i) + \sum_{i=1}^{r} A_i \zeta(z - p_i) +B   \Big) w =0,
\end{equation}
on $\mathbb{C}$, where $A_i,B, p_i \in \mathbb{C}$. We assume that 
\begin{itemize}
\item[(i)] $\sum_{i=1}^r A_i=0$ so that \eqref{e:GLE} descends to an equation on $E_{\tau}$.
\item[(ii)] The points $[p_i] \in E_\tau$, for $i = 1, \dots, r$, are pairwise distinct. 
\end{itemize}
Although the potential $Q(z)$ is invariant under the symmetry $n_i \mapsto -n_i-1$, we do not globally impose restrictions like $\Re(n_i) \geq -1/2$. This flexibility is because a single equation can admit different types of generalized Hermite--Halphen ansatz solutions (Section~\ref{ss:ansatz}) depending on the chosen branch of $n_i$.

For simplicity, we will also write \eqref{e:GLE} as 
$
w'' = Qw.
$

\subsection{Log-free conditions}
Equation (\ref{e:GLE}) has local exponents $-n_i$ and $n_i + 1$ at $z = p_i$.  
For $n_i \notin \mathbb{Z}/2$ there is no logarithmic solution. For $n_i = -\frac{1}{2}$, there is always a solution with logarithmic term. For $n_i =0$, there is a solution with logarithmic term if $A_i \neq 0$. 
When $n_i \in \tfrac{1}{2}\mathbb{N}$, to derive the log-free condition, let
\[
w(z) = \sum_{k \geq 0} c_k (z-p_i)^{-n_i +k}, \quad c_0 = 1,
\]
be a solution corresponding to the exponent $-n_i$ at $p_i$. We compare the coefficients of $(z - p_i)^{-n_i + l}$, $l \ge -2$, of equation (\ref{e:GLE}) after plugging $w(z)$. Coefficients of $(z - p_i)^{-n_i - 2}$ term matches automatically. For $(z-p_i)^{-n_i - 1}$ term, we have equation
\[
c_1(-n_i +1)(-n_i) = c_1n_i(n_i + 1) + A_i
\]
and hence $c_1=-\frac{A_i}{2n_i}$.

For $(z - p_i)^{-n_i + 0}$ term, we have equation
\begin{equation}
\begin{split}
&c_2(-n_i + 2)(-n_i + 1) \\
&\quad = c_2n_i(n_i + 1) + c_1 A_i + \sum_{j = 1, \ne i}^rn_j (n_j + 1) \wp_{ij} + \sum_{j = 1, \neq i}^r A_j \zeta_{ij} + B.
\end{split}
\end{equation}
In the primitive case $n_i=\frac{1}{2}$, the $c_2$ terms cancel out and it gives rise to the following quadratic (compatibility) equation:
\begin{equation} \label{e:log-free}
A_i^2 = B + \sum_{j = 1,\neq i}^r ( A_j \zeta_{ij} + \tfrac{3}{4}\wp_{ij}).
\end{equation}
Here $\wp_{ij} = \wp(p_i-p_j)$ and $\zeta_{ij} = \zeta(p_i-p_j)$.

For general $n_i \in \tfrac{1}{2}\mathbb{N}$, we get a polynomial with degree $\ell_i + 1 = 2n_i +1$. Indeed, continuing the process we see that for the $(z - p_i)^{-n_i + k}$ term with $k \ge 0$,
\begin{equation}
\begin{split}
&c_{k + 2}(-\tfrac{\ell_i}{2} + k + 2)(-\tfrac{\ell_i}{2} + k + 1) \\
&\quad = c_{k + 2}\tfrac{\ell_i}{2}(\tfrac{\ell_i}{2} + 1) + c_{k + 1} A_i + c_k B\\
&\qquad + \sum_{m = 0}^k c_{k - m}\Big(\sum_{j \ne i} \tfrac{\ell_j}{2} (\tfrac{\ell_j}{2} + 1) \wp_{ij}^{(m)} + \sum_{j \ne i} A_j \zeta_{ij}^{(m)} \Big).
\end{split}
\end{equation}
By assigning degree 1 to $A_i$ and degree 2 to $B$, we see inductively that $c_k$ has degree $k$. The recursion leads to a compatibility equation $F_{\ell_i}(\mathbf{A}, B) = 0$ on $c_{k + 2}$ when $k = \ell_i - 1$. Hence $F_{\ell_i} (\mathbf{A}, B)$ has degree $\ell_i + 1$. 

Notice that the highest degree terms $q_{\ell_i}(A_i, B)$ depend only on $A_i$ and $B$ and the expression $q_\ell(A, B)$ is determined by the compatibility equation of the following simple recursion  
\begin{equation} \label{e:q-rec}
(k + 2)((k + 1) - \ell) \bar c_{k + 2} = A \bar c_{k + 1} + B \bar c_k, \quad k \ge -1
\end{equation}
with $\bar c_{-1} = 0$ and $\bar c_0 = 1$. Consider the generating function $f(z) = \sum_{k = 0}^\infty \bar c_k z^k$. Then \eqref{e:q-rec} is equivalent to the confluent hypergeometric ODE
\begin{equation} \label{e:CHG-f}
zf'' = \ell f' + (A + Bz) f, \qquad f(0) = 1,\quad f'(0) = -\frac{A}{\ell}.
\end{equation}
Under $u = z^{\ell/2}f$ this is brought into the canonical form
\begin{equation} \label{e:CHG}
u'' + \Big( \frac{A}{z} + B - \frac{\ell(\ell + 2)}{4z^2}\Big) u = 0.
\end{equation}
In \cite[\S 3.3]{Wang_2020}, \eqref{e:CHG} was solved using explicit Kummer solutions (cf.~\cite[Ch.XVI]{Whittaker_Watson_1996}) and a ``residue trick'', which leads to the exact expression of $q_\ell(A, B)$:

\begin{proposition} \cite[Lemma 3.3]{Wang_2020} \label{p:topterm}
The RHS of \eqref{e:q-rec} at step $k = \ell - 1$ equals
\begin{equation} \label{e:q-ell}
q_{\ell}(A, B) = \frac{(-1)^\ell}{(\ell !)^2}\prod_{j = 0}^\ell (A - (\ell - 2j)B^{1/2}).
\end{equation}
\end{proposition}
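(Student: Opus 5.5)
The plan is to recast the recursion \eqref{e:q-rec} as a determinant and evaluate that determinant using the $(\ell+1)$-dimensional irreducible $\mathfrak{sl}_2(\mathbb{C})$-module $L_\ell$.

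First, observe that in \eqref{e:q-rec} the coefficient $(k+2)(k+1-\ell)$ vanishes exactly at $k=\ell-1$. For $k=-1,\dots,\ell-2$ it equals $-m(\ell+1-m)\neq 0$, where $m=k+2\in\{1,\dots,\ell\}$. Consider the $\ell+1$ equations at steps $k=-1,\dots,\ell-1$, each written as
\[
m(\ell+1-m)\,\bar c_{k+2}+A\bar c_{k+1}+B\bar c_k \quad (m=k+2),
\]
where the first term is absent for $k=\ell-1$. Together they form a homogeneous linear system $T\,\bar{\mathbf c}=0$ in the unknown vector $\bar{\mathbf c}=(\bar c_0,\dots,\bar c_\ell)$. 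The matrix $T$ is tridiagonal of size $(\ell+1)\times(\ell+1)$: its diagonal entries are all $A$, its entries below the diagonal are all $B$, and its entries above the diagonal are $m(\ell+1-m)$ for $m=1,\dots,\ell$.

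Next, relate $\det T$ to $q_\ell$. Forward-solve the first $\ell$ equations with $\bar c_0=1$; this is a triangular elimination whose pivots are the superdiagonal entries $m(\ell+1-m)$. Expanding $\det T$ along the last row, or equivalently Gaussian-eliminating the pivots, gives the polynomial identity
\[
\det T=\pm\Big(\prod_{m=1}^{\ell}m(\ell+1-m)\Big)\,(A\bar c_\ell+B\bar c_{\ell-1}).
\]
Here $A\bar c_\ell+B\bar c_{\ell-1}$ is exactly the right-hand side of \eqref{e:q-rec} at step $k=\ell-1$, and $\prod_{m=1}^{\ell}m(\ell+1-m)=(\ell!)^2$. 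This step, together with pinning down the sign, is the main obstacle. I would fix the sign by comparing the coefficient of $A^{\ell+1}$ on both sides: from the recursion, $\bar c_k$ has leading term $A^k/\prod_{m\le k}\big(-m(\ell+1-m)\big)$, which yields the overall factor $(-1)^\ell/(\ell!)^2$.

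Now identify $T$ with an $\mathfrak{sl}_2$ operator. In the standard basis $v_0,\dots,v_\ell$ of $L_\ell$, with $h v_j=(\ell-2j)v_j$, $f v_j=v_{j+1}$ and $e v_j=j(\ell+1-j)v_{j-1}$, the matrix $T$ (possibly after transposing, which does not change the determinant) is the matrix of $A\cdot\mathrm{id}+e+Bf$. For $B\neq 0$, rescale the basis by $v_j\mapsto B^{j/2}v_j$; this conjugates $e+Bf$ to $B^{1/2}(e+f)$. The element $e+f$ is conjugate to $h$ in $SL_2(\mathbb{C})$ via $\exp\!\big(\tfrac{\pi i}{4}(f-e)\big)$, or simply because $e+f$ is semisimple with eigenvalues $\pm1$ in the defining representation. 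Hence $e+f$ has eigenvalues $\ell-2j$ on $L_\ell$, and
\[
\det T=\prod_{j=0}^{\ell}\big(A+(\ell-2j)B^{1/2}\big).
\]
This product is invariant under $B^{1/2}\mapsto-B^{1/2}$, so it is a genuine polynomial in $A$ and $B$. Since both sides are polynomials, the identity extends to $B=0$ by continuity. Combining this with the normalization from the previous step gives \eqref{e:q-ell}.
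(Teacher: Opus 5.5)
Your proof is correct, and it reaches the formula by a different route from the paper.

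The paper works with the ODE $zf''=\ell f'+(A+Bz)f$. It gauges by $e^{\lambda z}$ with $\lambda^2=B$ and notes that $\widetilde L=(J_0-\tfrac12\ell)J_-+2\lambda J_0-A$ is upper triangular on $\mathbb{C}[z]_{\le\ell}$. It then connects $\det\widetilde L|_{V_\ell}$ to $q_\ell$ only through vanishing loci (unobstructed recursion versus existence of a polynomial solution), deduces proportionality, and fixes the constant at $B=0$. You replace that analytic zero-locus comparison, which tacitly needs a degree count to upgrade equal zero sets to proportionality, by an exact determinantal identity.

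The step you flag as the main obstacle is routine. The last-row cofactors $C_{\ell,j}$ are annihilated by the first $\ell$ rows of $T$, whose kernel is the line through $\bar{\mathbf c}$, so $C_{\ell,j}=C_{\ell,0}\,\bar c_j$. Deleting the last row and first column leaves a triangular matrix with diagonal $m(\ell+1-m)$, so $C_{\ell,0}=(-1)^\ell(\ell!)^2$. This gives $\det T=(-1)^\ell(\ell!)^2(A\bar c_\ell+B\bar c_{\ell-1})$ with the sign included, and you do not need the leading-coefficient comparison.

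On the $\mathfrak{sl}_2$ side the two proofs are closer than they look. Your $T$ is the matrix of $-L$ on $\mathbb{C}[z]/(z^{\ell+1})\cong M_\ell/M_{-\ell-2}=L_\ell$, with $f=z$ and $e=\ell\partial-z\partial^2$. The paper's gauge factor $e^{\lambda z}=\exp(\lambda f)$ conjugates $e+Bf$ to the Borel element $e+\lambda h$. You reach $B^{1/2}h$ instead via the diagonal rescaling and $e+f\sim h$. The paper's route makes the eigenvalues visible as a diagonal. Yours is purely algebraic and self-contained, and it fits the BGG picture used later in the paper.

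One small slip: the element conjugating $h$ to $e+f$ is $\exp\bigl(\tfrac{\pi}{4}(f-e)\bigr)$, a rotation by $\pi/4$, not $\exp\bigl(\tfrac{\pi i}{4}(f-e)\bigr)$. Your semisimplicity argument already covers this, so nothing breaks.
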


\begin{proof}
We present an alternative and shorter proof using the representation theory of $\mathfrak{sl}_2(\mathbb{C})$. Write \eqref{e:CHG-f} as $Lf = 0$ where 
\[ 
L = z \frac{d^2}{dz^2}  - \ell \frac{d}{dz} - (A + B z). 
\]
We apply a gauge transformation to isolate the finite-dimensional dynamics. Let $f(z) = e^{\lambda z} P(z)$ with $\lambda^2 = B$. Then $\widetilde{L} P(z) = 0$ for
\[ 
\widetilde{L} = e^{-\lambda z} L e^{\lambda z} = z \frac{d^2}{dz^2} + (2\lambda z - \ell) \frac{d}{dz} - (A + \ell \lambda). 
\]

Consider the standard differential realization of the Borel subalgebra of $\mathfrak{sl}_2(\mathbb{C})$ acting on polynomials:
\[
 J_- = \frac{d}{dz}, \qquad J_0 = z \frac{d}{dz} - \tfrac12 \ell. 
 \]
The invariant subspace is the representation $V_\ell = \mathrm{Sym}^\ell(\mathbb{C}^2) \cong \mathbb{C}[z]_{\le \ell}$. We express $\widetilde{L}$ in the universal enveloping algebra via these generators by
\[ 
\widetilde{L} = (J_0 - \tfrac12 \ell )  J_- + 2\lambda J_0 - A. 
\]

Since $J_0$ and $J_-$ stabilize $V_\ell$, $\widetilde{L}$ restricts to an endomorphism of $V_\ell$. In the standard basis $\{1, z, z^2, \dots, z^\ell\}$, the lowering operator $J_-$ is strictly upper-triangular, while the Cartan element $J_0$ is diagonal with spectrum $\{-\frac{\ell}{2}, -\frac{\ell}{2}+1, \dots, \frac{\ell}{2}\}$. Consequently, $\widetilde{L}$ is upper-triangular on $V_\ell$. 

The formal series $f(z)$ truncates (equivalently, the recursion \eqref{e:q-rec} encounters no obstruction at step $k=\ell$) if and only if $\widetilde{L}$ possesses a polynomial solution in $V_\ell$, which occurs precisely when the determinant of $\widetilde{L}|_{V_\ell}$ vanishes. The eigenvalues of $\widetilde{L}$ are its diagonal entries:
\[ 
d_k = 2\lambda (k - \tfrac12 \ell ) - A, \qquad 0 \le k \le \ell. 
\]
Setting $\lambda = \sqrt{B}$, the determinant is $0$ if and only if $A = (2k - \ell)\sqrt{B}$. Equivalently, $A \in \{(\ell - 2j)\sqrt{B} \mid 0 \le j \le \ell\}$. Therefore, the compatibility polynomial $q_\ell(A, B)$ must be proportional to this determinant:
\[ q_\ell(A, B) = C_\ell \prod_{j=0}^\ell \big(A - (\ell - 2j)\sqrt{B}\big). \]

To determine $C_\ell$, we consider the recursion \eqref{e:q-rec} by setting $B = 0$. The recursion simplifies to $(k+2)(k+1-\ell)\bar c_{k+2} = A \bar c_{k+1}$, yielding 
\[ \bar c_\ell = \frac{A^\ell}{\ell! (-\ell)(1-\ell)\cdots (-1)} = \frac{(-1)^\ell}{(\ell!)^2} A^\ell. \]
Since the compatibility equation at step $\ell-1$ is exactly $q_\ell(A, B) = A \bar c_\ell + B \bar c_{\ell-1} = 0$, the leading term is $A \bar c_\ell = \frac{(-1)^\ell}{(\ell!)^2} A^{\ell+1}$ hence $C_\ell = \frac{(-1)^\ell}{(\ell!)^2}$.
\end{proof}

\begin{remark}
In fact, an equivalent but more sophisticated form of the log-free conditions was studied in \cite{Wang_2020}. In that setup one first obtained a quadratic recursion and then transformed it into \eqref{e:CHG}.  
\end{remark}

\begin{definition} The variety defined by log-free conditions
\[
V_{\mathbf{n},\mathbf{p}}(\tau) := \Big\{(A_1,\dots,A_r,B)\ | \ F_{\ell_i} (\mathbf{A}, B)=0 \ \mbox{if $n_i \in \frac12\mathbb{N}$}, \sum_{i=1}^r A_i=0 \Big\}
\]
is called \emph{log-free variety}.

Consider $B$ as a variable of degree $2$ and $A_i$ of degree $1$. Define its compatification, called \emph{projective log-free stack}, in the weighted projective space:
\[
    \overline{V}_{\mathbf{n},\mathbf{p}}(\tau) \subset \mathbb{P}^{r+1}(1,\dots,1,2). 
\]
Note that the ambient space $\mathbb{P}^{r+1}(1,\dots,1,2)$ has a unique orbifold point $[0:\dots:0:1]$ with a nontrivial stabilizer group $\mathbb{Z}/2\mathbb{Z}$. We will see how the orbifold point affects the Hilbert quasi-polynomial computation in section 5.
\end{definition}

The log-free variety for the case when $\mathbf{n} \in \frac{1}{2} \mathbb{N}^r$ is of special interest. 
In \cite[Conjecture 5.7]{Wang_2020}, it was conjectured that $V_{\mathbf{n},\mathbf{p}}(\tau)$ always has curve components. It is nontrivial since it implies that log-free conditions are not complete intersections. We give a proof in section 5.

\begin{remark} \label{r:CI}
The non-complete intersection phenomenon happens only when $\mathbf{n} \in \tfrac{1}{2}\mathbb{N}^r$.  We show that in all other cases the log-free conditions are complete intersection. 

It suffices to prove that using only $\sum_i A_i=0$ and the first $r-1$ equations $F_{l_i}$, the resulting variety is a curve (without higher dimension components). The general case follows directly from the projective dimension theorem. 

Let $H$ be the infinity divisor (defined by $\{A_0=0\}$). Note that 
\[
\begin{aligned}
    &\Big\{ \sum A_i=0, F_{l_i}(\mathbf{A},B) =0, \text{ for } 1 \leq i \leq r-1 \Big\} \cap H 
    \\
    &= \left\{ [0:A_1:\dots:A_r:B] \in \mathbb{P}(1^{r+1},2) \;\middle|\; 
    \begin{aligned}
        &\sum_{i=1}^r A_i = 0, \quad
        q_{\ell_i}(A_i, B) = 0 \\
        &\qquad \qquad \text{for } 1 \le i \le r-1
    \end{aligned}
    \right\}.
\end{aligned}
\]
By Proposition~\ref{p:topterm}, it is easy to see that the intersection only consists of points and B\'ezout theorem on weighted projective space gives the exact counting:
\[
    \frac{1}{2} (2n_1+1) \dots (2n_{r-1}+1).  
\]
The points are indeed characterized by
\[
    \left\{ [0:c_1:\dots:c_r:1] \in\mathbb{P}(1^{r+1},2)\:\middle|\; \substack{\displaystyle c_r = -\sum_{i=1}^{r-1} c_i \\  \displaystyle c_i \in \{ -2n_i, -2(n_i-2), \dots, 2n_i \}  \\ \displaystyle \text{ for }1\leq i \leq r-1 } \right\} \Big/ \sim, 
\]
where $\mathbf{c} \sim - \mathbf{c}$. 

It is clear that $\dim (\overline{V}_{\mathbf{n},\mathbf{p}}(\tau)) \ge 1$. By projective dimension theorem:
\[  
    \dim (\overline{V}_{\mathbf{n},\mathbf{p}}(\tau)) \leq \dim (\overline{V}_{\mathbf{n},\mathbf{p}}(\tau) \cap H) +1 =1. 
\]
Hence $\overline{V}_{\mathbf{n},\mathbf{p},}(\tau)$ has pure dimension one and then a complete intersection.

 Iwasaki (\cite[Theorem 2.5]{Iwasaki_1991}) also observed this generic behavior, proving that the log-free variety has pure dimension $m$, where $m>0$ is the number of non-half-integral weights. 
\end{remark}

\subsection{Ansatz solution} \label{ss:ansatz}
We start with a general observation.
\begin{theorem} \label{t:double_multi_sol}
Suppose $f(z)$ is a multi-valued function on $\mathbb{C}$ satisfying the following conditions:
\begin{enumerate}
\item Quasi-periodicity: There exist constants $\mu_1, \mu_2 \in \mathbb{C}^*$ such that $f(z+\omega_k) = \mu_k f(z)$ for $k=1,2$.
\item Local behavior: The only singular or vanishing points of $f(z)$ lie in $S + \Lambda_\tau$, where $S = \{p_1, \dots, p_r\}$ is a finite set in a fundamental domain. At each $p_i \in S$, $f(z)$ admits the local form $f(z) = (z-p_i)^{\alpha_i} h_i(z)$, where $\alpha_i \in \mathbb{C}$ and $h_i(z)$ is holomorphic and non-vanishing at $p_i$.
\end{enumerate}
Then
$
\sum_{i=1}^r \alpha_i = 0,
$
and $f(z)$ is uniquely given by the formula:
\[
    f(z) = C e^{\kappa z} \prod_{i=1}^r \sigma(z-p_i)^{\alpha_i}, \quad \kappa \in \mathbb{C}.
\]
\end{theorem}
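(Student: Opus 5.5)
The plan is to use the logarithmic derivative $g := f'/f$, which removes the multivaluedness. Since each $\mu_k$ is a nonzero constant, differentiating $f(z+\omega_k)=\mu_k f(z)$ gives $g(z+\omega_k)=g(z)$, so $g$ is doubly periodic. Every branch of $f$ has the same logarithmic derivative up to analytic continuation, and the local form at $p_i$ is $f=(z-p_i)^{\alpha_i}h_i$. So $g$ is single-valued near each $p_i$ and equals
\[
g(z)=\frac{\alpha_i}{z-p_i}+\frac{h_i'(z)}{h_i(z)},
\]
which is a simple pole with residue $\alpha_i$ (or holomorphic if $\alpha_i=0$). Away from $S+\Lambda_\tau$, $f$ is holomorphic and nonvanishing, so $g$ is holomorphic there. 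One point needs care: $f$ is only multi-valued, so single-valuedness of $g$ on $\mathbb{C}\setminus(S+\Lambda_\tau)$ has to be justified. I would argue locally: any two branches of $f$ differ by a multiplicative constant along loops, because locally they solve the same first-order equation $f'=gf$ once we know $g$ near each $p_i$ has trivial local monodromy. Alternatively, I would read hypothesis (2) as saying the monodromy around each $p_i$ is the scalar $e^{2\pi i\alpha_i}$, so $g$ is well defined. I expect this to be the only conceptual subtlety.

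Once $g$ is a meromorphic elliptic function whose only poles are simple poles at the $p_i$ with residues $\alpha_i$, the residue theorem on a fundamental parallelogram gives $\sum_i\alpha_i=0$.

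For the explicit formula, set $G(z)=\sum_i\alpha_i\zeta(z-p_i)$. It has the same principal parts as $g$, and because $\sum\alpha_i=0$ it is elliptic, since its quasi-period shifts $\sum_i\alpha_i\eta_k$ vanish. Then $g-G$ is an entire elliptic function, hence a constant $\kappa$. Integrating $g=\kappa+\sum_i\alpha_i\,\frac{d}{dz}\log\sigma(z-p_i)$ gives $f=Ce^{\kappa z}\prod_i\sigma(z-p_i)^{\alpha_i}$ on a simply connected slit domain, and analytic continuation extends this to all branches.

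Uniqueness then follows: the logarithmic derivative determines $f$ up to the constant $C$. The quasi-periodicity of the resulting product is automatic from $\sigma(z+\omega_k)=-e^{\eta_k(z+\omega_k/2)}\sigma(z)$ together with $\sum\alpha_i=0$, which confirms consistency with hypothesis (1).
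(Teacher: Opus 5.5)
Your proposal is correct and follows the paper's proof: you pass to the elliptic logarithmic derivative $g=f'/f$, read off simple poles with residues $\alpha_i$, apply the residue theorem, identify $g=\kappa+\sum_i\alpha_i\zeta(z-p_i)$ via Liouville, and integrate. Two things you do are slightly more careful than the paper without changing the route. You derive $\sum_i\alpha_i=0$ before asserting that $\sum_i\alpha_i\zeta(z-p_i)$ is elliptic. You also note that single-valuedness of $g$ rests on $f$ having scalar monodromy, which the paper takes for granted.
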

\begin{proof}
Let $g(z) = \frac{f'(z)}{f(z)}$. The quasi-periodicity of $f$ implies that $g(z)$ is an elliptic function. The local behavior of $f$ implies that $g$ has poles on $p_i$ with residue $\alpha_i$. We conclude that 
\[
    g(z) = \kappa + \sum_{i=1}^r \alpha_i \zeta(z-p_i), \quad \kappa \in \mathbb{C}.
\]
The residue theorem implies that $\sum_{i=1}^N \alpha_i =0$, and 
\[
    f(z) =  C \exp\Big\{ \int_z g(w)dw  \Big\} = C e^{\kappa z} \prod_{i=1}^r \sigma(z-p_i)^{\alpha_i}. 
\]
\end{proof}

We study the GLE with solution satisfying quasi-periodicity. It takes the following form. 

Given $\mathbf{n} \in (\mathbb{C}\setminus \{0\})^r$ with total weight $\sum_i n_i = n \in \mathbb{Z}_{ \geq  0}$, the \textit{generalized Hermite--Halphen ansatz} (GHH ansatz) of type $\mathbf{n}$ consists of the following two cases. 

For $n\in \mathbb{N}$, 
\begin{equation} \label{e:GHH}
\begin{split}
w_{\mathbf{n},\mathbf{p}; \mathbf{a}}(z) = \exp \{hz\}\frac{\displaystyle\prod_{\mu = 1}^n \sigma(z - a_\mu)}{\displaystyle\prod_{i = 1}^{r}\sigma(z - p_i)^{n_i}},
\end{split}
\end{equation}
where $\mathbf{a} = (a_\mu) \in \mathbb{C}^n$ such that the points $[a_\mu] \in E_\tau$ are pairwise distinct, $[a_\mu] \neq [p_i]$ for all $\mu, i$, and
\begin{equation} \label{e:AP}
h = \frac{1}{n}\zeta_{\mathbf{n},\mathbf{p}}(\mathbf{a}) := \frac{1}{n} \sum_{\mu = 1}^n \sum_{i = 1}^{r} n_i \zeta(a_\mu - p_i). 
\end{equation}

When the total weight $n$ is $0$, consider 
\begin{equation} \label{e:GHH_zero}
\begin{split}
w_{\mathbf{n},\mathbf{p};h}(z) = \frac{\exp\{ hz \}}{\displaystyle\prod_{i = 1}^{r}\sigma(z - p_i)^{n_i}},
\end{split}
\end{equation}
where $h \in \mathbb{C}$ is a free parameter. 

\begin{remark}
When $r=1$ and $\mathbf{n} = n \in \mathbb{N}$, the generalized Hermite--Halphen ansatz reduces to 
\[
w_{\mathbf{n}, \mathbf{p} ;  \mathbf{a} }(z) = w_{\mathbf{a}}(z)
= \exp \Big\{ z \sum_{\mu = 1}^n \zeta(a_\mu) \Big\}  
\frac{\prod_{\mu = 1}^n  \sigma(z - a_\mu)}{\sigma(z)^n},
\]
which is the classical Hermite--Halphen ansatz for Lam\'e equation.
\end{remark}

To verify that the ansatz is a valid solution candidate for equation \eqref{e:GLE}, we compute its logarithmic derivative 
\[
\frac{w'}{w} = (\log w)' = h + \sum_{\mu = 1}^n \zeta(z - a_\mu) - \sum_{i = 1}^{r} n_i \zeta(z - p_i),
\]
which is an elliptic function since $\sum_{i = 1}^r n_i = n$. Substituting this into the identity $\frac{w''}{w} = \big( \frac{w'}{w} \big)' + \big( \frac{w'}{w} \big)^2$ yields the constraint:
\begin{equation} \label{e:constraint}
\begin{split}
&\sum_{i=1}^{r} n_i (n_i + 1)\wp (z - p_i) + \sum_{i = 1}^r A_i \zeta(z - p_i) + B \\
&\qquad= \sum_{i = 1}^{r} n_i \wp(z - p_i) - \sum_{\mu = 1}^n \wp(z - a_\mu) \\
& \qquad \quad + \Big( h +
 \sum_{\mu = 1}^n \zeta(z - a_\mu) - \sum_{i = 1}^{r} n_i \zeta(z - p_i)
 \Big)^2.
\end{split}
\end{equation}
By convention, for $n=0$ empty sums over $\mathbf{a}$ vanishes identically, the value of $h$ is determined by parameters $(\mathbf{A},B)$. For $n>0$, the equation is regular at $z= a_\mu$, we have
\[
    0 = {\rm Res}_{z=a_\mu} \Big(\frac{w''}{w} \Big) = 2 \Big( h + \sum_{\nu = 1, \neq \mu}^n \zeta(a_\mu - a_\nu) - \sum_{i = 1}^{r} n_i \zeta(a_\mu - p_i) \Big).
\]
Summing over $\mu$ gives the equation 
\[
    n h - \sum_{\mu=1}^n\sum_{i=1}^r n_i \zeta(a_\mu-p_i) =0
\]
which is consistent with the definition of GHH ansatz.

The double pole at $z = p_i$ on the right-hand side has coefficient $n_i + n_i^2 = n_i(n_i + 1)$, precisely matching the left-hand side. The remaining steps to justify that the GHH ansatz is indeed a solution are to equate all the residue
terms and the constant terms.

To simplify notation, we use vector evaluation shorthand for sums over the roots $\mathbf{a}$, and retaining the index shorthand for the fixed poles $\mathbf{p}$:
\begin{equation}
\begin{split}
\zeta(p_i - \mathbf{a}) &:= \sum_{\mu = 1}^n \zeta(p_i - a_\mu), \qquad \wp(p_i - \mathbf{a}) := \sum_{\mu = 1}^n \wp(p_i - a_\mu), \\
\zeta_{ij} &:= \zeta(p_i - p_j), \qquad \wp_{ij} := \wp(p_i - p_j).
\end{split}
\end{equation}
For the weight $n=0$ case, by convention, $\zeta(p_i - \mathbf{a}) = 0$ and $\wp(p_i - \mathbf{a}) = 0$.

\begin{theorem} \label{Ansatz}
The generalized Hermite--Halphen ansatz is a solution to equation \eqref{e:GLE} for some $(\{A_i\}_{i = 1}^r, B)$ provided that, when $n>0$, the roots $\mathbf{a}$ satisfy
\begin{equation} \label{Gene-Lame-Curve}
\sum_{i = 1}^{r} \sum_{\nu = 1, \neq \mu}^n 
n_i\Big(
\zeta(a_\mu - a_\nu) - \zeta(a_\mu - p_{i}) +  \zeta(a_\nu - p_{i})
\Big) = 0
\end{equation}
for all $\mu = 1, \ldots, n$. 

For $n \geq 0$, the corresponding coefficients $A_i$ and $B$ are given by:
\begin{equation} \label{A-a}
\frac{A_i}{\ell_i} = - \zeta(p_i - \mathbf{a}) + \sum_{j = 1, \ne i}^r n_j \zeta_{ij} - h,
\end{equation}
\begin{equation} \label{B-a}
B = \frac{A_i^2}{\ell_i^2} - \sum_{j = 1, \ne i}^r n_j(n_j + \ell_i) \wp_{ij} - \sum_{j = 1, \neq i}^{r} A_j \zeta_{ij} + (\ell_i - 1) \wp(p_i - \mathbf{a}).
\end{equation}
\end{theorem}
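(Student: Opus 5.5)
The plan is to work directly with the elliptic function $\widetilde Q := w''/w$, where $w = w_{\mathbf{n},\mathbf{p};\mathbf{a}}$ (or $w_{\mathbf{n},\mathbf{p};h}$ when $n=0$). We show that $\widetilde Q$ has exactly the shape \eqref{eq:potential}, and then read off $A_i$ and $B$ from its Laurent expansions at the poles $p_i$. Since $\sum_i n_i = n$, the logarithmic derivative
\[
g := \frac{w'}{w} = h + \sum_{\mu=1}^n \zeta(z-a_\mu) - \sum_{i=1}^r n_i \zeta(z-p_i)
\]
is elliptic, as noted before the statement. Hence $\widetilde Q = g' + g^2$ is elliptic, and its only possible poles lie at the $a_\mu$ and the $p_i$.

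\textbf{Step 1: no poles at the $a_\mu$.} At $z=a_\mu$ the double poles cancel: $g'$ contributes $-1/(z-a_\mu)^2$ and $g^2$ contributes $+1/(z-a_\mu)^2$. The residue is $2\big(h+\sum_{\nu\neq\mu}\zeta(a_\mu-a_\nu)-\sum_i n_i\zeta(a_\mu-p_i)\big)$. To identify this with \eqref{Gene-Lame-Curve}, we multiply by $n/2$, insert the definition \eqref{e:AP} of $h$, and use $n=\sum_i n_i$ twice:
\[
n\sum_{\nu\ne\mu}\zeta(a_\mu-a_\nu)=\sum_i\sum_{\nu\ne\mu}n_i\zeta(a_\mu-a_\nu),
\qquad
\sum_\nu\zeta(a_\nu-p_i)-n\zeta(a_\mu-p_i)=\sum_{\nu\ne\mu}\big(\zeta(a_\nu-p_i)-\zeta(a_\mu-p_i)\big).
\]
This turns the residue exactly into $\tfrac{2}{n}$ times the left side of \eqref{Gene-Lame-Curve}, so the hypothesis makes $\widetilde Q$ regular at every $a_\mu$. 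For $n=0$ this step is vacuous.

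\textbf{Step 2: shape of $\widetilde Q$.} Near $p_i$ we have $g=-n_i/(z-p_i)+O(1)$, so the double-pole coefficient of $\widetilde Q$ is $n_i+n_i^2=n_i(n_i+1)$. An elliptic function with only these poles is therefore of the form
\[
\widetilde Q=\sum_i n_i(n_i+1)\wp(z-p_i)+\sum_i A_i\zeta(z-p_i)+B .
\]
The residue theorem then forces $\sum_i A_i=0$, so $w$ solves \eqref{e:GLE} for these $(\mathbf{A},B)$.

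\textbf{Step 3: explicit coefficients.} We expand $g$ at $p_i$ to first order,
\[
g=-\frac{n_i}{z-p_i}+R_i+S_i(z-p_i)+O\big((z-p_i)^2\big),
\]
using $\zeta(u)=1/u+O(u^3)$ and $\zeta'=-\wp$. This gives
\[
R_i=h+\zeta(p_i-\mathbf{a})-\sum_{j\ne i}n_j\zeta_{ij},
\qquad
S_i=-\wp(p_i-\mathbf{a})+\sum_{j\ne i}n_j\wp_{ij}.
\]
The $1/(z-p_i)$ coefficient of $g'+g^2$ is $-2n_iR_i$. Comparing with $\widetilde Q$ gives $A_i=-\ell_iR_i$, which is \eqref{A-a}; here $\ell_i=2n_i\neq0$ because $n_i\neq0$. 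The constant term of $g'+g^2$ at $p_i$ is $S_i+R_i^2-2n_iS_i=R_i^2+(1-\ell_i)S_i$. The constant term of $\widetilde Q$ at $p_i$ is $\sum_{j\ne i}n_j(n_j+1)\wp_{ij}+\sum_{j\ne i}A_j\zeta_{ij}+B$, since $\wp(u)-u^{-2}$ and $\zeta(u)-u^{-1}$ vanish at $u=0$. Equating the two, substituting $R_i^2=A_i^2/\ell_i^2$, and collecting the $\wp_{ij}$ terms as $(1-\ell_i)n_j-n_j(n_j+1)=-n_j(n_j+\ell_i)$ yields \eqref{B-a}.

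The main obstacle is only bookkeeping: the identification in Step 1 of the residue with \eqref{Gene-Lame-Curve} via $\sum_i n_i=n$, and the sign conventions in the constant-term comparison of Step 3. The $n=0$ case runs through Steps 2 and 3 verbatim with all $\mathbf{a}$-sums equal to zero and $h$ free.
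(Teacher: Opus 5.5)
Your proposal is correct and follows essentially the same route as the paper: the residue of $w''/w$ at $a_\mu$ gives \eqref{Gene-Lame-Curve} after inserting $h=\frac{1}{n}\zeta_{\mathbf{n},\mathbf{p}}(\mathbf{a})$. Matching the $(z-p_i)^{-1}$ and $(z-p_i)^0$ coefficients of $g'+g^2$, using the constant and linear terms of $w'/w$ at $p_i$, then yields \eqref{A-a} and \eqref{B-a}. Your Step 2 makes explicit the shape of $w''/w$ and the vanishing $\sum_i A_i=0$, which the paper takes from \eqref{e:constraint}, and your constant-term factor $(1-\ell_i)$ is the correct form of the coefficient the paper's proof misprints as $(1+2\nu_i)$.
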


\begin{proof}
For $n>0$, recall the residue computation:
\[
    0 = {\rm Res}_{z=a_\mu} \Big(\frac{w''}{w} \Big) = 2 \Big( h + \sum_{\nu = 1, \neq \mu}^n \zeta(a_\mu - a_\nu) - \sum_{i = 1}^{r} n_i \zeta(a_\mu - p_i) \Big).
\]
Plug in $h = \frac{1}{n}\zeta_{\mathbf{n},\mathbf{p}}(\mathbf{a})$ gives \eqref{Gene-Lame-Curve}.

For $n\geq 0$, at the fixed poles $z=p_i$, the logarithmic derivative $w'/w$ has residue $ -n_i$. Its constant term $C_i$ and linear term $D_i$ are explicitly given by:
\[
    C_i = h + \zeta(p_i - \mathbf{a}) - \sum_{j=1, \neq i}^r n_j \zeta_{ij}, \qquad
    D_i = -\wp(p_i - \mathbf{a}) + \sum_{j=1, \neq i}^r n_j \wp_{ij}.
\]

Matching the coefficients of $(z-p_i)^{-1}$ and $(z-p_i)^0$ on both sides of \eqref{e:constraint} yields, respectively:
\[
\begin{split}
    A_i &=- 2n_i C_i, \\
    B + \sum_{j\neq i} n_j(n_j + 1)\wp_{ij} + \sum_{j\neq i} A_j \zeta_{ij} &= C_i^2 + (1+2\nu_i) D_i.
\end{split}
\]
Since $\ell_i = 2n_i$, substituting $C_i$ into the first equation directly gives \eqref{A-a}. Substituting both $C_i$ and $D_i$ into the second gives \eqref{B-a}. By our empty-sum convention, this coefficient matching holds uniformly for all $n \ge 0$.
\end{proof}

It is clear that $w_{\mathbf{n}, \mathbf{p}; \mathbf{a}}$ is log-free. If $n_i>0$, the expansion of $w_{\mathbf{n}, \mathbf{p}; \mathbf{a}}$ at $p_i$ starts with the lower exponent $c_i (z-p_i)^{-n_i} + \cdots$  which ensures that there is no log solution at $p_i$. Hence for $\mathbf{n} \in (\tfrac{1}{2}\mathbb{N})^r$, the existence of $w_{\mathbf{n}, \mathbf{p}; \mathbf{a}}$ gives the log-free conditions for all $p_i$. The global constraints $\sum_i A_i=0$ is automatic by equation~\eqref{A-a}. All together give the log-free conditions of equation~\eqref{e:GLE}.

We have the converse statement. It will be crucial for the later study of log-free variety. 

\begin{proposition} \label{exist_ansatz_sol}
If the generalized Lam\'e equation $w'' = Qw$ as in \eqref{e:GLE} with $\mathbf{n} \in (\tfrac{1}{2} \mathbb{N})^r$ and total weight $n \in \mathbb{N}$ has only log-free solutions, then one of its solutions $w$ is given by the generalized Hermite--Halphen ansatz, possibly with lower total weight.
\end{proposition}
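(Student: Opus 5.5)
Since every solution is log-free and every $n_i\in\tfrac12\mathbb{N}$, the local exponents at each $p_i$ are $-n_i$ and $n_i+1$, which differ by the integer $2n_i+1$. Consequently the local monodromy at every $p_i$ is the scalar $(-1)^{2n_i}$. The plan is to show that the global monodromy representation is then essentially abelian, so that some solution is a common eigenvector for both translations $z\mapsto z+\omega_1$ and $z\mapsto z+\omega_2$. Theorem~\ref{t:double_multi_sol} would then force that eigenvector to have the GHH shape. Concretely, I would pass to the double cover by twisting: put $\tilde w = w\prod_i\sigma(z-p_i)^{n_i}$ locally, or more simply consider $w^2$ (or $w\cdot(\text{fixed multivalued factor})$). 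The aim is a rank-two local system on $E_\tau\setminus\{p_i\}$ whose loops around the $p_i$ act by scalars $\pm1$.

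\textbf{Step 1.} The fundamental group of $E_\tau\setminus S$ is generated by $\gamma_1,\gamma_2$ and the small loops $\ell_i$, subject to the single relation $[\gamma_1,\gamma_2]=\prod\ell_i$. Since each $\rho(\ell_i)=(-1)^{2n_i}I$, the commutator $\rho(\gamma_1)\rho(\gamma_2)\rho(\gamma_1)^{-1}\rho(\gamma_2)^{-1}$ equals $(-1)^{2n}I=I$, because $n\in\mathbb{N}$. Hence $M_1=\rho(\gamma_1)$ and $M_2=\rho(\gamma_2)$ commute in $SL_2$ (the Wronskian is constant). Two commuting matrices have a common eigenvector $w$, so $w(z+\omega_k)=\mu_kw(z)$, where the multivaluedness at the $p_i$ is only a sign determined by $(z-p_i)^{-n_i}$.

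\textbf{Step 2.} Check hypothesis (2) of Theorem~\ref{t:double_multi_sol}. Because the solutions are log-free, near each $p_i$ we have $w=(z-p_i)^{-n_i}g_i(z)$ with $g_i$ holomorphic and $g_i(p_i)$ possibly zero. If $g_i(p_i)=0$, then $w$ is, up to a constant, the solution with exponent $n_i+1$, so locally $w=(z-p_i)^{n_i+1}h_i$. Either way $w$ has a pure local exponent $\alpha_i\in\{-n_i,\,n_i+1\}$. The other zeros $a_\mu$ of $w$ are simple, since a second-order ODE that is regular there cannot have a solution with a double zero. We therefore include them in $S$ with $\alpha=1$. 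Theorem~\ref{t:double_multi_sol} then gives
\[
w=Ce^{\kappa z}\prod_\mu\sigma(z-a_\mu)\prod_i\sigma(z-p_i)^{\alpha_i},
\]
with the constraint $\sum\alpha_i+\#\{a_\mu\}=0$.

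\textbf{Step 3.} Relabel the weights. At each $i$ with $\alpha_i=n_i+1$, write $\alpha_i=-n_i'$ with $n_i'=-n_i-1$, which gives the same potential coefficient $n_i'(n_i'+1)=n_i(n_i+1)$. Then $w$ is a GHH ansatz for the weights $\mathbf{n}'$, whose total weight $n'=\#\{a_\mu\}$ is at most $n$ (this is the lowering of total weight). The zeros $a_\mu$ are distinct mod $\Lambda_\tau$ and avoid the $p_i$, and $\kappa$ must equal $h$ by the residue computation at each $a_\mu$ preceding Theorem~\ref{Ansatz}. The case $n'=0$ gives \eqref{e:GHH_zero}.

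\textbf{Main obstacle.} The step I expect to be the main obstacle is the case in Step 1 where $M_1$ and $M_2$ are both scalar. Then every solution is a common eigenvector, which is fine. If instead some $M_k$ is a nontrivial Jordan block, the common eigenvector is unique, and Step 1 still works. So the real difficulty is the sign bookkeeping: one must make sure that the $\pm$ from half-integer $n_i$ does not spoil the commutator computation, and that the "only log-free solutions" hypothesis really yields scalar local monodromy, including in the degenerate case where the exponent difference is integral.
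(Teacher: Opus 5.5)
Your argument is correct, but it reaches the quasi-periodic solution by a different route from the paper's.

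\textbf{The paper's route.} It first gauges away the multivaluedness. Writing $w=Pu$ with $P=\sigma(z)^n/\prod_i\sigma(z-p_i)^{n_i}$, the new equation $Lu=0$ has elliptic coefficients; this is where $\sum_i n_i=n\in\mathbb{Z}$ is used. The exponents of $u$ at $p_i$ are $0$ and $2n_i+1$, so log-freeness makes every solution single-valued and meromorphic on $\mathbb{C}$. The paper then cites a classical result for such equations (the reference to Ince) to get a solution that is elliptic of the second kind. Finally it reads off the zeros of $u$ from its pole divisor $n[0]$.

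\textbf{Your route.} You keep $w$ and prove $[M_1,M_2]=\prod_i\rho(\ell_i)=(-1)^{2n}I=I$ directly from the surface-group relation, so integrality of $n$ enters as $(-1)^{2n}=1$. This has several advantages:
\begin{itemize}
\item it is self-contained;
\item it avoids the auxiliary base point in $P$, which must tacitly be chosen away from the $p_i$;
\item it is exactly the argument the paper itself uses later in Proposition~\ref{generic-2-to-1};
\item it feeds straight into Theorem~\ref{t:double_multi_sol}, with the weight-lowering and $n'\ge 0$ coming out automatically from $\sum_i\alpha_i+\#\{a_\mu\}=0$ and $\alpha_i\in\{-n_i,\,n_i+1\}$.
\end{itemize}

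\textbf{Two small points.}
\begin{itemize}
\item You should state explicitly what Theorem~\ref{t:double_multi_sol} really needs: that $w'/w$ is single-valued and elliptic. Single-valuedness holds because every solution has monodromy $\pm I$ around each puncture of $\mathbb{C}\setminus(S+\Lambda_\tau)$, by translation invariance of the equation and of log-freeness. Hence any closed loop in $\mathbb{C}\setminus(S+\Lambda_\tau)$ acts by a scalar. The common-eigenvector property then gives $\Lambda_\tau$-invariance of $w'/w$.
\item Your ``main obstacle'' is not actually an obstacle. Scalar local monodromy was already established in your first paragraph, and any two commuting $2\times 2$ matrices share an eigenvector. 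Likewise, the opening remark about twisting to a double cover or passing to $w^2$ is never used and can be dropped.
\end{itemize}
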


\begin{proof}
Let $P(z) := \sigma(z)^n / \prod_{i=1}^r \sigma(z-p_i)^{n_i}$ (which may be multi-valued if some $n_i$ are half-integers) and $s := P'/P$. Writing $w = Pu$ and substituting into $w'' = Qw$ yields the equation
\[
Lu := u'' + 2s u' + (s' + s^2 - Q)u = 0,
\]
which has elliptic coefficients. Since the original equation is log-free, $Lu = 0$ also has only log-free solutions. 
The local exponents of $u$ at $p_i$ are $0$ and $2n_i+1 \in \mathbb{N}$. By a standard argument (cf.~\cite[p.376]{Ince_1956}), this guarantees a quasi-periodic solution $u$. In the decomposition $w = Pu$, all principal parts at $p_i$ are absorbed by $P$, hence the only poles of $u$ are at the origin $[0] \in E_\tau$ with order $n$. 

Consequently, $u$ has $n$ zeros $\{[a_\mu]\}_{\mu=1}^n$ in $E_\tau$. Because $u$ is a solution to a second-order ODE, any zeros such that $[a_\mu] \neq [p_i]$ must be pairwise distinct in $E_\tau$. The logarithmic derivative of $u$ must take the form
\[
\frac{u'}{u} = -n \zeta(z) + \sum_{\mu = 1}^n \zeta(z - a_\mu) + h
\]
for some constant $h$. Equivalently, $u(z) = e^{hz} \prod_{\mu = 1}^n \sigma (z - a_\mu)/\sigma(z)^n$.

If a zero coincides with a fixed pole $[p_i]$, the local exponent requires $(2n_i+1)$ zeros to coincide there. We define a new weight vector $\mathbf{n}'$ with total weight $n'$: $n'_i = -n_i-1$ if zeros coincide at $[p_i]$, and $n'_i = n_i$ otherwise. Let $\mathbf{a}'$ denote the remaining zeros.

As derived previously from \eqref{e:constraint}, summing the residues forces the relation $n'h = \zeta_{\mathbf{n}',\mathbf{p}}(\mathbf{a}')$. For $n'>0$, this gives $h = \frac{1}{n'}\zeta_{\mathbf{n}',\mathbf{p}}(\mathbf{a}')$. For $n'=0$, the constant $h$ is unconstrained. Both cases are included in the definition of GHH ansatz.

With $u$ and $h$ as determined above, $w=Pu$ exactly matches the GHH ansatz of weight $\mathbf{n}'$ and roots $\mathbf{a}'$. This finishes the proof.
\end{proof}

\section{Geometry of generalized Lam\'e curves}
\subsection{Underlying generalized Lam\'e curves}
Let $\mathbf{n} \in (\mathbb{C} \setminus \{0\})^r$ with total weight $n \in \mathbb N$.
From the residue computation of Theorem~\ref{Ansatz}, we define the quasi-projective variety:
\[
\begin{split}
&Y_{\mathbf{n},\mathbf{p}}(\tau) := \Big\{  
\{[\mathbf{a}] \} \in {\rm Sym}^n E_\tau \Big|, 
\\
&\sum_{i = 1}^{r} \sum_{\nu = 1, \neq \mu}^n 
n_i\Big(
\zeta(a_\mu - a_\nu) - \zeta(a_\mu - p_{i}) +  \zeta(a_\nu - p_{i})
\Big) = 0, \text{ for }\mu=1,\dots,n \Big\}.
\end{split}
\]
We call its scheme-theoretic closure in $\Sym^n E_\tau$, denoted $\overline{Y}_{\mathbf{n},\mathbf{p}}(\tau)$, the underlying generalized Lam\'e curve of type $\mathbf{n}$.

\begin{theorem} \label{t:GLC}
$Y_{\mathbf{n},\mathbf{p}}(\tau)$ and $\overline{Y}_{\mathbf{n},\mathbf{p}}(\tau)$ are unions of finite number of curves. 
\end{theorem}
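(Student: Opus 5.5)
The plan is to prove the two inequalities $\dim \ge 1$ and $\dim \le 1$ for every irreducible component of $Y_{\mathbf{n},\mathbf{p}}(\tau)$, and then pass to the closure. Throughout, I work on the open subset $U \subset \Sym^n E_\tau$ where the $[a_\mu]$ are pairwise distinct and distinct from all $[p_i]$. On $U$ the left-hand sides
\[
G_\mu(\mathbf{a}) := \sum_{i=1}^r \sum_{\nu \neq \mu} n_i\big(\zeta(a_\mu-a_\nu)-\zeta(a_\mu-p_i)+\zeta(a_\nu-p_i)\big)
\]
are holomorphic, and the system $\{G_\mu=0\}$ is symmetric under permuting the $a_\mu$, so it descends to $U$. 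The statement is only about components, so it suffices to treat nonempty $Y_{\mathbf{n},\mathbf{p}}(\tau)$.

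\emph{Lower bound.} I would first observe the identity $\sum_{\mu=1}^n G_\mu \equiv 0$. The terms $\zeta(a_\mu-a_\nu)$ cancel in pairs because $\zeta$ is odd. The double sum $\sum_{\mu\neq\nu}\big(-\zeta(a_\mu-p_i)+\zeta(a_\nu-p_i)\big)$ vanishes because it is antisymmetric in $(\mu,\nu)$. Hence $Y_{\mathbf{n},\mathbf{p}}(\tau)$ is, locally on $U$, the common zero locus of the $n-1$ functions $G_1,\dots,G_{n-1}$ in the $n$-dimensional manifold $U$. Krull's principal ideal theorem (in its analytic form) then gives that every component has dimension $\ge 1$. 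This step is mechanical.

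\emph{Upper bound.} For this I would use the shifted addition map $\sigma_{\mathbf{n},\mathbf{p}}$. The aim is to show that for each $c\in E_\tau$ the fiber $Y_{\mathbf{n},\mathbf{p}}(\tau)\cap\sigma_{\mathbf{n},\mathbf{p}}^{-1}(c)$ is finite; then every component has dimension $\le 1$.

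Suppose instead that some fiber contains an irreducible curve $C$. Its closure $\overline{C}$ in the compact $\Sym^n E_\tau$ is a complete curve. I expect to show that $\overline{C}$ cannot lie entirely in $U$. The reason is that $h = \frac1n\zeta_{\mathbf{n},\mathbf{p}}(\mathbf{a})$ and the coefficients $(\mathbf{A},B)$ of Theorem~\ref{Ansatz} are holomorphic functions on $\overline{C}\cap U$. If $\overline{C}\subset U$, they would be constant on the compact curve $\overline{C}$. Then the single equation \eqref{e:GLE} would possess a one-parameter family of non-proportional quasi-periodic solutions, since distinct root sets give distinct zero divisors. For a second-order equation this forces the monodromy to be scalar. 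That in turn forces every solution to have the GHH form with the same exponential factor, and I would then contradict this by comparing exponents at $p_i$ for non-half-integral $n_i$, or the log-free structure otherwise.

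So $\overline{C}$ must meet the boundary $\Sym^n E_\tau\setminus U$, i.e. along $C$ some cluster of roots collides with each other or with a pole $p_i$.

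\emph{Boundary analysis (main obstacle).} I would rescale near such a collision point, writing $a_\mu = q+\epsilon\alpha_\mu$ for the colliding roots $\mu$ in the cluster. I then extract the leading order of $G_\mu$ in $\epsilon$. This produces a rational Stieltjes-type system
\[
\sum_{\nu\neq\mu}\frac{1}{\alpha_\mu-\alpha_\nu}-\frac{n_i}{\alpha_\mu}=0
\]
when $q=p_i$, and the analogous system without the pole term when $q$ is not a pole. These classical systems have only finitely many solutions modulo scaling, and have none unless a weight condition holds, such as the cluster size being $2n_i+1$ at $p_i$ (compare Proposition~\ref{exist_ansatz_sol}).

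From this I would deduce that along $C$ the boundary points are isolated and occur with a rigid local structure. Together with the constancy of $\sigma_{\mathbf{n},\mathbf{p}}$ on $C$, this should let me repeat the compactness argument on the reduced configuration of lower weight. An induction on $n$ would then produce the contradiction. This cluster analysis is where I expect the real difficulty, especially controlling nested collisions and making the induction on the lower-weight GLC rigorous.

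\emph{Conclusion.} Once the fibers of $\sigma_{\mathbf{n},\mathbf{p}}$ are finite, every component of $Y_{\mathbf{n},\mathbf{p}}(\tau)$ is a curve. Since $Y_{\mathbf{n},\mathbf{p}}(\tau)$ is a quasi-projective (constructible) set, it has finitely many components. Its closure $\overline{Y}_{\mathbf{n},\mathbf{p}}(\tau)$ is the union of the closures of these finitely many curves, so it is again a finite union of curves.
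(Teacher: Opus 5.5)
Your lower bound (the identity $\sum_\mu G_\mu\equiv 0$, then Krull) is exactly the paper's first step. The upper bound has a genuine gap, and it goes beyond the unfinished cluster analysis you flag: the claim you are trying to prove is false. Fibers of $\sigma_{\mathbf n,\mathbf p}$ on $Y_{\mathbf n,\mathbf p}(\tau)$ need not be finite.

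\textbf{Counterexample.} Take Example~\ref{eg:GLC_half_periods}, with $\mathbf n=(\frac12)^4$ and $\mathbf p=(0,\frac{\omega_1}{2},\frac{\omega_2}{2},\frac{\omega_3}{2})$. The anti-diagonal $\Delta_0=\{a_1\equiv -a_2\}$ is a component of $Y_{\mathbf n,\mathbf p}(\tau)$. One checks this directly from $\sum_i\zeta(a\mp p_i)=2\zeta(2a)\mp\frac12\sum_i\eta_i$: the defining equation then vanishes identically on $\{a,-a\}$. Since $a_1+a_2$ is constant on $\Delta_0$, the map $\sigma_{\mathbf n,\mathbf p}$ is constant there.

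\textbf{The general mechanism.} When all $n_i\in\frac12\mathbb N$ and the monodromy is $\pm\mathrm{Id}$, every member of the pencil $c_1w_1+c_2w_2$ is a GHH solution (Propositions~\ref{generic-2-to-1} and~\ref{prop:P1_codim_3}). This gives a rational curve in $Y$ lying in a single fiber of $\sigma_{\mathbf n,\mathbf p}$.

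\textbf{Why your boundary analysis cannot conclude.} The closure of such a curve meets the boundary exactly in clusters $\{p_i^{2n_i+1}\}$. These are precisely the collisions your rescaled system admits (Lemma~\ref{Lemma_sol_x} with $k=2n_i+1$). So no induction over boundary strata can produce the contradiction you are hoping for.

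\textbf{Your interior case is also not right as stated.} ``The log-free structure'' does not exclude scalar monodromy. In the case $\overline C\subset U$, the contradiction has to come from elsewhere: the pencil contains the solution with exponent $n_i+1$ at $p_i$, whose roots fall into $p_i$, so the family cannot stay inside $U$.

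\textbf{What a repair would need.} The correct statement to aim for is only that $Y$ contains no irreducible surface. Through $\sigma_{\mathbf n,\mathbf p}$ this splits into two claims:
\begin{enumerate}
\item fibers over a generic point of each component's image are finite, which is consistent with Theorem~\ref{thm_deg} asserting only a \emph{generic} degree;
\item no two-dimensional locus lies inside a single fiber.
\end{enumerate}
Your proposal has no mechanism for the second claim.

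\textbf{How the paper proceeds instead.} It bypasses $\sigma_{\mathbf n,\mathbf p}$ entirely. It rewrites the system as $\sum_i\mathbf A_i\mathbf y_i=\mathbf 0$ with $y_{\mu,i}=\wp'(a_\mu-p_i)$. It then shows that the left kernel of $\mathbf A_i$ is spanned by $(1,\dots,1)$: interpolate $v$ by a polynomial $f$ of degree $\le n-1$; then $Lf-f'$ has degree $\le n-2$ but $n$ zeros, so $f$ is constant. Hence the system imposes $n-1$ conditions. On the degenerate locus $\wp(a_\mu-p_i)=\wp(a_\nu-p_i)$ it uses the reduced matrices $\mathbf B_i$. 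Their rank is shown to be $\ge n-1-k$ via the weighted rational function $R(z)=N(z)/D(z)$, whose derivative is forced to vanish identically.
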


\begin{proof}
The defining equations of $Y_{\mathbf{n},\mathbf{p}}(\tau)$ sum to zero, implying any irreducible component has dimension $\geq 1$. It suffices to show equality holds.

Using the identity $\zeta(v-w) - \zeta(v) + \zeta(w) = \frac{1}{2}\frac{\wp'(v) + \wp'(w)}{\wp(v)-\wp(w)}$, we rewrite the defining equations as
\begin{equation} \label{e:Lame_alg_form}
    \sum_{i=1}^{r} \mathbf{A}_i \mathbf{y}_i = \mathbf{0},
\end{equation}
where $\mathbf{y}_i := (y_{1,i}, \dots, y_{n,i})^T$ with $x_{\mu,i} := \wp(a_\mu - p_i)$ and $y_{\mu,i} := \wp'(a_\mu - p_i)$, and $\mathbf{A}_i$ is the $n \times n$ matrix defined as:
\[
\mathbf{A}_i := n_i
\left(
\begin{matrix}
\sum_{k=2}^n \frac{1}{x_{1,i} - x_{k,i}}  & \frac{1}{x_{1,i}-x_{2,i}} & \cdots & \frac{1}{x_{1,i}-x_{n,i}} \\
\frac{1}{x_{2,i}-x_{1,i}} & \sum_{k \neq 2} \frac{1}{x_{2,i} - x_{k,i}} & \cdots & \frac{1}{x_{2,i}-x_{n,i}} \\
\vdots & \vdots & \ddots & \vdots \\
\frac{1}{x_{n,i}-x_{1,i}} & \frac{1}{x_{n,i}-x_{2,i}} & \cdots & \sum_{k=1}^{n-1} \frac{1}{x_{n,i} - x_{k,i}}
\end{matrix}
\right).
\]
Since the entries of $\mathbf{A}_i$ contain fractions of the form $\frac{1}{x_{\mu,i} - x_{\nu,i}}$, the formulation becomes undefined if any $x_{\mu,i} = x_{\nu,i}$. We analyze the generic case and the degenerate case separately.

\vspace{0.5em}
\textbf{The Generic case ($x_{\mu, i} \neq x_{\nu, i}$ for all $\mu \neq \nu$).}
It is a known result that $\mathrm{rank}(\mathbf{A}_i) = n-1$ (see \cite[Proof of Proposition 5.8.3]{Chai_Lin_Wang_2015}). We provide a brief alternate proof whose techniques extend to the degenerate case. 

We claim the left null space 
\[
v\mathbf{A}_i = \mathbf{0} \Leftrightarrow \sum_{\nu=1,\neq \mu}^n \frac{v_\mu - v_\nu}{x_{\mu,i} - x_{\nu,i}} =0, \text{ for }\mu=1,\dots,n.
\]
is spanned by $\mathbf{1} = (1,\dots,1)$. Let $f(x)$ be the unique polynomial of degree $\leq n-1$ with $f(x_{\mu,i}) = v_\mu$. The condition $v\mathbf{A}_i = \mathbf{0}$ implies $(Lf)(x_{\mu,i}) = 0$ for all $\mu$, where
\[
    (Lf)(x) := \sum_{\nu=1}^n \frac{f(x) - f(x_{\nu,i})}{x - x_{\nu,i}}.
\]
Since $(Lf)(x) - f'(x)$ is a polynomial of degree $\leq n-2$ that evaluates to zero at $n$ distinct points, it is identically zero. This shows $f(x)$ is constant, so $v = c\mathbf{1}$, yielding $\mathrm{rank}(\mathbf{A}_i) = n-1$.

We now count the dimension of the solution space. The entire configuration is parameterized by $n+r-1$ independent degrees of freedom (e.g., $x_{1,1}, \dots, x_{1,r}$ and $x_{2,1}, \dots, x_{n,1}$) via the elliptic curve addition laws. Fixing $\mathbf{p}$ subject to $\sum n_i p_i = 0$ gives $r-1$ constraints. Since the matrix imposes $n-1$ independent constraints, the remaining dimension is 
\[
(n+r-1) - (r-1) - (n-1) = 1.
\]

\vspace{0.5em}
\textbf{The Degenerate case ($x_{\mu, i} = x_{\nu, i}$ for some pairs).}
Assume $k$ pairs coincide: $x_{\mu_l,i} = x_{\nu_l,i}$ for $l=1,\dots,k$. The formulation in \eqref{e:Lame_alg_form} is modified to a reduced system
\[
    \mathbf{B}_i \mathbf{y}_i = \mathbf{c},
\]
where $\mathbf{B}_i$ is obtained from $\mathbf{A}_i$ by setting $x_{\mu_l,i} = x_{\nu_l,i}$ and forcing the undefined terms $\frac{1}{x_{\mu_l,i} - x_{\nu_l,i}}$ to zero. The vector $\mathbf{y}_i$ is subject to $k$ constraints $y_{\mu_l,i} = -y_{\nu_l,i}$, and the constant vector $\mathbf{c}$ has entries where $c_{\mu_l} = -c_{\nu_l} = \zeta(a_{\mu_l}-a_{\nu_l}) - \zeta(a_{\mu_l} - p_i) + \zeta(a_{\nu_l} - p_i)$. Because $\mathbf{y}_i$ already has $k$ constraints, it suffices to prove $\mathrm{rank}(\mathbf{B}_i) \geq n-1-k$.

Let $\mathbf{B} = \mathbf{B}_i$ with $x_\mu = x_{\mu,i}$, and define the index set $I := \{\mu_1, \nu_1, \dots, \mu_k, \nu_k\}$. The left null space condition $v\mathbf{B} = \mathbf{0}$ decouples into:
\[
\begin{split}
    &\sum_{\nu \notin I} \frac{v_\mu-v_\nu}{x_\mu-x_\nu} + \sum_{l=1}^k \Big( \frac{v_\mu - v_{\mu_l}}{x_\mu - x_{\mu_l}} + \frac{v_\mu - v_{\nu_l}}{x_\mu - x_{\nu_l}} \Big) = 0, \quad \text{for } \mu \notin I, \\
    &\sum_{\nu \neq \mu_l, \nu_l} \frac{v_{\mu_l} - v_\nu}{x_{\mu_l} - x_\nu} = 0, \quad \sum_{\nu \neq \mu_l, \nu_l} \frac{v_{\nu_l} - v_\nu}{x_{\nu_l} - x_\nu} = 0, \quad \text{for } l=1,\dots,k.
\end{split}
\]
Subtracting the $\mu_l$-equation from the $\nu_l$-equation gives $k$ independent constraints:
\[
    S_l \cdot (v_{\mu_l} - v_{\nu_l}) := \Big( \sum_{\nu \neq \mu_l, \nu_l} \frac{1}{x_{\mu_l} - x_\nu} \Big) (v_{\mu_l} - v_{\nu_l}) = 0.
\]
For the remaining $n-k$ equations, define new variables:
\[
    u_\mu := v_\mu \text{ for } \mu \notin I, \quad u_{\mu_l} := \frac{1}{2}(v_{\mu_l} + v_{\nu_l}) \text{ for } l=1,\dots,k.
\]
The system reduces to $\sideset{}{'}\sum_{\nu \notin I} \frac{u_\mu - u_\nu}{x_\mu - x_\nu} + \sideset{}{'}\sum_{l=1}^k 2 \frac{u_\mu - u_{\mu_l}}{x_\mu - x_{\mu_l}} = 0$ for all valid $\mu$. 

We obtain the conclusion that $u = c(1,\dots,1)$ as a special case of a more general fact. Let $z_\mu \in \mathbb{C}$ be $N$ distinct points and $w_\mu$ be nonzero weights satisfying $\sum w_\mu \neq 0$. If a vector $u$ satisfies the system
\[    \sum_{\nu \neq \mu} w_\nu \frac{u_\mu - u_\nu}{z_\mu - z_\nu} = 0, \quad \text{for } \mu=1,\dots,N,
\]
then $u = c(1,\dots,1)$.

Define the rational function
\[
    R(z) := \frac{ \sum_{\nu=1}^{N} \frac{w_\nu u_\nu}{z-z_\nu} }{ \sum_{\nu=1}^N \frac{w_\nu}{z-z_\nu} } = \frac{N(z)}{D(z)}.
\]
Since $\sum w_\mu \neq 0$, $D(z)$ has degree $N-1$, while $N(z)$ has degree $\leq N-1$. Its derivative $R'(z) = \frac{N'D - ND'}{D^2}$ satisfies three conditions:
\begin{enumerate}
    \item $R'(z_\mu) = 0$ for $\mu=1,\dots,N$, which is equivalent to the system above.
    \item If $D(z)$ has a pole of order $r_\mu$ at $d_\mu$, $R'(z)$ has a pole of order $r_\mu+1$. Thus, the numerator $N'D - ND'$ has a zero of order $r_\mu - 1$ at $d_\mu$.
    \item $\mathrm{Res}_{z=d_\mu} R'(z) = 0$, otherwise $R(z)$ would contain a logarithmic term $\ln(z-d_\mu)$.
\end{enumerate}
The numerator $N'D - ND'$ has degree $\leq 2N-4$ (the leading coefficient of $z^{2N-3}$ cancel). The three conditions listed above impose $2N-1$ independent root and multiplicity constraints. This shows $R'(z) \equiv 0$ and therefore $u_\mu = c$.

In summary, any vector in the left kernel of $\mathbf{B}$ takes the form
\[
    v = c(1, \dots, \overset{\mu_l}{1+c_l}, \dots, \overset{\nu_l}{1-c_l}, \dots, 1).
\]
In other words, $(1,\dots,1)$ is in the left kernel, and any extra degrees of freedom depend on whether $S_l$ is zero or not. This implies $\mathrm{rank}(\mathbf{B}_i) \geq n-1-k$. We prove the theorem.
\end{proof}

Note that $\mathbf{A}_i$, and $\mathbf{B}_i$ are matrices given by the system, not the Jacobian. It happens that $\overline{Y}_{\mathbf{n},\mathbf{p}}(\tau)$ is singular and/or has non-reduced structure. In those cases, the Jacobian will drop rank.
\begin{example} \label{eg:GLC_half_periods}
Let $\mathbf{n} = (\frac{1}{2}, \frac{1}{2}, \frac{1}{2}, \frac{1}{2})$ and $\mathbf{p} = (0, \frac{w_1}{2}, \frac{w_2}{2}, \frac{w_3}{2} )$. We have
\[
    \overline{Y}_{\mathbf{n}, \mathbf{p}}(\tau) = 2\Delta_0 \cup \Delta_1 \cup \Delta_2 \cup \Delta_3
\]
where
\[
    \Delta_i = \Big\{ \{[\mathbf{a}] \} \in \Sym^2 E_{\tau} \Big| a_1 \equiv a_2 + \frac{w_i}{2} \Big\},
\]
and
\[
    \Delta_0 = 2 \Big\{ \{[\mathbf{a}] \}\in \Sym^2 E_{\tau} \Big| a_1 \equiv -a_2 \Big\}
\]
is the component with multiplicity 2. 
\end{example}

\subsection{Generalized Lam\'e curves}
Let $\mathbf{n} \in (\mathbb{C} \setminus \{0\})^r$ with total weight $n \in \mathbb Z_{\geq 0}$. We study the space with full ansatz data $\mathbf{a}$, and $h$. For $n>0$, we define 
\[
\begin{split}
&\mathcal{Y}_{\mathbf{n},\mathbf{p}}(\tau) := \Big\{  
[(\mathbf{a},h)] \in {\rm Sym}^n \mathbb C\times \mathbb C/\sim \,\Big| \, h=\frac{1}{n}\zeta_{\mathbf{n},\mathbf{p}} (\mathbf{a}),
\\
&\sum_{i = 1}^{r} \sum_{\substack{\nu = 1 \\ \nu \neq \mu}}^n 
n_i\Big(
\zeta(a_\mu - a_\nu) - \zeta(a_\mu - p_{i}) +  \zeta(a_\nu - p_{i})
\Big) = 0,  
\text{ for }\mu=1,\dots,n
\Big\},
\end{split}
\]
where $\sim$ is the equivalence generated by
\[
(a_1,\dots,a_i,\dots,a_n,h)\sim (a_1,\dots,a_i+\omega,\dots,a_n,h+\eta(\omega)), \text{ for }\omega \in \Lambda_\tau,
\]
where $\eta(\omega) := m_1 \eta_1 + m_2 \eta_2$ for $\omega = m_1 \omega_1 + m_2 \omega_2$.

For $n=0$, we simply define $\mathcal{Y}_{\mathbf{n},\mathbf{p}}(\tau) := \{  h \in \mathbb{C}  \}.$

\begin{remark}
Given $\mathbf{n},\mathbf{p}$ with total weight $n>0$, $h$ is uniquely determined by $\{[\mathbf{a}]\}$. The projection of $\mathcal{Y}_{\mathbf{n},\mathbf{p}}(\tau)$ to $\Sym^n E_\tau \cong \Sym^n(\mathbb{C} / \Lambda_\tau)$ is isomorphic to $Y_{\mathbf{n},\mathbf{p}}(\tau)$. 
When $n=0$, there is no constraint on $h$. 

A GLE could have different types of generalized Lam\'e curve on it. For each $i$, we have a freedom of choosing $n_i$ or $-n_i-1$. The only condition is that the total weight $n \geq 0$.
\end{remark}
\begin{remark}[Comparison between complex analytic and algebraic category]
We briefly recall the complex analytic and algebraic constructions of the universal vector extension (see \cite[Examples~2.1 and 2.2]{Fonseca_Mattes_2025} for more details). 
In the complex analytic category, the universal vector extension is conveniently describe as:
\[
    E^{\natural, {\rm an}} := \mathbb{C} \times \mathbb{C} / \sim, \text{ where } (z,h) \sim (z+\omega, h+\eta(\omega)) \text{ for }\omega \in \Lambda, 
\]
In algebraic category, it is constructed by 
\[
    0 \rightarrow H^1(E, \mathcal{O}_E)^{\vee} \xrightarrow{e} E^{\natural} \rightarrow E \rightarrow 0. 
\]
The class $e$ is chosen as the identity of the extension space
\[
    {\rm Ext}^1(E, H^1(E, \mathcal{O}_E)^{\vee}  ) \cong {\rm Hom}_{\mathbb{C}} ( H^1(E, \mathcal{O}_E), H^1(E, \mathcal{O}_E)  ). 
\]
To make the relation precise, consider two affine open sets $U_0 = E \setminus \{O\}$ and $U_1 = E \setminus \{P\}$ (where $P=(x_P,y_P)$ with $y_P \neq 0$) trivializing $E^{\natural}$. With fiber coordinates $t_0$ and $t_1$, the bundle is glued over $U_0 \cap U_1$ via the non-trivial \v{C}ech 1-cocycle:
\[
    t_0 - t_1 = \frac{1}{2} \frac{y + y_P}{x - x_P}.
\]
This rational function has simple poles exactly at $O$ and $P$, explicitly representing the canonical extension class in $H^1(E, \mathcal{O}_E)$. The isomorphism between $E^{\natural,{\rm an}}$ and $E^{\natural}$ is precisely:
\[
\begin{split}
    E^{\natural, {\rm an}}|_{U_0} &\rightarrow E^{\natural}|_{U_0}, \quad (z, h) \mapsto (x, y, t_0) := \Big( \wp(z), \wp'(z), h - \zeta(z) \Big)
    \\
    E^{\natural, {\rm an}}|_{U_1} &\rightarrow E^{\natural}|_{U_1}, \quad (z, h) \mapsto (x, y, t_1) := \Big( \wp(z), \wp'(z), h - \zeta(z-p) - \zeta(p) \Big),
\end{split}
\]
where $p$ is the complex coordinate of $P$. The transition between these analytic trivializations is governed by the Weierstrass addition formula for the $\zeta$-function, which is compatible with the algebraic \v{C}ech cocycle:
\[
    t_0 - t_1 = \zeta(z-p) + \zeta(p) - \zeta(z) = \frac{1}{2} \frac{\wp'(z) + \wp'(p)}{\wp(z) - \wp(p)} = \frac{1}{2} \frac{y + y_P}{x - x_P}.
\]
As an example, the analytic locus $\Gamma:=\{ (z, \zeta(z))| z \neq 0  \} \subset E^{\natural,{\rm an}}$ is simply the locus $\{t_0=0\} \subset E^{\natural} $ on the algebraic side.
\end{remark}
\vspace{8pt} 
While the definition of $\mathcal{Y}_{\mathbf{n},\mathbf{p}}(\tau)$ is naturally in the complex analytic setting, the ambient space and the locus $\mathcal{Y}_{\mathbf{n}}$ itself can be defined algebraically. It is equivalent to work in the algebraic category.

For $n=1$, it is clear from the construction that $\mathcal{Y}_{\mathbf{n},\mathbf{p}}(\tau) \subset E^{\natural}$. 
For general $n$, consider the addition map 
\[
\begin{split}
\sigma_{\mathbf{n},\mathbf{p}} : \Sym^n E_\tau &\rightarrow E_\tau
\\
\{[\mathbf{a}]\} &\rightarrow \Big[\sum_{\mu=1}^n a_{\mu} - \sum_{i=1}^r n_ip_i \Big].
\end{split}
\]
Define $\mathcal{E}_n^{\rm Sym} := \sigma_{\mathbf{n},\mathbf{p}}^*(E^{\natural})$. We similarly have
\[
    \mathcal{Y}_{\mathbf{n},\mathbf{p}}(\tau) \subset \mathcal{E}_n^{\rm Sym}.
\]

To compactify $\mathcal{Y}_{\mathbf{n}}(\mathbf{p};\tau)$, we first consider the projective compactification of $\mathcal{E}_n^{\Sym}$. 
For $n=1$, consider $\mathcal{P}_1$ := $\mathbb{P}(F_2)$, where $F_2$, first studied by Atiyah, is the unique non-trivial indecomposable vector bundle of rank 2 and degree 0 over $E$. It is constructed by the nontrivial extension: 
\[
    0 \rightarrow \mathcal O_E \rightarrow F_2 \rightarrow \mathcal O_E \rightarrow 0.  
\]
$E^{\natural}$ is canonically identified with the open subscheme $\mathcal{P}_1 \setminus \mathbb{P}(\mathcal{O}_E)$.

For general $n$, consider $\mathcal{P}_n := \mathbb{P} ( \sigma_{\mathbf{n},\mathbf{p}}^* F_2 ) $. We have
\[
\mathcal{E}_n^{\rm Sym} \cong \mathcal{P}_n \setminus \mathbb{P}(\mathcal{O}_{{\rm Sym}^n E}).
\]
Define the generalized Lam\'e curve $\overline{\mathcal{Y}}_{\mathbf{n},\mathbf{p}}(\tau)$ the scheme-theoretic closure of $\mathcal{Y}_{\mathbf{n},\mathbf{p}}(\tau)$ in $\mathcal{P}_n$.

\begin{remark}
For the reader's convenience, we briefly recall the construction of the projective compactification of affine bundle. 

Let $\mathcal{A}$ be an affine bundle over $X$ modeled on a vector bundle $\mathcal{V}$. Precisely, if the transition function of $\mathcal{A}$ is
\[
\begin{split}
    U_\alpha \times \mathbb{A}^n &\rightarrow U_\beta \times \mathbb{A}^n
    \\
    (x,v) & \rightarrow (x, f_{\alpha\beta}(x)v + g_{\alpha\beta}(x) ),
\end{split}
\]
the transition function for $\mathcal{V}$ is accordingly defined as
\[
\begin{split}
    U_\alpha \times \mathbb{A}^n &\rightarrow U_\beta \times \mathbb{A}^n
    \\
    (x,v) & \rightarrow (x, f_{\alpha\beta}(x)v ).
\end{split}
\]

From the isomorphism $H^1(X, \mathcal{V}) \cong {\rm Ext}^1(\mathcal{O}_X, \mathcal{V})$, the cocycle condition on $\mathcal{A}$ gives the extension:
\[
    0 \rightarrow \mathcal{V} \rightarrow \mathcal{E} \rightarrow \mathcal{O}_X \rightarrow 0.
\]
The bundle $\mathcal{A}$ is canonically isomorphic to $\mathbb{P}(\mathcal{E}) \setminus \mathbb{P}(\mathcal{V})$. Specifically, this complement parameterizes lines in $\mathcal{E}$ that project non-trivially onto $\mathcal{O}_X$. Normalizing this projection to $1 \in \mathcal{O}_X$ identifies this locus with an affine hyperplane in each fiber where the transition maps act precisely via the translational shifts of $\mathcal{A}$.
\end{remark}

In general, the natural morphism by the projection:
\[
\begin{split}
    \pi: \overline{\mathcal{Y}}_{\mathbf{n},\mathbf{p}}(\tau) &\rightarrow \overline{Y}_{\mathbf{n},\mathbf{p}}(\tau)
    \\
    [ (\mathbf{a},h) ] & \rightarrow \{ [\mathbf{a}] \}
\end{split}
\]
is an isomorphism on the interior but only a finite morphism on the boundary. This explains why introducing $h$ is essential.
\begin{example} \label{ex:primitive_two_h}
Let $\mathbf{n} = ( \frac{1}{2}, \frac{1}{2} ,\frac{1}{2}, \frac{1}{2})$ and $\mathbf{p} = (p_1,\dots,p_4)$ with $p_i \neq p_j$. 
Let
\begin{align*}
    a_1 &= p_1 + \epsilon \\
    a_2 &= p_1 + \sum_{i \ge 1} \alpha_i \epsilon^i
\end{align*}
with $|\epsilon| \ll 1$. Plug $(a_1, a_2)$ into the defining equation of generalized Lam\'e curve and expand the zeta function as follows:
\begin{align*}
    \zeta(\epsilon) &= \frac{1}{\epsilon} - \frac{g_2}{60}\epsilon^3 - \frac{g_3}{140}\epsilon^5 - \dots \\
    \zeta(p+\epsilon) &= \zeta(p) - \wp(p)\epsilon - \frac{\wp'(p)}{2}\epsilon^2 - \dots
\end{align*}

Compare the coefficient of $\frac{1}{\epsilon}$:
\begin{equation*}
    \frac{4}{1-\alpha_1} - 1 + \frac{1}{\alpha_1} = 0 \implies \alpha_1 = -1, \text{ with multiplicity 2}.
\end{equation*}

Compare the coefficient of $\epsilon^0$:
\begin{equation*}
    \frac{4\alpha_2}{(1-\alpha_1)^2} - \frac{\alpha_2}{\alpha_1^2} = 0,
\end{equation*}
which is automatically true.

Compare the coefficient of $\epsilon^1$:
\begin{equation*}
    \left( \frac{4\alpha_2^2}{(1-\alpha_1)^3} + \frac{4\alpha_3}{(1-\alpha_1)^2} \right) - (-\wp_1) + \left( \frac{\alpha_2^2}{\alpha_1^3} - \frac{\alpha_3}{\alpha_1^2} - \alpha_1\wp_1 \right) = 0,
\end{equation*}
which gives $\alpha_2^2 = 4\wp_1$ and hence $\alpha_2 = \pm 2\sqrt{\wp_1}$ (where $\wp_1 := \sum_{i=2}^4 \wp(p_1 - p_i)$).

Compute $h$ in the limit of two branches $(a_1, a_2) = (p_1 + \epsilon, p_1 - \epsilon \pm 2\sqrt{\wp_1}\epsilon^2 + \dots)$:
\[
    h = \lim_{\epsilon \rightarrow 0} \frac{1}{2} \sum_{i=1}^4\sum_{\mu=1}^2\zeta(a_\mu - p_i) = \frac{1}{2} \Big( \sum_{j=2}^4 \zeta_{ij} \pm \sqrt{\wp_1} \Big). 
\]
Same computation works for all $i$. We conclude that $\overline{\mathcal{Y}}_{\mathbf{n},\mathbf{p}}(\tau) |_{\pi^{-1}(p_i,p_i)} \rightarrow \overline{Y}_{\mathbf{n},\mathbf{p}}(\tau) |_ {(p_i,p_i)}$ is two-to-one. 
\end{example}

From equation~\eqref{A-a} and equation~\eqref{B-a}, $A_i$ and $B$ are expressed in terms of $\mathbf{a}$ and $h$. The morphism
\[
\begin{split}
     \mathcal{Y}_{\mathbf{n},\mathbf{p}}(\tau) & \rightarrow V_{\mathbf{n},\mathbf{p}}(\tau) 
\end{split}
\]
can be extended to the boundary:
\[
    \overline{\mathcal{Y}}_{\mathbf{n},\mathbf{p}}(\tau) \rightarrow \overline{V}_{\mathbf{n},\mathbf{p}}(\tau). 
\]
\subsection{The boundary $\overline{\mathcal{Y}} \setminus \mathcal{Y}$}
In this subsection, we study the boundary of generalized Lame curves. The boundary consists of points $[(\mathbf{a}, h)]$ when $a_\mu \rightarrow p_i$ for some $\mu$ and $i$. Our study is based on considering the parametrization:
\[
    a_\mu = p_i + \sum_{j \geq 1} \alpha_{\mu 1} \epsilon^j,
\]
and solving $\{ \alpha_{\mu j} \}$ inductively. Solving the principal part $\epsilon^{-1}$ is already quite involved. We start with some preparations (readers primarily interested in the applications may skip directly to Proposition~\ref{prop_no_mixed}).
\begin{lemma} \label{Lemma_sol_x} The system
\[
    \sum_{\nu=1,\neq \mu}^k \frac{1}{\alpha_\mu - \alpha_\nu} - \frac{x}{\alpha_{\mu}}=0, \ \mu=1,\dots,k
\]
has no solution for $x \in \mathbb{C} \setminus \{ \frac{k-1}{2} \}$. For $x = \frac{k-1}{2}$, the solution is unique up to permutation and scaling and takes the form
\[
    [ \alpha_1,\dots,\alpha_k ] = [ M\zeta^0, M\zeta^1,\dots,M\zeta^{k-1} ],
\]
where $M \neq 0$, and $\zeta$ is a primitive $k$-th roots of unity.
\end{lemma}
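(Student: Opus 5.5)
We assume, as the form of the system requires, that the $\alpha_\mu$ are pairwise distinct and nonzero. The plan has two steps. First, a summation identity forces the value of $x$. Second, the system is translated into a linear ODE for the polynomial $P(z) := \prod_{\mu=1}^k (z-\alpha_\mu)$, whose solutions can be read off explicitly.

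\emph{Step 1 (the value of $x$).} Multiply the $\mu$-th equation by $\alpha_\mu$ and sum over $\mu$. On the left, group the terms in unordered pairs $\{\mu,\nu\}$. Each pair contributes
\[
\frac{\alpha_\mu}{\alpha_\mu-\alpha_\nu}+\frac{\alpha_\nu}{\alpha_\nu-\alpha_\mu}=1,
\]
so the double sum equals $\binom{k}{2}$. The right-hand side sums to $kx$. Hence $kx = k(k-1)/2$, which gives $x=\frac{k-1}{2}$. In particular there is no solution for any other value of $x$.

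\emph{Step 2 (the solutions for $x=\frac{k-1}{2}$).} Since the roots of $P$ are simple, we have the standard identity
\[
\sum_{\nu\neq\mu}\frac{1}{\alpha_\mu-\alpha_\nu}=\frac{P''(\alpha_\mu)}{2P'(\alpha_\mu)}.
\]
Substituting this and clearing denominators, the system becomes
\[
\alpha_\mu P''(\alpha_\mu)-(k-1)P'(\alpha_\mu)=0 \quad\text{for all } \mu .
\]
The polynomial $zP''(z)-(k-1)P'(z)$ has degree at most $k$ and vanishes at all $k$ roots of $P$. It is therefore equal to $cP$ for some constant $c$. Comparing the coefficients of $z^k$ gives $c = k(k-1)-(k-1)k=0$. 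Thus $zP''=(k-1)P'$, so $P'=Cz^{k-1}$, and since $P$ is monic, $P(z)=z^k - M^k$ for some constant $M$.

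The roots of $P$ must be distinct and nonzero, which forces $M\neq 0$. The roots are then $M\zeta^j$ for $j=0,\dots,k-1$, with $\zeta$ a primitive $k$-th root of unity. Conversely, running the same equivalences backwards shows that these configurations do satisfy the system. This gives uniqueness up to permutation and scaling.

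There is no genuine obstacle in this argument. The only points requiring care are the pairing computation in Step 1 and the leading-coefficient check that forces $c=0$ in Step 2. The degenerate case $k=1$ is consistent: the system reduces to $x/\alpha_1=0$, so $x=0=\frac{k-1}{2}$, and any $\alpha_1=M\neq 0$ is a solution.
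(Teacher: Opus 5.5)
Your proof is correct and follows the paper's route. You encode the roots in $P$, use $\sum_{\nu\neq\mu}(\alpha_\mu-\alpha_\nu)^{-1}=P''(\alpha_\mu)/(2P'(\alpha_\mu))$ to get $zP''-2xP'\equiv 0$ by a degree count, and read off $P=z^k-M^k$. The only difference is that you fix $x=\frac{k-1}{2}$ beforehand via the pairing identity, whereas the paper obtains it from the $z^{k-1}$-coefficient of $zP''=2xP'$.

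One small slip: $zP''-(k-1)P'$ has degree at most $k-1$, so $c=0$ is immediate because there is no $z^k$ term. The quantity $k(k-1)-(k-1)k$ that you wrote is the $z^{k-1}$-coefficient, not the $z^k$-coefficient.
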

\begin{proof}
Assume a solution $\{ \alpha_1,\dots,\alpha_k \}$ exists,
let $p(z) = \prod_{\mu=1}^k (z- \alpha_\mu)$ be a degree $k$ polynomial with roots given by $\{ \alpha_\mu \}$. Then
\[
    \frac{p''(\alpha_\mu)}{2p'(\alpha_\mu)} = \sum_{\nu=1,\neq \mu}^k \frac{1}{\alpha_\mu - \alpha_\nu}.
\]
The system can be rewritten as
\[
    \frac{p''(z)}{2p'(z)} - \frac{x}{z}  = 0, \text{ for }z=\alpha_1,\dots,\alpha_k.
\]
We claim that $p(z)$ satisfies the following differential equation
\[
    \Big(z \frac{d^2}{dz^2} - 2x \frac{d}{dz} \Big) p(z) = 0.
\]
The left hand side equals zero when $z = \alpha_1,\dots,\alpha_k$, and hence can be written as $Q(z)p(z)$. On the other hand, it has degree $\leq k-1$. We conclude that $Q(z)=0$. 

For the existence of degree $k$ polynomial solution, we have $x = \frac{k-1}{2}$ and the solution takes the following form 
\[
    p(z) = C_1 z^k - C_2. 
\]
This proves the lemma.
\end{proof}

\begin{lemma} \label{lemma_boundary}
The system of equations
\begin{equation} \label{equation_boundary}
    f_{\mu}(\alpha) := \sum_{\nu=1, \neq \mu}^k \frac{1}{\alpha_{\mu} - \alpha_{\nu}} - \frac{(k-1)x}{\alpha_{\mu}} + \sum_{\nu=1, \neq \mu}^k \frac{x}{\alpha_{\nu}} =0, \qquad \mbox{for $\mu =1 ,\dots,k$}
\end{equation}
has a unique solution (up to permutation and scaling) with $\alpha_\mu \neq \alpha_\nu$ for $x \in \mathbb{C}\setminus\{ 0, \frac{1}{2k}, \dots, \frac{k-2}{2k} \}$. 

Moreover, the Jacobian of $f$ evaluated at the corresponding solution, $J_f(\alpha)$,
\[
\begin{split}
    J_f(\alpha)_{\mu\mu} &= \sum_{\nu=1,\neq \mu}^k \frac{-1}{(\alpha_\mu-\alpha_\nu)^2} + \frac{(k-1)x}{\alpha_\mu^2}, 
    \\
    J_f(\alpha)_{\mu\nu} &= \frac{1}{(\alpha_\mu - \alpha_\nu)^2} - \frac{x}{\alpha_\nu^2} \text{ for } \mu \neq \nu,
\end{split}
\]
has rank $k-1$ for $x \in \mathbb{C}\setminus\{0, \frac{1}{2k}, \dots, \frac{k-1}{2k}\}$, and rank $k-2$ for $x = \frac{k-1}{2k}$. 
\end{lemma}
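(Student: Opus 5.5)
The plan is to repeat the polynomial trick of Lemma~\ref{Lemma_sol_x} after first simplifying the system. Put $S:=\sum_{\nu=1}^k \alpha_\nu^{-1}$. Then
\[
-\frac{(k-1)x}{\alpha_\mu}+\sum_{\nu\neq\mu}\frac{x}{\alpha_\nu}=xS-\frac{kx}{\alpha_\mu},
\]
so \eqref{equation_boundary} reads
\[
\frac{p''(\alpha_\mu)}{2p'(\alpha_\mu)}-\frac{kx}{\alpha_\mu}+xS=0, \qquad p(z):=\prod_{\mu=1}^k(z-\alpha_\mu).
\]
The polynomial $zp''+(2xSz-2kx)p'$ has degree $\le k$ and vanishes at every root of $p$, so it equals $2kxS\,p$ (compare leading coefficients). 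Thus $p$ satisfies
\[
zp''+(2xSz-2kx)p'-2kxS\,p=0 .
\]
We split into the cases $S=0$ and $S\neq 0$.

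\textbf{Case $S=0$.} The equation is $zp''-2kxp'=0$. By the argument of Lemma~\ref{Lemma_sol_x}, a degree $k$ solution forces $2kx=k-1$, i.e.\ $x=\frac{k-1}{2k}$, and then $p=z^k-C$. The roots of this $p$ do satisfy $S=0$.

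\textbf{Case $S\neq0$ (so $x\neq0$).} Using the scaling freedom, normalize $2xS=1$, which gives the Kummer-type equation $zp''+(z-2kx)p'-kp=0$. Comparing coefficients of $z^j$ in $p=\sum c_jz^j$ with $c_k=1$ gives the downward recursion
\[
(k-j)c_j=(j+1)(j-2kx)c_{j+1},
\]
so $p$ is unique. Moreover $c_0\neq0$ exactly when $2kx\notin\{0,1,\dots,k-1\}$. At $2kx=k-1$ we get $c_{k-1}=0$, hence everything collapses to $p=z^k$. That is not allowed, so this branch is absent there and only the $S=0$ solution survives.

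When $c_0\neq 0$, I would check three things.
\begin{itemize}
\item The roots are nonzero, since $p(0)=c_0\neq0$.
\item The roots are simple. A double root $z_0\neq0$ would force $p''(z_0)=0$, and differentiating the ODE repeatedly then kills all derivatives of $p$ at $z_0$, a contradiction.
\item The normalization is self-consistent: $S=-c_1/c_0$, and the recursion at $j=0$ gives $c_0/c_1=-2x$, so $S=\frac{1}{2x}$ as required.
\end{itemize}
This yields existence and uniqueness up to permutation and scaling for $x\notin\{0,\frac1{2k},\dots,\frac{k-2}{2k}\}$, with $x=\frac{k-1}{2k}$ handled by the $S=0$ branch.

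For the Jacobian, homogeneity of degree $-1$ of $f$ gives $J_f(\alpha)\alpha=-f(\alpha)=0$, so the rank is at most $k-1$. To show the kernel is exactly one-dimensional, I would linearize the polynomial reformulation. A kernel vector $\dot\alpha$ corresponds to a first-order deformation $\dot p=-\sum_\mu \dot\alpha_\mu\prod_{\nu\neq\mu}(z-\alpha_\nu)$, which has degree $\le k-1$. It must satisfy the linearized ODE
\[
z\dot p''+(2xSz-2kx)\dot p'-2kxS\dot p+2x\dot S(zp'-kp)=0 .
\]
Running the same coefficient recursion on $\dot p$ should show that $\dot p$ is a multiple of $zp'-kp$, which is the infinitesimal scaling. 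The recursion is invertible precisely when the factors $(k-j)$ and $(j-2kx)$ do not vanish in the relevant range, which gives rank $k-1$ off $\{0,\frac1{2k},\dots,\frac{k-1}{2k}\}$.

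At $x=\frac{k-1}{2k}$ with $S=0$, the factor $(k-1-2kx)$ vanishes. This should produce one extra free coefficient, namely the direction in which the Kummer branch $z^k$ meets $z^k-C$, and hence rank $k-2$. I expect this Jacobian computation, in particular pinning down the extra kernel vector at $x=\frac{k-1}{2k}$ and keeping track of how $\dot S$ enters, to be the main obstacle.
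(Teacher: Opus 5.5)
Your existence and uniqueness argument is correct and follows the paper's route: you split on whether $S$ (the paper's $\lambda$) vanishes, pass to $zp''+(2xSz-2kx)p'-2kxSp=0$, and identify the Kummer/Laguerre polynomial. Your ODE-regularity argument for simple roots and your check $S=-c_1/c_0=\frac{1}{2x}$ are, if anything, cleaner than the paper's appeal to a discriminant formula.

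The generic rank $k-1$ also goes through, but not for the reason you state. With $2xS=1$, the operator $L\dot p=z\dot p''+(z-2kx)\dot p'-k\dot p$ satisfies
\[
L[z^d]=(d-k)z^d+d(d-1-2kx)z^{d-1}.
\]
So $L$ is triangular on $\mathbb{C}[z]_{\le k-1}$ with diagonal entries $d-k\neq0$, and it is invertible for \emph{every} $x$. The factors $(j-2kx)$ are off-diagonal; the excluded values of $x$ enter only through existence. Hence $\dot p=-2x\dot S\,L^{-1}(zp'-kp)$ is a one-parameter family, and it is the scaling line (one checks $L[zp'-kp]=-(zp'-kp)$).

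The genuine gap is the rank $k-2$ claim at $x=\frac{k-1}{2k}$, which you leave open, and the mechanism you propose is not the right one. The factor $(k-1-2kx)$ occurs only in the recursion for $p$ on the $S\neq0$ branch, where it kills that branch. In any linearization on $\mathbb{C}[z]_{\le k-1}$ it multiplies the coefficient $b_k=0$, so it never frees a coefficient. Moreover, at $x=\frac{k-1}{2k}$ the solution has $S=0$, so the normalization $2xS=1$ is unavailable.

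You must instead linearize directly at $p=z^k-C$, where $zp'-kp=kC$ and $2kx=k-1$:
\[
z\dot p''-(k-1)\dot p'+(k-1)C\,\dot S=0 .
\]
With $\dot p=\sum_{d=0}^{k-1}b_dz^d$ this gives $d(d-k)b_d=0$ for $d\ge 2$, $b_1=C\dot S$, and $b_0$ free.

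The decisive step is the compatibility of $\dot S$ with $\dot p$. From $\dot\alpha_\mu=-\dot p(\alpha_\mu)/p'(\alpha_\mu)=-(b_0+b_1\alpha_\mu)\alpha_\mu/(kC)$ and $S=0$ one gets $\dot S=-\sum_\nu\dot\alpha_\nu/\alpha_\nu^2=b_1/C$. This is the same relation as before, so nothing further is imposed. The kernel is therefore spanned by $(\alpha_\mu)_\mu$ (from $b_0$, the scaling) and $(\alpha_\mu^2)_\mu$ (from $b_1$), and the rank is exactly $k-2$. These are the vectors $v_1,v_2$ used in Corollary~\ref{cor_boundary}.

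This check cannot be skipped, because it is sensitive to the coefficient $2x\dot S$ in your linearized equation. You have that coefficient right. The paper's displayed linearization writes $2\delta_v\lambda$, and with that coefficient the compatibility condition would force $\dot S=0$, eliminating the second kernel direction.
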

\begin{proof}
As in the proof of Lemma~\ref{Lemma_sol_x}, we reduce the problem to finding polynomial solutions of an ODE. 

Let $\lambda := \sum_{\nu=1}^k \frac{1}{\alpha_\nu}$. Rewrite the system as
\[
    \sum_{\nu=1, \neq \mu}^k \frac{1}{\alpha_{\mu} - \alpha_{\nu}} - \frac{kx}{\alpha_{\mu}} = -x \lambda, \qquad \mbox{for $\mu =1 ,\dots,k$}.
\]
Consider the following two cases:

\noindent 1. $\lambda =0$. 
It has been studied in Lemma~\ref{Lemma_sol_x}, which has a unique solution (up to permutation and scaling) if and only if $x = \frac{k-1}{2k}$.

\noindent 2. $\lambda \neq 0$. Up to scaling, $\lambda$ can be any nonzero constant.
Assume that a solution $\{ \alpha_1,\dots,\alpha_k \}$ exists with $\alpha_\mu\neq \alpha_\nu$.
Let $p(z) = \prod_{\mu=1}^k (z- \alpha_\mu)$. The system can be rewritten as
\[
    \frac{p''(z)}{2p'(z)} - \frac{kx}{z} = -x \lambda, \text{ for }z=\alpha_1,\dots,\alpha_k,
\]
and hence $p(z)$ satisfies the following equation 
\begin{equation} \label{eqn_sol_x}
    \mathcal{L}[p] := \Big( z \frac{d^2}{dz^2} + (2\lambda x z-2kx) \frac{d}{dz} - 2k\lambda x \Big) p(z) =0.
\end{equation}
The constant term $-2k\lambda x$ is chosen to have degree $k$ polynomial solutions. 

The solution takes the form of a Laguerre polynomial. Explicitly, up to scaling,
\[
    p(z) = L_k^{(-2kx-1)}(-2\lambda xz) :=  \sum_{m=0}^k \binom{k - 2kx - 1}{k - m} \frac{(2\lambda xz)^m}{m!}.
\]
The discriminant is proportional to
\[
(2\lambda x)^{k(k-1)} \prod_{j=1}^k (j - 2kx - 1)^{j-1}. 
\]
Now we classify the non-generic locus:
\[
\begin{split}
    p(0) =0 &\Leftrightarrow x \in \{ 0, \frac{1}{2k}, \dots, \frac{k-1}{2k} \}
    \\
    \Delta(p) =0 &\Leftrightarrow x \in \{ 0, \frac{1}{2k}, \dots, \frac{k-1}{2k} \}. 
\end{split}
\]
Note that $x = \frac{k-1}{2k}$ has been studied in Case 1. Combining Cases 1 and 2 yields the uniqueness proof. 

To compute the rank of the Jacobian $J_f(\alpha)$, given $v =(v_1,\dots,v_k) \in \mathbb{C}^k$, let $\delta_v p$ and $\delta_v \lambda$ be the infinitesimal deformation of $p$ and $\lambda$ corresponding to $\alpha \to \alpha + \epsilon v$. Explicitly,
\[
\begin{split}
    \delta_v p :=& \frac{d}{d\epsilon} \Big|_{\epsilon =0} \prod_{\mu=1}^k (z-\alpha_\mu - \epsilon v_\mu) = - p(z) \sum_{\mu=1}^k \frac{v_\mu}{z-\alpha_\mu} \in \mathbb{C}[z]_{\leq k-1},
    \\
    \delta_v \lambda :=& \frac{d}{d\epsilon} \Big|_{\epsilon=0} \left( \sum_{\nu=1}^k \frac{1}{\alpha_\nu + \epsilon v_\nu} \right) = - \sum_{\nu=1}^k \frac{v_\nu}{\alpha_\nu^2}.
\end{split}
\]
Let $D \mathcal{L}_p$ be the Fr\'echet derivative of the operator $\mathcal{L}$ from equation~\eqref{eqn_sol_x} at $p$, defined as
\[
\begin{split}
    D\mathcal{L}_p[f] :=&  \frac{d}{d\epsilon} \Big|_{\epsilon =0} \mathcal{L}_{p+ \epsilon f}[p + \epsilon f]
    \\
    =& z f'' + (2x\lambda z-2kx) f' -2kx \lambda f + 2 (\delta_v \lambda)(zp'-kp).
\end{split}
\]
Note that $v \in \ker (J_f(\alpha))$ if and only if $D\mathcal{L}_p[ \delta_v p ] =0$. The condition $D\mathcal{L}_p[ \delta_v p ] =0$ is equivalent to the following:
\begin{equation} \label{eqn_linearlized}
\begin{split}
     L[\delta_v p] :=& z (\delta_v p)'' + (2x \lambda z - 2kx)(\delta_v p)' - 2kx \lambda (\delta_v p) 
     \\
     =& -2(\delta_v \lambda)(z p' - k p).
\end{split}
\end{equation}
Here $L$ is the linear differential operator defined by the homogeneous part of $D\mathcal{L}_p$. For any $z^d$ with $0 \leq d \leq k-1$:
\[
L[z^d] = 2x\lambda(d-k) z^d + d(d - 1 - 2kx) z^{d-1}.
\]

When $x \in \mathbb{C} \setminus \{0, \frac{1}{2k}, \dots, \frac{k-1}{2k} \}$, $L$ induces an isomorphism on $\mathbb{C}[z]_{\leq k-1}$ since the leading coefficient $2x\lambda(d-k) \neq 0$. For any choice of $\delta_v \lambda \in \mathbb{C}$, there exists a unique solution $\delta_v p$ satisfying equation~\eqref{eqn_linearlized} and therefore a unique vector $v$ with $v_\mu = -\frac{\delta_v p(\alpha_\mu)}{p'(\alpha_\mu)}$. We conclude that $\ker J_f(\alpha)$ is 1-dimensional, spanned by $v$, so the rank is $k-1$. 

When $x = \frac{k-1}{2k}$, $L : \mathbb{C}[z]_{\leq k-1} \rightarrow \mathbb{C}[z]_{\leq k-1}$ has a 1-dimensional kernel since $\lambda=0$. We have an extra freedom of choice of $\delta_v p$ . This finishes the proof. 
\end{proof}

We study the deformed system:
\[
    f_\mu (\alpha) = \rho_\mu, \text{ for }\mu=1,\dots,k
\]
with $\sum_\mu \rho_\mu=0$. Intuitively, the solution in Lemma~\ref{lemma_boundary} deforms to a degree $k!$ quasi-projective rational curves. We make it precise below:
\begin{corollary} \label{cor_boundary}
Let $\alpha_1=1$ and $f$ be the system in Lemma~\ref{lemma_boundary}. There exist Zariski open sets 
\[
\begin{split}
    U &\subset \left\{ (x,\alpha)\in \mathbb{C}^{k+1} | \alpha_1=1; \ \alpha_i \neq \alpha_j   \right\},
    \\
    V &\subset \left\{ (x, \boldsymbol{\rho}) \in \mathbb{C}^{k+1} \Big| \sum_{\mu=1}^k \rho_\mu =0 \right\},
\end{split}
\]
such that $\Big( (\mathbb{C}\setminus\{ 0, \frac{1}{2k}, \dots,\frac{k-2}{2k} \}), \mathbf{0} \Big) \subset V$ and $f: U \rightarrow V$ is a morphism of degree $k!$.
\end{corollary}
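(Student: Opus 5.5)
The map is $F(x,\alpha) := (x, f(\alpha))$ on $\{\alpha_1 = 1,\ \alpha_i\neq\alpha_j\}$, where $\alpha_1=1$ kills the scaling symmetry ($f_\mu$ is homogeneous of degree $-1$). The identity $\sum_\mu f_\mu(\alpha) = 0$ holds because the pairwise terms cancel and the $x$-terms give $-(k-1)x\lambda + (k-1)x\lambda$. So $F$ lands in the hyperplane $\{\sum\rho_\mu=0\}$, and source and target are both $k$-dimensional. The plan is to show that $F$ is dominant and generically finite, compute its degree over the fiber $\boldsymbol\rho=\mathbf 0$ using Lemma~\ref{lemma_boundary}, and then let $U,V$ be the loci where $F$ is finite étale.

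\textbf{Step 1 (fiber over $\boldsymbol\rho=\mathbf 0$).} Fix $x\notin\{0,\tfrac1{2k},\dots,\tfrac{k-2}{2k}\}$. By Lemma~\ref{lemma_boundary}, the solutions of $f(\alpha)=0$ with distinct entries form a single orbit under permutations and scalings. After normalizing by $\alpha_1=1$, the solutions are obtained by choosing which root $\alpha_\nu$ is rescaled to $1$ and then permuting the rest. Since the roots are distinct and nonzero, this gives exactly $k\cdot(k-1)! = k!$ points, provided the stabilizer of the configuration in permutations$\times$scalings acts compatibly. The case to watch is $x=\tfrac{k-1}{2k}$: there the configuration is $\{M\zeta^j\}$, which is invariant under rotation by $\zeta$. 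Even so, the normalized ordered tuples $(1,\alpha_2,\dots,\alpha_k)$ are still distinct for distinct orderings, so the count stays at $k!$. I will check this explicitly.

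\textbf{Step 2 (étaleness at these points).} The Jacobian of $F$ in the variables $(x,\alpha_2,\dots,\alpha_k)$ is block triangular, with the $x$-row trivial. Its relevant block is $J_f(\alpha)$ restricted to the directions $\alpha_2,\dots,\alpha_k$, with image in $\{\sum\rho=0\}$. By Euler's relation, the kernel of $J_f$ always contains the scaling vector $\alpha$ itself, and $\alpha_1=1\ne0$ means this vector is transverse to the slice $\{v_1=0\}$. Lemma~\ref{lemma_boundary} gives rank $k-1$ for $x\notin\{0,\dots,\tfrac{k-1}{2k}\}$, so on that set the restricted map is an isomorphism and $F$ is étale at all $k!$ points. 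Hence $F$ is dominant, and it is locally a $k!$-sheeted cover near $(x,\mathbf 0)$.

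\textbf{Step 3 (global degree and open sets).} Because the source is irreducible, generic finiteness gives a dense open $V_0$ over which $F$ is finite étale of some degree $d$. To show $d=k!$, I must rule out extra preimages near $(x,\mathbf 0)$ that come from boundary degenerations, where $\alpha_i\to\alpha_j$, $\alpha_i\to0$, or $\alpha_i\to\infty$. I plan to handle this by a properness argument. Clear denominators by multiplying by $\prod\alpha_\mu\prod_{i<j}(\alpha_i-\alpha_j)$. Then show that any sequence $\alpha^{(m)}$ with $f(\alpha^{(m)})\to0$ must either converge to a genuine solution or degenerate to a cluster, and that a degenerating cluster satisfies a subsystem of the same shape, Lemma~\ref{lemma_boundary} with smaller $k$ or Lemma~\ref{Lemma_sol_x}, which has no solution for the given generic $x$.

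This exclusion of boundary contributions is the main obstacle. I would try rescaling each cluster and reading off the leading-order equations. A collision cluster around a nonzero point gives the pure system $\sum 1/(\alpha_\mu-\alpha_\nu)=0$, which has no solution with $k\geq2$ since its sum of $\alpha_\mu\cdot$(equation) is $\binom{k}{2}\neq0$. A cluster at $0$ or at $\infty$ gives a system of Lemma~\ref{Lemma_sol_x} type with a shifted parameter, which is forbidden for $x$ outside a finite set.

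Once that is done, take $V$ to be the complement of the branch locus together with the images of the degenerate strata, intersected with the hyperplane $\{\sum\rho_\mu=0\}$, and take $U=F^{-1}(V)$. Then $F\colon U\to V$ is finite étale of degree $k!$, and $V$ contains $(x,\mathbf 0)$ for all admissible $x$, with a possible separate check at $x=\tfrac{k-1}{2k}$, where Step 2 needs the rank statement of Lemma~\ref{lemma_boundary} refined.
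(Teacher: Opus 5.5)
Your Steps 1--2 are correct for $x\notin\{0,\frac{1}{2k},\dots,\frac{k-1}{2k}\}$. That is the easy part, which the paper dispatches in one line via the rank of $J_f$ and the inverse function theorem. Note that the exclusion set in the statement stops at $\frac{k-2}{2k}$. So the corollary asserts $(x_0,\mathbf{0})\in V$ for $x_0=\frac{k-1}{2k}$, and this is exactly where your argument fails, in two ways.

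\textbf{The count at $x_0$ is wrong.} The configuration $\{M\zeta^j\}$ is invariant under multiplication by $\zeta$. Whichever root you rescale to $1$, the normalized set is always the set of $k$-th roots of unity, so the $k$ choices produce identical tuples. The fiber over $(x_0,\mathbf{0})$ therefore has only $(k-1)!$ points, not $k!$. For $k=2$, $\alpha_2=a$ solves $xa^2+(1-2x)a+x=0$, whose discriminant is $1-4x$; at $x=\frac14$ the only solution is $a=-1$.

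\textbf{$F$ is genuinely ramified at these points.} By Lemma~\ref{lemma_boundary}, $\ker J_f=\mathbb{C}v_1\oplus\mathbb{C}v_2$ with $v_1=(\alpha_\mu)$ and $v_2=(\alpha_\mu^2)$. The vector $v_1-v_2\neq 0$ has vanishing first entry, so it is tangent to the slice $\alpha_1=1$ and lies in $\ker dF$. No refinement of the rank statement can rescue Step~2. Your $V$, built as the complement of the branch locus, necessarily excludes $(x_0,\mathbf{0})$, which is what you are asked to include. Here ``degree $k!$'' must mean finite of degree $k!$ with ramification allowed. The missing statement is that each of the $(k-1)!$ points has local multiplicity exactly $k$.

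\textbf{How the paper fills this.} This is the technical heart of the paper's proof. One solves
$f_\mu\bigl(\alpha+\sum_{m,i}c_{mi}v_m\epsilon^i\bigr)=\sum_{j\ge k}\rho_{\mu j}\epsilon^j$.
The orders $\epsilon^{<k}$ are unobstructed and determined by $c_{21}$, because the derivatives of $f$ along $v_2$ of order $\le k-1$ lie in $\operatorname{Im}J_f$. At order $\epsilon^k$ one obtains $M_kc_{21}^k=\sum_\mu\alpha_\mu\rho_{\mu k}$. The crux is $M_k=-\frac{k(k-1)^k}{2(k-1)!}\neq 0$. This is computed not by iterated differentiation but via the truncated polynomial $P_{k,\epsilon}$, which satisfies the Laguerre-type ODE modulo $\epsilon^k$. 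Then $w_k\oplus\operatorname{Im}J_f$ fills the hyperplane $\sum_\mu\rho_\mu=0$, so higher orders are unobstructed. Hence each point splits into exactly $k$ branches with $\epsilon\sim\rho^{1/k}$.

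\textbf{An alternative route.} Suppose you complete your Step~3 properness argument near $(x_0,\mathbf{0})$. The leading-order cluster analysis you sketch indicates that a cluster of size $s<k$ at $0$ forces $x=\frac{s-1}{2k}$, and one at $\infty$ forces $x=\frac{k-s-1}{2k}$; both are excluded at $x_0$. Then $F$ is finite, hence flat, there. The $(k-1)!$ points form a single orbit of the automorphisms $\alpha\mapsto(\alpha_{\sigma(\mu)}/\alpha_{\sigma(1)})_\mu$. These preserve the fiber ideal, since $f_\mu(\sigma\cdot\alpha)=\alpha_{\sigma(1)}f_{\sigma(\mu)}(\alpha)$. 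So each point has length $k!/(k-1)!=k$, with no need to compute $M_k$.
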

\begin{proof}
The existence of $V$ containing $(\mathbb{C} \setminus \{ 0, \frac{1}{2k}, \dots, \frac{k-1}{2k} \}, \mathbf{0})$ follows from the rank of Jacobian and inverse function theorem. 

The technical part is to prove $p = ( \frac{k-1}{2k} , \mathbf{0} ) \in V$. Note that $f^{-1}(p)$ contains $(k-1)!$ points with multiplicity $k$. We claim that when deform in either $x$ or $r$ directions generically, each point splits into $k$ solutions. It gives the following Puiseux series expansion. 

Let $\alpha := (1, \alpha_2, \dots, \alpha_k)$ be a solution from Lemma~\ref{lemma_boundary}, and let $v_m := (1, \alpha_2^m,\dots,\alpha_k^m)$. Note that 
\[
\begin{cases}
    \ker J_f(\alpha) = \mathbb{C} v_1, \text{ for }x\in \mathbb{C} \setminus\{0, \frac{1}{2k}, \dots, \frac{k-2}{2k}\};
    \\
    \ker J_f(\alpha) = \mathbb{C} v_1 \oplus \mathbb{C} v_2,  \text{ for }x= \frac{k-1}{2k}.
\end{cases}
\]
For $x = \frac{k-1}{2k}$, we solve the deformed system
\[
    f_\mu \Big( \alpha + \sum_{m=1}^k\sum_{i \geq 1} c_{m i} v_m \epsilon^i\Big) = \sum_{j \geq k} \rho_{\mu j} \epsilon^j, \text{ for }\mu=1,\dots,k,
\]
where $\sum_m c_{m i} = 0$ and $\sum_{\mu} \rho_{\mu j}=0$. The right-hand side starts with $\epsilon^k$ so that $\epsilon \sim \rho^{1/k}$, ensuring the branching multiplicity is exactly $k$. Comparing the $\epsilon$-coefficients gives $c_{11} = -c_{21}$ and $c_{m 1} =0$ for $m=3,\dots,k$. 

For the coefficients of $\epsilon^{\leq k-1}$, note that the derivatives 
\[
    w_j := \frac{d^j}{d\epsilon^j} f(\alpha + \epsilon v_2)\Big|_{\epsilon=0} \in {\rm Im}(J_f), \text{ for }j=1,\dots,k-1;
\]
hence $\{ c_{m j} \}_{ j \leq k-1 }$ can be completely expressed in terms of $c_{21}$. In fact, $c_{m j} = {\rm const} \cdot c_{21}^j$. For the coefficient of $\epsilon^k$, we derive the form:
\[
    M_k c_{21}^k = \sum_{\mu =1}^k \alpha_\mu  \rho_{\mu k}.  
\]
For higher order $\epsilon^{>k}$, the inverse function theorem applies since
\[
    w_k \oplus {\rm Im}(J_f) = \Big\{ \mathbf{z} \in \mathbb{C}^k \ \Big| \ \sum_\mu z_\mu=0  \Big\}.
\]
It remains to show that $M_k = - \frac{k(k-1)^k}{2(k-1)!} \neq 0$. 

Computing $M_k$ directly by evaluating $c_{m j}$ is highly complicated. It requires iteratively differentiating the nonlinear rational system $k$ times and repeatedly inverting the restricted Jacobian. Instead, we bypass this by considering the deformed polynomial
\[
\begin{split}
    P_{k,\epsilon}(z) &:= \left[ \prod_{\mu=1}^k \Big(z - \alpha_\mu(\epsilon)\Big) \right]_{\leq t^{k-1}} 
    \\
    &= z^k-1 + p_0 \epsilon - \sum_{i=1}^{k-1} p_i \epsilon^i z^i \in \mathbb{C}[\epsilon, z].
\end{split}
\] 
The coefficients $p_i$ depend only on $c_{21}$. In particular, $p_0 = p_1 = kc_{21}$. It satisfies the following condition:
\[
\begin{split}
    \mathcal{L}[P_{k,\epsilon}](z) &:= \Bigg( z \frac{d^2}{dz^2} + \Big( \frac{ 2(k-1)\lambda z}{2k} + 1-k\Big) \frac{d}{dz} - (k-1)\lambda \Bigg) P_{k,\epsilon}(z) 
    \\
    &\equiv 0  \pmod{\epsilon^k},
\end{split}
\]
where $\lambda := {\rm Coeff}\Big(\epsilon,\sum_\mu \frac{1}{\alpha_{\mu}(\epsilon)}\Big) = -k c_{21} \epsilon$. 
Solving the equation gives:
\[
    p_i =  \frac{k(k-1)^i}{i! (k-i)} c_{21}^i, \text{ for }i=1,\dots,k-1.
\]
Hence, we can evaluate $M_k$:
\[
    M_k c_{21}^k = \sum_{\mu=1}^k \alpha_\mu r_{\mu k} = \sum_{\mu=1}^k \alpha_\mu \frac{ {\rm Coeff}(\epsilon^k,\mathcal{L}[P_{k,\epsilon}](\alpha_\mu) )}{ 2\alpha_\mu P_{k,\epsilon}'(\alpha_\mu) } = - \frac{k(k-1)^k}{2(k-1)!} c_{21}^k.
\]
This proves the corollary.
\end{proof}

\begin{remark} \label{remark_boundary}
In the case classical Lam\'e equation, i.e. $r=1, p=0$, and $\mathbf{n} =n \in \mathbb{N}$, \cite[Theorem~7.3 (iv)]{Chai_Lin_Wang_2015} states that $\{[\mathbf{0}]\} \in \overline{Y}_n \setminus Y_n$, where they using the relation of $\{[\mathbf{a}]\}$ with parameter $B$ in the Lam\'e equation.

Here we give an alternate proof using Lemma~\ref{Lemma_sol_x}. The argument only uses the defining equation of Lam\'e curve. The technique will extend to the generalized Lam\'e curve in Theorem~\ref{special-pt}.

Let
\[
    a_\mu = \sum_{i \geq 1} \alpha_{\mu i} \epsilon^i, \text{ for }\mu = 1,\dots,n.
\]
The goal is to show that such a curve in $t$ exists in $Y_n$.
Expand the defining equation of the Lam\'e curve
\[
    \sum_{\nu=1,\neq \mu}^n \Big( \zeta(a_\mu-a_\nu) - \zeta(a_\mu) + \zeta(a_\nu) \Big) =0, \text{ for }\mu=1,\dots,n,
\]
using $\zeta(\epsilon) = \frac{1}{\epsilon} - \sum_{k=1}^{\infty} \mathcal{G}_{2k+2} \epsilon^{2k+1}$, where $\mathcal{G}_{2k+2}$ is the Eisenstein series of weight $2k+2$. We claim that $\{ \alpha_{\mu i} \}$ can be uniquely solved up to permutation and reparametrization of $\epsilon$, i.e. $\epsilon' = \sum_{i \geq 1} c_i \epsilon^i$ with $c_1 \neq 0$. 

The coefficient of $\epsilon^{-1}$ of the system reduces to the case in Lemma~\ref{Lemma_sol_x} with $k=n$, and $x = 1$. Hence $\{ \alpha_{\mu 1} \}$ can be uniquely solve up to permutation and scaling. The scaling freedom is absorbed in the choice of $c_1$. The same argument works for all higher order. To solve $\{ \alpha_{\mu i} \}$, compare the coefficient of $\epsilon^{i-1}$ gives the form
\[
    J_f( \alpha_{1 1}, \dots, \alpha_{\alpha_{n1}}) 
    \begin{bmatrix}
        \alpha_{1 i}
        \\
        \vdots
        \\
        \alpha_{ni}
    \end{bmatrix} = \{ \text{terms involving } \mathcal{G}_k, \alpha_{\mu,j}, \text{ with } j < i\}.
\]
The rank of $J_f$ is $n-1$ gives a one dimensional freedom of $\{\alpha_{\mu i}\}$. The freedom is absorbed in  the choice of $c_i$. The prove the claim. 
\end{remark}

\begin{proposition}[Admissible boundary points] \label{prop_no_mixed}
Given $\mathbf{n} \in (\mathbb{C} \setminus \{0\} )^r$ with total weight $n\in \mathbb{N}$, let $[(\mathbf{a},h)] \in  \partial\overline{\mathcal{Y}}_{\mathbf{n},\mathbf{p}}(\tau)$ with 
\[
    \{[\mathbf{a}]\} = \{[a_1], \dots, [a_l], [p_1]^{k_1}, \dots, [p_r]^{k_r}\}, 
\]
where $[a_\mu] \neq [p_j]$ for $\mu=1,\dots,l$ and $j=1,\dots,r$. The non-zero multiplicities $k_j$ either all take the form $2n_j+1$ (assuming $n_j \in \frac{1}{2}\mathbb{N}$) or none do. If $l>0$, then $k_j = 2n_j+1$ for all $k_j > 0$.  
\end{proposition}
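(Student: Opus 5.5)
The plan is to expand the residue equations along a one-parameter curve in $\mathcal{Y}_{\mathbf{n},\mathbf{p}}(\tau)$ approaching the given boundary point, and to read off the leading-order (rescaled) system at each pole where roots collide. This reduces everything to the rational systems already treated in Lemma~\ref{Lemma_sol_x} and in the Laguerre analysis of Lemma~\ref{lemma_boundary}.

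\textbf{Setup.} I would first rewrite the defining equation at $a_\mu$ in the more convenient unsummed form coming directly from the residue computation in Theorem~\ref{Ansatz}:
\[
\sum_{\nu\neq\mu}\zeta(a_\mu-a_\nu)-\sum_{i}n_i\zeta(a_\mu-p_i)+h=0,\qquad h=\tfrac1n\zeta_{\mathbf{n},\mathbf{p}}(\mathbf{a}).
\]
This is equivalent to the system defining $\mathcal{Y}_{\mathbf{n},\mathbf{p}}(\tau)$, since the latter is $n$ times it. I then choose an analytic arc $\epsilon\mapsto(\mathbf{a}(\epsilon),h(\epsilon))$ in $\mathcal{Y}_{\mathbf{n},\mathbf{p}}(\tau)$ tending to the boundary point. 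After reparametrization, the roots in the cluster $C_j$ at $p_j$ take Puiseux form $a_\mu=p_j+\alpha_\mu\epsilon^{s_j}+\cdots$ with distinct leading coefficients generically, and the free roots $a_1,\dots,a_l$ converge to points different from all $[p_i]$. The quantity $h(\epsilon)$ is either bounded, i.e.\ converges in the affine part, or blows up like $c\,\epsilon^{-t}$ with $c\neq0$, i.e.\ the limit lies on $\mathbb{P}(\mathcal{O})$.

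\textbf{Leading-order analysis.}
\begin{enumerate}
\item \emph{Free roots.} Suppose $l>0$. In the equation at a free root $a_\mu$, every $\zeta$-term is bounded: even $\sum_{\nu\in C_j}\zeta(a_\mu-a_\nu)$ tends to $k_j\zeta(a_\mu-p_j)$. Hence $h$ must stay bounded.
\item \emph{Clusters.} At $\mu\in C_j$, the order-$\epsilon^{-s_j}$ part of the equation is
\[
\sum_{\nu\in C_j,\nu\neq\mu}\frac{1}{\alpha_\mu-\alpha_\nu}-\frac{n_j}{\alpha_\mu}+c_j=0 .
\]
Here $c_j$ is the coefficient of $\epsilon^{-s_j}$ in $h$. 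It is $0$ if $h=O(\epsilon^{-t})$ with $t<s_j$, in particular if $h$ is bounded.
\item \emph{Bounded $h$.} If $h$ is bounded, every cluster satisfies the system of Lemma~\ref{Lemma_sol_x} with $x=n_j$, $k=k_j$. That lemma forces $n_j=(k_j-1)/2$, that is $k_j=2n_j+1$ (so $n_j\in\frac12\mathbb{N}$), for every $j$ with $k_j>0$. Combined with step 1, this proves the last sentence of the proposition, and the ``all'' alternative.
\item \emph{Unbounded $h$.} If $h\sim c\,\epsilon^{-t}$ with $c\neq0$, a cluster with $s_j<t$ would leave the unbalanced term $c\,\epsilon^{-t}$ at top order, which is impossible. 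If instead $s_j>t$, the cluster has $c_j=0$ and again falls under Lemma~\ref{Lemma_sol_x}. Such a cluster should then be excluded by looking at the next order, where $h$ enters; this is the point I am least sure of and would check carefully. For the main clusters, $s_j=t$ and $c_j=c\neq0$. As in the proof of Lemma~\ref{lemma_boundary}, $q(z)=\prod_{\mu\in C_j}(z-\alpha_\mu)$ then satisfies
\[
zq''+(2cz-2n_j)q'-2ck_jq=0,
\]
whose degree-$k_j$ solution is a Laguerre polynomial $L^{(-2n_j-1)}_{k_j}(\text{const}\cdot z)$.
\item \emph{Excluding $k_j=2n_j+1$ when $c\neq0$.} If $k_j=2n_j+1$, the parameter is $-k_j$, and $L^{(-k)}_k(x)=(-x)^k/k!$. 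So all $\alpha_\mu$ would be $0$, contradicting that they are distinct and nonzero. Equivalently, the discriminant and $p(0)$ criterion of Lemma~\ref{lemma_boundary} detects this degeneracy. Hence when $h$ is unbounded, no $k_j$ equals $2n_j+1$: the ``none'' alternative.
\end{enumerate}

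\textbf{Main obstacle.} The delicate step is the bookkeeping of scales when $h$ diverges: different clusters may approach their poles at different rates, and the relative sizes of $s_j$ and $t$ must be controlled. Coincident leading coefficients $\alpha_\mu$ could also require an iterated rescaling inside a cluster. I would handle both by a Newton-polygon argument: at the finest scale where some roots still coincide, recenter and apply the same dichotomy. Showing that every sub-configuration inherits the same alternative is where I expect most of the work to go.
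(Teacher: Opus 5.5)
Your steps 1--3 and 5 are fine, modulo the distinctness of leading coefficients that you mention. They parallel the paper's two cases, organized by whether $h$ stays bounded rather than by the coefficient of $h$ at the deepest scale. But the point you flag in step~4 is a genuine gap, and it is exactly the content of the ``all or none'' claim when $l=0$.

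Suppose $h\sim c\,\epsilon^{-t}$ with $c\neq0$ (so $l=0$ by step~1), and some cluster $C_j$ approaches $p_j$ faster, i.e.\ $s_j>t$. At its leading order this cluster does not see $h$, so Lemma~\ref{Lemma_sol_x} only gives $k_j=2n_j+1$, and nothing in your argument excludes it. You have shown that $l>0$ or bounded $h$ forces every cluster to have $k_j=2n_j+1$. You have also shown that unbounded $h$ forbids this at the blow-up scale, but not at deeper scales.

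This is not a routine next-order check, because global bookkeeping cannot see it. Summing the equations over $C_j$ gives the exact identity $n_j\sum_{\mu\in C_j}\zeta(a_\mu-p_j)=k_jh+O(1)$. So through its subleading terms, $n_j\sum_{\mu\in C_j}\zeta(a_\mu-p_j)$ carries exactly $k_j$ times the singular part of $h$, and summing over all clusters yields only $l\,h=O(1)$, which is empty for $l=0$.

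For the same reason, the paper's handling of this case cannot simply be imported. Its proof steps down from the deepest scale and asserts $H'(n-\sum_{j\in S'}k_j)=0$, computing the $\epsilon^{-d'}$-coefficient of $h$ from the $S'$-clusters alone. But the deeper clusters contribute a further $\sum_{j\in S}k_jH'$ to that coefficient of $nh$. Once this is restored, the relation at the shallowest cluster scale reduces to $l\,H'=0$.

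What closes the gap is an argument internal to a cluster with $k=2n_j+1$, showing that such a cluster forces $h$ to stay bounded. Put $u_\mu=a_\mu-p_j$ for $\mu\in C_j$. Absorb everything holomorphic at $p_j$ (the other roots and poles, and the regular parts $\zeta(x)-1/x$ within the cluster) into $\tilde h+\phi(u)$, where $\phi(0)=0$ and $\phi$ is holomorphic on a fixed disc with bounded Taylor coefficients. Here $\tilde h$ differs from $h$ by a bounded amount.

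The equations then read exactly $\frac{q''(u_\mu)}{2q'(u_\mu)}-\frac{n_j}{u_\mu}+\tilde h+\phi(u_\mu)=0$ for $q(z)=\prod_{\mu\in C_j}(z-u_\mu)$. Hence $zq''-(k-1)q'+2z(\tilde h+\phi)q'=B\,q$ with $B$ holomorphic, $B(0)=2k\tilde h+O(|u|)$, and $B^{(i)}(0)=O(1)$ for $i\geq1$.

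The coefficients of $z^0,\dots,z^{k-2}$ give the triangular recursion $(d+1)(d+1-k)\,q_{d+1}=2(k-d)\,\tilde h\,q_d\bigl(1+O(|\tilde h|^{-2}+|u|/|\tilde h|)\bigr)$. Hence $q_{k-1}\asymp\tilde h^{k-1}q_0\neq0$. But the coefficient of $z^{k-1}$ says that $2\tilde h\,q_{k-1}$ equals terms smaller by that same factor, which is impossible as $\tilde h\to\infty$. This is the perturbed form of the degeneracy $L^{(-k)}_k\propto z^k$ you use in step~5, and it needs no assumption on $s_j$ or on distinct leading coefficients.

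Similarly, weighting the $\mu$-th equation by $u_\mu$ and summing gives exactly $\tfrac12k_j(k_j-1-2n_j)=-\tilde h\sum_\mu u_\mu-\sum_\mu u_\mu\phi(u_\mu)$. So $k_j=2n_j+1$ whenever $\tilde h\,u\to0$. This replaces your leading-order use of Lemma~\ref{Lemma_sol_x} in step~3 and removes the Newton-polygon issue there.

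The proposition then follows. A single $(2n_j+1)$-cluster makes $h$ bounded. Bounded $h$, which holds in particular when $l>0$, makes every cluster of that type.
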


\begin{proof}
Let $[(\mathbf{a}(\epsilon),h(\epsilon))] \in \mathcal{Y}_{\mathbf{n},\mathbf{p}}(\tau)$ be a curve converging to $[(\mathbf{a},h)] \in \partial \overline{\mathcal{Y}}_{\mathbf{n},\mathbf{p}}(\tau)$ at $\epsilon=0$. 
Up to a choice of representative, we assume that $a_\mu = p_j$ whenever $[a_\mu] = [p_j]$.
Let $I_j$ be the index set of $\mu$ with $a_\mu(\epsilon)$ converging to $p_j$. The parametrization takes the form:
\[
a_\mu(\epsilon) = p_j + \alpha_\mu \epsilon^{d_j} + O(\epsilon^{d_j+1}), \quad \text{for } \mu \in I_j,
\]
where $d_j \geq 1$ and $\alpha_\mu \neq 0$. 

To balance the local expansions, we start with the highest order poles. Let $d = \max_{k_j>0} \{d_j\}$ and define the index set $S = \{ j \mid d_j = d \}$. Compute
\[
    h(\epsilon) = \frac{1}{n}\zeta_{\mathbf{n},\mathbf{p}}(\mathbf{a}(\epsilon)) = H \epsilon^{-d} + O(\epsilon^{-d+1}),
\]
where 
\[
H = \frac{1}{n} \sum_{s \in S} n_s \sum_{\mu \in I_s} \frac{1}{\alpha_\mu}.
\]
For $j \in S$, the defining constraint equation at $z=a_\mu(\epsilon)$ has a highest order pole of $\epsilon^{-d}$. Extracting the $\epsilon^{-d}$ coefficient yields
\[
\sum_{\nu \in I_j, \neq \mu} \frac{n}{\alpha_\mu - \alpha_\nu} - \frac{n \cdot n_j}{\alpha_\mu} + n H = 0,
\]
which simplifies to
\[
\sum_{\nu \in I_j, \neq \mu} \frac{1}{\alpha_\mu - \alpha_\nu} - \frac{n_j}{\alpha_\mu} + H = 0, \quad \text{for } \mu \in I_j.
\]
Summing this equation over all $\mu \in I_j$ gives $n_j \sum_{\mu \in I_j} \frac{1}{\alpha_\mu} = k_j H$. Substituting this back into the definition of $H$ yields
\[
H \Big( n - \sum_{j \in S} k_j \Big) = 0.
\]
We consider two cases:

\noindent \textbf{Case 1:} $H \neq 0$. This requires $\sum_{j \in S} k_j = n$. Since the total weight of all roots is $n$, this implies $l=0$ and that $S$ contains all indices (i.e., all $d_j$ are identical). The local equation on each $I_j$ takes the form
\[
\sum_{\nu \in I_j, \neq \mu} \frac{1}{\alpha_\mu - \alpha_\nu} - \frac{n_j}{\alpha_\mu} + H = 0, \quad \text{for } \mu \in I_j.
\]
By the proof of Case 2 of Lemma~\ref{lemma_boundary}, this system admits solutions only if $\frac{n_j}{k_j} \notin \{ 0, \frac{1}{2k_j}, \dots, \frac{k_j-1}{2k_j} \}$. Equivalently, $k_j \leq 2n_j$ when $n_j \in \frac{1}{2} \mathbb{N}$.

\noindent \textbf{Case 2:} $H = 0$. The local equation for $j \in S$ reduces to Lemma~\ref{Lemma_sol_x}, which admits solutions only when $k_j = 2n_j+1$. 
Inductively, since the coefficient of $\epsilon^{-d}$ in $h(\epsilon)$ vanishes, we step down to the next largest exponent $d' = \max \{d_j \mid d_j < d\}$. Let $S'$ be the index set for $d'$. The exact same argument yields $H' \Big( n - \sum_{j \in S'} k_j \Big) = 0$. Since $S$ is nonempty, the sum $\sum_{j \in S'} k_j$ is strictly less than $n$, which implies $H'=0$. By induction, all such coefficients vanish, forcing $k_j = 2n_j+1$ for all colliding roots and in this case $h$ is finite in the limit.
\end{proof}

We will call the boundary with $h=\infty$ Type-I and with $h$ finite Type-II. 

\begin{theorem}[Classification of Type-I boundary] \label{special-pt}
Given $\mathbf{n} \in (\mathbb{C} \setminus \{0\} )^r$ with total weight $n\in \mathbb{N}$,
the Type-I boundary (where $h=\infty$) of $\overline{\mathcal{Y}}_{\mathbf{n},\mathbf{p}}(\tau)$ consists of the points $[(\mathbf{a}, \infty)]$ satisfying
\[
    \{[\mathbf{a}]\} = \{[p_1]^{k_1}, \dots, [p_r]^{k_r}\},
\]
where the multiplicities $k_j$ satisfy $k_j \leq 2n_j$ whenever $n_j \in \frac{1}{2}\mathbb{N}$.
\end{theorem}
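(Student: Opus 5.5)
The plan is to prove two inclusions. First, every Type-I boundary point has the stated shape. Second, every configuration $\{[p_1]^{k_1},\dots,[p_r]^{k_r}\}$ with $\sum_j k_j=n$ and $k_j\le 2n_j$ whenever $n_j\in\tfrac12\mathbb{N}$ actually occurs as a limit with $h\to\infty$.

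\emph{Necessity.} This is essentially contained in the proof of Proposition~\ref{prop_no_mixed}. Take a curve $[(\mathbf{a}(\epsilon),h(\epsilon))]\in\mathcal{Y}_{\mathbf{n},\mathbf{p}}(\tau)$ tending to a boundary point with $h=\infty$, and run the inductive step-down over the exponents $d>d'>\cdots$. If every leading coefficient $H,H',\dots$ vanished, we would be in Case~2 throughout, and $h$ would stay finite. So at some stage $H^{(s)}\neq0$, and Case~1 applies there. The relation $H^{(s)}\big(n-\sum_{j\in S^{(s)}}k_j\big)=0$ then forces $l=0$ and $\sum_{j\in S^{(s)}}k_j=n$, so all roots collide with poles. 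Since the earlier stages had $H=0$ with nonempty $S$, which would make the sum strictly smaller than $n$, this in fact happens at the first stage. On each cluster we obtain the system of Lemma~\ref{lemma_boundary} with $x=n_j/k_j$ and $\lambda\neq0$. By Case~2 of that lemma it is solvable only if $n_j/k_j\notin\{0,\tfrac1{2k_j},\dots,\tfrac{k_j-1}{2k_j}\}$. For $n_j\in\tfrac12\mathbb{N}$ this is exactly $k_j\le 2n_j$, and for non-half-integral $n_j$ it imposes nothing.

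\emph{Existence.} Fix admissible $(k_j)$ and make the ansatz
\[
a_\mu(\epsilon)=p_j+\sum_{i\ge1}\alpha_{\mu i}\epsilon^i,\qquad \mu\in I_j,\ |I_j|=k_j .
\]
At order $\epsilon^{-1}$ we need, for each cluster with $k_j>0$, a solution of $\sum_{\nu\in I_j,\neq\mu}\frac{1}{\alpha_\mu-\alpha_\nu}-\frac{n_j}{\alpha_\mu}+H=0$ with one common $H\neq0$. We take the Laguerre solution from the proof of Lemma~\ref{lemma_boundary}, namely $p(z)=L^{(-2n_j-1)}_{k_j}(-2\lambda_j x z)$ with $x=n_j/k_j$ and $\lambda_j$ chosen so that $x\lambda_j=H$. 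The hypothesis $k_j\le 2n_j$, together with $n_j\neq0$, ensures that $x$ avoids the bad set, so the roots are distinct and nonzero. Summing the equations reproduces $H=\frac1n\sum_j n_j\sum_{\mu\in I_j}\alpha_{\mu1}^{-1}$ automatically because $\sum_j k_j=n$. Hence the leading order is consistent, and $h(\epsilon)=H\epsilon^{-1}+O(1)\to\infty$.

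\emph{Higher orders.} At order $\epsilon^{i-2}$ we get a linear system for $(\alpha_{\mu i})$. Its matrix is block-diagonal in the clusters, with each block the Jacobian $J_f$ of Lemma~\ref{lemma_boundary} (rank $k_j-1$, image the sum-zero hyperplane), plus rank-one couplings through the $h$-term. The right-hand side involves only lower-order data and Taylor coefficients of $\zeta,\wp$ at $p_j-p_{j'}$. This is the main obstacle: block by block we can only match the sum-zero part, leaving $r'$ cluster-sum conditions ($r'$ = number of nonempty clusters), one of which is automatic since the total equations sum to zero. The remaining $r'-1$ conditions must be absorbed by the $r'$ kernel directions, which are the per-cluster rescalings $v_1=(\alpha_{\mu1})_{\mu\in I_j}$, modulo one global reparametrization of $\epsilon$. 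I will first compute the effect of a kernel shift $t_j v_1$ in cluster $j$ on the cluster sums, through the $h$-coupling term $\frac1n\sum n_{j'}\sum 1/\alpha$. I expect this to give an $r'\times r'$ matrix of the form $\mathrm{diag}(c_j)-(\text{rank one})$, with $c_j\propto k_j H\neq0$, whose kernel is exactly the global scaling. Showing this corank-one property, i.e.\ checking that the constants do not degenerate, is the step I would verify carefully. Once it holds, each order is uniquely solvable up to reparametrization, as in Remark~\ref{remark_boundary}. If convergence of the formal series is an issue, I would instead apply Corollary~\ref{cor_boundary}-style inverse function arguments in the rescaled coordinates $\alpha=(a-p_j)/\epsilon$. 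That yields an actual analytic branch in $\mathcal{Y}_{\mathbf{n},\mathbf{p}}(\tau)$ converging to $[(\mathbf{a},\infty)]$.
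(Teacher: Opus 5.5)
Your proposal follows the paper's proof. The multiplicity constraint comes from Case~1 of Proposition~\ref{prop_no_mixed}, and existence comes from the separated Laguerre solutions of Lemma~\ref{lemma_boundary} at order $\epsilon^{-1}$, with each higher order solvable up to a one-dimensional freedom absorbed by reparametrizing $\epsilon$. The corank-one check you single out, which the paper only asserts, does go through: shifting cluster $j$ by its own scaling direction changes the cluster sums by the $j$-th column of $H\big(\mathrm{diag}(k_1,\dots,k_{r'})-\tfrac1n\,kk^{T}\big)$, and this matrix has kernel spanned by $(1,\dots,1)$ because $H\neq0$, all $k_j>0$ and $\sum_j k_j=n$.
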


\begin{proof}
Up to a choice of representative, we assume that $a_\mu = p_j$ whenever $[a_\mu] = [p_j]$. For $j=1, \dots, r$, let $I_j$ be the index set of $\mu$ such that $a_\mu = p_j$. Consider the parametrization
\[
    a_\mu = p_j + \sum_{i \geq 1} \alpha_{\mu i} \epsilon^i, \text{ for } \mu \in I_j, \quad j=1,\dots,r.
\]
The multiplicity constraints have been studied in Case 1 of Proposition~\ref{prop_no_mixed}. 
Here we show that the coefficients $\{ \alpha_{\mu i} \}$ in the parametrization can be solved inductively as in the discussion of Remark~\ref{remark_boundary}.

The coefficient of $\epsilon^{-1}$ in the defining equation of the generalized Lam\'e curve gives
\[
    \sum_{\nu\in I_j,\neq \mu}\frac{n}{\alpha_{\mu 1} - \alpha_{\nu 1}} - \frac{n \cdot n_j}{\alpha_{\mu 1}} + \sum_{i=1}^r n_i \lambda_i = 0, \text{ for } \mu\in I_j, \ j=1,\dots,r,
\]
where 
\[
    \lambda_j := \sum_{\nu \in I_j} \frac{1}{\alpha_{\nu 1}}, \text{ for } j=1,\dots,r.
\]

Summing over the equations for $\mu \in I_j$ gives the global constraints:
\[
    - n n_j \lambda_j + k_j \sum_{i=1}^r n_i \lambda_i = 0, 
\]
which is equivalent to
\[
    [ n_1 \lambda_1 : n_2 \lambda_2 : \dots : n_r \lambda_r ] = [ k_1 : k_2 : \dots : k_r ]. 
\]
Replacing $\sum_{i=1}^r n_i \lambda_i$ with $\frac{n \cdot n_j}{k_j} \lambda_j$ for the equations with $\mu \in I_j$, we separate the variables:
\[
    \sum_{\nu\in I_j,\neq \mu}\frac{1}{\alpha_{\mu 1} - \alpha_{\nu 1}} - \frac{(k_j-1) \frac{n_j}{k_j} }{\alpha_{\mu 1}} + \sum_{\nu\in I_j,\neq \mu} \frac{ \frac{n_j}{k_j} }{\alpha_{\nu 1}} =0, \text{ for }\mu\in I_j, \ j=1,\dots,r.
\]
For each $j$, the system is of the form in Lemma~\ref{lemma_boundary}. Each system gives a unique solution up to scaling. The global constraint ensures that all the scalings are compatible. Overall, we have a $1$-dimensional freedom. 

The same argument works for all higher orders. By separating the system into subsystems for each $j$ and connecting them via global constraints, each order gives only a one-dimensional freedom. These are all absorbed by the possible reparametrizations of $\epsilon$:
\[
    \epsilon' = \sum_{i\geq 1}c_i \epsilon^i, \quad c_1 \neq 0.
\]
Each freedom that arises in solving for $\{ \alpha_{\mu i} \}_{\mu=1}^n$ corresponds to a choice of $c_i$. 

For the second statement, note that 
\[
    h := \lim_{\epsilon\rightarrow 0} \frac{1}{n} \zeta_{\mathbf{n},\mathbf{p}}(\mathbf{a}) = \lim_{\epsilon\rightarrow 0} \frac{1}{n \epsilon} \sum_{j=1}^r n_j \lambda_j =  \infty.
\]
The proof is complete.
\end{proof}

\begin{definition}
We denote the number of ansatz with $h=\infty$ as
\[
    F(\mathbf{n}) := \{ (k_1,\dots,k_r) \in \mathbb{Z}_{\geq 0}\ | \ \sum k_i=n, k_i \leq 2n_i \text{ when } n_i \in \tfrac{1}{2} \mathbb{N}   \}.
\]
Let $I :=\{ i \ |\ n_i \in \tfrac{1}{2} \mathbb{N}  \}$. The number takes the following form:
\[
    F(\mathbf{n}) = \sum_{ J \subset I } (-1)^{|J|} \binom{n_J+r-1}{r-1}_{\rm comb}, 
\]
where $n_J:= n - \sum_{j \in J}(2n_j+1)$, and $\binom{A}{B}_{\rm comb}:= 0$ if $A < B$. 
\end{definition}

By the proof of Case 2 in Proposition~\ref{prop_no_mixed}, any Type-II boundary point (where $h$ is finite) must consist of root collisions with multiplicities of the form $2n_j+1$. Up to reordering, we can write it as
\[
    \{[\mathbf{a}]\} = \{[a_1], \dots, [a_{n'}], [p_1]^{2n_1+1}, \dots, [p_k]^{2n_k+1}\},
\]
where $[a_\mu] \neq [p_j]$ for $\mu=1,\dots,n'$ and $j=1,\dots,r$. Consequently, this boundary configuration only appears when the corresponding fixed weights are half-integers: $n_1,\dots,n_k \in \frac{1}{2}\mathbb{N}$.

To describe the constraints on the remaining non-colliding roots $\mathbf{a}' = \{a_\mu\}_{\mu=1}^{n'}$ and the parameter $h$, we define $\mathbf{n}'$ with:
\[
n_i'=
\begin{cases}
    -n_i-1, \text{ for }i=1,\dots,k;
    \\
    n_i, \text{ otherwise}.
\end{cases}
\]
Furthermore, let $F_{\mathbf{n},\mathbf{p};i}$ denote the log-free condition for the generalized Lam\'e equation of type $\mathbf{n}$ at $p_i$, and let $A_{\mathbf{n},\mathbf{p};i}$ and $B_{\mathbf{n},\mathbf{p}}$ be the local coefficients defined by equations~\eqref{A-a} and~\eqref{B-a}. We have

\begin{theorem}[Algebraic constraints of Type-II boundary] \label{special-pt-II}
With the setup and notation above, let $[(\mathbf{a},h)] \in \partial\overline{\mathcal{Y}}_{\mathbf{n},\mathbf{p}}(\tau)$ be a Type-II boundary point. The remaining non-colliding roots $\mathbf{a}'$ and the parameter $h$ satisfy:
\begin{enumerate}
    \item If $n'=0$, 
    \[
        F_{\mathbf{n},\mathbf{p};i}\Big(A_{\mathbf{n}',\mathbf{p};i}(\mathbf{a}',h),B_{\mathbf{n}',\mathbf{p}}(\mathbf{a}',h)\Big)=0, \text{ for }i=1,\dots,k;
    \]
    \item If $n'>0$, 
    \[
    \begin{cases}
        [(\mathbf{a}',h)] \in \mathcal{Y}_{\mathbf{n}', \mathbf{p}}(\tau);
        \\
        F_{\mathbf{n},\mathbf{p};i}\Big(A_{\mathbf{n}',\mathbf{p};i}(\mathbf{a}',h),B_{\mathbf{n}',\mathbf{p}}(\mathbf{a}',h)\Big)=0, \text{ for }i=1,\dots,k.
    \end{cases}
    \]
\end{enumerate}
\end{theorem}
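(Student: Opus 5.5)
Take a curve $[(\mathbf{a}(\epsilon),h(\epsilon))]\in\mathcal{Y}_{\mathbf{n},\mathbf{p}}(\tau)$ converging to the given Type-II point. By Proposition~\ref{prop_no_mixed} (Case 2), $h(\epsilon)\to h$ is finite and exactly $2n_i+1$ roots converge to $p_i$ for $i=1,\dots,k$. The plan is to treat each $w_\epsilon:=w_{\mathbf{n},\mathbf{p};\mathbf{a}(\epsilon)}$ as a solution of the GLE \eqref{e:GLE} with accessory parameters $(\mathbf{A}(\epsilon),B(\epsilon))$ from \eqref{A-a}, \eqref{B-a}, and to pass to the limit in two places: the residue equations at the surviving roots, and the log-free conditions at the colliding poles.

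\textbf{Step 1: limit of the ansatz.} For each $i\le k$, the cluster of $2n_i+1$ roots near $p_i$ gives
\[
\frac{\prod_{\mu\in I_i}\sigma(z-a_\mu(\epsilon))}{\sigma(z-p_i)^{n_i}}\longrightarrow \sigma(z-p_i)^{n_i+1}=\sigma(z-p_i)^{-n_i'},
\]
locally uniformly on compacta avoiding $p_i+\Lambda_\tau$. Hence $w_\epsilon\to w_0:=e^{hz}\prod_{\mu\le n'}\sigma(z-a_\mu)\big/\prod_i\sigma(z-p_i)^{n_i'}$, which is the GHH ansatz of type $\mathbf{n}'$ with roots $\mathbf{a}'$ and exponent $h$. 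If $n'>0$, the residue equations at the remaining roots,
\[
h+\sum_{\nu\neq\mu}\zeta(a_\mu-a_\nu)-\sum_i n_i\zeta(a_\mu-p_i)=0,
\]
pass to the limit: the cluster terms $\sum_{\nu\in I_i}\zeta(a_\mu-a_\nu)-n_i\zeta(a_\mu-p_i)$ tend to $-n_i'\zeta(a_\mu-p_i)$. Summing the limiting equations over $\mu$ gives $n'h=\zeta_{\mathbf{n}',\mathbf{p}}(\mathbf{a}')$, so $[(\mathbf{a}',h)]\in\mathcal{Y}_{\mathbf{n}',\mathbf{p}}(\tau)$. This is the first condition in case (2).

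\textbf{Step 2: accessory parameters converge.} The potential is determined by $w$ via $Q=w''/w$, so $Q_\epsilon\to w_0''/w_0$ locally uniformly away from the poles. Since $n_i(n_i+1)=n_i'(n_i'+1)$, the potential $w_0''/w_0$ has the same double-pole coefficients as the type-$\mathbf{n}$ equation. Comparing Laurent data, or applying Theorem~\ref{Ansatz} to type $\mathbf{n}'$, yields
\[
(\mathbf{A}(\epsilon),B(\epsilon))\to(A_{\mathbf{n}',\mathbf{p};i}(\mathbf{a}',h),\,B_{\mathbf{n}',\mathbf{p}}(\mathbf{a}',h)).
\]
Convergence holds because the coefficients of an elliptic function with prescribed poles are determined by finitely many Laurent coefficients, and these converge. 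The formula \eqref{A-a} for type $\mathbf{n}'$ uses $\ell_i'=2n_i'$, and this is consistent.

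\textbf{Step 3: log-free conditions.} For each $\epsilon$, the type-$\mathbf{n}$ equation with parameters $(\mathbf{A}(\epsilon),B(\epsilon))$ has a log-free solution at $p_i$ with leading exponent $-n_i$, namely $w_\epsilon$. The discussion after Theorem~\ref{Ansatz} then gives $F_{\mathbf{n},\mathbf{p};i}(\mathbf{A}(\epsilon),B(\epsilon))=0$ for all $i$ with $n_i\in\frac12\mathbb{N}$. Since $F_{\ell_i}$ is a polynomial with coefficients depending only on $\mathbf{p},\tau$, continuity and Step 2 give the stated equations for $i=1,\dots,k$.

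The $n'=0$ case runs the same way: the surviving factor is $e^{hz}/\prod\sigma(z-p_i)^{n_i'}$, with $h$ the limit of $h(\epsilon)$, and only Step 3 yields a constraint.

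\textbf{Expected main obstacle.} The main difficulty is Step 2. The individual expressions \eqref{A-a}, \eqref{B-a} for type $\mathbf{n}$ blow up, since $\zeta(p_i-\mathbf{a})$ and $\wp(p_i-\mathbf{a})$ diverge as the cluster collapses. So the limit must be taken through the global identity $Q_\epsilon=w_\epsilon''/w_\epsilon$ and the uniqueness of elliptic functions with given principal parts, not termwise. Concretely, one writes $A_j(\epsilon)$ for $j\neq i$ and $B(\epsilon)$ as coefficients of $w_\epsilon''/w_\epsilon$ at a base point away from $p_i$, where convergence is uniform. Then $A_i(\epsilon)=-\sum_{j\ne i}A_j(\epsilon)$ converges as well.
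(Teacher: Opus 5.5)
Your proof is correct and has the same skeleton as the paper's. You follow a curve in $\mathcal{Y}_{\mathbf{n},\mathbf{p}}(\tau)$ to the Type-II point, show that the accessory parameters converge to $A_{\mathbf{n}',\mathbf{p};i}(\mathbf{a}',h)$ and $B_{\mathbf{n}',\mathbf{p}}(\mathbf{a}',h)$, and pass the closed polynomial condition $F_{\mathbf{n},\mathbf{p};i}=0$ to the limit.

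Where you differ is in how you justify the convergence. The paper parametrizes each cluster as $a_\mu(\epsilon)=p_j+\sum_{i\ge 1}\alpha_{\mu i}\epsilon^i$, with leading coefficients the $(2n_j+1)$-th roots of unity from Lemma~\ref{Lemma_sol_x}, and asserts the limits by ``direct computation''. At a colliding pole this is not immediate. The relation $\sum_{\mu\in I_j}\alpha_{\mu 1}^{-1}=0$ removes the $\epsilon^{-1}$ terms from $\zeta(p_j-\mathbf{a}(\epsilon))$ and $h(\epsilon)$. The remaining finite parts involve the next-order coefficients $\alpha_{\mu 2}$, and these must combine to turn the prefactor $\ell_j=2n_j$ in \eqref{A-a} into $2n_j'=-2n_j-2$.

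Your global argument through $w_\epsilon\to w_0$ and $Q_\epsilon=w_\epsilon''/w_\epsilon$ bypasses all of this. It gets the identification with the type-$\mathbf{n}'$ formulas directly from Theorem~\ref{Ansatz}. It also proves the first condition in case (2), which the paper's proof leaves implicit. What the local parametrization buys instead is explicit control of the expansions, which the converse (sufficiency) direction discussed in the subsequent remark would require.

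There are two points to tighten.

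\textbf{Step 2.} State it simply: $Q_\epsilon-\sum_j n_j(n_j+1)\wp(z-p_j)$ lies in the fixed space spanned by the linearly independent functions $1$ and $\zeta(z-p_j)-\zeta(z-p_r)$, $j<r$. Locally uniform convergence on any nonempty open set then gives convergence of all coefficients at once. The device $A_i=-\sum_{j\neq i}A_j$ becomes unnecessary. Reading off $A_j$ at $p_j$ would in any case fail when $p_j$ also carries a cluster.

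\textbf{Step 1.} Passing to the limit in $\zeta(a_\mu-a_\nu)$ requires the surviving limit roots to be pairwise distinct mod $\Lambda_\tau$. This distinctness is part of the claim $[(\mathbf{a}',h)]\in\mathcal{Y}_{\mathbf{n}',\mathbf{p}}(\tau)$, but it is not among the hypotheses you use. The fix is to do Step 2 first, since it only needs $w_\epsilon\to w_0$. That gives $w_0''=Q^*w_0$ with $Q^*$ holomorphic off $\mathbf{p}+\Lambda_\tau$. Since $w_0\not\equiv 0$ solves this linear equation, it cannot have multiple zeros there.
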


\begin{proof}
Up to a choice of representative, we assume that $a_\mu = p_j$ whenever $[a_\mu] = [p_j]$. Let $I_j$ be the index set of $\mu$ such that $a_\mu = p_j$.
Consider the following parametrization
\[
    a_\mu(\epsilon) = p_j + \sum_{i \geq 1} \alpha_{\mu i} \epsilon^i, \text{ for }\mu \in I_j, \quad j=1,\dots,k,
\]
with $\{\alpha_{\mu 1}\}_{\mu \in I_j} = \{ \xi_{2n_j+1}^m \}_{m=0}^{2n_j}$, where $\xi_{2n_j+1}$ is a primitive $(2n_j+1)$-th root of unity. This choice follows from Lemma~\ref{Lemma_sol_x}. Assuming the existence of boundary points ensures that we can solve $\alpha_{\mu i}$ inductively without obstruction. 

A direct computation shows that  
\[
\begin{split}
    &\lim_{\epsilon \rightarrow 0} A_{\mathbf{n},\mathbf{p};i} \Big(\mathbf{a}(\epsilon), \frac{1}{n} \zeta_{\mathbf{n},\mathbf{p}}( \mathbf{a}(\epsilon) )  \Big) = A_{\mathbf{n}',\mathbf{p};i}(\mathbf{a}',h);
    \\
    &\lim_{\epsilon \rightarrow 0} B_{\mathbf{n},\mathbf{p}} (\mathbf{a}(\epsilon), h(\epsilon))  = B_{\mathbf{n}',\mathbf{p}}(\mathbf{a}',h).
\end{split}
\]
Note that the ansatz solution corresponding to $\mathbf{a}(\epsilon)$ is log-free. It satisfies the log-free condition at $p_i$. The values of $A_{\mathbf{n},\mathbf{p};i}$ and $B_{\mathbf{n},\mathbf{p}}$ vary continuously in the limit $\epsilon \rightarrow 0$. The limit also satisfies the same log-free condition $F_{\mathbf{n},\mathbf{p};i}$.  
\end{proof}
\begin{remark}
We explain more about Theorem~\ref{special-pt-II}.

First, it classifies all possible Type-II boundary points of $\mathcal{Y}_{\mathbf{n},\mathbf{p}}(\tau)$. More precisely, these points take the form $[(\mathbf{a},h)] \in \overline{\mathcal{Y}}_{\mathbf{n},\mathbf{p}}(\tau)$ where the root configuration contains the collisions $\{[p_1]^{2n_1+1}, \dots, [p_k]^{2n_k+1}\}$ and the remaining roots form a point $[(\mathbf{a}',h)] \in \mathcal{Y}_{\mathbf{n}',\mathbf{p}}(\tau)$.  Unlike the Type-I boundary, Type-II boundary might not exist. The reason comes from the algebraic constraints in Theorem~\ref{special-pt-II}. 

Note that $[(\mathbf{a}',h)] \in \mathcal{Y}_{\mathbf{n}',\mathbf{p}}(\tau)$ automatically implies that it satisfies the log-free condition of type $\mathbf{n}'$ at the fixed poles $p_i$:
\[
    F_{\mathbf{n}',\mathbf{p};i}\Big(A_{\mathbf{n}',\mathbf{p};i}(\mathbf{a}',h),B_{\mathbf{n}',\mathbf{p}}(\mathbf{a}',h)\Big)=0, \text{ for }i=1,\dots,k.
\]
The constraints in Theorem~\ref{special-pt-II} additionally impose the log-free condition of type $\mathbf{n}$ at $p_i$: 
\[
    F_{\mathbf{n},\mathbf{p};i}\Big(A_{\mathbf{n}',\mathbf{p};i}(\mathbf{a}',h),B_{\mathbf{n}',\mathbf{p}}(\mathbf{a}',h)\Big)=0, \text{ for }i=1,\dots,k.
\]
We emphasize the appearance of both $\mathbf{n}$ and $\mathbf{n}'$. The type $\mathbf{n}$ condition gives $k$ extra constraints to the point $[(\mathbf{a}',h)] \in \mathcal{Y}_{\mathbf{n}',\mathbf{p}}(\tau)$. Fixing $\mathbf{p}$, the expected dimension of this kind of boundary is $1-k$. The locus of all Type-II boundaries needs to be further explored. 

Second, Theorem~\ref{special-pt-II} only gives the necessary conditions for the existence of Type-II boundary points. It is expected that these conditions are also sufficient. In other words, 
\[
    F_{\mathbf{n},\mathbf{p};i}\Big(A_{\mathbf{n}',\mathbf{p};i}(\mathbf{a}',h),B_{\mathbf{n}',\mathbf{p}}(\mathbf{a}',h)\Big)=0, \text{ for }i=1,\dots,k,
\]
is exactly the obstruction to existence. For small $n_i$, this can be proved by direct power series computation, but the computational complexity grows rapidly for large $n_i$.

Finally, note that when $n'>0$, $h$ is uniquely determined. When $n'=0$, $h$ has finitely many choices (see Example~\ref{eg:GLC_half_periods}). We give an alternative computation using the constraints in Theorem~\ref{special-pt-II}. 

Let $\mathbf{n} = (\frac{1}{2},\frac{1}{2},\frac{1}{2}, \frac{1}{2} )$. 
Consider a Type-II boundary point $[(\mathbf{a},h)] \in \overline{\mathcal{Y}}_{\mathbf{n},\mathbf{p}}(\tau)$ with $\{\mathbf{a}\} = \{p_1^2\}$. 
The corresponding ansatz takes the weight 0 form
\[
    w_{\mathbf{n}',\mathbf{p};h}(z) = \frac{\exp(hz) \sigma(z-p_1)^{3/2}  }{ \prod_{i=2}^4 \sigma(z-p_i)^{1/2} }, 
\]
where $\mathbf{n}' = (-\frac{3}{2},\frac{1}{2},\frac{1}{2},\frac{1}{2})$.

From Theorem~\ref{Ansatz}, we have
\[
\begin{split}
    A_1 &= 3h - \frac{3}{2} \sum_{j=2}^4 \zeta_{1j},
    \\
    A_i &= -h - \frac{3}{2} \zeta_{i1} + \frac{1}{2}\sum_{j=2,\neq i}^4 \zeta_{ij}, \quad \text{for } i =2,3,4,
\end{split}
\]
and
\[
\begin{split}
    B &= A_i^2 - \sum_{j \neq i} \zeta_{ij} A_j - \frac{3}{4} \wp_i, \quad \text{for } i=2,3,4,
    \\
    B &= \Big( \frac{A_1}{3} \Big)^2  - \sum_{j \neq 1} \zeta_{1j} A_j + \frac{5}{4}\wp_1.
\end{split}
\]
From Theorem~\ref{special-pt-II}, we have $1$ extra equation at $p_1$:
\[
    B = A_1^2 - \sum_{j \neq 1} \zeta_{1j} A_j - \frac{3}{4} \wp_1. 
\]
Comparing the two equations involving $B$ yields $\frac{8}{9}A_1^2 = 2\wp_1$, which gives
\[
    h = \frac{1}{2} \Big( \sum_{j=2}^4 \zeta_{1j} \pm \sqrt{\wp_1} \Big).
\]
\end{remark}

\section{Degeneration formula}
\subsection{Addition map}
Let $\mathbf{n} \in ( \mathbb{C} \setminus \{ 0\} )^r$ with total weight $n\in \mathbb{N}$.
 Consider the  addition map:
\[
\sigma_{\mathbf{n},\mathbf{p}} : E_{\tau}^n \rightarrow E_{\tau}, \quad \
[\mathbf{a}] \mapsto \Big[\sum_{\mu=1}^n a_{\mu} - \sum_{i=1}^r n_ip_i \Big].
\]
By abuse of notation, we use the same symbol $\sigma_{\mathbf{n},\mathbf{p}}$ for the induced map on the quotient:
\begin{equation}\label{e:addmap}
\sigma_{\mathbf{n},\mathbf{p}} : \Sym^n E_{\tau} \rightarrow E_{\tau}, \quad \{[\mathbf{a}]\} \mapsto \Big[\sum_{\mu=1}^n a_{\mu} - \sum_{i=1}^r n_ip_i \Big].
\end{equation}
Since $n_i$ is in general not integer, the assumption $p_i \in \mathbb{C}$ is essential here.

The following degree formula is crucial for studying the geometry of (underlying) generalized Lam\'e curve. 
\begin{theorem} \label{thm_deg}
Let $I := \{ i\ |\ n_i \in \tfrac{1}{2}\mathbb{N} \}$. The addition map
\[
\sigma_{\mathbf{n},\mathbf{p}}: \overline{Y}_{\mathbf{n},\mathbf{p}}(\tau) \to E_\tau
\] 
is generically of degree 
\[
    \sum_{J \subset I} (-1)^{|J|} \binom{n_J +r}{r+1}_{\rm comb},
\]
where $n_J := n - \sum_{j \in J}(2n_j +1)$, and $\binom{A}{B}_{\rm comb} := 0$ if $A<B$.
\end{theorem}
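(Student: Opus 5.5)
The degree formula should be proved in two stages. First we treat the case $I=\emptyset$, i.e.\ all $n_i$ generic, where the claim is that $\sigma_{\mathbf{n},\mathbf{p}}|_{\overline{Y}_{\mathbf{n},\mathbf{p}}(\tau)}$ has degree $\binom{n+r}{r+1}$. Then the half-integral weights are handled by an inclusion--exclusion argument. The guiding observation is the identity
\[
\binom{n+r}{r+1}=\sum_{k=0}^{n}\binom{n-k+r-1}{r},
\]
which is exactly the shape of a recursion in $r$ coming from colliding two poles. For $r=1$ the right-hand side gives $\binom{n+1}{2}$, the classical degree of the addition map on the Lam\'e curve (cf.~\cite{Chai_Lin_Wang_2015}). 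That case will be the base of an induction on $r$.

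\textbf{Generic weights.} Fix a generic target $c\in E_\tau$ and consider the finite scheme $\overline{Y}^c_{\mathbf{n},\mathbf{p}}(\tau)$ as $\mathbf{p}$ varies. It is finite by Theorem~\ref{t:GLC} together with the dimension count in its proof. The length of this scheme is constant on the locus where no point escapes to the boundary in an uncontrolled way. We degenerate along $\mathbf{p}_{12,\epsilon}=(p_{12}+\epsilon,p_{12}-\epsilon,p_3,\dots,p_r)$ and track the roots as $\epsilon\to 0$.

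Each limit point splits into two scales.
\begin{itemize}
\item A cluster of $k$ roots collapses into $p_{12}$. After rescaling $a_\mu=p_{12}+\alpha_\mu\epsilon$, the $\epsilon^{-1}$ term of the defining equations gives a rational two-pole system, of the same type as Lemma~\ref{Lemma_sol_x} and Lemma~\ref{lemma_boundary} but with poles $\pm1$ of weights $n_1,n_2$.
\item The remaining $n-k$ roots solve the weight-$(n_1+n_2-k,n_3,\dots,n_r)$ system with $r-1$ poles.
\end{itemize}
Two facts must then be shown. First, the local rational system modulo its scaling/translation freedom has exactly one admissible solution for generic $n_1,n_2$; this plays the role of the Verma module coefficient $q^k$ with multiplicity one. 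Second, its Jacobian has the expected corank, as in Lemma~\ref{lemma_boundary}. Given these, the implicit function argument of Remark~\ref{remark_boundary} and Corollary~\ref{cor_boundary} lifts every point of each stratum uniquely back to $\epsilon\neq0$.

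Since the target shifts by $k[p_{12}]$ and $c$ is generic, properness then gives
\[
\deg_r(n)=\sum_{k=0}^{n}\deg_{r-1}(n-k),
\]
and induction yields $\binom{n+r}{r+1}$.

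\textbf{Half-integral weights.} Suppose $n_1\in\tfrac12\mathbb{N}$. Perturb to $(n_1+\delta,n_2-\delta,\dots)$, which has generic weights, and let $\delta\to0$. From Proposition~\ref{prop_no_mixed} and Theorem~\ref{special-pt-II}, the limit of the fiber over $c$ splits in two.
\begin{itemize}
\item The genuine fiber of $\overline{Y}_{\mathbf{n},\mathbf{p}}(\tau)$.
\item Points where exactly $2n_1+1$ roots coalesce at $p_1$ with leading coefficients given by roots of unity (Lemma~\ref{Lemma_sol_x}). The remaining roots lie in the fiber of $\overline{Y}_{(-n_1-1,n_2,\dots),\mathbf{p}}(\tau)$ over $c-(2n_1+1)[p_1]$.
\end{itemize}
This stratum has total weight $n-(2n_1+1)=n_{\{1\}}$, and its weight $-n_1-1$ is not in $\tfrac12\mathbb{N}$. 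Iterating over the elements of $I$ gives the alternating sum over $J\subset I$. The convention $\binom{A}{B}_{\rm comb}=0$ accounts for strata whose weight would be negative. For this step we must also show that each Type-II boundary configuration is hit with multiplicity exactly one in the $\delta$-family, and that no Type-I point (where $h=\infty$) lies over a generic $c$. The latter is clear, since the Type-I points in Theorem~\ref{special-pt} form a finite set with fixed images under $\sigma$.

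\textbf{Main obstacle.} The hardest step is conservation of degree, and in both stages it reduces to the same issue: the local rational clusters must contribute with multiplicity exactly one, with no solutions lost to infinity or to further coalescence. We would attack this first by adapting the Laguerre-polynomial/ODE argument of Lemma~\ref{lemma_boundary} to two poles $\pm1$. The generating polynomial of a $k$-cluster should satisfy a Heun/Jacobi-type equation whose degree-$k$ polynomial solution is unique for generic weights. A nondegenerate Jacobian would then follow from the same triangularity argument used for $L[z^d]$.
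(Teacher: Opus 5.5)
\textbf{The completeness of the collision limit is missing, and you cannot borrow it from the paper.} Your Stage~1 rests on the claim that every limit point as $\epsilon\to0$ is a $k$-cluster at scale exactly $\epsilon$ around $p_{12}$ together with an interior point of the lower-weight curve. That claim is the whole content of conservation of degree, and your plan for the ``main obstacle'' does not address it. The Jacobi-polynomial argument you describe is exactly Lemma~\ref{lemma_principal}: for $(n_1,n_2)\notin E_k$ the two-pole system has a unique solution with invertible Jacobian. Combined with nondegeneracy of the lower-curve block, this shows that configurations of your two-scale type lift uniquely to $\epsilon\neq0$. That gives only the inequality $\deg_r(n)\ge\sum_k\deg_{r-1}(n-k)$. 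For equality you must also rule out three kinds of Puiseux branches: roots approaching $p_{12}$ at a scale $\epsilon^d$ with $0<d<1$ (where $p_1,p_2$ look like a single pole of weight $n_1+n_2-k'$), finer clusters at $p_1$ or $p_2$ themselves, and limits on the boundary of the lower curve. Theorem~\ref{thm:underlying_dege_generic} does not supply this. Its proof ends with ``these account for all points by the degree formula'', so it uses the statement you are trying to prove. You would need an independent multi-scale leading-order analysis in the style of Proposition~\ref{prop_no_mixed}: generic $c$ excludes branches with $h\to\infty$, and Lemma~\ref{Lemma_sol_x} excludes finite-$h$ clusters when the relevant weights avoid $\tfrac12\mathbb N$. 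The same issue recurs in your $\delta$-stage, since the decomposition theorem is only asserted in the paper.

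\textbf{Under your hypothesis $I=\emptyset$ the two-scale picture can actually fail.} Lemma~\ref{lemma_principal} needs $n_1+n_2\notin\{\tfrac{k-1}{2},\dots,k-1\}$, and your induction needs $n_1+n_2-k\notin\tfrac12\mathbb N$. The condition $I=\emptyset$ does not guarantee a pair of poles with these properties. Take $\mathbf n=(\tfrac14,\tfrac14,\tfrac14,\tfrac94)$. Then $n=3$, $I=\emptyset$, the target degree is $\binom75=21$, and every pair of weights sums to $\tfrac12$ or $\tfrac52$. Collide $p_1,p_2$:
\begin{itemize}
\item For $k=0$ the stratum is the curve of weight $(\tfrac12,\tfrac14,\tfrac94)$. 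It has $I=\{1\}$ and degree $\binom64-\binom44=14$, not the $15$ your recursion uses.
\item For $k=1$ the stratum contributes $5$.
\item For $k=2$ one has $(x,y)=(\tfrac14,\tfrac14)\in E_2$, and the two-pole system has no admissible solution.
\item For $k=3$ the contribution is $0$.
\end{itemize}
So two-scale configurations account for only $19$ of the $21$ points. The other two must come from branches of a different shape. In the paper's description they form the doubled $k=2$ stratum of Corollary~\ref{cor:dege_special}(2): two roots approach $p_{12}$ at an intermediate scale and see a merged pole of weight $\tfrac12$, as in Lemma~\ref{Lemma_sol_x} with $k=2$. Hence Stage~1 as written only covers weights whose successive collision sums avoid $\tfrac12\mathbb Z_{\ge0}$; the last collision, whose sum is an integer, is harmless. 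You would still need a separate argument that the degree is constant on all of $\{I=\emptyset\}$.

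The paper avoids both problems by deforming something different. It perturbs the right-hand sides to $\rho_\mu$ with $|\rho_\mu|\to\infty$. There the count $n!\binom{n+r}{r+1}$ on $E_\tau^n$ is a weight-independent sum over set partitions $I_0\sqcup\dots\sqcup I_r$. It then lets $\rho\to0$, where by Lemma~\ref{lemma_boundary} the only degenerations are $(2n_j+1)$-clusters at half-integral poles. Inclusion--exclusion then gives the formula, with no induction on $r$ and no base case.
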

\begin{proof}
It is equivalent to study the generic degree of $\sigma_{\mathbf{n},\mathbf{p}}$ on the open set $Y_{\mathbf{n}, \mathbf{p}}(\tau)$, since the boundary has strictly lower dimension.
The proof proceeds in two steps. First, we compute the intersection number of the divisors $D_{\mu,\rho_\mu}$ in $E_\tau^n$ for general $\boldsymbol{\rho}$. By deforming to $|\rho_\mu| \to \infty$, we combinatorially count the points and obtain $n! \binom{n+r}{r+1}$. Second, we analyze the limit $\rho_\mu \to 0$ and apply an inclusion-exclusion argument to systematically subtract degenerate solutions (where $a_\mu \to p_i$), ensuring all remaining intersection points are valid. Because $Y_{\mathbf{n},\mathbf{p}}(\tau)$ is defined in the symmetric product ${\rm Sym}^n E_\tau$, the generic degree of $\sigma_{\mathbf{n},\mathbf{p}}$ is $1/n!$ times the number of valid solutions.

Given $\boldsymbol{\rho}=(\rho_1,\dots,\rho_n) \in \mathbb{C}^n$ with $\sum_{\mu=1}^n \rho_\mu=0$, we define 
\[
D_{\mu,\rho_\mu} := \left\{ \sum_{i = 1}^{r} \sum_{\nu = 1, \neq \mu}^n 
n_s\Big(
\zeta(a_\mu - a_\nu) - \zeta(a_\mu - p_i) + \zeta(a_\nu - p_i)
\Big) =  \rho_\mu \right\} \subset E_{\tau}^n,
\]
for $\mu=1,\dots,n$. Furthermore, given $c \in E_\tau$, we define
\[
Z^c(\boldsymbol{\rho}) :=  \left( \bigcap_{\mu=1}^n D_{\mu,\rho_\mu}\right) \cap \sigma_{\mathbf{n},\mathbf{p}}^{-1}(c) \subset E_\tau^n.
\]

We claim that, for generic $c$ and $\boldsymbol{\rho}$, by counting with multiplicities, 
\[
\#(Z^c(\boldsymbol{\rho})) = n! \binom{n+r}{r+1}.
\]
Assume $|\rho_\mu| >M>0$ for all $\mu$. For $M$ large enough, the points of $Z^c(\boldsymbol{\rho})$ cluster into components corresponding to partitions of the indices:
\[
Z^c(\boldsymbol{\rho}) = \bigsqcup_{\substack{I_0 \sqcup \dots \sqcup I_r = \{1,\dots,n\} \\ I_0 \neq \emptyset }} P_{I_0,\dots,I_r}^c (\boldsymbol{\rho}).
\]
As $M \to \infty$, each component $P^c_{I_0,\dots,I_r}$ is characterized by its limit:
\[
P_{I_0,\dots,I_r}^c(\infty) = 
\left\{
\mathbf{a}\in E_\tau^n \;\middle|\; 
\begin{array}{l}
a_\mu = a_\nu \text{ for all } \mu, \nu \in I_0, \\
a_\mu = p_j \text{ for all } \mu \in I_j, \forall j \in \{1,\dots,r\}
\end{array}
\right\}
\]
with multiplicity
\[
(|I_0|-1)! \prod_{i=1}^r |I_i|!.
\] 
The multiplicity corresponds to the number of ways to obtain $P_{I_0,\dots,I_r}^c(\infty)$ from the intersection of the irreducible components of $D_{\mu,\rho_\mu}$ for $\mu=1,\dots,n$. We also note that $P_{I_0,\dots,I_r}^c(\infty)$ is the disjoint union of $|I_0|^2$ points, corresponding to the number of $|I_0|$-torsion points in $E_\tau$.

Combining all the information above, we have 
\[
\begin{split}
\#(Z^c(\boldsymbol{\rho})) &=  \sum_{I_0 \sqcup \dots \sqcup I_r = \{1,\dots,n\}} | P_{I_0,\dots,I_r}^c (\boldsymbol{\rho}) | \cdot (|I_0|-1)! \prod_{i=1}^r |I_i|!
\\
&= \sum_{I_0 \sqcup \dots \sqcup I_r = \{1,\dots,n\}}   |I_0|\prod_{i=0}^r |I_i|!
\\
&= \sum_{|I_0|+\dots+|I_r|=n} \frac{n!}{\prod_{i=0}^r |I_i|!} |I_0|\prod_{i=0}^r |I_i|!
\\
&= n! \sum_{|I_0|+\dots+|I_r|=n} |I_0| = n! \binom{n+r}{r+1}.
\end{split}
\]
This finishes the first step.

Next, let $\boldsymbol{\rho} \to 0$. We study the intersection points that degenerate, i.e., where $a_\mu \to p_i$. We claim that the set of degenerate solutions can be stratified by subsets $J \subset I := \{ i \mid n_i \in  \tfrac{1}{2} \mathbb{N} \}$ satisfying 
\[
n_J := n - \sum_{j \in J} (2n_j+1) \ge 0.
\]
More precisely, for $J=\{ 1,\dots,l \}$, up to permutation of coordinates, the degenerate solutions in $E_\tau^n$ take the form:
\[
    \mathbf{a} = (\underbrace{p_1, \dots, p_1}_{2n_1+1}, \dots, \underbrace{p_l, \dots, p_l}_{2n_l+1}, a'_1, \dots, a'_{n_J}),
\]
where the remaining coordinates form a point $\{[a'_1], \dots, [a'_{n_J}]\} \in Y_{\mathbf{n}',\mathbf{p}}^{c'}(\tau)$,
with $\mathbf{n}' = ( -n_1-1,\dots,-n_l-1,n_{l+1}, \dots,n_r )$ and 
\[
    Y_{\mathbf{n}',\mathbf{p}}^{c'}(\tau) := Y_{\mathbf{n}',\mathbf{p}}(\tau) \cap \sigma_{\mathbf{n}',\mathbf{p}}^{-1}(c') \text{ with }  c' = c- \sum_{j=1}^l (2n_j+1) p_j.
\]
Counting with multiplicity, there are exactly $n! \deg (\sigma_{\mathbf{n}',\mathbf{p}}: Y_{\mathbf{n}',\mathbf{p}}(\tau) \to E_\tau)$ such solutions in $E_\tau^n$.

To verify this, consider the special case $J=\{1\}$. As in the proof of Theorem~\ref{special-pt}, let $m$ be the number of coordinates $\mu$ for which $a_\mu \to p_1$. Without loss of generality, let 
\[
    a_\mu = p_1 + \sum_{k \ge 1} \alpha_{\mu k} \epsilon^k, \quad \text{for } \mu=1,\dots, m.
\]
Comparing the coefficients of $\epsilon^{-1}$ gives:
\[
\begin{cases}
    \displaystyle\sum_{\nu=1}^{m} \frac{1}{\alpha_{\nu 1}} = 0, \\[15pt]
    \displaystyle\sum_{\substack{\nu=1 \\ \nu \neq \mu}}^{m} \frac{1}{\alpha_{\mu 1} - \alpha_{\nu 1}} - \frac{n_1-1}{\alpha_{\mu 1}} + \sum_{\substack{\nu=1 \\ \nu \neq \mu}}^m \frac{1}{\alpha_{\nu 1}}  = 0.
\end{cases}
\]
By Lemma~\ref{lemma_boundary}, the solution set $\{ \alpha_{\mu 1} \}$ exists if and only if $m = 2n_1+1$. The constant terms for the remaining equations (for $\mu = 2n_1+2,\dots,n$) precisely impose the condition
\[
    \{[a_{2n_1+2}], \dots, [a_n]\} \in Y_{\mathbf{n}',\mathbf{p}}(\tau). 
\]
The higher-order terms $\{ \alpha_{\mu k} \}$ can be solved inductively by Corollary~\ref{cor_boundary}. Finally, the number of such degenerate solutions in $E_\tau^n$ is given by
\[
\begin{split}
    \binom{n}{2n_1+1} (2n_1+1)!  (n-2n_1-1)! & \deg ( \sigma_{\mathbf{n}',\mathbf{p}}: Y_{\mathbf{n}',\mathbf{p}}(\tau) \to E_\tau ) 
    \\
     = n! & \deg ( \sigma_{\mathbf{n}',\mathbf{p}}: Y_{\mathbf{n}',\mathbf{p}}(\tau) \to E_\tau ).
\end{split}
\]
Here, the binomial coefficient corresponds to the choice of the $2n_1+1$ coordinates converging to $p_1$, the $(2n_1+1)!$ factor corresponds to the number of valid leading-order solutions $\{ \alpha_{\mu 1} \}$, and the final term corresponds to the ordered degree of $\sigma_{\mathbf{n}',\mathbf{p}}$ on $E^{n-2n_1-1}_\tau$. 

The proof generalizes to an arbitrary subset $J = \{ 1,\dots,l \}$. To correctly isolate the non-degenerate solutions, we derive the degree formula in the inclusion-exclusion form. We complete the proof.
\end{proof}

\begin{example} 
In the case of classical Lam\'e equation, i.e. $r=1$, $p=0$, $\mathbf{n} = n \in \mathbb{N}$, the degree of the addition map is $\binom{n+1}{2}$. It is consistent with the result in \cite[Theorem~2.4]{Lin_Wang_2017} which was obtained there by applying Theorem of the cube.
\end{example}

The natural map $\pi : \overline{\mathcal{Y}}_{\mathbf{n},\mathbf{p}}(\tau) \rightarrow \overline{Y}_{\mathbf{n},\mathbf{p}}(\tau)  $ is an isomorphism on the interior and finite on the boundary. Theorem~\ref{thm_deg} can be equivalently formulated on $\overline{\mathcal{Y}}_{\mathbf{n},\mathbf{p}}(\tau)$. 

\begin{definition} Fix $c \in E_\tau$. We define 
\[
\begin{split}
    \overline{Y}_{\mathbf{n},\mathbf{p}}^c(\tau) &:= \overline{Y}_{\mathbf{n},\mathbf{p}}(\tau) \ \cap \ \sigma^{-1}_{\mathbf{n},\mathbf{p}}( c );
    \\
    \overline{\mathcal{Y}}_{\mathbf{n},\mathbf{p}}^c(\tau) &:= \overline{\mathcal{Y}}_{\mathbf{n},\mathbf{p}}(\tau) \ \cap \ \pi^{-1}\sigma^{-1}_{\mathbf{n},\mathbf{p}}( c ).
\end{split}
\]
\end{definition}

\subsection{Flat family of GLC and BGG category}
In this subsection, we state the main theorem of this paper and its direct corollaries. The proof is involved and deferred to the next subsection.

\begin{theorem} \label{thm:universal_flatness}
Fix a total weight $n \in \mathbb{N}$. There exists a flat family $\overline{\mathcal{Y}}(\tau)$ over the space of weights and singularities:
\[
    \overline{\mathcal{Y}}(\tau) \rightarrow \Big\{ \mathbf{n} \in \mathbb{C}^r \ \Big| \ \sum_i n_i = n \Big\} \times \mathbb{C}^r
\]
whose fibers satisfy the following:
\begin{enumerate}
    \item Over the generic locus, the fiber is the generalized Lam\'e curve $\overline{\mathcal{Y}}_{\mathbf{n},\mathbf{p}}(\tau)$.
    \item Over the special locus, the fiber is uniquely determined by the tensor algebra of $\mathfrak{sl}_2(\mathbb{C})$-modules within the BGG category $\mathcal{O}$.
\end{enumerate}
\end{theorem}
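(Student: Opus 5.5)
The plan is to build $\overline{\mathcal{Y}}(\tau)$ as a closure inside a relative projective bundle and to prove flatness fibrewise via the addition map. Over the parameter space $T:=\{\mathbf{n}\mid\sum n_i=n\}\times\mathbb{C}^r$, I form the relative bundle $\mathcal{P}_n^{T}:=\mathbb{P}(\sigma_{\mathbf{n},\mathbf{p}}^*F_2)$ over $\Sym^nE_\tau\times T$. The shifted addition map depends holomorphically on $(\mathbf{n},\mathbf{p})$ through $\sum n_ip_i$, and this is why $\mathbf{p}\in\mathbb{C}^r$ rather than $E_\tau^r$. I then take $\overline{\mathcal{Y}}(\tau)$ to be the scheme-theoretic closure of the total space of $\mathcal{Y}_{\mathbf{n},\mathbf{p}}(\tau)$ over the open locus $T^\circ$. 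On $T^\circ$ the $p_i$ are distinct and the $n_i\notin\tfrac12\mathbb{Z}_{\ge0}$ (or are fixed half-integers with generic others). Since $T$ is smooth and hence reduced, the closure of the generic part has no components supported over proper closed subsets of $T$. Flatness then reduces to the statement that no fiber jumps in dimension or Hilbert polynomial. Over a smooth base, a closed subscheme that is the closure of its generic part and whose fibers are all curves of constant Hilbert polynomial is flat.

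To control the Hilbert polynomial, I would compose with $\sigma_{\mathbf{n},\mathbf{p}}$ and fix a generic target $c$, so that the problem becomes flatness of the finite family $\overline{\mathcal{Y}}^c\to T$. A finite morphism to a smooth base is flat if and only if its fiber length is constant. Theorem~\ref{thm_deg} supplies the generic length $\sum_J(-1)^{|J|}\binom{n_J+r}{r+1}_{\rm comb}$ on each stratum. The key point is to show that this number, summed over the boundary pieces appearing in the special fiber, is conserved. There are two codimension-one degenerations. In the $\mathbf{n}$-direction, as $n_1\to\tfrac12\mathbb{Z}_{\ge0}$, the Decomposition theorem (Theorem~\ref{thm_decom}) gives the limit as the main component together with $\{p_1^{2n_1+1}\}\times\overline{\mathcal{Y}}^{c'}_{(-n_1-1,\dots)}$. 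The inclusion–exclusion sign pattern in Theorem~\ref{thm_deg}, term $J\ni1$ versus $J\not\ni1$, is exactly $[M_{2n_1}]=[L_{2n_1}]+q^{2n_1+1}[M_{-2n_1-2}]$, so the lengths add up. In the $\mathbf{p}$-direction, as $p_1\to p_2$, the Degeneration theorem (Theorem~\ref{thm:dege_generic}) gives $\bigcup_k\{p_{12}^k\}\times S_{-n_1\eta(\omega)}\overline{\mathcal{Y}}^{c_k}_{\mathbf{n}^{(k)}_{12}}$. Here I would check the identity
\[
\binom{n+r}{r+1}=\sum_{k\ge0} m_k\binom{n-k+r-1}{r},
\]
where $m_k$ is the multiplicity of the local rational cluster. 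This is a Vandermonde-type identity mirroring $[M_{2n_1}]\otimes[M_{2n_2}]=\sum_kq^k[M_{2n_1+2n_2-2k}]$. To obtain $m_k$, I would count solutions of the rational limit system at the collision, as in Lemma~\ref{lemma_boundary} and Corollary~\ref{cor_boundary}, which should give $k+1$. That is the correct weight for the Vandermonde sum.

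For special points of higher codimension (several collisions and several half-integer weights), I would argue by induction on codimension. Applying the two rules iteratively along a generic curve in $T$ approaching the point expresses the fiber as a formal element of $R[q]$, and this gives part (2). Length conservation along each step gives constancy of the total length. To ensure the fiber does not depend on the approach curve, and that the family is genuinely flat rather than merely having constant pointwise length, I would build local power-series deformations. Near each boundary point I would solve the GLC equations order by order in $\epsilon$, using the Jacobian rank computations (rank $k-1$, or $k-2$ at $x=\tfrac{k-1}{2k}$, together with Puiseux branching of multiplicity $k$). This produces analytic sections of the family through every point of the special fiber and shows that every point lies in the closure with the predicted multiplicity.

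The main obstacle is that multiplicity bookkeeping: the limit scheme might carry embedded or extra nonreduced structure, especially at non-reduced components such as $2\Delta_0$ in Example~\ref{eg:GLC_half_periods} and where the two rules interact (poles colliding when a weight is half-integral). My first approach would be to prove upper semicontinuity of fiber length together with the lower bound from the explicit power-series sections, which together force equality. I would fall back on the $h$-coordinate, that is, working in $\mathcal{P}_n$ rather than $\Sym^nE_\tau$, to separate branches that $\pi$ identifies, as in Example~\ref{ex:primitive_two_h}.
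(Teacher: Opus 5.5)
\textbf{Gap 1: the flatness mechanism does not close.} Over a base $T$ of dimension $>1$, the scheme-theoretic closure of a family that is flat over a dense open need not be flat. Its special fibre can also be larger than any one-parameter limit, because limits along different curves need not agree; compare Remark~\ref{Rmk_dege_loci}, where at points of $F_k$ the local system has a whole $\mathbb{P}^1$ of solutions depending on the direction of approach. Your two tools give
\[
\ell(\text{special})\ge\ell(\text{generic}) \quad\text{(upper semicontinuity)}, \qquad \ell(\text{special})\ge\ell(\text{predicted}) \quad\text{(power-series sections)}.
\]
Both are lower bounds. Nothing excludes extra points or nilpotents, so they do not ``force equality''.

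The paper gets the missing upper bound for free by working over a regular curve $B$ through the special point. The closure in $\mathcal{P}_n\times B$ is torsion-free, hence flat, so the special fibre automatically has the generic length. The explicit deformations of Theorem~\ref{thm:underlying_dege_generic}, together with the degree count, then identify that fibre. Higher codimension is handled by iterating such one-parameter slices into multivariable power series.

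\textbf{Gap 2: fixing a generic $c$ misses vertical components.} The slice $\overline{\mathcal{Y}}^c$ cannot see any component contracted by $\sigma_{\mathbf{n},\mathbf{p}}$. Every weight-zero stratum $\{p^k\}\times\overline{\mathcal{Y}}_{\mathbf{m},\mathbf{p}}(\tau)$ with $\sum m_i=0$ is an $h$-line over a single point of $\Sym^nE_\tau$; examples are the $k=n$ terms of Theorem~\ref{thm:dege_generic} and Corollary~\ref{Cor:special_fiber_p=0}. The component $2\Delta_0$ in Example~\ref{eg:GLC_half_periods} is also contracted. Such components contribute to the Hilbert polynomial but not to $\deg\sigma$. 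Hence constant generic-$c$ length can neither prove flatness of $\overline{\mathcal{Y}}(\tau)\to T$ nor verify part~(2) for these components. In addition, no single $c$ is generic over all of $T$, since the non-generic targets move with $(\mathbf{n},\mathbf{p})$. The paper explicitly relaxes the fixed-$c$ condition and deforms every point of the special fibre, using Puiseux series where the lower-weight curve is singular.

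\textbf{Gap 3: the $\mathbf{p}$-direction multiplicity is wrong.} The local system at a pole collision is \eqref{eqn:principal}. It is not the system of Lemma~\ref{lemma_boundary} or Corollary~\ref{cor_boundary}, which describe roots falling into a single pole. By Lemma~\ref{lemma_principal} it has a unique solution up to permutation with full-rank Jacobian, since the poles at $\pm1$ remove the scaling freedom. So $m_k=1$, matching the coefficient $1$ of $q^k$ in $[M_{2n_1}]\otimes[M_{2n_2}]$. The conservation identity is then the hockey-stick identity
\[
\sum_{k=0}^n\binom{n-k+r-1}{r}=\binom{n+r}{r+1}.
\]
With $m_k=k+1$ your sum equals $\binom{n+r+1}{r+2}$ instead; for $n=r=2$ it gives $5$ rather than $4$. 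You appear to have used the weight-space multiplicity $k+1$ of $M_{2n_1}\otimes M_{2n_2}$ instead of its Verma multiplicity. This slip is repairable, but as written the length check fails.
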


We explain the construction of the special fiber using the language of BGG category $\mathcal{O}$ below.

\begin{definition}[The Category $\mathcal{O}$ Dictionary] \label{def:BGG}
To each singularity $p_i$ with weight $n_i$, we assign the highest weight module $V_{2n_i}$, where
\[
    V_{\lambda} :=
    \begin{cases}
        M_{\lambda} & \text{if } \lambda \notin \mathbb{Z}_{\geq 0} \quad \text{(Verma module)}, \\
        L_{\lambda} := M_{\lambda}/M_{-\lambda-2} & \text{if } \lambda \in \mathbb{Z}_{\geq 0} \quad \text{(finite dimensional quotient)}.
    \end{cases}
\]
We associate the generalized Lam\'e curve $\overline{\mathcal{Y}}_{\mathbf{n},\mathbf{p}}(\tau)$ with the tensor product $V_{2n_1} \otimes \dots \otimes V_{2n_r}$. 
\end{definition}

The special fiber can be iteratively constructed using two algebraic rules derived from category $\mathcal{O}$. To explicitly track the components during degenerations, we consider the slice with fixed generic $c$. In all formulas below, the total degree of the addition map is conserved.

\vspace{0.5em}
\noindent \textbf{1. Decomposition rule:} For $n_1 \in \frac{1}{2}\mathbb{Z}_{\geq 0}$, the boundary splitting corresponds to the BGG resolution:
\[
    [M_{2n_1}] = [L_{2n_1}] + q^{2n_1+1} [M_{-2n_1-2}].
\]
The formal grading variable $q$ tracks the collision multiplicity, where $q^k$ corresponds geometrically to $\{p^k\}$.
\begin{theorem}[Decomposition theorem] \label{thm_decom} 
Assume $n_1 \in \tfrac{1}{2}\mathbb{Z}_{\geq 0}$, and $n_i$ are generic for $i \ge 2$. We have:
\[  
\begin{split}
    \lim_{\delta \rightarrow 0}\overline{\mathcal{Y}}_{(n_1+\delta,n_2-\delta,n_3,\dots,n_r),\mathbf{p} }^c(\tau) &= \overline{\mathcal{Y}}_{\mathbf{n},\mathbf{p} }^c(\tau) \\
    &\quad \cup \ \{p_1^{2n_1+1}\} \times \overline{\mathcal{Y}}^{c'}_{(-n_1-1,n_2,\dots,n_r),\mathbf{p}} (\tau),
\end{split}
\]
where $c' := c - (2n_1+1) [p_1]$. We adopt the following two conventions:
\[
    \overline{\mathcal{Y}}_{(0,n_2,\dots,n_r), \mathbf{p}}^c(\tau) 
    := \overline{\mathcal{Y}}_{(n_2,\dots,n_r), (p_2,\dots,p_r)}^c(\tau),
\]
and
\[
    \overline{\mathcal{Y}}_{\mathbf{m},\mathbf{p}}(\tau) := \emptyset, \quad \text{if } \sum_i m_i < 0.
\]
\end{theorem}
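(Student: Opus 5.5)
The plan is to track the points of the slice $\overline{\mathcal{Y}}^c_{\mathbf{n}_\delta,\mathbf{p}}(\tau)$, with $\mathbf{n}_\delta := (n_1+\delta,n_2-\delta,n_3,\dots,n_r)$, as $\delta\to 0$. Degree conservation (Theorem~\ref{thm_deg}) then shows that the limit is exactly the union on the right-hand side.

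For small $\delta\neq 0$, the index $1$ no longer lies in $I$, since $n_1+\delta\notin\frac12\mathbb{N}$. Theorem~\ref{thm_deg} therefore gives the fiber cardinality
\[
d_\delta=\sum_{J\subset I\setminus\{1\}}(-1)^{|J|}\binom{n_J+r}{r+1}_{\rm comb}.
\]
At $\delta=0$, the index $1$ joins $I$. Splitting the sum over $J\subset I$ according to whether $1\in J$ gives
\[
\deg\sigma_{\mathbf{n},\mathbf{p}}=d_\delta-\sum_{J\ni 1}(\cdots).
\]
I expect the subtracted terms to equal $\deg\sigma_{\mathbf{n}',\mathbf{p}}$ for $\mathbf{n}'=(-n_1-1,n_2,\dots,n_r)$, which has total weight $n-(2n_1+1)$. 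The reason is that $-n_1-1\notin\frac12\mathbb{N}$, so $1\notin I(\mathbf{n}')$, and the weight shifts $n'_J=n_{J\cup\{1\}}$ line up. Thus
\[
d_\delta=\deg\sigma_{\mathbf{n},\mathbf{p}}+\deg\sigma_{\mathbf{n}',\mathbf{p}}.
\]
The conventions cover the edge cases: an empty set when $n'<0$, and dropping $p_1$ when $n_1=0$, where $2n_1+1=1$ and the relevant sums again match. This is the combinatorial step, and I regard it as routine.

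Next I would show that both pieces occur in the limit. Using the $\boldsymbol\rho$-deformation picture from the proof of Theorem~\ref{thm_deg}, I view the slice as the intersection $Z^c$ with $\boldsymbol\rho=0$ inside $E_\tau^n$. The defining equations depend polynomially (analytically) on $\delta$, so limit points of $\overline{\mathcal{Y}}^c_{\mathbf{n}_\delta,\mathbf{p}}$ lie in the $\delta=0$ solution set. By the stratification in that proof, this set consists of the honest points of $\overline{\mathcal{Y}}^c_{\mathbf{n},\mathbf{p}}$ together with degenerate solutions of the form $\{p_1^{2n_1+1}\}\times\mathbf{a}'$, where $\mathbf{a}'\in\overline{\mathcal{Y}}^{c'}_{\mathbf{n}',\mathbf{p}}$. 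Other collisions at $p_1$ are excluded: Type-I limits would need $h=\infty$, which is ruled out at fixed generic $c$, and Type-II limits force multiplicity $2n_1+1$ by Proposition~\ref{prop_no_mixed}.

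The limit of $h$ also needs checking. On the collided stratum, the contributions $\frac{n_1+\delta}{n}\sum\zeta(a_\mu-p_1)$ over the cluster have leading parts $\sum 1/\alpha_\mu=0$ by Lemma~\ref{Lemma_sol_x}. So $h$ converges, and I would verify that its limit equals the $h$ of $\mathbf{n}'$, namely $\frac{1}{n'}\zeta_{\mathbf{n}',\mathbf{p}}(\mathbf{a}')$. This should follow from the same expansion used in Theorem~\ref{special-pt-II}.

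The main obstacle is the converse direction: every point of $\{p_1^{2n_1+1}\}\times\overline{\mathcal{Y}}^{c'}_{\mathbf{n}',\mathbf{p}}$ must actually be a limit, with the correct multiplicity, so that counts match and nothing is lost or doubled. My first attempt would use a Puiseux ansatz
\[
a_\mu=p_1+\sum_k\alpha_{\mu k}\epsilon^k,\qquad \delta=\delta(\epsilon),
\]
for the colliding cluster. At leading order it is the system of Lemma~\ref{lemma_boundary} with $x=\frac{n_1}{2n_1+1}=\frac{k-1}{2k}$ for $k=2n_1+1$, the degenerate value where the Jacobian has rank $k-2$. Corollary~\ref{cor_boundary} then supplies the $k$-fold branching, now with $\delta$ playing the role of the perturbation $\rho$. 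I expect $\delta\sim\epsilon^{k}$ with a nonzero constant $M_k$, giving $(k-1)!$ cluster shapes each of multiplicity $k$, which fits the $k!$ ordered count.

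Combined with the degree identity, this shows that the total multiplicity of the limit equals $d_\delta$. Hence the limit is exactly the stated union, taken scheme-theoretically on the slice at generic $c$.
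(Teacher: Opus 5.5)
Your overall strategy matches the one the paper intends: degree conservation from Theorem~\ref{thm_deg} plus an explicit power-series deformation of the boundary stratum. The paper itself only remarks that Theorem~\ref{thm_decom} is proved ``in the same way'' as Theorem~\ref{thm:underlying_dege_generic}. Your degree identity $d_\delta=\deg\sigma_{\mathbf{n},\mathbf{p}}+\deg\sigma_{\mathbf{n}',\mathbf{p}}$ is correct, and the containment step via the leading-order analysis of Proposition~\ref{prop_no_mixed} is sound in outline. The $h$-limit is easiest from the residue equations at the non-colliding roots. In the limit the cluster contributes $(2n_1+1)\zeta(a_\nu-p_1)$, and $2n_1+1-n_1=-n_1'$, so these become exactly the $\mathcal{Y}_{\mathbf{n}',\mathbf{p}}(\tau)$ equations. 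The gap is in the converse step, exactly where you expected difficulty: Corollary~\ref{cor_boundary} does not apply here, and the branching is not governed by the universal constant $M_k$.

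Corollary~\ref{cor_boundary} concerns the isolated system $f_\mu(\alpha)=\rho_\mu$ with $\sum_\mu\rho_\mu=0$ and a perturbation starting at order $\epsilon^k$. In the relevant case $n'>0$ the cluster is coupled to $\mathbf{a}'$, so neither hypothesis holds.
\begin{itemize}
\item The $h$-term contributes $\frac{n_1}{n}\lambda$ to each cluster equation, not $\frac{n_1}{k}\lambda$, where $\lambda=\sum_{\mu\in I}1/\alpha_\mu$. So the cluster equations no longer sum to zero.
\item Consequently, for $k\ge 2$, the cluster block of the linearization has rank $k-1$, not $k-2$: the direction $v_2$ is sent to a nonzero multiple of $(1,\dots,1)$.
\item The regular terms from $\mathbf{a}'$ and from $p_2,\dots,p_r$ enter already at relative order $\epsilon$, well before order $\epsilon^k$.
\end{itemize}
The true relation between $\delta$ and the cluster size is clearest from the Frobenius recursion. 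For $\delta\neq0$ the ansatz near $p_1$ is a multiple of the solution with exponent $-n_1-\delta$. In that recursion the $k$-th coefficient has denominator $k(k-1-2n_1-2\delta)=-2k\delta$. Its numerator tends to the log-free obstruction $F$ of Theorem~\ref{special-pt-II}, namely $F_{\mathbf{n},\mathbf{p};1}$ evaluated at the accessory parameters of the limiting $\mathbf{n}'$-ansatz $(\mathbf{a}',h)$. Hence the colliding roots satisfy $(a_\mu-p_1)^k\approx 2k\delta/F$.

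So the leading coefficient depends on $\mathbf{a}'$. It vanishes exactly on the extra log-free locus $F=0$, which contains the points where your two pieces of the limit meet. To get multiplicity one at each point of $\{p_1^{2n_1+1}\}\times\overline{\mathcal{Y}}^{c'}_{\mathbf{n}',\mathbf{p}}(\tau)$, you must use genericity of $c$ at this step, and two things need proof:
\begin{itemize}
\item $F$ does not vanish identically on any component of $\overline{\mathcal{Y}}_{\mathbf{n}',\mathbf{p}}(\tau)$ dominating $E_\tau$, so that a generic slice avoids $F=0$.
\item The intermediate orders are unobstructed in the coupled system itself; this cannot be imported from Corollary~\ref{cor_boundary}.
\end{itemize}
As written, your proposal uses genericity of $c$ only to exclude Type-I limits, and that is not enough.
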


\begin{remark}
We briefly recall our convention for the symmetric product to clarify the ambient spaces. The operation $\times$ denotes the natural multiplication of unordered symmetric classes. 

Specifically, any element in the boundary stratum on the right-hand side,
\[
    [(\mathbf{a},h)] \in \{p_1^{2n_1+1}\} \times \overline{\mathcal{Y}}^{c'}_{(-n_1-1,n_2,\dots,n_r),\mathbf{p}} (\tau),
\]
corresponds to a root configuration $\{ \mathbf{a} \} = \{p_1^{2n_1+1}\} \times \{\mathbf{a}'\}$. Here, $2n_1+1$ roots have collided at the pole $p_1$, while the remaining moving component $[(\mathbf{a}', h)]$ lives in the moduli space $\overline{\mathcal{Y}}^{c'}_{(-n_1-1,n_2,\dots,n_r),\mathbf{p}} (\tau)$. Because the total degree is conserved, $(2n_1+1) + (n - 2n_1 - 1) = n$, both components on the right-hand side naturally embed into the same ambient space $\mathcal{P}_n$ as the left-hand side.
\end{remark}

\vspace{0.5em}
\noindent \textbf{2. Generic tensor rule:} For generic weights $n_1,n_2,n_1+n_2 \in \mathbb{C} \setminus \{ \tfrac{1}{2} \mathbb{Z}_{\geq 0} \}$, the collision of two singularities corresponds to the Verma tensor decomposition:
\[
    [M_{2n_1}] \otimes [M_{2n_2}] = \sum_{k=0}^\infty q^k [M_{2n_1+2n_2-2k}].
\]

For $\omega \in \Lambda_\tau$, let $\mathbf{p}_{12, \epsilon}^\omega = (p_{12} + \omega + \epsilon, p_{12} - \epsilon, p_3, \dots, p_r)$, and define the shift operator $S_v$:
\[
\begin{split}
     S_v : \mathcal{P}_n &\rightarrow \mathcal{P}_n \\
     [(\mathbf{a},h)] &\mapsto [(\mathbf{a},h+v)].
\end{split}
\]

\begin{theorem}[Degeneration theorem - Generic Case] \label{thm:dege_generic}
Assume $n_1, n_2, n_1+n_2 \in \mathbb{C} \setminus \{ \tfrac{1}{2} \mathbb{Z}_{\geq 0} \}$. We have:
\[
    \lim_{\epsilon \to 0} \overline{\mathcal{Y}}_{\mathbf{n}, \mathbf{p}_{12, \epsilon}^\omega}^c(\tau) =
    \bigcup_{k=0}^{n} \{p_{12}^k\} \times S_{-n_1 \eta(\omega)} \Big( \overline{\mathcal{Y}}^{c_k}_{\mathbf{n}_{12}^{(k)},\mathbf{p}_{12}}(\tau) \Big),
\]
where $c_k = c - k [p_{12}]$, $\mathbf{n}_{12}^{(k)} = (n_1+n_2-k,n_3,\dots,n_r)$, and $\mathbf{p}_{12} = (p_{12},p_3,\dots,p_r)$.
\end{theorem}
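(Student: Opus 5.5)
We plan a two-scale asymptotic analysis of the defining residue equations of $\overline{\mathcal{Y}}_{\mathbf{n},\mathbf{p}^\omega_{12,\epsilon}}(\tau)$ as $\epsilon\to 0$, followed by a degree count. The count is what shows that the limit contains nothing beyond the listed strata.

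\textbf{Step 1: reduce to the fundamental domain.} We take a family $[(\mathbf{a}(\epsilon),h(\epsilon))]$ on the slice over $c$. First we remove $\omega$. Since $\zeta(a-p_1-\omega)=\zeta(a-p_1)-\eta(\omega)$, the defining equations \eqref{Gene-Lame-Curve} are unchanged when $p_1$ is replaced by $p_{12}+\epsilon$. The quantity $h=\frac1n\zeta_{\mathbf{n},\mathbf{p}}(\mathbf{a})$, however, changes by $+n_1\eta(\omega)$. This accounts for the shift $S_{-n_1\eta(\omega)}$, so from now on we may take $\omega=0$ and keep track of the shift only in $h$. The value of $\sigma_{\mathbf{n},\mathbf{p}}$ is also unchanged, because $n_1\omega$ is absorbed consistently with the twisted data (at the level of $E_\tau$ we only need $c$).

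\textbf{Step 2: split the roots into two scales.} We partition the roots as $\mathbf{a}=\mathbf{a}^{\rm loc}\sqcup\mathbf{a}'$. The local roots are written $a_\mu=p_{12}+\epsilon\alpha_\mu+o(\epsilon)$, and there are $k$ of them. The roots of $\mathbf{a}'$ stay away from $p_{12}$.

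On the roots of $\mathbf{a}'$, the two poles at $p_{12}\pm\epsilon$ merge to first order into a single pole of weight $n_1+n_2$. The $k$ local roots together act like $k$ zeros at $p_{12}$, which lowers the effective weight to $n_1+n_2-k$. The order-$\epsilon^0$ equations for $\mu\in\mathbf{a}'$ are then exactly the defining equations of $Y_{\mathbf{n}^{(k)}_{12},\mathbf{p}_{12}}$. The value of $h$ converges accordingly, because the local contributions $\zeta(a_\mu-p_i)$ cancel in the weighted sum up to $O(1)$ terms consistent with the new weight. We must check this carefully, in the same way as in Theorem~\ref{special-pt-II}.

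On the local roots, the leading $\epsilon^{-1}$ terms give a rational system in $\boldsymbol\alpha$. It consists of the pairwise terms $\frac{1}{\alpha_\mu-\alpha_\nu}$ and the pole terms $-\frac{n_1}{\alpha_\mu-1}-\frac{n_2}{\alpha_\mu+1}$, together with a constant coupling term coming from $h$. This is a Stieltjes-type electrostatic system. Following the method of Lemma~\ref{Lemma_sol_x} and Lemma~\ref{lemma_boundary}, we will convert it into a Jacobi-type ODE for $p(z)=\prod(z-\alpha_\mu)$. For generic $n_1,n_2$ this ODE has a unique degree-$k$ polynomial solution (a Jacobi polynomial $P_k^{(-2n_1-1,-2n_2-1)}$), with simple roots avoiding $\pm1$. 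We need the genericity hypothesis $n_1,n_2,n_1+n_2\notin\frac12\mathbb{Z}_{\ge0}$ at exactly this point, to rule out degenerate discriminants and roots colliding with $\pm1$. The Jacobian has full rank, so an inverse function argument as in Remark~\ref{remark_boundary} and Corollary~\ref{cor_boundary} lifts every point of $\{p_{12}^k\}\times\overline{\mathcal{Y}}^{c_k}_{\mathbf{n}^{(k)}_{12},\mathbf{p}_{12}}$ to a family for $\epsilon\neq0$. This proves the inclusion $\supseteq$.

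\textbf{Step 3: exclude extra limit points.} For the reverse inclusion, every limit point arises from some $k$ by Step 2, apart from points where roots collide with $p_3,\dots,p_r$ or where $h\to\infty$. These are boundary points of the lower-weight curves and are already contained in their closures. Multiplicities are then fixed by conserving degree. For generic $c$, the number of points in $\overline{\mathcal{Y}}^c$ is given by Theorem~\ref{thm_deg}, which here, with generic weights, is $\binom{n+r}{r+1}$. We then need the identity
\[
\binom{n+r}{r+1}=\sum_{k=0}^{n}\binom{n-k+r-1}{r},
\]
which is the hockey-stick identity. It shows that the local multiplicities must all equal one and that no points escape, as in the Verma rule $[M_{2n_1}]\otimes[M_{2n_2}]=\sum q^k[M_{2n_1+2n_2-2k}]$. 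Flatness of the $c$-slice, which is finite over the $\epsilon$-disc, then gives the equality of limits as schemes.

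The main obstacle is Step 2 for the local cluster. We must show that $h$ stays finite, or more precisely that the $\epsilon^{-1}$ part of $h$ vanishes by the same summation trick as in Proposition~\ref{prop_no_mixed}. We must also establish the uniqueness and nondegeneracy of the Jacobi-type solution for every $k\le n$. We would first try the polynomial ODE reduction and the explicit discriminant of Jacobi polynomials.
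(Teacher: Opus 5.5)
Your overall route is the paper's. The cluster's $\epsilon^{-1}$ system is the Stieltjes system of Lemma~\ref{lemma_principal}, solved by the Jacobi polynomial $P_k^{(-2n_1-1,-2n_2-1)}$, with full-rank $J_g$ off $E_k$. The strata are lifted by an order-by-order power series, and nothing else appears in the limit because of Theorem~\ref{thm_deg} together with $\sum_k\binom{n-k+r-1}{r}=\binom{n+r}{r+1}$. Your reduction of $\omega$ via $\zeta(u-\omega)=\zeta(u)-\eta(\omega)$ and your tracking of $h$ are more explicit than the paper, which only treats $\omega=0$ on $\overline{Y}$. One caveat: with $\sigma_{\mathbf{n},\mathbf{p}}$ as defined (including $-\sum n_ip_i$), its value does shift by $-n_1\omega\notin\Lambda_\tau$. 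The formula $c_k=c-k[p_{12}]$ is consistent only with the plain normalization $\sum_\mu a_\mu=c$ that the paper imposes in \eqref{eqn:degeneration}, so state that you adopt it rather than appealing to twisted data.

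The gap is in the lifting step. Invertibility of $J_g$ only lets you solve for the cluster coefficients. The far roots $\mathbf{a}'$ must also be corrected order by order, and at each order they satisfy $M\,\boldsymbol{\alpha}_{{\rm fix},m}=\mathbf{R}_{{\rm fix},m}$. Here $M$ is the linearization at $\mathbf{a}'$ of the weight-$(n-k)$ system, and the full system is block triangular with diagonal blocks $J_g$ and $M$, as in \eqref{eqn:inductive_block}. You need two facts at the points of the generic $c_k$-slice: $M$ has rank exactly $n-k-1$, and $d(\sum_\mu a_\mu)$ is nonzero on $\ker M$. Without them, neither your inverse function argument nor the branch-by-branch matching in Step~3 goes through. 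These facts are not automatic, since GLCs can be non-reduced (Example~\ref{eg:GLC_half_periods}); a non-reduced lower curve would give non-simple slice points with no unique lift. This is the step the paper actually works for. It degenerates $\mathbf{a}'$ to a Type-I boundary point, where the principal part splits into blocks of Lemma~\ref{lemma_boundary} tied by $r-1$ global relations, giving rank $n-k-1$. It then applies Vieta to the Laguerre roots to show the kernel vector has nonzero coordinate sum.

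By contrast, what you call the main obstacle is easy. For $k<n$, the $\epsilon^{-1}$ part of any far root's residue equation is exactly the $\epsilon^{-1}$ coefficient of $h$, so that coefficient vanishes. Summing the limiting far equations then forces $h\to\frac{1}{n-k}\zeta_{\mathbf{n}^{(k)}_{12},\mathbf{p}_{12}}(\mathbf{a}')$. The case $k=n$ contributes nothing to a generic $c$-slice, since $\binom{r-1}{r}=0$.
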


All special degeneration limits are obtained by iteratively applying these two algebraic rules. By evaluating the generic tensor product and replacing any $m \in \tfrac{1}{2}\mathbb{Z}_{\geq 0}$ with its irreducible quotient $[L_{2m}]$, we systematically recover the following non-generic geometric limits.

\begin{corollary}[Degeneration theorem - Special cases] \label{cor:dege_special}
With the same convention as in Theorem~\ref{thm_decom}, we have:
\begin{enumerate}[label=(\arabic*)]
\item If $n_1 \in \tfrac{1}{2} \mathbb{Z}_{\geq 0}$ and $n_2 \in \mathbb{C} \setminus \{ \tfrac{1}{2} \mathbb{Z}_{\geq 0} \}$,
\[
    \lim_{\epsilon \to 0} \overline{\mathcal{Y}}_{\mathbf{n}, \mathbf{p}_{12, \epsilon}^\omega}^c(\tau) = \bigcup_{k=0}^{2n_1} \{p_{12}^k\} \times S_{-n_1 \eta(\omega)} \Big( \overline{\mathcal{Y}}^{c_k}_{\mathbf{n}_{12}^{(k)},\mathbf{p}_{12}}(\tau) \Big).
\]
\item If $n_1, n_2 \in \mathbb{C} \setminus \{ \tfrac{1}{2} \mathbb{Z}_{\geq 0} \}$ and $n_1+n_2 \in \tfrac{1}{2} \mathbb{Z}_{\geq 0}$, 
\[
\begin{split}
    \lim_{\epsilon \to 0} \overline{\mathcal{Y}}_{\mathbf{n}, \mathbf{p}_{12, \epsilon}^\omega}^c(\tau) &= \bigcup_{k=0}^{n} \{p_{12}^k\} \times S_{-n_1 \eta(\omega)} \Big( \overline{\mathcal{Y}}^{c_k}_{\mathbf{n}_{12}^{(k)},\mathbf{p}_{12}}(\tau) \Big) 
    \\
    &\quad \cup \bigcup_{k = \lfloor n_1+n_2+2 \rfloor}^{2n_1+2n_2+1} \{p_{12}^k\} \times S_{-n_1 \eta(\omega)} \Big( \overline{\mathcal{Y}}^{c_k}_{\mathbf{n}_{12}^{(k)},\mathbf{p}_{12}}(\tau) \Big).
\end{split}
\]
\item If $n_1 \in \tfrac{1}{2} \mathbb{Z}_{\geq 0}$ and $n_2 \in -\tfrac{1}{2} \mathbb{Z}_{\geq 0}$ with $n_1+ n_2 \in \tfrac{1}{2} \mathbb{Z}_{\geq 0}$, 
\[
\begin{split}
    \lim_{\epsilon \to 0} \overline{\mathcal{Y}}_{\mathbf{n}, \mathbf{p}_{12, \epsilon}^\omega}^c(\tau) &= \bigcup_{k=0}^{2n_1} \{p_{12}^k\} \times S_{-n_1 \eta(\omega)} \Big( \overline{\mathcal{Y}}^{c_k}_{\mathbf{n}_{12}^{(k)},\mathbf{p}_{12}}(\tau) \Big) 
    \\
    &\quad \cup \bigcup_{k = \lfloor n_1+n_2+2 \rfloor}^{2n_1+2n_2+1} \{p_{12}^k\} \times S_{-n_1 \eta(\omega)} \Big( \overline{\mathcal{Y}}^{c_k}_{\mathbf{n}_{12}^{(k)},\mathbf{p}_{12}}(\tau) \Big).
\end{split}
\]
\item If $n_1, n_2 \in \tfrac{1}{2} \mathbb{Z}_{\geq 0}$,
\[
    \lim_{\epsilon \to 0} \overline{\mathcal{Y}}_{\mathbf{n}, \mathbf{p}_{12, \epsilon}^\omega}^c(\tau) = \bigcup_{k=0}^{2\min(n_1,n_2)} \{p_{12}^k\} \times S_{-n_1 \eta(\omega)} \Big( \overline{\mathcal{Y}}^{c_k}_{\mathbf{n}_{12}^{(k)},\mathbf{p}_{12}}(\tau) \Big).
\]
\end{enumerate}
\end{corollary}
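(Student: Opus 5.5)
The plan is to deduce all four cases from Theorem~\ref{thm:dege_generic} and Theorem~\ref{thm_decom} by a two-parameter degeneration in which the two limits are compared. Throughout I would work on the slices $\overline{\mathcal{Y}}^c$ for generic $c$, where every piece is a finite set counted with multiplicity. The total count is the degree of $\sigma_{\mathbf{n},\mathbf{p}}$, fixed by Theorem~\ref{thm_deg}.

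\textbf{Case (1).} Perturb the weights along $\mathbf{n}_\delta=(n_1+\delta,n_2-\delta,n_3,\dots,n_r)$, which keeps the total weight $n$. Consider the two-parameter family $\overline{\mathcal{Y}}^c_{\mathbf{n}_\delta,\mathbf{p}^\omega_{12,\epsilon}}(\tau)$ and take the limits in both orders.
\begin{enumerate}
\item \emph{$\epsilon\to0$ first.} For small $\delta\neq0$ the weights $n_1+\delta,\ n_2-\delta$ are generic and $n_1+n_2$ is unchanged. Theorem~\ref{thm:dege_generic} then gives $\bigcup_k\{p_{12}^k\}\times S_{-(n_1+\delta)\eta(\omega)}\overline{\mathcal{Y}}^{c_k}_{\mathbf{n}^{(k)}_{12},\mathbf{p}_{12}}$. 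The shift tends to $S_{-n_1\eta(\omega)}$ as $\delta\to0$, and the lower-weight curves do not depend on $\delta$.
\item \emph{$\delta\to0$ first.} Theorem~\ref{thm_decom} gives $\overline{\mathcal{Y}}^c_{\mathbf{n},\mathbf{p}^\omega_{12,\epsilon}}\cup\{p_1^{2n_1+1}\}\times\overline{\mathcal{Y}}^{c'}_{(-n_1-1,n_2,\dots),\mathbf{p}^\omega_{12,\epsilon}}$.
\item \emph{Collide the second piece.} Its weights $-n_1-1$ and $n_2$ are generic, so Theorem~\ref{thm:dege_generic} applies as $\epsilon\to0$. It produces exactly the strata $\{p_{12}^{k}\}\times\overline{\mathcal{Y}}^{c_k}_{\mathbf{n}^{(k)}_{12}}$ with $k=2n_1+1+j$, $j\ge0$. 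The $2n_1+1$ frozen roots at $p_1+\omega+\epsilon$ go to $p_{12}$, and the residual weight is $n_2-n_1-1-j=n_1+n_2-k$.
\item \emph{Shift.} The frozen roots sit at $p_{12}+\omega$, so passing to the representative at $p_{12}$ moves $h$ by $(2n_1+1)\eta(\omega)$. Combined with the shift $(n_1+1)\eta(\omega)$ coming from weight $-n_1-1$, this gives the net shift $-n_1\eta(\omega)$. I would verify this by the same $\eta$-bookkeeping as in the generic theorem.
\item \emph{Subtract.} Removing these strata from the total limit leaves $k=0,\dots,2n_1$. This is the identity $[L_{2n_1}]\otimes[M]=[M_{2n_1}]\otimes[M]-q^{2n_1+1}[M_{-2n_1-2}]\otimes[M]$.
\end{enumerate}

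\textbf{Case (2).} Apply Theorem~\ref{thm:dege_generic} to a generic perturbation, then let the components become special. Each generic-limit component of weight $m=n_1+n_2-k\in\frac12\mathbb{Z}_{\ge0}$ is non-generic, so further decompose it by Theorem~\ref{thm_decom} with $[M_{2m}]=[L_{2m}]+q^{2m+1}[M_{-2m-2}]$. The extra collided roots already sit at $p_{12}$, so they add $q$-degree and give strata with $k'=k+2m+1=2(n_1+n_2)+1-k$. Re-indexing these produces the second union in (2), whose range runs from $\lfloor n_1+n_2+2\rfloor$ to $2n_1+2n_2+1$. The first union keeps the $L$-parts for $k\le n$. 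Cases (3) and (4) follow by combining the two mechanisms:
\begin{itemize}
\item \emph{Case (3):} truncate as in (1), then decompose as in (2).
\item \emph{Case (4):} apply the truncation of (1) in both factors, which gives the Clebsch--Gordan range $k\le2\min(n_1,n_2)$. The $M_{-2m-2}$-type strata cancel against the subtracted terms.
\end{itemize}

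\textbf{Key step: commuting the limits.} I need the two iterated limits to agree as cycles, so that the subtraction in step 5 is legitimate. My approach is a degree count plus a support argument.
\begin{itemize}
\item \emph{Degrees.} Using Theorem~\ref{thm_deg}, compute the degrees of all pieces, i.e. the inclusion--exclusion binomial sums. Check that both sides of each claimed identity have the same total degree. This amounts to verifying the corresponding $q$-character identity at the level of these sums.
\item \emph{Supports.} Show that every limit point of the special family lies in one of the listed strata. Use the local analysis of Proposition~\ref{prop_no_mixed}, together with Lemma~\ref{Lemma_sol_x} and Lemma~\ref{lemma_boundary} applied at the collision point, to control how many roots can cluster at $p_{12}$. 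Show that the excluded strata (for instance $k>2n_1$ in Case (1)) cannot arise without $2n_1+1$ roots first collapsing onto $p_1$ before the collision, which is precisely the second piece in step 2.
\end{itemize}
Containment together with equal degree then forces equality with multiplicities. I expect the support step to be the hardest part. Separating the two scales (roots merging into $p_1$ versus roots merging into the cluster at $p_{12}$) will probably need a Puiseux-type expansion in both $\epsilon$ and $\delta$, parallel to Corollary~\ref{cor_boundary}.
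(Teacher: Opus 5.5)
Your route is the paper's route. The paper's proof is only the Grothendieck-ring computation obtained by composing the generic tensor rule (Theorem~\ref{thm:dege_generic}) with the decomposition rule (Theorem~\ref{thm_decom}), as announced in the remark after the proof of Theorem~\ref{thm:underlying_dege_generic}. It never asks whether the two iterated limits agree.

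You rightly isolate that interchange as the real content, but your way of settling it has a gap. Support containment plus equal total degree does not determine the limit $0$-cycle. At generic $c$ each stratum $\{p_{12}^k\}\times S_{-n_1\eta(\omega)}\overline{\mathcal{Y}}^{c_k}_{\mathbf{n}^{(k)}_{12},\mathbf{p}_{12}}(\tau)$ is a finite set of points. A cycle supported on their union with the correct total degree can still carry the wrong multiplicity at individual points. Equal degree upgrades a cycle inequality $A\ge Z$ (or $A\le Z$) to equality, not an inclusion of supports. Moreover, a support statement about the $\delta=0$ family alone says nothing about the interchange, which is governed by the limit points of the whole two-parameter family.

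What does work is finiteness. Let $\mathcal{Z}$ be the closure of the $c$-sliced family over the $(\delta,\epsilon)$-plane, and suppose its fibre over $(0,0)$ is finite, so $\mathcal{Z}$ is finite over a neighbourhood since $\mathcal{P}_n$ is proper. Then, by conservation of number, the limit $0$-cycle along any curve through $(0,0)$ is the refined fibre $f^{!}[(0,0)]$ of the finite map $f\colon\mathcal{Z}\to\mathbb{C}^2$. The two iterated limits therefore coincide, and your subtraction in step 5 holds with multiplicities. Alternatively, prove the lower bound directly: build, for each claimed point, the claimed number of deformation branches, as in the power-series/Jacobian proof of Theorem~\ref{thm:underlying_dege_generic}, now fed by the degenerate principal systems of Remark~\ref{Rmk_dege_loci}.

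Either condition has to be checked, not assumed. By Remark~\ref{Rmk_dege_loci}, on $F_k$ the leading-order collision system has a $\mathbb{P}^1$ of solutions indexed by the direction of approach in weight space, which is exactly how iterated limits come to differ. In case (4), $(n_1,n_2)\in F_k$ precisely for $\max(2n_1,2n_2)+1\le k\le 2n_1+2n_2+1$, which are strata that must end up absent. In (3) and (4) you also need several weight parameters, since Theorem~\ref{thm_decom} allows only one non-generic weight, so there are more interchanges of the same kind.

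Separately, your re-indexing in case (2) does not reproduce the displayed range when $m_0:=n_1+n_2\in\mathbb{Z}$. From $k'=2m_0+1-k$ with $0\le k\le\lfloor m_0\rfloor$, the smallest value is $2m_0+1-\lfloor m_0\rfloor$. This equals $\lfloor m_0+2\rfloor$ for $m_0\in\mathbb{Z}+\frac12$, but equals $m_0+1$ for integral $m_0$. The reason is that the weight-zero stratum at $k=m_0$ also splits (Theorem~\ref{thm_decom} with $n_1=0$).

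Your own degree check exposes this. Take $r=4$, $n=3$, $n_1+n_2=1$, $n_3+n_4=2$, with all four weights generic. The generic degree is $\binom{7}{5}=21$, but the first union contributes only $15+4+1+0=20$. The missing $1$ is the second copy of the $k=2$ stratum, which the displayed range $k\ge 3$ omits; (3) has the same issue. So your mechanism yields the lower endpoint $\lceil n_1+n_2+1\rceil$, which is what degree conservation requires. You should not claim it produces $\lfloor n_1+n_2+2\rfloor$.
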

\begin{proof}
By the Category $\mathcal{O}$ dictionary, evaluating the geometric limits reduces strictly to computing the tensor product of $\mathfrak{sl}_2(\mathbb{C})$-modules within the Grothendieck ring $R[q]$. 

We demonstrate case (4) as an example. Because $n_1, n_2 \in \frac{1}{2} \mathbb{Z}_{\geq 0}$, the associated modules are finite-dimensional irreducible representations $L_{2n_1}$ and $L_{2n_2}$. Their tensor product is given by the Clebsch-Gordan decomposition:
\[
    [L_{2n_1}] \otimes [L_{2n_2}] = \sum_{j=0}^{2\min(n_1,n_2)} q^j [L_{2n_1+2n_2-2j}].
\]
Translating the right-hand side back to geometry via the dictionary yields exactly the union in (4). Cases (1), (2), and (3) follow by identical algebraic substitutions.
\end{proof}

Iterating degeneration formula can be complicated. The BGG dictionary bypasses this: for example, on the most degenerate fiber, the component multiplicities follow directly from the character formula.
\begin{corollary} \label{Cor:special_fiber_p=0}
Let $\mathbf{0} = (0,\dots,0)$. We have:
\[
    \lim_{\mathbf{p} \to \mathbf{0}} \overline{\mathcal{Y}}_{\mathbf{n}, \mathbf{p}}(\tau) = \bigcup_{k=0}^{n}  m_k(\mathbf{n}) \Big( \{0^k\} \times  \overline{\mathcal{Y}}_{n-k, 0}(\tau) \Big),
\]  
where 
\[
    m_k(\mathbf{n}) := {\rm Coeff}\Bigg( x^k; \frac{ \prod_{i \in I} (1-x^{2n_i+1}) }{(1-x)^{r-1}} \Bigg), \quad I := \Big\{ i\ \Big| \ n_i \in \tfrac{1}{2}\mathbb{Z}_{\geq 0} \Big\}. 
\]
\end{corollary}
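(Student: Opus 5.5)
Fix a generic ordering of the collisions and treat $\mathbf{p}\to\mathbf{0}$ as a chain of successive two-pole collisions. First let $p_1\to p_2$, then let the merged pole $p_{12}$ collide with $p_3$, and so on, ending at a single pole at $0$ with weight $n$. Because $\overline{\mathcal{Y}}(\tau)$ is flat over the configuration space (Theorem~\ref{thm:universal_flatness}), the limit fiber does not depend on the path. We may therefore compute along this chain, and also replace the weights $\mathbf{n}$ by a nearby generic deformation. Throughout we use $\omega=0$, so the shift $S_{-n_1\eta(\omega)}$ is the identity. We also fix a generic slice $c$ and rely on the conservation of the addition-map degree (Theorem~\ref{thm_deg}) to track multiplicities.

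The bookkeeping is done in the Grothendieck ring $R[q]$ via Definition~\ref{def:BGG}. First take all $n_i$ generic. Iterating the generic tensor rule (Theorem~\ref{thm:dege_generic}) gives
\[
[M_{2n_1}]\otimes\cdots\otimes[M_{2n_r}]=\sum_{k\ge 0} q^k\,\#\{(k_2,\dots,k_r)\in\mathbb{Z}_{\ge0}^{r-1}: \textstyle\sum k_j=k\}\,[M_{2n-2k}],
\]
and the coefficient here is the coefficient of $x^k$ in $(1-x)^{-(r-1)}$. The key point is that a boundary stratum $\{p_{12}^{k'}\}\times\overline{\mathcal{Y}}_{\mathbf{n}^{(k')}_{12},\mathbf{p}_{12}}$ produced at one step is itself a GLC for the lower weight. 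So at the next collision we apply the degeneration theorem again, and the clusters at $0$ simply add up. The total cluster size $k$ yields $\{0^k\}\times\overline{\mathcal{Y}}_{n-k,0}(\tau)$, and the number of ordered routes to it is the composition count above.

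Next, for $i\in I$ we replace $[M_{2n_i}]$ by $[L_{2n_i}]=[M_{2n_i}]-q^{2n_i+1}[M_{-2n_i-2}]$, which is the decomposition rule (Theorem~\ref{thm_decom}). Because the generic tensor rule depends on the highest weights only through the total shift, the term $[M_{-2n_i-2}]$ contributes the same series as $[M_{2n_i}]$, shifted by $q^{2n_i+1}$. This gives the generating factor $(1-x^{2n_i+1})$, and hence the claimed $m_k(\mathbf{n})$. Geometrically, we deform $n_i\in I$ to $n_i+\delta$ (compensated on another pole), apply the generic result, and then let $\delta\to 0$. By Theorem~\ref{thm_decom}, the generic fiber splits off the stratum $\{p_i^{2n_i+1}\}\times\overline{\mathcal{Y}}_{(-n_i-1,\dots)}$, whose own degeneration to $\mathbf{0}$ is computed by the same formula with $2n_i+1$ extra points at $0$. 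Subtracting this stratum gives the sign in the product. Finally, we terminate at $k\le n$ using the convention $\overline{\mathcal{Y}}_{\mathbf{m}}=\emptyset$ for negative total weight. The single-pole fibers $\overline{\mathcal{Y}}_{n-k,0}$ are classical Lam\'e curves; for $n-k$ a half-integer weight at one pole, the $[L]$ versus $[M]$ distinction is automatically absorbed.

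The main obstacle is justifying that iterated limits commute with the $\delta$-deformation and the subtraction. Concretely, we must show that the multiplicities genuinely add as cycles: that no component is absorbed and none appears with extra multiplicity from non-reduced structure. I would handle this by comparing the degree of $\sigma$ on both sides. The degree of the left side is given by Theorem~\ref{thm_deg}. On the right side we sum $m_k(\mathbf{n})\binom{n-k+1}{2}$, and the equality reduces to the generating-function identity $\sum_k m_k x^k\cdot\sum_j\binom{j+1}{2}x^j=\prod_{i\in I}(1-x^{2n_i+1})(1-x)^{-(r+2)}$. This identity matches the inclusion–exclusion in Theorem~\ref{thm_deg}, since $\binom{m+r}{r+1}$ is the coefficient of $x^{m-1}$ in $(1-x)^{-(r+2)}$. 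Degree conservation then forces the component multiplicities to be exactly $m_k(\mathbf{n})$, once the cycle-level limit is known to be supported on these components.
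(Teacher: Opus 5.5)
Your main line is the paper's proof. The dictionary of Definition~\ref{def:BGG} turns the limit into the identity $[V_{2n_1}]\otimes\cdots\otimes[V_{2n_r}]=\sum_k m_k(\mathbf{n})\,q^k[M_{2n-2k}]$. Your weak-composition count followed by the termwise BGG resolution is just the expansion of ${\rm ch}(V_{2n_1}\otimes\cdots\otimes V_{2n_r})/{\rm ch}(M_{2n})=\prod_{i\in I}(1-x^{2n_i+1})(1-x)^{-(r-1)}$, which the paper obtains in one line from characters.

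Two rules are used slightly beyond their stated hypotheses, and the paper's appeal to the dictionary glosses over the same two points:
\begin{enumerate}
\item At the last collision the merged weight $n-k'$ lies in $\mathbb{Z}_{\ge0}$. So you are really in case~(2) of Corollary~\ref{cor:dege_special}, not Theorem~\ref{thm:dege_generic}. The extra terms there are empty because their total weight is negative, which is the paper's ``truncated at weight $0$'' remark.
\item Theorem~\ref{thm_decom} is stated only when all other weights are generic. With several half-integral weights, your compensating pole is itself on the lattice.
\end{enumerate}

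The step that fails is the one you offer to resolve what you call the main obstacle. Conservation of $\deg\sigma$ is a single scalar identity $\sum_k\mu_k\binom{n-k+1}{2}=\deg\sigma_{\mathbf{n},\mathbf{p}}$ in $n+1$ unknown multiplicities $\mu_k$. Knowing the support therefore cannot force $\mu_k=m_k(\mathbf{n})$. It would do so for $k<n$ only together with lower bounds $\mu_k\ge m_k(\mathbf{n})$. But you produce the factors $(1-x^{2n_i+1})$ by a subtraction, and lower bounds on two terms give nothing on their difference.

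Moreover, the component $\{0^n\}\times\overline{\mathcal{Y}}_{0,0}(\tau)$ contributes $\binom{1}{2}=0$. It lies over $[0]\in E_\tau$, so it does not meet a generic $c$-slice at all. Degree is therefore blind to $m_n(\mathbf{n})$, which is why Corollary~\ref{cor:modular_decomp} stops at $k=n-1$.

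What actually licenses adding and subtracting multiplicities is the flatness you already invoke for path independence (Theorem~\ref{thm:universal_flatness}). Specialization of cycles in a flat family over a curve is additive. Hence the iterated collision limits, the limit $\delta\to0$, and the removal of $\{p_i^{2n_i+1}\}\times\overline{\mathcal{Y}}_{(-n_i-1,\dots)}$ all hold as identities of cycles. This is what the paper uses implicitly when it appeals to the dictionary.

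Finally, your displayed generating-function identity is missing a factor of $x$, since $\sum_j\binom{j+1}{2}x^j=x(1-x)^{-3}$. With that correction, the comparison with Theorem~\ref{thm_deg} is a good consistency check on the $k<n$ part, but not a proof of the multiplicities.
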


\begin{proof}
By the dictionary, this limit is equivalent to the following decomposition:
\[
    [V_{2n_1}]\otimes \dots \otimes [V_{2n_r}] = \sum_{k=0}^{\infty} m_k(\mathbf{n}) q^k [M_{2n-2k}],
\] 
Note that using $[M_{2n-2k}]$ or $[L_{2n-2k}]$ on the right hand side does not change the conclusion, since the decomposition on the geometry side is truncated at weight 0.

The coefficients are computed using standard $\mathfrak{sl}_2(\mathbb{C})$ character formulas. Let $x = e^{-2}$ be the formal variable tracking the weight shifts. Recall that:
\[
\begin{split}
    {\rm ch}(M_{2n_i}) &= \frac{e^{2n_i}}{1 - e^{-2}} = \frac{e^{2n_i}}{1 - x};
    \\
    {\rm ch}(L_{2n_j}) &= {\rm ch}(M_{2n_j}) - {\rm ch}(M_{-2n_j-2}) = \frac{e^{2n_j}(1 - x^{2n_j+1})}{1 - x}.
\end{split}
\]
Taking the tensor product of all $r$ modules, we have:
\[
    {\rm ch}(V_{2n_1} \otimes \dots \otimes V_{2n_r}) = e^{2n} \frac{\prod_{j \in I} (1 - x^{2n_j+1})}{(1 - x)^r}.
\]
Assuming the tensor product decomposes as $\sum m_k(\mathbf{n}) [M_{2n - 2k}]$, the character of this sum is:
\[
    \sum_{k=0}^{\infty} m_k(\mathbf{n}) {\rm ch}(M_{2n - 2k}) = \sum_{k=0}^{\infty} m_k(\mathbf{n}) \frac{e^{2n - 2k}}{1 - x} = \frac{e^{2n}}{1 - x} \sum_{k=0}^{\infty} m_k(\mathbf{n}) x^k.
\]
Equating the two character expressions yields the generating function for $m_k(\mathbf{n})$. 
\end{proof}

\begin{corollary}
The generalized Lam\'e curve $\overline{\mathcal{Y}}_{\mathbf{n},\mathbf{p}}(\tau)$ is connected.
\end{corollary}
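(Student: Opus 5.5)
The plan is to deduce connectedness from the flat family of Theorem~\ref{thm:universal_flatness} and the explicit most degenerate fiber in Corollary~\ref{Cor:special_fiber_p=0}. The tool is Zariski's connectedness principle: for a proper flat family over a connected base, in the form ``limits of connected fibers are connected, and a generic fiber degenerating to a connected fiber with geometrically reduced structure is connected'', or more robustly Stein factorization. Concretely, I would restrict the family $\overline{\mathcal{Y}}(\tau)$ to a one-parameter curve $\mathbf{p}(s)=s\mathbf{p}$, $s\in\mathbb{C}$, joining the given configuration at $s=1$ to $\mathbf{0}$ at $s=0$. I would choose the path generically so that for $s\neq 0$ the poles stay distinct and the fiber is $\overline{\mathcal{Y}}_{\mathbf{n},s\mathbf{p}}(\tau)$. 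The total space restricted to this disc is flat and proper over the base, since each fiber is closed in the projective bundle $\mathcal{P}_n$ over $\Sym^n E_\tau$.

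First I would show that the special fiber at $s=0$ is connected. By Corollary~\ref{Cor:special_fiber_p=0}, its support is $\bigcup_k \{0^k\}\times\overline{\mathcal{Y}}_{n-k,0}(\tau)$, a union of classical Lam\'e curves translated into $\Sym^n E_\tau$. Each classical $\overline{\mathcal{Y}}_{m,0}(\tau)$ is irreducible, hence connected; I would cite the classical hyperelliptic description from \cite{Chai_Lin_Wang_2015}. The components are linked through boundary points. By Remark~\ref{remark_boundary}, $\{[\mathbf 0]\}$, i.e.\ $\{0^{m}\}$ with $h=\infty$, lies in $\overline{Y}_m$. Thus every stratum $\{0^k\}\times\overline{\mathcal{Y}}_{n-k,0}$ contains the point $\{0^n\}$ in its closure. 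Over it the $h$-coordinate is $\infty$, the unique point of $\mathbb{P}(\mathcal{O})$ in the fiber. So all strata share a common point, and the special fiber is connected. For $k=n$ the stratum is the single point $\{0^n\}$ with some $h$ (the weight-$0$ case, where $h$ is free), and its closure also reaches $h=\infty$.

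Next I would pass from the special fiber to the general one. Let $f:\mathcal{X}\to\Delta$ be the restricted family and take its Stein factorization $\mathcal{X}\to\Delta'\to\Delta$, with $\Delta'\to\Delta$ finite. Flatness and properness give that the number of connected components of the fibers is lower semicontinuous in the right direction, since a disconnected generic fiber would specialize to at least as many connected components. More precisely, if the generic fiber had two components, their closures would be disjoint closed subsets flat over $\Delta$ after removing embedded components at $0$. Their limits would then be nonempty closed subsets whose union is the special fiber. I still need them to be disjoint, which is the subtle point.

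The main obstacle is exactly this: connectedness of the special fiber does not by itself force connectedness of the nearby fibers, because two sheets could degenerate into strata that meet. The clean fix is Zariski's main theorem in Stein form, which requires $f_*\mathcal{O}_{\mathcal{X}}=\mathcal{O}_\Delta$. This holds if the special fiber is reduced and connected, since then $h^0(\mathcal{O}_{X_0})=1$ and semicontinuity gives $h^0(\mathcal{O}_{X_s})\le 1$. However, the fiber at $\mathbf{0}$ carries multiplicities $m_k(\mathbf{n})$. I would therefore instead degenerate one collision at a time using Theorem~\ref{thm:dege_generic}, where each stratum appears with multiplicity one for generic weights, and argue by induction on $r$. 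At each step the limit fiber is a reduced union of lower-$r$ generalized Lam\'e curves, connected by the inductive hypothesis, and these meet at the common Type-I point where all roots sit at $p_{12}$ with $h=\infty$, as given by Theorem~\ref{special-pt}. Then $h^0=1$ propagates to the general fiber. For non-generic weights I would finish by flatness in the $\mathbf{n}$-direction together with the decomposition theorem, Theorem~\ref{thm_decom}. Verifying reducedness of these one-step limits is where I expect the real work to lie.
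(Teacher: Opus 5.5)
Your first step is exactly the paper's proof. The paper takes the $\mathbf{p}\to\mathbf{0}$ fiber of Corollary~\ref{Cor:special_fiber_p=0}, observes that every stratum $\{0^k\}\times\overline{\mathcal{Y}}_{n-k,0}(\tau)$ passes through $[(\mathbf{0},\infty)]$, and concludes in one line that ``flatness preserves connectedness''. Your objection to that last inference is correct, and the paper does not address it. Connectedness passes from a special fiber to nearby fibers of a proper flat family only under an extra hypothesis such as $h^0(\mathcal{O}_{X_0})=1$. The fiber at $\mathbf{p}=\mathbf{0}$ carries the multiplicities $m_k(\mathbf{n})$, which exceed $1$ as soon as $r\ge 3$ (for generic weights $m_1(\mathbf{n})=r-1$). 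Two disjoint sheets $y=\pm\sqrt{t}$ specializing to the double structure $y^2=0$ show that such a fiber can be connected while every nearby fiber is disconnected. So replacing the most degenerate fiber by a one-collision limit with multiplicity-one strata, and then using semicontinuity of $h^0(\mathcal{O})$, genuinely improves on the paper's argument.

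As written, however, the repair is incomplete beyond the point you flag.

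\textbf{Reducedness is not supplied by the degeneration theorem.} Theorem~\ref{thm:dege_generic} describes generic $c$-slices, which are finite sets, so it only detects components dominating $E_\tau$. It says nothing about the scheme structure at the shared point $[(\{p_{12}^n\},\infty)]$. That is exactly where an embedded point would sit, as when two skew lines specialize to two meeting lines. It also says nothing about vertical pieces such as the weight-zero stratum over $\{p_{12}^n\}$. You would have to prove that the curves $\overline{\mathcal{Y}}_{\mathbf{n}_{12}^{(k)},\mathbf{p}_{12}}(\tau)$ are reduced and that the limit has no embedded point where they meet; the paper only asserts the latter.

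\textbf{The last step for special weights runs in the wrong direction.} The $\delta\to 0$ limit in Theorem~\ref{thm_decom} is $\overline{\mathcal{Y}}_{\mathbf{n},\mathbf{p}}(\tau)\cup\{p_1^{2n_1+1}\}\times\overline{\mathcal{Y}}_{(-n_1-1,n_2,\dots,n_r),\mathbf{p}}(\tau)$. So ``limits of connected fibers are connected'' only shows this union is connected. The extra stratum may be exactly what links two components of $\overline{\mathcal{Y}}_{\mathbf{n},\mathbf{p}}(\tau)$.

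The natural fix is to keep $\mathbf{n}$ fixed and degenerate in $\mathbf{p}$ via Corollary~\ref{cor:dege_special}, whose strata are again listed with multiplicity one. But then your common point can fail, because Theorem~\ref{special-pt} caps the number of roots at a half-integral pole. For example, with $\mathbf{n}=(\tfrac12)^4$ the $k=1$ stratum $\{p_{12}\}\times\overline{\mathcal{Y}}_{(\frac12,\frac12),(p_3,p_4)}(\tau)$ does not pass through $[(\{p_{12}^2\},\infty)]$. It meets the $k=0$ stratum instead at $[(\{p_{12},p_3\},\infty)]$, so the linking points have to be chosen case by case.

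\textbf{Two smaller points.} First, the final collision from $r=2$ to $r=1$ never satisfies the hypothesis $n_1+n_2\notin\tfrac12\mathbb{Z}_{\ge 0}$ of Theorem~\ref{thm:dege_generic}, since there $n_1+n_2=n$; that step needs case~(2) of Corollary~\ref{cor:dege_special}. Second, semicontinuity only gives connectedness for $\mathbf{p}$ in an open set. To reach every configuration of distinct poles you must add the specialization argument at fixed $\mathbf{n}$, using that the fibers of the family over such configurations are the curves $\overline{\mathcal{Y}}_{\mathbf{n},\mathbf{p}}(\tau)$ themselves.
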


\begin{proof}
By the universal flatness theorem, there exists a flat family over the parameter space whose special fiber over $\mathbf{p} = \mathbf{0}$ is:
\[
    \bigcup_{k=0}^{n}  m_k(\mathbf{n}) \Big( \{0^k\} \times  \overline{\mathcal{Y}}_{n-k, 0}(\tau) \Big).
\]
This special fiber is connected because all of its components contain the shared point $[(\mathbf{0},\infty)]$. Since flatness preserves connectedness, the generic fiber $\overline{\mathcal{Y}}_{\mathbf{n},\mathbf{p}}(\tau)$ is also connected. 
\end{proof}

\subsection{Proof of flatness for $\overline{\mathcal{Y}}(\tau)$}
This subsection consists of two parts. In the first part, we prove Theorem~\ref{thm:dege_generic}. In the second part, we show that the constructive proof in the first part can be enhanced to construct the flat family. 

Theorem~\ref{thm:dege_generic} naturally descends to a degeneration on the underlying generalized Lam\'e curve $\overline{Y}_{\mathbf{n},\mathbf{p}}(\tau)$. Because points in $\overline{Y}_{\mathbf{n},\mathbf{p}}(\tau)$ forget $h$, the shift operator $S_v$ becomes trivial. 

\begin{theorem}[Degeneration theorem - generic case] \label{thm:underlying_dege_generic}
For generic $c\in E_\tau$, assume $n_1, n_2, n_1+n_2 \in \mathbb{C} \setminus \{ \tfrac{1}{2} \mathbb{Z}_{\geq 0} \}$. We have:
\[
    \lim_{\epsilon \to 0} \overline{Y}_{\mathbf{n}, \mathbf{p}_{12, \epsilon}}^c(\tau) =
    \bigcup_{k=0}^{n} \{[p_{12}]^k\} \times \overline{Y}^{c_k}_{\mathbf{n}_{12}^{(k)},\mathbf{p}_{12}}(\tau),
\]
where $c_k = c - k [p_{12}]$, and $\mathbf{p}_{12,\epsilon} = \mathbf{p}_{12,\epsilon}^0 = (p_{12}+\epsilon, p_{12}-\epsilon, p_3, \dots, p_r)$.
\end{theorem}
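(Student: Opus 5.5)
We prove the statement in two halves: every limit point of the family lies on the right-hand side, and every component on the right-hand side really appears, with the correct multiplicity, by conservation of the degree of $\sigma$ from Theorem~\ref{thm_deg}.

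\emph{Step 1: classifying the limits.} Take an analytic arc $\{[\mathbf{a}(\epsilon)]\}\in \overline{Y}^c_{\mathbf{n},\mathbf{p}_{12,\epsilon}}(\tau)$ and pass to its limit. Sort the roots into three groups:
\begin{itemize}
\item[(a)] roots converging to points other than $p_{12},p_3,\dots,p_r$;
\item[(b)] roots converging to $p_{12}$;
\item[(c)] roots converging to $p_j$ with $j\ge 3$.
\end{itemize}
For a cluster in (b), write $a_\mu=p_{12}+\epsilon\beta_\mu+o(\epsilon)$; this is the rescaling natural to the collision scale $\epsilon$, and roots approaching faster or slower are handled by a finer nested expansion. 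Expand the defining equations \eqref{Gene-Lame-Curve} in $\epsilon$.
\begin{itemize}
\item The coefficient of $\epsilon^{-1}$ gives, for the $k$ roots in the cluster, the rational system
\[
\sum_{\nu\neq\mu}\frac{1}{\beta_\mu-\beta_\nu}-\frac{n_1}{\beta_\mu-1}-\frac{n_2}{\beta_\mu+1}+H=0,
\]
where $H$ is the $\epsilon^{-1}$ coefficient of $h$.
\item For the outer roots, at order $\epsilon^0$ the two poles merge into a single pole of weight $n_1+n_2$. The cluster of $k$ roots then acts like $k$ zeros absorbed into that pole, so it lowers the effective weight to $n_1+n_2-k$.
\end{itemize}
The argument of Proposition~\ref{prop_no_mixed} applies to this system. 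Summing the $\epsilon^{-1}$ equations forces $H=0$ whenever outer roots remain; the exception is the cluster that absorbs everything, which is the $k=n$ term. With $H=0$, the outer roots $\mathbf{a}'$ satisfy the equations of $Y_{\mathbf{n}^{(k)}_{12},\mathbf{p}_{12}}$. Finally, the addition map transforms as $\sigma_{\mathbf{n},\mathbf{p}_{12,\epsilon}}(\mathbf{a})\to \sigma_{\mathbf{n}^{(k)}_{12},\mathbf{p}_{12}}(\mathbf{a}')+k[p_{12}]$, which identifies the fiber as $c_k$.

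\emph{Step 2: the local two-pole system.} The rational system above is a two-centre analogue of Lemma~\ref{lemma_boundary}. Following the same ODE trick, with $P(z)=\prod(z-\beta_\mu)$ we get
\[
(z^2-1)P''-2\big(n_1(z+1)+n_2(z-1)\big)P'+\lambda P=0,
\]
the Heine--Stieltjes equation for Jacobi polynomials with parameters $(-2n_1-1,-2n_2-1)$. It has a unique degree-$k$ polynomial solution with simple roots avoiding $\pm1$ exactly when the Jacobi parameters are non-degenerate. This is where the hypothesis $n_1,n_2,n_1+n_2\notin\tfrac12\mathbb{Z}_{\ge0}$ enters: it rules out the discriminant and endpoint degeneracies, precisely as in the Laguerre computation. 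Consequently each $k$-cluster contributes $k!$ ordered local solutions. Moreover the linearization has rank $k-1$, with kernel given by translation/scaling along $\epsilon$. The argument of Corollary~\ref{cor_boundary} then lets us solve all higher orders inductively, so every point of $\{[p_{12}]^k\}\times \overline{Y}^{c_k}_{\mathbf{n}^{(k)}_{12},\mathbf{p}_{12}}$ lifts to a genuine arc in the family.

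\emph{Step 3: degree count, the main obstacle.} Steps 1 and 2 give both inclusions set-theoretically. To pin down multiplicities and to exclude extra components, compare degrees of $\sigma$ for generic $c$ and check the identity
\[
\binom{n+r}{r+1}=\sum_{k=0}^n\binom{n-k+r-1}{r}.
\]
Here we use that with generic weights $I=\emptyset$, so all degrees are pure binomials; the identity is the hockey-stick identity. Since the total degree is conserved and every term on the right is attained at least once, each term appears with multiplicity exactly one and nothing else survives.

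The hard part is controlling the non-uniform scalings in Step 1: roots approaching $p_{12}$ at rates other than $\epsilon$, and the case $H\neq0$. For these I would use an induction on the scale layers like Case 2 of Proposition~\ref{prop_no_mixed}, showing that any off-scale cluster either lies at a pole, where genericity of $n_1,n_2$ excludes it by Lemma~\ref{Lemma_sol_x}, or reduces to the $\epsilon$-scale system. The degree count then also excludes any missed contributions.
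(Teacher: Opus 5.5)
Your architecture is the same as the paper's: lift every point of the right-hand side, then close up with the hockey-stick identity $\sum_{k}\binom{n-k+r-1}{r}=\binom{n+r}{r+1}$ and Theorem~\ref{thm_deg}. With that architecture, Step~1 is essentially redundant. The gap is in Step~2, which is what Step~3's ``every term on the right is attained at least once'' rests on.

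\emph{The rank claim is false.} The linearization of the two-pole cluster system does not have rank $k-1$. The poles at $\pm1$ break both translation and scaling. There is also no reparametrization freedom, because $\epsilon$ is the pole separation itself. In fact $J_g$ is nonsingular (Lemma~\ref{lemma_principal}). Since $\lambda=k(2n_1+2n_2+1-k)$ is fixed by the data, the linearized operator is $\mathcal{L}$ itself. On $\mathbb{C}[z]_{\le k-1}$ it is triangular with diagonal entries $(d-k)(d+k-1-2n_1-2n_2)$, which are nonzero under your hypothesis on $n_1+n_2$. Had there been a kernel, it would be an obstruction with nothing to absorb it. So Corollary~\ref{cor_boundary} is the wrong tool and would not give you the induction. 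It is a one-pole statement whose kernel is the scaling, absorbed by reparametrizing the boundary parameter, together with a Puiseux analysis at a special weight.

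\emph{The outer roots are never treated.} This is the more serious problem. Lifting a point of $\{[p_{12}]^k\}\times\overline{Y}^{c_k}_{\mathbf{n}^{(k)}_{12},\mathbf{p}_{12}}(\tau)$ requires deforming the outer roots $\mathbf{a}'$ at the same time as the cluster. At each order the coefficient equations are block-triangular. The cluster block is the nonsingular $J_g$. The outer block is the $(n-k)\times(n-k)$ linearization $M$ of the weight-$(n-k)$ GLC equations at $\mathbf{a}'$.

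This $M$ has a one-dimensional kernel, the tangent direction of the lower-weight curve; that is where your ``translation'' kernel actually lives. It is removed only by the constraint $\sum_\mu a_\mu(\epsilon)=c$, and only if the kernel vector has nonzero coordinate sum, i.e.\ is transverse to the fibre of $\sigma$. This is exactly where genericity of $c$ is used.

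So you must show two things: that $\mathrm{rank}\,M=n-k-1$ at generic points, and that the kernel is transverse to the fibres of $\sigma$. The paper does this by degenerating to a Type-I boundary point and applying Vieta to the Laguerre roots. Neither fact is automatic, since the paper notes that GLCs can be singular or non-reduced. Without this you have neither the lifts nor the fact that the right-hand side consists of $\binom{n+r}{r+1}$ distinct simple points. Hence the multiplicity-one conclusion of Step~3 does not follow.
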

To prove Theorem~\ref{thm:underlying_dege_generic}, consider the parametrization
\[
\begin{split}
    a_\mu(\epsilon) &= p_{12} + \sum_{j \geq 1} \alpha_{\mu j} \epsilon^j, \mbox{ for $\mu =1,\dots,k$}.
\end{split}
\]
Let $p_i(\epsilon)$ be the $i$-th component of $\mathbf{p}_{12,\epsilon}$.
The solvability of the following system:
\begin{equation} \label{eqn:degeneration}
\begin{cases}
    &\displaystyle \sum_{\mu=1}^n a_{\mu}(\epsilon) = c 
    \\
    &\displaystyle \sum_{i = 1}^{r}  \sum_{\substack{\nu = 1\\ \nu\neq \mu}}^n  
    n_i\Big( \zeta(a_\mu(\epsilon) - a_\nu(\epsilon)) - \zeta(a_\mu(\epsilon) - p_i(\epsilon)) +  \zeta(a_\nu(\epsilon) - p_i(\epsilon)) \Big) = 0
\end{cases}
\end{equation}
will imply the existence of the component
\[
     \{[p_{12}]^k\} \times \Big( \overline{Y}^{c_k}_{\mathbf{n}_{12}^{(k)},\mathbf{p}_{12}}(\tau) \Big),
\]
on the special fiber.

The principal part (coefficients of $\epsilon^{-1}$) of the system~\eqref{eqn:degeneration} gives
\[
\begin{cases}
    &\displaystyle\sum_{\mu=1}^k \Big( \frac{n_1}{\alpha_{\mu 1} -1} + \frac{n_2}{\alpha_{\mu 1} + 1} \Big) = 0 ;
    \\
    &\displaystyle\sum_{\nu=1,\neq \mu}^k \frac{n}{\alpha_{\mu 1}-\alpha_{\nu 1}} - (n-1) \Big( \frac{n_1}{\alpha_{\mu 1} -1} + \frac{n_2}{\alpha_{\mu 1} +1} \Big) 
    \\
    &\displaystyle\hspace{50mm}+ \sum_{\nu=1,\neq \mu}^k \Big( \frac{n_1}{\alpha_{\nu 1} -1} + \frac{n_2}{\alpha_{\nu 1}+1} \Big) =0.
\end{cases}
\]
Equivalently, we rewrite it in a more symmetric form
\begin{equation} \label{eqn:principal}
\begin{split}
    g_\mu(\alpha_{11},\dots,\alpha_{k1} ):=\sum_{\nu=1,\neq \mu}^k\frac{1}{\alpha_{\mu 1} - \alpha_{\nu 1}} + \frac{-n_1}{\alpha_{\mu 1} - 1} + \frac{-n_2}{\alpha_{\mu 1} + 1}=0,
    \\
    \mbox{ for $\mu =1,\dots,k.$}
\end{split}
\end{equation}
For notation simplicity, we write $\alpha_{\mu}$ for $\alpha_{\mu 1}$, $x$ (resp. $y$) for $n_1$ (resp. $n_2$). The existence of the first order deformation (i.e. the existence of $\alpha_{\mu}$) is equivalent to the solvability of the system~\eqref{eqn:principal}.

To formulate the result, let
\[
\begin{split}
    E_k &:= \{ x=0,\frac{1}{2},\dots, \frac{k-1}{2} \} \cup \{y=0,\frac{1}{2},\dots,\frac{k-1}{2}\} 
    \\
    & \hspace{30mm}\cup \{ x + y= \frac{k-1}{2}, \frac{k}{2},\dots, k-1 \} \subset \mathbb{C}^2;
    \\
    F_k &:= \{ x, y = 0,\frac{1}{2},\dots, \frac{k-1}{2} \}  \cap \{ x + y \geq \frac{k-1}{2} \} \subset \mathbb{C}^2.
\end{split}
\]
\begin{center}
\begin{figure}[ht] 
\begin{tikzpicture}[scale=3]
    \foreach \x in {0.5,1,1.5,2,2.5} {
        \draw[gray!50] (\x,0) -- (\x,2);
    }
    \foreach \y in {0,0.5,1,1.5} {
        \draw[gray!50] (0,\y) -- (3,\y);
    }
    
    \draw[->] (0,0) -- (3,0);
    \draw[->] (0,0) -- (0,2);
    
    \node[below] at (0.5,-0.05) {$x=0.5$};
    \node[below] at (1,-0.05) {1};
    \node[below] at (1.5,-0.05) {1.5};
    
    \node[left] at (-0.05,0.5) {$y=0.5$};
    \node[left] at (-0.05,1) {1};
    \node[left] at (-0.05,1.5) {1.5};
    
    \fill[blue] (0, 1.5) circle (0.03);
    \fill[blue] (0.5,1.5) circle (0.03);
    \fill[blue] (1,1.5) circle (0.03);
    \fill[blue] (1.5,1.5) circle (0.03);
    \fill[blue] (0.5, 1) circle (0.03);
    \fill[blue] (1, 1) circle (0.03);
    \fill[blue] (1.5, 1) circle (0.03);
    \fill[blue] (1, 0.5) circle (0.03);
    \fill[blue] (1.5, 0.5) circle (0.03);
    \fill[blue] (1.5, 0) circle (0.03);
    
    \draw[blue,thick] (0,-0.05) -- (0,1.9);
    \draw[blue,thick] (0.5,-0.05) -- (0.5,1.9);
    \draw[blue,thick] (1,-0.05) -- (1,1.9);
    \draw[blue,thick] (1.5,-0.05) -- (1.5,1.9);
    
    \draw[blue,thick] (-0.05,0) -- (2.9,0);
    \draw[blue,thick] (-0.05,0.5) -- (2.9,0.5);
    \draw[blue,thick] (-0.05,1) -- (2.9,1);
    \draw[blue,thick] (-0.05,1.5) -- (2.9,1.5);
        
    \draw[blue,thick] (-0.25,1.75) -- (1.75,-0.25);
    \draw[blue,thick] (0.25,1.75) -- (1.75,0.25);
    \draw[blue,thick] (0.75,1.75) -- (1.75,0.75);
    \draw[blue,thick] (1.25,1.75) -- (1.75,1.25);

    \node[right] at (3,0) {$\{1\}^1$};
    \node[right] at (3,0.5) {$\{1\}^2$};
    \node[right] at (3,1) {$\{1\}^3$};
    \node[right] at (3,1.5) {$\{1\}^4$};

    \node[above] at (0,2) {$\{(-1)\}^1$};
    \node[above] at (0.5,2) {$\{(-1)\}^2$};
    \node[above] at (1,2) {$\{(-1)\}^3$};
    \node[above] at (1.5,2) {$\{(-1)\}^4$};

    \node[right] at (1.75,-0.25) {$\{\infty\}^4$};
    \node[right] at (1.75,0.25) {$\{\infty\}^3$};
    \node[right] at (1.75,0.75) {$\{\infty\}^2$};
    \node[right] at (1.75,1.25) {$\{\infty\}^1$};
\end{tikzpicture}
\caption{the blue lines represent $E_4$, the blue buttons represent $F_4$. \\ The label $\{a\}^m$ on each blue line corresponds to the solution set which contains $m$'s many of $a$. See Remark~\ref{Rmk_dege_loci}.}
\label{figure_E_F}
\end{figure}
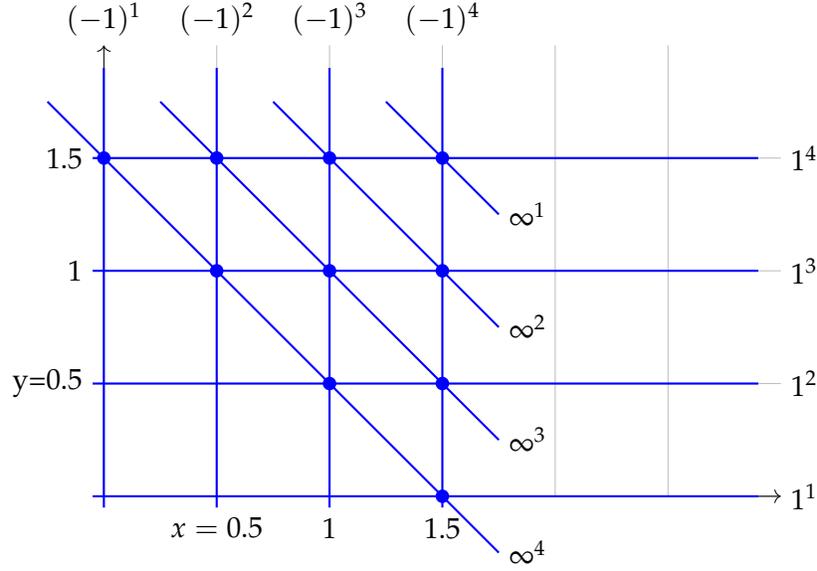
\end{center}

We have
\begin{lemma} \label{lemma_principal} For $x,y \in \mathbb{C}^2 \setminus E_k$, up to permutation, the system 
\[
    g_{\mu}(\alpha) = \sum_{\nu=1,\neq \mu}^k\frac{1}{\alpha_{\mu } - \alpha_{\nu }} + \frac{-x}{\alpha_{\mu} - 1} + \frac{-y}{\alpha_{\mu} + 1}=0,
    \mbox{ for $\mu =1,\dots,k.$}
\]
has an unique solution $\{ \alpha_\mu \}_{\mu=1}^k$ with $\alpha_\mu \neq \alpha_\nu$.

Moreover, the Jacobian of $g$ evaluated at the corresponding solution, $J_g(\alpha)$, has rank $k$. 
\end{lemma}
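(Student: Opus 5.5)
Following the pattern of Lemmas~\ref{Lemma_sol_x} and \ref{lemma_boundary}, I would convert the system into a polynomial ODE (Stieltjes' electrostatic method). Let $p(z)=\prod_{\mu=1}^k(z-\alpha_\mu)$. Since $\sum_{\nu\neq\mu}\frac{1}{\alpha_\mu-\alpha_\nu}=\frac{p''(\alpha_\mu)}{2p'(\alpha_\mu)}$, the system $g_\mu=0$ says that
\[
(z^2-1)p''(z)-2\big(x(z+1)+y(z-1)\big)p'(z)
\]
vanishes at every $\alpha_\mu$. This polynomial has degree $\le k$, so it equals $\Lambda\, p(z)$ for a constant $\Lambda$. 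Comparing $z^k$ coefficients gives $\Lambda=k(k-1)-2k(x+y)$. Thus $p$ is a degree-$k$ polynomial solution of the hypergeometric (Jacobi) equation
\[
(1-z^2)p''+\big(2(x+y)z+2(x-y)\big)p'-k(k-1-2x-2y)p=0 .
\]

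For existence and uniqueness I would act on the monomial basis. Write $\mathcal{L}$ for the operator $p\mapsto (z^2-1)p''-2((x+y)z+(x-y))p'-\Lambda p$. It is upper triangular on $\mathbb{C}[z]_{\le k}$ with diagonal entries
\[
d(d-1)-2d(x+y)-\Lambda=(d-k)\big(d+k-1-2(x+y)\big).
\]
When $x+y\notin\{\tfrac{k-1}{2},\dots,k-1\}$, the only vanishing entry is at $d=k$. Hence there is a unique monic degree-$k$ solution, which is the Jacobi polynomial $P_k^{(-2x-1,-2y-1)}(z)$ up to scaling. The roots of $p$ then give the unique solution set $\{\alpha_\mu\}$.

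It remains to check that the roots are distinct and avoid $\pm1$. Conversely, if $p$ has simple roots away from $\pm 1$, then $g_\mu=0$ holds automatically by reversing the identity. I would evaluate $p(1)$, $p(-1)$ and the discriminant using the classical closed formulas for Jacobi polynomials (Szeg\H{o}'s discriminant formula):
\begin{itemize}
\item $p(1)\propto\binom{k-2x-1}{k}$, which vanishes exactly when $x\in\{0,\tfrac12,\dots,\tfrac{k-1}{2}\}$, and symmetrically $p(-1)$ in $y$.
\item The discriminant is, up to nonzero factors, a product of powers of $(j-2x-1)$, $(j-2y-1)$ and $(k+j-2x-2y-2)$ for $1\le j\le k$.
\end{itemize}
Its zero set is contained in $E_k$. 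Rather than trusting the formula, I would verify it by a degeneration argument: a double root $\beta$ of $p$ with $\beta\neq\pm1$ forces $p\equiv0$ by ODE uniqueness at a regular point. Thus roots can only collide at $\pm1$ or with a degree drop, which are exactly the excluded parameters.

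For the Jacobian I would mimic the proof of Lemma~\ref{lemma_boundary}. A tangent vector $v$ corresponds to $\delta p=-p\sum_\mu\frac{v_\mu}{z-\alpha_\mu}\in\mathbb{C}[z]_{\le k-1}$. Here $\Lambda$ depends only on $x,y$, so $\delta\Lambda=0$, and $J_g v=0$ iff $\mathcal{L}[\delta p]=0$ (up to the nonzero factor $2p'(\alpha_\mu)(\alpha_\mu^2-1)$ at each root). On $\mathbb{C}[z]_{\le k-1}$ the triangular operator $\mathcal{L}$ has diagonal entries $(d-k)(d+k-1-2(x+y))$, all nonzero for $d\le k-1$ off $E_k$. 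Hence $\delta p=0$, so $v=0$ and $J_g$ has full rank $k$.

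Unlike Lemma~\ref{lemma_boundary}, there is no scaling symmetry here, because the points $\pm1$ pin down the configuration. The main obstacle is the root-distinctness and non-vanishing at $\pm1$, since the precise discriminant exponents are not needed but must be controlled on $E_k$.
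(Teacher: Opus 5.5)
Your proposal is correct and follows essentially the same route as the paper: the Stieltjes reduction to the Jacobi equation, the identification of $E_k$ through $p(\pm1)=0$ and the vanishing of the leading coefficient, and the injectivity of the root-independent operator $\mathcal{L}$ on $\mathbb{C}[z]_{\le k-1}$ for the Jacobian. Your triangularity argument for the existence and uniqueness of the monic solution, and your ODE-uniqueness argument for simple roots, are a little more self-contained than the paper's appeal to the Jacobi closed form and its discriminant formula. One harmless slip: in your displayed hypergeometric equation the zeroth-order term should be $+k(k-1-2x-2y)p$, although the operator $\mathcal{L}$ you actually work with has the correct sign.
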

\begin{proof}
The proof strategy is the same as in Lemma~\ref{lemma_boundary}. We repeat here for completeness and mention the differences.

Assume a solution $\{ \alpha_1,\dots,\alpha_k \}$ exists with $\alpha_\mu \neq \alpha_\nu$,
let $p(z) = \prod_{\mu=1}^k (z- \alpha_\mu)$ be a degree $k$ polynomial with roots given by $\{ \alpha_\mu \}$. Then
\[
    \frac{p''(\alpha_\mu)}{2p'(\alpha_\mu)} = \sum_{\nu=1,\neq \mu}^k \frac{1}{\alpha_\mu - \alpha_\nu}.
\]
The system can be rewritten as
\[
    \frac{p''(z)}{2p'(z)} + \frac{-x}{z-1} + \frac{-y}{z +1} = 0, \text{ for }z=\alpha_1,\dots,\alpha_k.
\]
$p(z)$ satisfies the following equation
\[
    \mathcal{L}[q] := \Big( (z^2-1)\frac{d^2}{dz^2} + 2(-x(z+1)-y(z-1)) \frac{d}{dz} + \lambda \Big) p(z) =0,
\]
To have a degree $k$ polynomial solution, the choice of $\lambda = k(2x+2y+1-k)$ is unique by comparing the coefficient of $z^k$.

The solution takes the form of the Jacobi polynomial $P_k^{(-2x-1, -2y-1)}(z)$. Explicitly, up to scaling,
\[
\begin{split}
    p(z)
    =& P_k^{(-2x-1, -2y-1)}(z) 
    \\
    :=& \frac{1}{2^k k!} \sum_{m=0}^k \binom{k}{m} (k-2y-1)_m (k-2x-1)_{k-m} (z-1)^m (z+1)^{k-m}.
\end{split}
\]
Here, $(a)_n = a(a-1)\dots(a-n+1)$ is the falling factorial.

Now we show that the non-generic locus is $E_k$. Note that 
\[
\begin{split}
P_k^{(-2x-1, -2y-1)}(1) = 0  &\Leftrightarrow (k-2x-1)_k = 0 \Leftrightarrow  x \in \left\{ 0, \frac{1}{2}, \dots, \frac{k-1}{2} \right\};
\\
P_k^{(-2x-1, -2y-1)}(-1) = 0  &\Leftrightarrow (k-2y-1)_k = 0  \Leftrightarrow  y \in \left\{ 0, \frac{1}{2}, \dots, \frac{k-1}{2} \right\};
\\
\text{Coeff}(z^k; P_k^{(-2x-1, -2y-1)})=0 &\Leftrightarrow (2k-2x-2y-2)_k = 0 \;  \Leftrightarrow 
\\
&\hspace{85pt}  x+y \in \left\{ \frac{k-1}{2}, \frac{k}{2}, \dots, k-1 \right\}.
\end{split}
\]
The three conditions correspond to the case that at least one $\alpha_\mu$ equals to $1$, $-1$, and $\infty$ respectively. The union of these loci gives $E_k$. 

For the condition $\alpha_\mu$ mutually distinct, note that the discriminant of $P_k^{(-2x-1, -2y-1)}(z)$ is proportional to
\[
    \prod_{j=1}^k (j-2x-1)^{j-1} (j-2y-1)^{j-1} (k+j-2x-2y-2)^{k-j},
\]
whose zero locus is strictly contained in $E_k$. This finish the proof of the first statement.

For the rank statement, given $v = (v_1,\dots,v_k) \in \mathbb{C}^k$, let $\delta_v p$ be the infinitesimal deformation of $p$. Explicitly,
\[
    \delta_v p := \frac{d}{d\epsilon} \Big|_{\epsilon =0} \prod_{\mu=1}^k (z-\alpha_\mu - \epsilon v_\mu) = - p(z) \sum_{\mu=1}^k \frac{v_\mu}{z-\alpha_\mu} \in \mathbb{C}[z]_{\leq k-1}.
\]
Unlike Lemma~\ref{lemma_boundary}, the differential operator $\mathcal{L}$ does not depend on the roots $\alpha_\mu$, as the constant $\lambda = k(2x+2y+1-k)$ is entirely fixed by $x,y$, and $k$. Thus, the Fr\'echet derivative of $\mathcal{L}$ at $p$ is simply $\mathcal{L}$ itself. 

Note that $v \in \ker (J_g(\alpha))$ if and only if $\mathcal{L}[\delta_vp]=0$. Evaluate $\mathcal{L}$ on the monomial:
\[
    \mathcal{L}[z^d] = (d-k)(d+k-1-2x-2y) z^d + \text{ lower degree terms}.
\]
This implies that $\mathcal{L}$ induces an isomorphism on $\mathbb{C}[z]_{\leq k-1}$. The condition $\mathcal{L}[\delta_v p] = 0$ implies that $\delta_v p = 0$. Since $\delta_v p(\alpha_\mu) = -v_\mu p'(\alpha_\mu)$ and the roots are distinct ($p'(\alpha_\mu) \neq 0$), we conclude that $v = 0$. Thus, $\ker J_f(\alpha) = \{0\}$, meaning the Jacobian has full rank $k$.
\end{proof}

\begin{remark} \label{Rmk:Heine_Stieltjes}
The system~\ref{eqn:principal} has been studied in \cite{Varchenko} to find critical points of the product of powers of linear functions. They show that for generic case, the counting is the same as the Euler characteristic of the complement of the configuration of the union of hyperplanes defined by those linear functions. Since the Jacobian of the solution and the degenerate locus is also important for us, we give an alternate proof.
\end{remark}

\begin{remark} \label{Rmk_dege_loci}
The degenerate locus can be further studied. 

When $(x,y) \in E_k \setminus F_k$, the solution only exists in the limiting sense, since some of $\alpha_\mu$ takes values in $\{1,-1,\infty\}$, see Figure~\ref{figure_E_F}. The value and multiplicity follows from identities of Jacobi polynomial. For example, for $x = \frac{l-1}{2}$, we have
\[
    P_k^{-l, -2y-1} (z) = C (z-1)^l P_{k-l}^{l, -2y-1}(z), \quad C = \frac{(k-2y-1)_l}{2^l (k)_l}.
\]
yielding $l$ values of $\mu$ such that $\alpha_\mu = 1$. Similar discussion works for $y=c_1$ and $x+y=c_2$ by M\"obius transform.

When $(x,y) \in F_k$, there are infinitely many solutions. For $(x,y) = (x_0+u\epsilon, y_0+v\epsilon)$ with $(x_0,y_0) \in F_k$, 
\[
    P_k^{-2x-1,-2y-1}(z) = \epsilon F(u,v,z) + O(\epsilon^2),
\]
where $F(u,v,z)$ is a polynomial in $z$ of degree $k$ with coefficients linear functions of $u,v$. When taking the limit $\epsilon\rightarrow 0$, it suffices to look at the leading term $F(u,v,z)$. As a result, the solution forms a $\mathbb{P}^1$ family with $\mathbb{P}^1 \cong E \subset {\rm Bl}_{(x_0,y_0)} \mathbb{C}^2 $, the exceptional locus of the blowup. Lemma~\ref{Lemma_sol_x} is a special case for $(x,y) = ( \frac{n-1}{2},0 )$. 
\end{remark}
\vspace{3mm}

We proceed to solve the higher order deformation.
\begin{proof}[Proof of Theorem~\ref{thm:underlying_dege_generic}] 
We claim that given a point in the special fiber
\[
    \{[\mathbf{a}]\} = \{[p_{12}]^k, [a_{k+1}], \dots, [a_n]\} \in \{[p_{12}]^k\} \times \overline{Y}_{\mathbf{n}_{12}^{(k)}, \mathbf{p}_{12}}^{c_k}(\tau),
\]
there exists an explicit deformation in $\overline{Y}_{\mathbf{n},\mathbf{p}_{12,\epsilon}}^c(\tau)$. We construct this deformation as a power series:
\[
    a_\mu(\epsilon) = a_\mu + \sum_{m \geq 1} \alpha_{\mu m}\epsilon^m, \quad \mu=1,\dots,n.
\]
To show that the coefficients $\{ \alpha_{\mu m} \}$ can be uniquely solved inductively, we separate the variables into the deforming part and the fixed part:
\[
\begin{split}
    \mathbf{a}_{\rm deform} &= (p_{12},\dots,p_{12}), \quad \boldsymbol{\alpha}_{{\rm deform},m} = (\alpha_{1m}, \dots,\alpha_{km});
    \\
    \mathbf{a}_{\rm fix} &= (a_{k+1},\dots,a_{n}), \quad \boldsymbol{\alpha}_{{\rm fix},m} = (\alpha_{k+1, m}, \dots, \alpha_{nm}). 
\end{split}
\]

\vspace{0.5em}
\noindent\textbf{Base cases:}
The coefficient of $\epsilon^{-1}$ yields the system in Lemma~\ref{lemma_principal}, which uniquely determines $\boldsymbol{\alpha}_{{\rm deform }, 1}$. 
The coefficient of $\epsilon^0$ reduces to the linear system: 
\[
    J_g( \boldsymbol{\alpha}_{{\rm deform},1} ) \, \boldsymbol{\alpha}_{{\rm deform},2}^T = \mathbf{0}.
\]
Because the Jacobian $J_g$ has maximal rank, this forces $\boldsymbol{\alpha}_{{\rm deform},2} = \mathbf{0}$. 

\vspace{0.5em}
\noindent\textbf{The inductive step:}
For the fixed coordinates, let $M = M_{\mathbf{n}_{12}^{(k)}, \mathbf{p}_{12}}( \mathbf{a}_{\rm fix} )$ be the $(n-k)\times (n-k)$ matrix whose entries for $\mu, \nu \in \{k+1, \dots, n\}$ are defined by:
\[
\begin{split}
     M_{\mu \mu} &= - \sum_{\nu=k+1,\neq \mu}^n \wp(a_\mu - a_\nu) + \frac{n-k-1}{n-k} \sum_{i=2}^r n_i' \wp(a_{\mu} - p_i), 
    \\
    M_{\mu \nu} &=  \wp(a_\mu - a_\nu) - \frac{1}{n-k} \sum_{i=2}^r n_i' \wp(a_{\mu} - p_i), \quad (\mu \neq \nu).
\end{split}
\]
For $m \geq 1$, analyzing the coefficient of $\epsilon^m$ yields a block-triangular system for the higher-order terms:
\begin{equation} \label{eqn:inductive_block}
    \begin{pmatrix}
        J_g( \boldsymbol{\alpha}_{{\rm deform},1} ) & * \\
        \mathbf{0} & M
    \end{pmatrix}
    \begin{pmatrix}
        \boldsymbol{\alpha}_{{\rm deform},m+2}^T \\
        \boldsymbol{\alpha}_{{\rm fix},m}^T
    \end{pmatrix} 
    =  
    \begin{pmatrix}
        \mathbf{R}_{{\rm deform}, m}^T \\
        \mathbf{R}_{{\rm fix}, m}^T
    \end{pmatrix},
\end{equation}
where the right-hand side vectors consist of terms involving $\wp, \wp', g_2, g_3$ and the lower-order coefficients $\boldsymbol{\alpha}_{{\rm deform}, i < m+2}$ and $\boldsymbol{\alpha}_{{\rm fix}, j < m}$.

\vspace{0.5em}
\noindent\textbf{Rank argument and conclusion:}
To guarantee a unique solution, we must establish the rank of $M$. We claim that for a generic $\{[a_\mu]\}_{\mu=k+1}^n \in \overline{Y}_{\mathbf{n}_{12}^{(k)},\mathbf{p}_{12}}(\tau)$, the matrix $M$ has rank $n-k-1$. 

We verify this by degenerating $\{[a_{k+1}],\dots,[a_n]\} \to \{[p_1]^{k_1},\dots,[p_r]^{k_r}\} \in \partial\overline{Y}_{\mathbf{n}_{12}^{(k)},\mathbf{p}_{12}}(\tau)$ with a boundary parameter $s$. As established in the proof of Theorem~\ref{special-pt}, the principal part (the $s^{-1}$-coefficient) separates into $r$ blocks governed by Lemma~\ref{lemma_boundary}, along with global constraints that impose $r-1$ relations among the blocks. In total, the principal part has rank:
\[
    \sum_{i=1}^r (k_i -1) + (r-1) = n-k-1.
\]
This implies that $M$ has rank $n-k-1$.

The $1$-dimensional kernel of $M$ corresponds to uniform translations. On the boundary point, the kernel of the principal part of the $r$-block matrix is spanned by the vector $(\alpha_{\mu 1})$, where for each block $I_j$, the components $\alpha_{\mu 1}$ are the roots of the Laguerre polynomial $L_{k_j}^{(-2n_j-1)}\big(-2\lambda_j \frac{n_j}{k_j} z\big)$ from Lemma~\ref{lemma_boundary}. By Vieta's formulas, the sum of the roots for the $j$-th block is $\frac{k_j^2(2n_j - k_j + 1)}{2n_j\lambda_j}$. Substituting the global constraint $n_j \lambda_j = \frac{k_j}{n} \sum_{i=1}^r n_i \lambda_i$, the sum of all entries evaluates to
\[
    \sum_{\mu} \alpha_{\mu 1} = \frac{n}{2 \sum_{i=1}^r n_i \lambda_i} \sum_{j=1}^r k_j(2n_j - k_j + 1).
\]
Evaluating at the boundary point $\{[p_{12}]^{n-k}\}$, the sum simplifies to
\[
    \sum_{\mu} \alpha_{\mu 1} = \frac{n(n-k)(2n_1 - n + k + 1)}{2n_1\lambda_1} \neq 0.
\]
This implies that the sum of the components of the kernel vector of $M$ is generically non-zero, and that the condition $\sum_\mu a_\mu(\epsilon) = c$ strictly eliminates the uniform translation ambiguity. This guarantees that the deformation coefficients $\alpha_{\mu m}$ can be uniquely solved inductively.

Through this deformation construction, we obtain
\[
    \sum_{k=0}^{n} \binom{n+r-1-k}{r}_{\rm comb} = \binom{n+r}{r+1}
\]
many points in $\overline{Y}_{\mathbf{n},\mathbf{p}}^c(\tau)$ stratified by the index $k$. For generic $c$, these account for all points by the degree formula. This completes the proof.
\end{proof}

\begin{remark}
We note that the Decomposition Theorem~\ref{thm_decom} can be proved in the same way by explicit formal power series deformation construction. 

The special cases of the Degeneration Theorem~\ref{cor:dege_special} can, in principle, also be proved directly via deformation constructions. In such an explicit approach, the principal $\epsilon^{-1}$ system governing the leading-order coefficients corresponds to the degenerate loci of Lemma~\ref{lemma_principal} (see Remark~\ref{Rmk_dege_loci}). 

Rather than repeating these explicit local constructions for each case, we can rely only on the generic degeneration and the decomposition rules. They are precisely encoded by the BGG dictionary, iteratively applying these rules ensures that all special limits are systematically recovered within the exact same category.
\end{remark}

\vspace{5mm}

Let $B \subset E^r$ be the curve consisting of the points $\mathbf{p}_{12,\epsilon} := (p_{12} + \epsilon, p_{12}-\epsilon, p_3, \dots, p_r)$ parametrized by $\epsilon$. By the generic flatness theorem, there exists a Zariski open subset $U \subset B$, such that $\overline{\mathcal{Y}}_{\mathbf{n},U}(\tau)$ is a flat family over $U$ with fiber at any point $\mathbf{p}_{12,\epsilon} \in U$ being isomorphic to the generalized Lam\'e curve $\overline{\mathcal{Y}}_{\mathbf{n}, \mathbf{p}_{12,\epsilon}}(\tau)$. Let $\overline{\mathcal{Y}}_{\mathbf{n},B}(\tau)$ be the scheme-theoretic closure of $\overline{\mathcal{Y}}_{\mathbf{n},U}(\tau)$ in $\mathcal{P}_n \times B$. Since $B$ is a regular curve, the projection $\overline{\mathcal{Y}}_{\mathbf{n},B}(\tau) \rightarrow B$ is flat.

By explicitly checking Properties (1) and (2) established above, we prove that, for $\epsilon \neq 0$, this abstract flat closure is fiberwise isomorphic to the generalized Lam\'e curve itself. For $\epsilon=0$, let $\overline{\mathcal{Y}}_{\mathbf{n}, \mathbf{p}_{12,0}}(\tau)$ denote the union of the components on the right-hand side of the degeneration theorem. We have:

\begin{theorem}
$\overline{\mathcal{Y}}_{\mathbf{n},B}(\tau)|_\epsilon \cong \overline{\mathcal{Y}}_{\mathbf{n}, \mathbf{p}_{12,\epsilon}}(\tau)$ for all $\epsilon\in B$.
\end{theorem}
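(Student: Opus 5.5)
The plan splits the theorem into the case $\epsilon \neq 0$ and the central fiber $\epsilon = 0$.

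\textbf{Case $\epsilon \neq 0$.} Here $\overline{\mathcal{Y}}_{\mathbf{n},B}(\tau)$ is the closure of the flat family over $U$. Two facts are needed. First, the fiber over every $\epsilon \neq 0$ in $B \setminus U$ still agrees with $\overline{\mathcal{Y}}_{\mathbf{n},\mathbf{p}_{12,\epsilon}}(\tau)$. Second, no extra boundary points or embedded components appear in these fibers.

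\begin{itemize}
\item \textbf{Conservation of degree.} For $\epsilon \neq 0$ the poles are pairwise distinct, so Theorem~\ref{thm_deg} applies uniformly in $\epsilon$. It gives the generic degree of $\sigma_{\mathbf{n},\mathbf{p}}$ on $\overline{Y}_{\mathbf{n},\mathbf{p}_{12,\epsilon}}(\tau)$, and this degree does not depend on $\epsilon$. For generic $c$, the slices $\overline{\mathcal{Y}}^c$ then form a finite scheme of constant length. Flatness of $\overline{\mathcal{Y}}_{\mathbf{n},B}(\tau)$ gives the same length for the fibers of the closure.
\item \textbf{Boundary points.} By Proposition~\ref{prop_no_mixed}, Theorem~\ref{special-pt} and Theorem~\ref{special-pt-II}, the boundary points of each fiber are the Type-I points $[(\mathbf{a},\infty)]$ and the Type-II points. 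Both kinds are produced by the explicit Puiseux-series families in those proofs. These families depend analytically on $\mathbf{p}$ away from pole collisions, so each such point lies in the closure of the total family.
\item \textbf{Conclusion for $\epsilon\neq 0$.} The closure's fiber contains $\overline{\mathcal{Y}}_{\mathbf{n},\mathbf{p}_{12,\epsilon}}(\tau)$. Equal length on generic $c$-slices then forces equality of the one-dimensional parts. Any remaining zero-dimensional discrepancy must lie on the finitely many boundary points, and I would rule it out using the local uniqueness of the series solutions, where each step of the induction leaves only a one-dimensional freedom absorbed by reparametrization.
\end{itemize}

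\textbf{Case $\epsilon = 0$.} Both inclusions between the closure's fiber and $\overline{\mathcal{Y}}_{\mathbf{n},\mathbf{p}_{12,0}}(\tau)$ are needed.

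\begin{itemize}
\item \textbf{Fiber $\supseteq$ the union.} The proof of Theorem~\ref{thm:underlying_dege_generic} builds, for each point of $\{p_{12}^k\}\times\overline{Y}^{c_k}_{\mathbf{n}^{(k)}_{12},\mathbf{p}_{12}}(\tau)$, a convergent power-series deformation into the fibers over $\epsilon\neq 0$. Convergence should follow from the block-triangular system \eqref{eqn:inductive_block} together with an analytic implicit function argument. Along this deformation, $h(\epsilon)=\frac1n\zeta_{\mathbf{n},\mathbf{p}_{12,\epsilon}}(\mathbf{a}(\epsilon))$ converges. The $k$ roots near $p_{12}$ contribute via the principal system \eqref{eqn:principal}: the sum $\sum_\mu \big(\tfrac{n_1}{\alpha_\mu-1}+\tfrac{n_2}{\alpha_\mu+1}\big)$ vanishes, so their $\epsilon^{-1}$ terms cancel. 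The limit is the $h$ of the weight $\mathbf{n}_{12}^{(k)}$ ansatz, shifted by $-n_1\eta(\omega)$ when $\omega\neq0$.
\item \textbf{Fiber $\subseteq$ the union.} The number of points obtained in the proof, $\sum_k\binom{n+r-1-k}{r}=\binom{n+r}{r+1}$, matches the degree. So for generic $c$ the union already exhausts the length of the flat limit, and the inclusion holds on generic slices. Since the closure is flat over the regular curve $B$, its central fiber has no components over special values of $c$ that are not limits of generic ones.
\item \textbf{Boundary of the central fiber.} For points where $h=\infty$ or where roots sit at $p_3,\dots,p_r$, I would use a two-parameter version of the series of Theorem~\ref{special-pt} to match them.
\end{itemize}

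\textbf{Main obstacle.} The delicate step is showing that scheme structures agree, not just supports. One must exclude embedded points and multiplicity mismatches at the boundary and at the collision points. I would first try to verify reducedness of generic $c$-slices by showing that the Jacobian of the combined system is invertible there. This follows from the maximal rank of $J_g$ in Lemma~\ref{lemma_principal} and from $\operatorname{rank} M=n-k-1$ combined with the nonvanishing kernel-sum computation. Reducedness of the generic slices, together with flatness, should then give the isomorphism of the whole fibers.
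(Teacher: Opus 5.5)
At the level of supports of the $\sigma$-horizontal components your plan is sound: deform generic points, use that the closure is closed, and check the hockey-stick count on generic slices. The genuine gap is the decisive step, where you pass from ``reduced generic $c$-slices plus flatness'' to an isomorphism of whole fibers. The divisor $\sigma_{\mathbf{n},\mathbf{p}}^{-1}(c)$ is pulled back from a point of $E_\tau$ and is not ample. Conservation of the $\sigma$-degree and reducedness of generic slices therefore see only the $\sigma$-horizontal part of a fiber, and only on a dense open set. Flatness over $B$ says only that the total space has no associated points in the special fiber. It does not stop the special fiber from acquiring embedded points, or components lying inside a single fiber of $\sigma$.

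Two standard examples show this. The family $xy=\epsilon$ in $\mathbb{A}^2$ is flat, and its slices $x=c$ ($c\neq 0$) are reduced of length $1$ for every $\epsilon$, yet the fiber at $\epsilon=0$ contains the line $x=0$. The twisted cubic degenerating to a plane nodal cubic acquires an embedded point at the node. So your sentence ``its central fiber has no components over special values of $c$ that are not limits of generic ones'' does not follow from flatness.

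Here this is not hypothetical. The $k=n$ term of the unsliced union has total weight $0$, so it is the $h$-line over the single point $\{[p_{12}]^n\}$, and $\sigma$ is constant on it. It is empty in every generic $c$-slice, which is why Corollary~\ref{cor:modular_decomp} stops at $k=n-1$. Neither your ``$\supseteq$'' nor your ``$\subseteq$'' argument ever reaches it.

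The same objection applies to your $\epsilon\neq 0$ step ``equal length on generic slices forces equality of the one-dimensional parts''. Example~\ref{eg:GLC_half_periods} already shows that at special but distinct pole positions a generalized Lam\'e curve can contain a component, $2\Delta_0=\{a_1\equiv -a_2\}$, on which $\sigma_{\mathbf{n},\mathbf{p}}$ is constant, so no generic slice detects it. Likewise, uniqueness of the series through a Type-I point shows only that the curve is unibranch there. It says nothing about embedded points, which could also sit at interior singular points.

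The paper's proof is local at every point of the special fiber, and this is what your plan lacks. It first notes that the family over $U$ is a complete intersection and generically smooth, hence reduced by Serre's criterion ($R_0+S_1$). The closure is then the reduced topological closure, and the question becomes one of supports and multiplicities. It then deforms \emph{every} point of the special fiber, not only generic points of generic slices. Where $M_{\mathbf{n}^{(k)}_{12},\mathbf{p}_{12}}$ drops rank, at singular points of the lower-weight curves, it uses Puiseux series and replaces $\sum_\mu a_\mu=c$ by a generic linear condition $\sum_\mu s_\mu a_\mu=c'$. Finally, it identifies the multiplicity of each component as the number of branches from the $J_g$ block times the ramification index from the $M$ block. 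It is this local count, reproduced by the construction, that excludes embedded points; your Jacobian argument supplies only its unramified case at generic points.

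Two smaller points.
\begin{enumerate}
\item ``For $\epsilon\neq 0$ the poles are pairwise distinct'' is false on $B$. If $2\epsilon\in\Lambda_\tau\setminus\{0\}$ then $[p_1]=[p_2]$, which is the $\omega\neq 0$ case of Theorem~\ref{thm:dege_generic} with the shift $S_{-n_1\eta(\omega)}$. If $p_{12}\pm\epsilon\equiv p_j$ there is a collision with one of $p_3,\dots,p_r$. At these values Theorem~\ref{thm_deg} does not apply.
\item Type-II points are never constructed in the paper, since Theorem~\ref{special-pt-II} gives only necessary conditions. For containment of supports you do not need them anyway, because the closure is closed.
\end{enumerate}
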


\begin{proof}
First, we note that $\overline{\mathcal{Y}}_{\mathbf{n},U}$ (before taking the closure) is a complete intersection (which implies $S_k$ for all $k$) and generically smooth (which implies $R_0$). By Serre's criterion, $R_0 + S_1$ implies that the space is reduced. Consequently, the scheme-theoretic closure of the total space coincides with its topological closure equipped with the reduced induced scheme structure.

We first show that the special fiber $\overline{\mathcal{Y}}_{\mathbf{n}, \mathbf{p}_{12,0}}(\tau)$ is contained in the topological closure of the generic fiber. From the degeneration theorem, we have shown that by fixing generic $c = \sum_{\mu=1}^n a_\mu$, the locus $\overline{\mathcal{Y}}_{\mathbf{n}, \mathbf{p}_{12,0}}^c(\tau)$ deforms to the generic fiber via an explicit power series construction. We claim that by relaxing the fixed $c$ condition, every point in the special fiber deforms without obstruction.

For the generic $c$ argument, the deformation system decomposes into a block-triangular form:
\begin{equation} \label{eqn:deform_dege_sepa}
\begin{split}
 J_g (\boldsymbol{\alpha}_{\rm deform}) + * (\boldsymbol{\alpha}_{\rm fix}) &= \text{lower-order terms},
\\
 M_{\mathbf{n}_{12}^{(k)}, \mathbf{p}_{12}}(\mathbf{a}_{\rm fix}) \boldsymbol{\alpha}_{\rm fix}  &= \text{lower-order terms}.
\end{split}
\end{equation}
Because the Jacobian $J_g$ has full rank, the upper system is always solvable. The technical difficulty arises when the matrix $M_{\mathbf{n}_{12}^{(k)}, \mathbf{p}_{12}}$ drops rank at a non-generic configuration (i.e., when $[(\mathbf{a}_{\rm fix}, h)]$ is a singular point of the lower weight generalized Lam\'e curve $\overline{\mathcal{Y}}_{\mathbf{n}_{12}^{(k)}, \mathbf{p}_{12}}(\tau)$). In this case, a standard Puiseux series argument resolves the singularity by providing a fractional parametrization:
\[
\begin{split}
 \mathbf{a}_{\rm deform}(\epsilon) &= \mathbf{a}_{\rm deform} + \sum_{i \geq 1} \boldsymbol{\alpha}_{{\rm deform},i} \epsilon^{i/m},
 \\
 \mathbf{a}_{\rm fix}(\epsilon) &= \mathbf{a}_{\rm fix} + \sum_{i \geq 1} \boldsymbol{\alpha}_{{\rm fix},i} \epsilon^{i/m},
\end{split}
\]
where $m \in \mathbb{N}$ is a ramification index intrinsically determined by the singularity of the boundary stratum $\{p_{12}^k\} \times [(\mathbf{a}_{\rm fix}, h)]$. By fixing a generic linear combination $c' = \sum_\mu s_\mu a_\mu$ for generic $s_\mu \in \mathbb{C}$, the uniform translation ambiguity is transversally resolved, and the deformation's fractional coefficients can be solved inductively without obstruction. (For a generic point $[(\mathbf{a}_{\rm fix}, h)]$, the standard condition $\sum_\mu a_\mu = c$ with $m=1$ suffices.) This proves there is no obstruction to deforming any point into the generic fiber.

Finally, the algebraic multiplicity of each irreducible component of the special fiber is the product of two distinct geometric sources. The first comes from the coefficient of the degeneration formula (encoded by the isolated solutions of the $J_g$ block), which dictates the number of distinct deformation branches. The second comes from the intrinsic multiplicity of the boundary stratum $\{p_{12}^k\} \times [(\mathbf{a}_{\rm fix}, h)]$ itself (encoded by the $M$ block), which dictates the ramification index $m$ of each branch. Because the deformation construction explicitly captures both the number of branches and their Puiseux ramification, it perfectly reproduces this combined multiplicity. This guarantees there is no room for embedded points in the special fiber, completing the proof.
\end{proof}

Finally, iterating the argument in the proof for the curve base case, we prove Theorem~\ref{thm:universal_flatness}.
\begin{proof}
For a fixed total weight $n\in \mathbb{N}$, let
\[
    U \subset \Big\{ \mathbf{n} \in \mathbb{C}^r \ \Big| \ \sum_i n_i = n \Big\} \times \mathbb{C}^r
\]
be an open set such that $\overline{\mathcal{Y}}_U \rightarrow U$ is a flat family with fiber $\overline{\mathcal{Y}}_{\mathbf{n},\mathbf{p}}$ over $(\mathbf{n},\mathbf{p}) \in U$. We claim that the scheme-theoretic closure of $\overline{\mathcal{Y}}_U$, denoted $\overline{\mathcal{Y}}$, over the special locus, the fiber is uniquely determined by the tensor algebra of $\mathfrak{sl}_2$-modules within the BGG category $\mathcal{O}$ and the family 
\[
\overline{\mathcal{Y}} \rightarrow \Big\{ \mathbf{n} \in \mathbb{C}^r \ \Big| \ \sum_i n_i = n \Big\} \times \mathbb{C}^r
\]
is flat. 

For any point $[(\mathbf{a},h)]$ lying in a degenerate boundary stratum, we construct a multivariable deformation inductively on the codimension of the stratum. At each step, choosing a 1-parameter slice that deforms exactly one parameter (either separating colliding points or shifting a weight off the half-integer lattice) reduces the system to the 1-parameter construction of the previous theorem. That construction explicitly guarantees that the deformation is unobstructed and that the number of formal branches matches the local algebraic multiplicity. 

Iterating these 1-parameter deformations constructs the full multivariable power series, satisfying Property (1). Because algebraic multiplicity is multiplicative under successive flat degenerations, Property (2) is systematically preserved at every inductive step. This proves the family is globally flat.
\end{proof}

\section{Geometry of log-free curves}
In this section, we focus on the most interesting case $\mathbf{n} \in (\tfrac{1}{2} \mathbb{N})^r$. 
Recall from Remark~\ref{r:CI} that the log-free conditions form a complete intersection whenever $\mathbf{n} \notin (\tfrac{1}{2}\mathbb{N})^r$. The case $\mathbf{n} \in (\tfrac{1}{2} \mathbb{N})^r$ is precisely the exception where the complete intersection property fails.

\begin{conjecture}[{ \cite[Conjecture~5.7]{Wang_2020} }] \label{conj_Wang}
The log-free variety $V_{\mathbf{n},\mathbf{p}}(\tau)$ contains curve components. 
\end{conjecture}

The main content of this section is to prove a stronger form of this conjecture, showing that $V_{\mathbf{n},\mathbf{p}}(\tau)$ is a finite union of curves with no isolated points.

First, we prove Conjecture~\ref{conj_Wang} by considering the natural projection:
\[
    \pi: \mathcal{Y}_{\mathbf{n},\mathbf{p}}(\tau) \rightarrow V_{\mathbf{n},\mathbf{p}}(\tau).
\]
We establish a precise correspondence between the branches of $V_{\mathbf{n},\mathbf{p}}(\tau)$ near $B=\infty$ and the Type-I boundary points of the generalized Lam\'e curve (GLC). This correspondence implies the non-constancy of the morphism $\pi$, which guarantees that $V_{\mathbf{n},\mathbf{p}}(\tau)$ contains curve components. To state this correspondence formally in Theorem~\ref{branch}, we parameterize both sets up to a natural involution.

On the log-free side, near the infinity divisor $B=\infty$, the space $V_{\mathbf{n},\mathbf{p}}(\tau)$ is dominated by the leading terms of the log-free conditions:
\[
    q_{2n_i}(A_i, B) = \frac{(-1)^{2n_i}}{({2n_i} !)^2}\prod_{j = 0}^{2n_i} (A_i - ({2n_i} - 2j)B^{1/2})=0, \quad \text{for } i=1,\dots,r.
\]
Imposing the global constraint $\sum_{i=1}^r A_i = 0$, the possible asymptotic branches $A_i \sim (2n_i-2k_i)B^{1/2}$ are characterized by the index set
\[
    \Big\{ (k_1, \dots, k_r) \mid 0 \leq k_i \leq 2n_i, \sum_{i=1}^r k_i = n \Big\} / \sim,
\]
where the equivalence relation $(k_1,\dots,k_r) \sim (2n_1-k_1,\dots,2n_r-k_r)$ identifies branches differing only by the local reparametrization $\epsilon \mapsto -\epsilon$ at infinity. Note that when $\mathbf{n} \in \mathbb{N}^r$, there is a unique self-equivalent (symmetric) branch given by $k_i = n_i$ for all $i$.

On the GLC side, we define an analogous involution on the Type-I boundary points. Two points are equivalent, $\mathbf{a} \sim \mathbf{a}'$, if their divisor sum is $\mathbf{a} + \mathbf{a}' = \sum_{i=1}^r 2n_i p_i$. As on the log-free side, when $\mathbf{n} \in \mathbb{N}^r$, there is a unique self-equivalent (symmetric) boundary point given by $\mathbf{a} = \sum_{i=1}^r n_i p_i$.

\begin{theorem}\label{branch}
The natural morphism $\pi$ maps the open neighborhoods of the Type-I boundary points onto the branches of $V_{\mathbf{n},\mathbf{p}}(\tau)$ near the infinity divisor $B=\infty$, inducing a 1-1 correspondence up to equivalence. Specifically:
\begin{enumerate}
    \item If the boundary point is non-symmetric, i.e., $\mathbf{a}_{\mathbf{k}} \nsim \mathbf{a}_{2\mathbf{n}-\mathbf{k}}$, $\pi$ maps its neighborhood onto the branch intersecting the infinity divisor at 
    \[
        [2(n_1 - k_1) : \dots : 2(n_r - k_r) : 1].
    \]
    \item If the boundary point is symmetric, $\pi$ maps the neighborhood of the self-equivalent point $\mathbf{a}$ onto the unique branch intersecting the infinity divisor at the orbifold point
    \[
        [0 : \dots : 0 : 1].
    \]
\end{enumerate}
\end{theorem}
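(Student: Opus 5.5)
The plan is to compute the asymptotics of $(A_i,B)=\pi(\mathbf a(\epsilon),h(\epsilon))$ along the local branch of the GLC through a Type-I boundary point, using the explicit expansions from the proof of Theorem~\ref{special-pt}. Fix $\mathbf k=(k_1,\dots,k_r)$ with $k_i\le 2n_i$, $\sum k_i=n$, and take the parametrization $a_\mu=p_j+\alpha_{\mu1}\epsilon+O(\epsilon^2)$ for $\mu\in I_j$. There the leading coefficients solve the Laguerre system of Lemma~\ref{lemma_boundary}, and they are coupled through $[n_1\lambda_1:\dots:n_r\lambda_r]=[k_1:\dots:k_r]$. Write $\Lambda:=\sum_i n_i\lambda_i$. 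Then $h=\Lambda/(n\epsilon)+O(1)$ and $n_j\lambda_j=k_j\Lambda/n$.

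First, substitute into \eqref{A-a}. We have $\zeta(p_i-\mathbf a)=-\lambda_i/\epsilon+O(1)$, because the roots near $p_i$ contribute $\sum_{\mu\in I_i}\zeta(-\alpha_{\mu1}\epsilon+\dots)$. Hence
\[
\frac{A_i}{2n_i}=\frac{\lambda_i}{\epsilon}-\frac{\Lambda}{n\epsilon}+O(1)=\frac{\Lambda}{n\epsilon}\Big(\frac{k_i}{n_i}-1\Big)+O(1),
\]
so $A_i\sim\frac{2\Lambda}{n\epsilon}(k_i-n_i)$. Second, substitute into \eqref{B-a} at a fixed $i$. We have $A_i^2/\ell_i^2\sim\Lambda^2(k_i-n_i)^2/(n^2n_i^2\epsilon^2)$, and $(\ell_i-1)\wp(p_i-\mathbf a)=(2n_i-1)\epsilon^{-2}\sum_{\mu\in I_i}\alpha_{\mu1}^{-2}+O(1)$. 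The $\zeta_{ij}A_j$ terms are only $O(\epsilon^{-1})$. To evaluate $\sum_{\mu\in I_i}\alpha_{\mu 1}^{-2}$, I will use the Laguerre polynomial $L_{k_i}^{(-2k_i x_i-1)}$ with $x_i=n_i/k_i$ and Vieta (power sums of reciprocal roots come from the lowest coefficients), or equivalently square the local equation and sum. I expect the result to be
\[
B\sim\frac{\Lambda^2}{n^2\epsilon^2}.
\]
This matches the log-free side, where $A_i=(2n_i-2k_i)B^{1/2}$ up to the sign $\epsilon\mapsto-\epsilon$. As a consistency check, it must be independent of $i$ because $B$ is independent of $i$.

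This identity is the computational heart of the proof and the step I expect to be the main obstacle. The Vieta bookkeeping is delicate; a cleaner route is to multiply the local equation by $1/\alpha_\mu$ and sum, using $\sum_{\mu\ne\nu}\frac{1}{\alpha_\mu(\alpha_\mu-\alpha_\nu)}=\tfrac12\big(\lambda^2-\sum\alpha_\mu^{-2}\big)$. That linear relation should give $\sum\alpha_\mu^{-2}$ in terms of $\lambda_i^2$.

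Consequently $\pi$ sends the boundary point to $[0:2(n_1-k_1):\dots:2(n_r-k_r):1]$ in the weighted projective compactification. This is up to sign, and the sign is absorbed by the $\mathbb Z/2$ stabilizer, i.e.\ by the identification $\mathbf c\sim-\mathbf c$. Hence the image is non-constant and a curve branch. Note that $\mathbf k$ and $2\mathbf n-\mathbf k$ give the same point; this is exactly the involution $\mathbf a\sim\mathbf a'$.

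For surjectivity and bijectivity, the branches of $V$ at infinity are indexed, by Remark~\ref{r:CI}, by the classes $\mathbf c\sim-\mathbf c$ with $c_i=2(n_i-k_i)$. The Type-I boundary points are counted by $F(\mathbf n)$ via Theorem~\ref{special-pt}, and modulo the involution these sets are in bijection via $\mathbf k\mapsto\mathbf c$. So every branch is hit. If two inequivalent boundary points mapped to the same branch, their points at infinity would coincide and $\mathbf k'\in\{\mathbf k,2\mathbf n-\mathbf k\}$. In the symmetric case $k_i=n_i$, we have $A_i=O(1)$ while $B\to\infty$, so the image is the orbifold point $[0:\dots:0:1]$. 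The equivalent points $\mathbf k$ and $2\mathbf n-\mathbf k$ on the GLC map to the same branch; I will check this via the symmetry $a\mapsto 2\sum n_ip_i-a$ composed with $\epsilon\mapsto-\epsilon$.

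Finally, to show that $\pi$ maps a neighborhood \emph{onto} the branch, I will argue that $\pi$ is finite near the boundary, since $B\to\infty$ only there. A non-constant map from a curve germ onto a one-dimensional germ is surjective.
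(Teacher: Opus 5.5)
Your main line is the paper's proof. The paper also substitutes the Type-I parametrization into \eqref{A-a} and \eqref{B-a}, gets $A_i/(2n_i)\sim H(k_i-n_i)/(n_i\epsilon)$ with $H=\Lambda/n$, and evaluates $\sum_{\mu\in I_i}\alpha_{\mu1}^{-2}=\frac{H^2k_i(k_i-2n_i)}{n_i^2(1-2n_i)}$ from the Laguerre polynomial of Lemma~\ref{lemma_boundary}, which gives $\epsilon^2B\to H^2$. Your bijection and symmetric-case remarks only make explicit what the paper leaves implicit.

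However, the identity in your ``cleaner route'' has the wrong sign. Pairing $(\mu,\nu)$ with $(\nu,\mu)$ gives
\[
\sum_{\mu\neq\nu}\frac{1}{\alpha_\mu(\alpha_\mu-\alpha_\nu)}=-\sum_{\mu<\nu}\frac{1}{\alpha_\mu\alpha_\nu}=\frac12\Big(\sum_\mu\alpha_\mu^{-2}-\lambda_i^2\Big),
\]
as you can test on $\alpha=(1,-1)$. Your sign instead yields $(2n_i+1)\sum\alpha_{\mu1}^{-2}=\lambda_i^2+2H\lambda_i$ and an $i$-dependent $\epsilon^2B$ (try $\mathbf n=(1,1)$, $\mathbf k=(2,0)$), which your own consistency check would flag.

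With the correct sign, this route is actually better than the paper's. Multiply $\sum_{\nu\neq\mu}\frac{1}{\alpha_\mu-\alpha_\nu}-\frac{n_i}{\alpha_\mu}+H=0$ by $\alpha_\mu^{-1}$ and sum over $I_i$. This gives $(2n_i-1)\sum_{\mu\in I_i}\alpha_{\mu1}^{-2}=2H\lambda_i-\lambda_i^2$, hence
\[
\epsilon^2B\longrightarrow(\lambda_i-H)^2+(2n_i-1)\sum_{\mu\in I_i}\alpha_{\mu1}^{-2}=H^2
\]
for every $i$ simultaneously. It needs no Laguerre polynomials and no global constraint; that constraint is used only for $A_i/B^{1/2}\to\pm2(k_i-n_i)$. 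It also avoids the $0/0$ in the paper's closed form at $n_i=\tfrac12$.

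Second, drop the planned check via $a\mapsto2\sum n_ip_i-a$ and $\epsilon\mapsto-\epsilon$. Applied to the roots, that reflection sends the poles $p_i$ to $2\sum_jn_jp_j-p_i$. So for generic $\mathbf p$ it is not a symmetry of $\overline{\mathcal Y}_{\mathbf n,\mathbf p}(\tau)$. Nor does it carry $\{p_i^{k_i}\}$ to $\{p_i^{2n_i-k_i}\}$: the relation $\mathbf a+\mathbf a'=\sum2n_ip_i$ is an identity of divisors, not a map. You do not need the check anyway, since your asymptotics already place $\mathbf k$ and $2\mathbf n-\mathbf k$ at the same point at infinity.

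What you do need, there and in your surjectivity step (where you identify branches with points at infinity), is that $\overline V_{\mathbf n,\mathbf p}(\tau)$ has a single branch through each such point. The paper does not address this either. It follows because $\overline V\cap\{A_0=0\}$ lies in the scheme cut out by the $q_{2n_i}$ and $\sum A_i$, which is reduced by the Chinese Remainder Theorem argument. The local ring of $\overline V$ at such a point is one-dimensional, since the point is a limit of $\pi(\mathcal Y)$. Its maximal ideal is therefore generated by $A_0$, so the ring is a DVR. At the orbifold point, run the same argument on the double cover.
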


\begin{proof}
Recall the computation for the existence of the Type-I boundary in Theorem~\ref{special-pt}. Up to a choice of representative, we assume that $a_\mu = p_j$ whenever $[a_\mu] = [p_j]$. For $j=1,\dots,r$, let $I_j$ be the index set of $\mu$ such that $a_{\mu}=p_j$. The parametrization
\[
    a_\mu = p_j + \sum_{m \geq 1} \alpha_{\mu m} \epsilon^m, \text{ for }\mu \in I_j, \ j=1,\dots,r,
\]
always exists. The linear terms $\{ \alpha_{\mu 1} \}$ satisfy:
\[
    \Big( \sum_{\nu \in I_1} \frac{n_1}{\alpha_{\nu 1}}: \dots: \sum_{\nu \in I_r} \frac{n_r}{\alpha_{\nu 1}}   \Big) = \lambda ( k_1: \dots: k_r ), \quad \lambda \in \mathbb{C}^\times.
\]
Furthermore, for each $i$, by Lemma~\ref{lemma_boundary}, $\{ \alpha_{\nu 1} \}_{\nu \in I_i}$ are the roots of the corresponding Laguerre polynomial. We can compute:
\[  
    \sum_{\nu \in I_i} \frac{1}{\alpha_{\nu 1}^2} = \frac{\lambda^2 k_i (k_i-2n_i)}{n_i^2(1-2n_i)}. 
\]

Now the theorem follows from a direct computation:
\[
\begin{split}
\frac{A_i(\epsilon)}{2n_i} &= - \zeta_{i \mathbf{a}} + \sum_{j = 1, \ne i}^r n_j \zeta_{ij} - h 
\\
&= \frac{\lambda k_i}{n_i \epsilon} + O(1) - \frac{\lambda}{\epsilon} = \frac{\lambda(k_i-n_i)}{n_i\epsilon}
\\
B(\epsilon) &= \frac{A_i(\epsilon)^2}{(2n_i)^2} - \sum_{j = 1, \ne i}^r n_j(n_j + 2n_i) \wp_{ij} - \sum_{j = 1, \neq i}^{r} A_j(\epsilon) \zeta_{ij} + (2n_i - 1) \wp_{i \mathbf{a}} 
\\
&= \frac{1}{\epsilon^2} \Bigg( \frac{\lambda^2 (n_i-k_i)^2}{n_i^2} + (2n_i-1) \sum_{\nu \in I_i} \frac{1}{\alpha_{\nu 1}^2} \Bigg) + O(1)
\\
&= \frac{\lambda^2}{\epsilon^2} + O(1). 
\end{split}
\]
This finishes the proof.
\end{proof}
We further prove that the log-free variety has no isolated points. Recall that $F(\mathbf{n})$ is the number of Type-I boundary points. 
\begin{theorem} \label{thm:log_free_curveness}
$\overline{V}_{\mathbf{n},\mathbf{p}}(\tau)$ is a curve of degree $\frac{1}{2} F(\mathbf{n})$ in variables $A_i$ and $B$.
\end{theorem}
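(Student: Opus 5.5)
Here $\mathbf{n}\in(\tfrac12\mathbb{N})^r$ throughout. I would prove the statement in two halves. First, $\overline{V}_{\mathbf{n},\mathbf{p}}(\tau)$ has pure dimension one with no isolated or embedded points. Second, its weighted degree equals $\tfrac12F(\mathbf{n})$, computed by intersecting with the infinity divisor $H=\{A_0=0\}$ in $\mathbb{P}^{r+1}(1,\dots,1,2)$. The upper bound $\dim\le 1$ follows as in Remark~\ref{r:CI}. By Proposition~\ref{p:topterm}, $\overline{V}_{\mathbf{n},\mathbf{p}}(\tau)\cap H$ is cut out by $\sum A_i=0$ and $q_{2n_i}(A_i,B)=0$ for all $i$. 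It is therefore the finite set of points $[0:2(n_1-k_1):\dots:2(n_r-k_r):1]$ with $0\le k_i\le 2n_i$ and $\sum k_i=n$, taken modulo $\mathbf{k}\sim 2\mathbf{n}-\mathbf{k}$. The projective dimension theorem then gives $\dim\le1$. The lower bound $\dim\ge 1$ comes from Theorem~\ref{branch}. The map $\pi:\overline{\mathcal{Y}}_{\mathbf{n},\mathbf{p}}(\tau)\to\overline{V}_{\mathbf{n},\mathbf{p}}(\tau)$ is non-constant near every Type-I boundary point, and every point at infinity is hit by such a branch. Hence every point of $\overline{V}\cap H$ lies on a curve component.

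To exclude zero-dimensional pieces (affine isolated points, or embedded points), I would use the flat degeneration sketched in the introduction. Introduce a parameter $t$ and rescale $A_i\mapsto A_i/t$, $B\mapsto B/t^2$. The ideal $I_t$ generated by $t^{2n_i+1}F_{2n_i}(\mathbf{A}/t,B/t^2)$ and $\sum A_i$ specializes at $t=0$ to the weighted-homogeneous ideal $I_0=(q_{2n_1}(A_1,B),\dots,q_{2n_r}(A_r,B),\sum A_i)$. The key claim is that $I_0$ is exactly the initial ideal $\mathrm{in}(I_1)$ with respect to the weighted degree. Granting this, the family $\mathbb{C}[\mathbf{A},B][t]/I_t$ is flat, with general fibre $V_{\mathbf{n},\mathbf{p}}(\tau)$ and special fibre the weighted cone $V(I_0)$.

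The main obstacle is to show that the naive limit ideal is the initial ideal, and that it defines a reduced curve. The Rees-type limit always contains $I_0$, and equality can fail because there are $r+1$ equations in $r+1$ variables that are not a complete intersection. Here I would use the $\mathfrak{sl}_2$ structure. Set $B=\beta^2$. Each $q_{2n_i}$ factors into $2n_i+1$ distinct linear forms $A_i-(2n_i-2j)\beta$. Then $V(I_0)$ becomes, after the $\mathbb{Z}/2$ quotient, a union of distinct lines through the origin, one for each admissible $\mathbf{k}$, so it is a reduced cone in the $\beta$-cover. To show that $I_0$ is saturated with no embedded component at the origin, and that no syzygy of the $F$'s produces a new lower-degree leading form, I would reduce modulo a general linear form $\ell$. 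The quotient $\mathbb{C}[\mathbf{A},B]/(\ell,q_{2n_i})$ is an Artinian complete intersection in the $r$ variables $A_1,\dots,A_r$ with $B$ eliminated. Stanley's Weak Lefschetz Property says that multiplication by $\sum A_i$ on it has maximal rank in each degree. This gives exactly the Hilbert function of the reduced cone, forcing equality of Hilbert functions between $I_0$ and $\mathrm{in}(I_1)$. Reducedness and absence of embedded points then pass to the general fibre by flatness and upper semicontinuity, which gives pure dimension one.

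Finally, the degree is read off from the reduced special fibre, equivalently from $\overline{V}\cap H$ with multiplicities one. The lines are indexed by $\{\mathbf{k}:0\le k_i\le 2n_i,\ \sum k_i=n\}$, whose cardinality is $F(\mathbf{n})$ by its inclusion–exclusion definition. The involution $\mathbf{k}\sim 2\mathbf{n}-\mathbf{k}$, coming from $\beta\mapsto-\beta$, pairs them. The unique symmetric branch at the orbifold point is counted with weight $\tfrac12$ because of its $\mathbb{Z}/2$ stabilizer. This yields weighted degree $\tfrac12F(\mathbf{n})$.
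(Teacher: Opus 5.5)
Your proposal follows essentially the same route as the paper's proof: Theorem~\ref{branch} to place every point of $\overline{V}_{\mathbf{n},\mathbf{p}}(\tau)\cap H$ on a curve branch, the weighted degeneration to the naive limit $I_0=(q_{2n_1}(A_1,B),\dots,q_{2n_r}(A_r,B),\sum_i A_i)$, the cover $B=\beta^2$ splitting $V(I_0)$ into $F(\mathbf{n})$ reduced lines, and Stanley's Lefschetz theorem to exclude an embedded point at the origin, after which $I_0=\mathrm{in}(I_1)$ and reducedness and purity pass to the general fibre. Two things need to be stated precisely: first, the element you reduce by must be $B$ (or $\beta$ on the cover), not a general linear form, because only then is the quotient Stanley's monomial complete intersection $S=\mathbb{C}[\mathbf{A}]/(A_1^{2n_1+1},\dots,A_r^{2n_r+1})$, and what makes $B$ a non-zero-divisor is that $\dim_{\mathbb{C}} S/(\sum_i A_i)S=\dim S_n=F(\mathbf{n})$ equals the number of reduced points on the slice $B=1$; second, the equality $\mathrm{in}(I_1)=I_0$ does not follow from the Lefschetz property alone but from combining the radicality of $I_0$ with the lower bound from Theorem~\ref{branch} that every line lies in $V(\mathrm{in}(I_1))$, which is exactly the role of the degree count in Step~1 of the paper.
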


\begin{proof}
\textbf{Step 1: Degree of the general fiber.}

Note that every curve component of $\overline{V}_{\mathbf{n},\mathbf{p}}(\tau)$ intersects the infinity divisor $B =\infty$. It follows from the top degree terms $q_{2n_i}(A_i,B)$, where $B \rightarrow \infty$ whenever $A_i \rightarrow \infty$. By Theorem~\ref{branch}, there are $\frac{1}{2} F(\mathbf{n})$ such branches, accounting for the orbifold point. Thus, the union of the 1-dimensional components of $\overline{V}_{\mathbf{n},\mathbf{p}}(\tau)$ has degree $\frac{1}{2} F(\mathbf{n})$.

\vspace{0.5em}
\noindent \textbf{Step 2: Flat degeneration and the central fiber.} 

To rule out isolated and embedded points, we construct a flat family via two steps. The second step uses the Gr\"obner degeneration. 

First, the log-free condition at $p_i$ is 
\[
    F_{2n_i}(\mathbf{A}, B) = q_{2n_i}(A_i, B) + H_i (\mathbf{A},B,\zeta,\wp) = 0.
\]
This equation is quasi-homogeneous with respect to the weights $1$ for $A_i, \zeta$ and $2$ for $B, \wp$. Thus, the simultaneous substitution $A_i \mapsto tA_i, B \mapsto t^2B, \zeta \mapsto t\zeta, \wp \mapsto t^2\wp$ factors out a power of $t$, perfectly preserving the ideal for any $t \neq 0$. 
Second, we apply a standard Gr\"obner weight vector strictly to the variables: $w(A_i) = -1$ and $w(B) = -2$, which induces the substitution $A_i \mapsto t^{-1}A_i$ and $B \mapsto t^{-2}B$. Composing these two steps cancels the scaling on the variables, yielding the transformation $\zeta \mapsto t\zeta$ and $\wp \mapsto t^2\wp$. 

This composition constructs a 1-parameter family $V_{\mathbf{n},\mathbf{p}}(\tau)_t$ that is flat over $\mathbb{C}[t]$ (\cite[Theorem 15.17]{Eisenbud_commutative_algebra_1995}). At $t=1$, we recover the general fiber $V_{\mathbf{n},\mathbf{p}}(\tau)$. At $t=0$, the central fiber, denoted $V_{\mathbf{n},0}$, is the flat limit, defined by the initial ideal $\text{in}_w(I)$.

By definition, the initial ideal $\text{in}_w(I)$ is generated by the leading (highest $w$-weight) terms of all polynomials in $I$. In general, this differs from the naive ideal generated by only the leading terms of the explicit equations, because hidden algebraic relations (S-polynomials) can emerge during a Gr\"obner basis computation. 

To emphasize the difference, let $W_0$ be the naive limit scheme defined purely by taking the limit $t \rightarrow 0$ of our given generators:
\[
    q_{2n_i}(A_i, B) = \frac{(-1)^{2n_i}}{({2n_i} !)^2}\prod_{j = 0}^{2n_i} (A_i - ({2n_i} - 2j)B^{1/2})=0, \quad 1 \le i \le r,
\]
together with the global condition $\sum_{i=1}^r A_i = 0$. We have $V_{\mathbf{n},0} \subset W_0$. 

\vspace{0.5em}
\noindent \textbf{Step 3: Reducedness of $W_0$ and $V_{\mathbf{n},0} = W_0$.} 

Consider the base change $B=C^2$, yielding the space $\widetilde{W}_0$ with coordinate ring $\mathbb{C}[\mathbf{A},C]/I_{\widetilde{W}_0}$ where 
\[
    I_{\widetilde{W}_0} = \Bigg( \prod_{j=0}^{2n_1} \big(A_1-(2n_1-2j)C\big), \dots, \prod_{j=0}^{2n_r} \big(A_r-(2n_r-2j)C\big), \sum_{i=1}^r A_i \Bigg).
\]
We analyze the affine cone $\widetilde{W}_0$ into three parts: the affine chart $C=1$, the infinity hyperplane $C=0$, and the origin $C = A_i=0$.

On the chart $C=1$, the linear factors $A_i - (2n_i - 2j_i)$ are pairwise coprime. By the Chinese Remainder Theorem, the ideal generated by their products decomposes into an intersection of maximal ideals:
\[
    \bigcap_{0 \le j_i \le 2n_i} \big( A_1-(2n_1-2j_1), \dots, A_r-(2n_r-2j_r) \big).
\]
These $\prod_{i=1}^r (2n_i+1)$ distinct points intersect the hyperplane $\sum_{i=1}^r A_i = 0$ yields $F(\mathbf{n})$ points. Because the dehomogenized ideal is given by this radical intersection of points, the affine cone away from $C=0$ consists of union of $F(\mathbf{n})$ reduced lines.

Second, at the hyperplane $C=0$, the only root of the ideal
\[
(A_1^{2n_1+1}, \dots, A_r^{2n_r+1}, \sum A_i).
\] 
is the origin $A_1 = \dots = A_r = 0$. In the projective scheme, this corresponds to the empty set, proving no additional lines are supported at infinity.

Third, to prove the affine cone has no embedded point at the origin (meaning the irrelevant maximal ideal is not an associated prime), we show that $C$ is a non-zero-divisor on the coordinate ring. We analyze this via the graded Artinian complete intersection $S = \mathbb{C}[A_1,\dots,A_r]/(A_1^{2n_1+1},\dots,A_r^{2n_r+1})$. Recall the classical work by Stanley:
\begin{theorem}[\cite{Stanley_1980}]
The ring $S$ satisfies the Hard Lefschetz property with respect to the linear operator $L= A_1 + \dots + A_r$. 
\end{theorem}
Our argument only requires the Weak Lefschetz Property (the maximal rank of the multiplication maps $L: S_m \to S_{m+1}$). Combined with the unimodal symmetry of $S$, the map is injective for $m \le n-1$ and surjective for $m \ge n$. 

Let $\mathcal{J}(\mathbf{n})$ be the index set such that $\{ A_J \mid J \in \mathcal{J}(\mathbf{n}) \}$ forms a vector space basis for $S/L(S) = \bigoplus_{m=0}^{n-1} S_{m+1}/L(S_m)$. Note that 
\[
| \mathcal{J}(\mathbf{n}) | = \dim S_n = F(\mathbf{n}).
\]
Furthermore, we claim that $\{ A_J C^k \}_{J \in \mathcal{J(\mathbf{n})}, k \geq 0}$ forms a basis of the ring $\mathbb{C}[\mathbf{A},C]/ I_{\widetilde{W}_0}$. First of all, this set spans the ring since any term containing $A^{2n_i+1}$ reduces to higher powers of $C$. Second, they have no relation since when $C=1$ the cardinality of the spanning set and the degree of the affine cone are both equal to $F(\mathbf{n})$. This proves the basis claim. $C$ is a non-zero-divisor follows directly from the claim. 

The naive central fiber $W_0$ corresponds to the quotient of $\mathbb{Z}/2\mathbb{Z}$ action $C \mapsto -C$ on $\widetilde{W}_0$. Taking the quotient preserves the reducedness. We conclude that $W_0$ is a reduced curve of degree $\frac{1}{2} F(\mathbf{n})$. Note that the degree is in weighted projective space, the symmetric $C$-axis (if exists) will be counted as $\frac{1}{2}$ after quotient.

Finally, from Step 2, $V_{\mathbf{n},0} \subseteq W_0$. From Step 1, by flatness, $V_{\mathbf{n},0}$ must have the same degree as the general fiber, which is $\tfrac{1}{2} F(\mathbf{n})$. Since $W_0$ is reduced and has degree $\tfrac{1}{2} F(\mathbf{n})$, $V_{\mathbf{n},0} = W_0$. Therefore, no additional algebraic relations exist in the initial ideal. Because $V_{\mathbf{n},0}$ is a reduced curve, flatness guarantees the general fiber $\overline{V}_{\mathbf{n},\mathbf{p}}(\tau)$ is a reduced curve of degree $\frac{1}{2} F(\mathbf{n})$. This finishes the proof.
\end{proof}

\begin{corollary}
The Hilbert quasi-polynomial of $\overline{V}_{\mathbf{n},\mathbf{p}}(\tau)$ is given by
\[
\chi_{\overline{V}_{\mathbf{n},\mathbf{p}}(\tau)}(m)=\frac{m-n+1}{2}F(\mathbf{n})+\frac{1}{4}\left( \prod_{i=1}^r (2n_i+1)+(-1)^m\delta_{\mathbf{n},\mathbb{N}^r}\right)
\]
where
\[
\delta_{\mathbf{n},\mathbb{N}^r} = 
\begin{cases}
    1,\qquad\textnormal{if } \mathbf{n} \in \mathbb{N}^r
    \\
    0,\qquad\textnormal{otherwise.} 
\end{cases}
\]
\end{corollary}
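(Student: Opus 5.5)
The plan is to compute the Hilbert (quasi-)function on the flat limit rather than on $\overline{V}_{\mathbf{n},\mathbf{p}}(\tau)$ itself. By the proof of Theorem~\ref{thm:log_free_curveness}, the family $V_{\mathbf{n},\mathbf{p}}(\tau)_t$ is flat over $\mathbb{C}[t]$, and its central fiber is exactly the naive scheme $W_0$. The family is compatible with the weighted grading ($\deg A_i=1$, $\deg B=2$), so the Hilbert function is constant in $t$. It therefore suffices to compute $\chi$ for the graded ring $R_0:=\mathbb{C}[\mathbf{A},B]/(q_{2n_1},\dots,q_{2n_r},\sum A_i)$. Here I take $\chi(m)$ to be the cumulative count $\dim$ of elements of weighted degree $\le m$ (equivalently, the Hilbert function of the homogenization). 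This is the convention that yields a function of slope $\tfrac12F(\mathbf{n})$ for a curve of degree $\tfrac12 F(\mathbf{n})$, and it is what the stated formula reflects.

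\textbf{Step 1: an explicit basis of $R_0$.} I will reuse the basis $\{A_JC^k\}_{J\in\mathcal{J}(\mathbf{n}),\,k\ge0}$ of $\mathbb{C}[\mathbf{A},C]/I_{\widetilde W_0}$ established in Step~3 of the preceding proof. The ring $R_0$ is the invariant subring under $C\mapsto -C$, so a basis of $R_0$ is $\{A_JC^{2k}=A_JB^k\}$, where $A_JB^k$ has weighted degree $|J|+2k$. Let $g_d:=\#\{J\in\mathcal{J}(\mathbf{n}) : |J|=d\}$. The Weak Lefschetz property (Stanley) gives $g_d=s_d-s_{d-1}$ for $0\le d\le n$ and $g_d=0$ for $d>n$, where $s_d=\dim S_d$ and $\sum_d s_dx^d=\prod_i(1+x+\dots+x^{2n_i})$.

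\textbf{Step 2: counting.} The dimension of $(R_0)_m$ equals $\#\{(J,k): |J|+2k=m\}=\sum_{d\le \min(m,n),\,d\equiv m \ (2)} g_d$. For $m\ge n$ this becomes $\tfrac12\big(\sum_d g_d+(-1)^m\sum_d(-1)^dg_d\big)$. The first sum telescopes to $s_n=F(\mathbf{n})$. For the alternating sum, I will use the palindromic symmetry $s_d=s_{2n-d}$ to show
\[
\sum_{d\le n}(-1)^d(s_d-s_{d-1})=\sum_{d=0}^{2n}(-1)^ds_d=\prod_i\big(1-1+\cdots+(-1)^{2n_i}\big),
\]
which equals $1$ exactly when every $2n_i$ is even, i.e.\ it equals $\delta_{\mathbf{n},\mathbb{N}^r}$. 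Summing $\dim(R_0)_j$ over $j\le m$ then gives a quasi-linear function with slope $\tfrac12F(\mathbf{n})$ and periodic part $\tfrac14(-1)^m\delta_{\mathbf{n},\mathbb{N}^r}$. The constant term has to be extracted from the initial segment $j<n$.

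\textbf{Main obstacle: the constant term.} I must show that $\sum_{j\le m}\dim(R_0)_j-\tfrac{m}{2}F(\mathbf{n})$ has non-oscillating part $\tfrac{1-n}{2}F(\mathbf{n})+\tfrac14\prod_i(2n_i+1)$. For this I will write the generating series $\sum_m\dim(R_0)_m x^m=\frac{\sum_{d\le n}g_dx^d}{1-x^2}$. Its cumulative version is obtained by dividing once more by $(1-x)$, and the constant of the quasi-polynomial is read off from the partial fractions at $x=1$ (double pole) and $x=-1$ (simple pole). The double-pole residue involves $\sum_d d\,g_d$. Using $g_d=s_d-s_{d-1}$ together with symmetry, $\sum_{d\le n}d\,g_d=nF(\mathbf{n})-\sum_{d<n}s_d$, and $\sum_{d<n}s_d=\tfrac12\big(\prod_i(2n_i+1)-F(\mathbf{n})\big)$. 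This is where $\prod_i(2n_i+1)=\sum_d s_d$ enters. The parity-sensitive half of the orbifold contribution at $x=-1$ should account for the remaining $\tfrac14$-terms. I expect the bookkeeping of these two residues, and checking the convention on $\chi$ against the orbifold point $[0:\dots:0:1]$, to be the only delicate part.
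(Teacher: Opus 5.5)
Your proposal is correct and follows essentially the same route as the paper. Both arguments count the basis elements $A_JB^k$ with $|J|+2k\le m$, use the weak Lefschetz property to get $g_d=\dim S_d-\dim S_{d-1}$, and finish with the symmetry identities $\sum_{d<n}\dim S_d=\tfrac12\bigl(\prod_i(2n_i+1)-F(\mathbf{n})\bigr)$ and $\sum_{d\le n}(-1)^d g_d=\delta_{\mathbf{n},\mathbb{N}^r}$.

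Your partial-fraction bookkeeping is a cleaner packaging of the paper's parity split. The pole at $x=1$ gives $\tfrac{m-n+1}{2}F(\mathbf{n})+\tfrac14\prod_i(2n_i+1)$, and the pole at $x=-1$ gives only $\tfrac14(-1)^m\delta_{\mathbf{n},\mathbb{N}^r}$. It also avoids an off-by-one in the paper's intermediate formula $P_1(m)$, whose sum picks up a spurious top term $F(\mathbf{n},n+1)$ when $n$ is even.
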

\begin{proof}
For $m$ sufficiently large, the value of the Hilbert quasi-polynomial $\chi_{\overline{V}_{\mathbf{n},\mathbf{p}}(\tau)}(m)$ equals the dimension of the $m$-th graded piece of the homogeneous coordinate ring. In the weighted projective space $\mathbb{P}^{r+1}(1,\dots,1,2)$, this dimension is exactly the number of basis elements $A_J B^k$ from the affine coordinate ring whose total weighted degree satisfies $|J| + 2k \le m$.

Let $F(\mathbf{n}, j) = \dim S_j$ denote the dimension of the $j$-th graded piece of the complete intersection $S$. Because the index set $\mathcal{J}(\mathbf{n})$ forms a basis for $S/L(S)$, the number of basis elements $J \in \mathcal{J}(\mathbf{n})$ with degree $|J| \le k$ evaluates to a telescoping sum for any $k \le n$:
\[
    |\{ J \in \mathcal{J}(\mathbf{n}) \mid |J| \le k \}| = \sum_{j=0}^k (\dim S_j - \dim S_{j-1}) = F(\mathbf{n}, k).
\]
By summing these dimensions over all valid powers $k \ge 0$ such that $|J| + 2k \le m$, we can express the Hilbert quasi-polynomial as a piecewise function depending on the parity of $m$: 
\[
\chi_{\overline{V}_{\mathbf{n},\mathbf{p}}(\tau)}(m)=
\begin{cases}
    P_1(m), \qquad\textnormal{ if } m\equiv 1 \pmod 2\\
    P_2(m), \qquad\textnormal{ if } m\equiv 0 \pmod 2
\end{cases}
\]
where
\begin{align*}
    P_1(m)&=F(\mathbf{n})\lceil \frac{m-n}{2}\rceil+\sum_{j=0}^{\lfloor\frac{n}{2}\rfloor}F(\mathbf{n},2j+1)\\
    P_2(m)&=F(\mathbf{n})\lceil \frac{m-n}{2}\rceil+\sum_{j=0}^{\lfloor\frac{n}{2}\rfloor}F(\mathbf{n},2j).
\end{align*}

We can further simplify these summations by noticing that:
\[
    2\sum_{j=0}^n F(\mathbf{n},j)= \prod_{i=1}^r (2n_i+1)+F(\mathbf{n})
\]
\[
    2\sum_{j=0}^{n}(-1)^j F(\mathbf{n},j)= \delta_{\mathbf{n},\mathbb{N}^r} +(-1)^nF(\mathbf{n}).
\]
Therefore, combining the parity cases, we compute:
\[
    \chi_{\overline{V}_{\mathbf{n},\mathbf{p}}(\tau)}(m)=\frac{m-n+1}{2}F(\mathbf{n})+\frac{1}{4}\left( \prod_{i=1}^r (2n_i+1)+(-1)^m\delta_{\mathbf{n},\mathbb{N}^r}\right).
\]
Notice that the constant term alternates with the difference $\frac{1}{4}\delta_{\mathbf{n},\mathbb{N}^r}$, which is precisely the contribution of the orbifold point.
\end{proof}

From the calculation of the Hilbert polynomial $\chi_{\overline{V}_{\mathbf{n},\mathbf{p}}(\tau)}$, we also have

\begin{corollary} \label{Vn-genus}
The arithmetic genus of $\overline{V}_{\mathbf{n},\mathbf{p}}(\tau)$ (as a coarse moduli space) is given by 
\[
p_a(\overline{V}_{\mathbf{n},\mathbf{p}}(\tau))=\frac{n-1}{2}F(\mathbf{n})-\frac{1}{4}\left( \prod_{i=1}^r(2n_i+1)+\delta_{\mathbf{n},\mathbb{N}^r} \right)+1 \in \mathbb{Z}_{\geq 0}.
\]
The orbifold genus of $\overline{V}_{\mathbf{n},\mathbf{p}}(\tau)$ is given by 
\[
p_{a}^{orb}(\overline{V}_{\mathbf{n},\mathbf{p}}(\tau))=\frac{n-1}{2}F(\mathbf{n})-\frac{1}{4}\prod_{i=1}^r(2n_i+1)+1 \in \mathbb{Q}_{\geq 0}. 
\]
\end{corollary}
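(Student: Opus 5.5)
The plan is to read off both genera from the Hilbert quasi-polynomial in the preceding corollary, using the standard relation between the constant term and the arithmetic genus of a curve. For a projective curve $X$ with Hilbert polynomial $\chi_X(m) = d\,m + (1 - p_a)$, the genus is $p_a = 1 - \chi_X(0)$. In our weighted setting, the Hilbert function is a quasi-polynomial of period $2$. I would therefore interpret the two parities separately, since their average and their even-part carry different geometric meaning.

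\textbf{Coarse moduli space.} The coarse space is the $\mathbb{Z}/2$-quotient. Its sections of $\mathcal{O}(m)$ for even $m$ are the invariant sections, so its Hilbert polynomial in the variable $m$ is the even branch $P_2$. Evaluating the formula of the corollary at $m=0$ gives
\[
\chi(0) = \frac{1-n}{2}F(\mathbf{n}) + \frac14\Big(\prod_{i=1}^r(2n_i+1) + \delta_{\mathbf{n},\mathbb{N}^r}\Big).
\]
Then $p_a = 1 - \chi(0)$ is exactly the stated expression.

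\textbf{Orbifold genus.} For the stack, the periodic correction $\frac14(-1)^m\delta_{\mathbf{n},\mathbb{N}^r}$ is precisely the contribution of the orbifold point, as noted at the end of the previous proof. The orbifold Euler characteristic is the averaged, non-periodic constant term. So I would drop the $(-1)^m$ term before setting $m=0$, which yields the orbifold formula. The difference between the two genera, $\frac14\delta_{\mathbf{n},\mathbb{N}^r}$, matches the standard correction for a $\mathbb{Z}/2$ stacky point, where $1-\frac12$ is split evenly across the two parities. This confirms the bookkeeping.

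\textbf{Nonnegativity and integrality.} This is the step I expect to be the main obstacle. Integrality of $p_a$ holds because $\chi(0)$ is a dimension count in the coordinate ring, being the value of the even branch $P_2(0)$ of a Hilbert function. I would double-check it directly from the parity identities $2\sum_j F(\mathbf{n},j) = \prod_i(2n_i+1) + F(\mathbf{n})$ and $2\sum_j (-1)^j F(\mathbf{n},j) = \delta_{\mathbf{n},\mathbb{N}^r} + (-1)^n F(\mathbf{n})$, which show the quarter terms combine to an integer.

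For $p_a \ge 0$, I would use that $\overline{V}_{\mathbf{n},\mathbf{p}}(\tau)$ is connected. It is the flat deformation of the cone $W_0$, all of whose lines pass through the vertex, and $H^0(\mathcal{O})$ is semicontinuous. A connected reduced curve has $h^0(\mathcal{O}) = 1$, so $p_a = h^1(\mathcal{O}) \ge 0$. The main subtlety is justifying that the genus computed from the graded ring agrees with $h^1(\mathcal{O})$ in the weighted setting, that is, that no local cohomology correction arises in low degree. I would handle this via the non-zero-divisor property of $C$ established in Step 3 of Theorem~\ref{thm:log_free_curveness}, which gives depth $2$ for the cone.
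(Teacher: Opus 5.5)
Your proposal is correct and is the paper's own argument: take $p_a = 1-\chi(0)$ from the Hilbert quasi-polynomial, whose even branch governs the coarse space, and for the orbifold genus drop the twisted-sector term $\tfrac14(-1)^m\delta_{\mathbf{n},\mathbb{N}^r}$ before setting $m=0$. Your integrality and nonnegativity discussion goes beyond the paper, which asserts $\mathbb{Z}_{\geq 0}$ without proof; that discussion uses the parity identities and $h^0(\mathcal{O})=1$ by semicontinuity from the connected reduced cone $\overline{W_0}$, and the local-cohomology worry is unnecessary because $\chi(\mathcal{O}_X)=P_2(0)$ holds by Serre vanishing regardless of depth.
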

\begin{proof}
It follows directly from 
\[
    p_a(\overline{V}_{\mathbf{n},\mathbf{p}}(\tau))  = 1 - \chi_{\overline{V}_{\mathbf{n},\mathbf{p}}(\tau)}(0), \quad p_a^{orb}(\overline{V}_{\mathbf{n},\mathbf{p}}(\tau)) = 1- \chi^{orb}_{\overline{V}_{\mathbf{n},\mathbf{p}}(\tau)}(0).
\]
Note that the difference between the holomorphic Euler characteristic and orbifold Euler characteristic is the contribution of the twisted sector, which is exactly $\frac{1}{4}\delta_{\mathbf{n},\mathbb{N}^r}$. 
\end{proof}

\section{Monodromy and isomonodromic deformations}

\subsection{Monodromy}
We start with a general argument. Let $n_i \in \mathbb{C}$, the factors $\sigma(z-p_i)^{n_i}$ are multi-valued. To define their analytic continuation, we fix a base point $z_0 \in \mathbb{C} \setminus \{p_i + \Lambda_\tau\}$ and consider a generic path $\gamma_1$ from $z_0$ to $z_0+\omega_1$. 

Let $\gamma_{1,i} = \gamma_1 - p_i$, and consider a closed curve $\Gamma_{1,i} = \gamma_{1,i} + \delta + \gamma_{1,i}' + \delta'$, where $\gamma_{1,i}'$ is the point reflection of $\gamma_{1,i}$ with respect to $p_i$, $\delta$ and $\delta'$ are two curves connecting $\gamma_{1,i}$ and $\gamma_{1,i}'$ such that $\delta$ and $-\delta'$ are differed by a translation of $\omega_1$.

\begin{center}
\begin{figure}[ht!]
    \begin{tikzpicture}[
    scale=0.8,
    midarrow/.style={postaction={decorate,decoration={markings, mark=at position 0.55 with {\arrow{Stealth[scale=1.5]}}}}}
]

\coordinate (W1) at (4, 0);     
\coordinate (W2) at (1.5, 2.5); 

\foreach \i in {-1, 0, 1} {
    \foreach \j in {-1, 0, 1} {
        \coordinate (L) at ($\i*(W1) + \j*(W2)$);
        
        \draw[gray!40, thin, dashed] (L) -- ($(L)+(W1)$);
        \draw[gray!40, thin, dashed] (L) -- ($(L)+(W2)$);
        
        \fill[red!70!black] (L) circle (2pt);
    }
}

\node[below right] at (0,0) {$0$};
\node[below] at (W1) {$\omega_1$};
\node[below] at ($-1*(W1)$) {};
\node[above left] at (W2) {};
\node[above right] at ($(W1)+(W2)$) {};

\coordinate (A) at (1.2, 1.5);
\coordinate (B) at ($(A) + (W1)$);

\coordinate (nA) at ($-1*(A)$);
\coordinate (nB) at ($-1*(B)$);

\fill[black] (A) circle (2pt) node[above left] {$A = z_0-p_i$};
\fill[black] (B) circle (2pt) node[above right] {$B = z_0-p_i+\omega_1$};
\fill[black] (nA) circle (2pt) node[below right] {$-A$};
\fill[black] (nB) circle (2pt) node[below left] {$-B$};


\draw[blue, thick, midarrow] (A) to[out=30, in=150] node[above, midway] {$\gamma_{1,i}$} (B);

\draw[blue, thick, midarrow] (nA) to[out=210, in=330] node[below, midway] {$\gamma_{1,i}'$} (nB);

\draw[orange!90!black, thick, midarrow] (nB) to[out=60, in=210] node[left, midway, xshift=-4pt, yshift=6pt] {$\delta'$} (A);

\draw[orange!90!black, thick, midarrow] (B) to[out=210, in=60] node[right, midway, xshift=4pt, yshift=-6pt] {$\delta$} (nA);

\end{tikzpicture}
\end{figure}
\end{center}

Integrating $\zeta$ along the translated path $\gamma_{1,i} = \gamma_1 - p_i$ yields
\[
\begin{split}
\log \sigma(z_0-p_i+\omega_1) & - \log \sigma(z_0-p_i) = \int_{\gamma_{1,i}} \zeta(w)\ dw 
\\
&= \frac{1}{2}\left(\int_{\Gamma_{1,i}} \zeta(w)\ dw - \int_{\delta+\delta'} \zeta(w)\ dw\right) \\
&= {\rm Ind}_{\Gamma_{1,i}}(\Lambda_\tau)\pi\sqrt{-1} + \eta_1\big(z_0-p_i + \tfrac{1}{2}\omega_1\big),
\end{split}
\]
where 
${\rm Ind}_{\Gamma_{1,i}}(\Lambda_\tau)$
is the winding number of $\Lambda_\tau$ along $\Gamma_{1,i}$. The number does not depend on the choice of $\delta$ and $\delta'$, and hence only depends on $\gamma_1$. 

Apply this to the ansatz solution, denote $w_1^{\gamma_1}(z_0)$ be the analytic continuation of $w_1(z_0)$ (with a fixed branch) along $\gamma_1$, we have
\begin{equation} \label{e:analy_conti}
w_1^{\gamma_1}(z_0) = w_1(z_0) \exp\left[ \omega_1 h - \eta_1 \sigma_{\mathbf{n},\mathbf{p}}(\mathbf{a}) + \Theta(\gamma_1) \right]
\end{equation}
where the constant
\[
\Theta(\gamma_1) = \pi \sqrt{-1} \sum_{i=1}^r n_i \text{Ind}_{\Gamma_{1,i}}(\Lambda_\tau),
\] 
depends only on the homotopy class of $\gamma_1$ on $\mathbb{C} \setminus\Big( \Lambda_\tau + \{ p_1,\dots,p_r \}\Big)$. 

Let $\widetilde{\sigma}_{\mathbf{n},\mathbf{p}}$ be the addition map on the universal cover:
\[
\begin{split}
    \widetilde{\sigma}_{\mathbf{n},\mathbf{p}}: \Sym^n \mathbb{C} &\rightarrow \mathbb{C}
    \\
    \{ \mathbf{a} \} & \rightarrow \sum_{\mu=1}^n a_\mu - \sum_{i=1}^r n_ip_i. 
\end{split}
\]
By convention, when $n=0$, we define $\Sym^0 \mathbb{C} := \{\emptyset\}$, and the empty sum $\sum_{\mu=1}^n a_\mu$ naturally evaluates to $0$.

\begin{lemma} \label{l:twisted_mono_data}
For any representative $(\mathbf{a}, h) \in \Sym^n \mathbb{C} \times \mathbb{C}$ of an equivalence class in $\overline{\mathcal{Y}}_{\mathbf{n},\mathbf{p}}(\tau)$, the linear system
\begin{equation}
\begin{split}
    \tilde{t} \omega_1 + \tilde{s} \omega_2 &= \widetilde{\sigma}_{\mathbf{n},\mathbf{p}}(\mathbf{a}), \\
    \tilde{t} \eta_1 + \tilde{s} \eta_2 &= h,
\end{split}
\end{equation}
determines a unique pair of complex values $(\tilde{t},\tilde{s})\in \mathbb{C}^2$.

Furthermore, the equivalence class $(t,s) \in (\mathbb{C}/\mathbb{Z})^2$ of the solution $(\tilde{t},\tilde{s})$ is independent of the choice of representative. 
\end{lemma}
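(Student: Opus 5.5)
The plan has two parts: existence and uniqueness of $(\tilde t,\tilde s)$ reduce to the Legendre relation, and independence of the representative reduces to a check on the generators of $\sim$.

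\textbf{Unique solvability.} The coefficient matrix of the system is
\[
\begin{pmatrix} \omega_1 & \omega_2 \\ \eta_1 & \eta_2 \end{pmatrix}.
\]
By the Legendre relation, $\eta_1\omega_2-\eta_2\omega_1 = 2\pi\sqrt{-1}$ (up to the sign fixed by the orientation convention $\Im(\omega_2/\omega_1)>0$). Its determinant is therefore $\pm 2\pi\sqrt{-1}\neq 0$, so for any right-hand side the pair $(\tilde t,\tilde s)\in\mathbb{C}^2$ exists and is unique. Cramer's rule gives the explicit formulas
\[
\tilde t = \frac{\eta_2\widetilde{\sigma}_{\mathbf{n},\mathbf{p}}(\mathbf{a}) - \omega_2 h}{\omega_1\eta_2-\omega_2\eta_1},
\qquad
\tilde s = \frac{\omega_1 h - \eta_1\widetilde{\sigma}_{\mathbf{n},\mathbf{p}}(\mathbf{a})}{\omega_1\eta_2-\omega_2\eta_1}.
\]
These formulas show that both the existence and the uniqueness are purely linear-algebraic.

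\textbf{Independence of the representative.} The map $(\mathbf{a},h)\mapsto(\tilde t,\tilde s)$ is affine-linear. It therefore suffices to check how the solution changes under the generators of $\sim$. There are two kinds of generators.

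The first is a permutation of the $a_\mu$. This leaves both $\widetilde{\sigma}_{\mathbf{n},\mathbf{p}}(\mathbf{a})$ and $h$ unchanged, so the solution is unchanged.

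The second is the move $a_\mu\mapsto a_\mu+\omega$, $h\mapsto h+\eta(\omega)$, with $\omega=m_1\omega_1+m_2\omega_2$. Under this move the right-hand side changes by $(\omega,\eta(\omega)) = m_1(\omega_1,\eta_1)+m_2(\omega_2,\eta_2)$. By linearity and uniqueness, the solution changes by exactly $(\tilde t,\tilde s)\mapsto(\tilde t+m_1,\tilde s+m_2)$. This shift lies in $\mathbb{Z}^2$, so the class $(t,s)\in(\mathbb{C}/\mathbb{Z})^2$ is preserved.

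\textbf{Remaining cases.} Three further cases need a remark.

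For $n=0$, $\widetilde{\sigma}_{\mathbf{n},\mathbf{p}}=-\sum_i n_ip_i$ is fixed and $h$ is a free coordinate with no equivalence, so there is nothing to check.

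The definition of $\widetilde{\sigma}_{\mathbf{n},\mathbf{p}}$ also involves the lifts $p_i\in\mathbb{C}$. These are fixed data and not part of the representative, so no further ambiguity enters.

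On the boundary of $\overline{\mathcal{Y}}_{\mathbf{n},\mathbf{p}}(\tau)$ we only need the Type-II points, where $h$ is finite. There the same argument applies verbatim to limits of representatives, since the equivalence relation extends continuously.

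The only genuine input is the nondegeneracy coming from the Legendre relation; everything else is bookkeeping. The one point to watch is the sign convention for $\eta(\omega)$ in $\sim$: it must match the lattice coefficients so that the shift is exactly $(m_1,m_2)$.
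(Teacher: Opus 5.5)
Your proposal is correct and matches the paper's proof. In both, the Legendre relation $\eta_1\omega_2-\eta_2\omega_1=2\pi\sqrt{-1}$ makes the coefficient matrix invertible, and a lattice move $a_\mu\mapsto a_\mu+\omega$, $h\mapsto h+\eta(\omega)$ shifts $(\tilde t,\tilde s)$ by $(m_1,m_2)\in\mathbb{Z}^2$. Your remarks on $n=0$, the fixed lifts $p_i$, and the exclusion of Type-I points (which have no representative with finite $h$) are consistent additions that the paper leaves implicit.
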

\begin{proof}
The Legendre relation $\eta_1 \omega_2 - \eta_2 \omega_1 = 2\pi \sqrt{-1}$ gives the uniqueness of the solution $(\tilde{t}, \tilde{s})$. 

Under the equivalence relation defining $\overline{\mathcal{Y}}_{\mathbf{n},\mathbf{p}}(\tau)$, a lattice shift by $m\omega_1 + n\omega_2 \in \Lambda_\tau$ on the coordinates $\mathbf{a}$ simultaneously shifts the lifted addition map $\widetilde{\sigma}_{\mathbf{n},\mathbf{p}}(\mathbf{a})$ by $m\omega_1 + n\omega_2$ and the coordinate $h$ by $m\eta_1 + n\eta_2$. This induces the exact integer translations $t \mapsto t+m$ and $s \mapsto s+n$. Thus, the image in $(\mathbb{C}/\mathbb{Z})^2$ is well-defined.
\end{proof}

\begin{definition}[Twisted monodromy data]
For $[(\mathbf{a},h)] \in \overline{\mathcal{Y}}_{\mathbf{n},\mathbf{p}}(\tau)$, the well-defined pairs $(t,s)\in (\mathbb{C}/\mathbb{Z})^2$ defined by Lemma~\ref{l:twisted_mono_data} is called the \emph{twisted monodromy data} associated to $[(\mathbf{a},h)]$.
\end{definition}

This proposition demonstrates that the actual monodromy multipliers $\lambda_{\gamma_i}$ and the twisted monodromy data $(t,s)$ differ exactly by the explicit factor $e^{\Theta(\gamma_i)}$.

\begin{proposition} 
For general $\mathbf{n}$, let $w(z)$ be the GHH ansatz solution corresponding to $[(\mathbf{a},h)]$ with associated twisted monodromy data $(t,s) \in (\mathbb{C}/\mathbb{Z})^2$.

Define the eigenvalues $\lambda_{\gamma_1} := e^{\Theta(\gamma_1)} e^{-2\pi \sqrt{-1}s}$ and $\lambda_{\gamma_2} := e^{\Theta(\gamma_2)} e^{2\pi \sqrt{-1}t}$. Let $v(z)$ (possibly not of ansatz form) be a second linearly independent solution. We have
\[
\begin{pmatrix}
    v(z_0) \\
    w(z_0)
\end{pmatrix}^{\gamma_i} = 
\begin{pmatrix}
    \lambda_{\gamma_i}^{-1} & c_{\gamma_i} \\
    0 & \lambda_{\gamma_i} 
\end{pmatrix}
\begin{pmatrix}
    v(z_0) \\
    w(z_0)
\end{pmatrix},
\]
for some constant $c_{\gamma_i} \in \mathbb{C}$. 
\end{proposition}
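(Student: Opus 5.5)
The plan has three steps: compute the multiplier of $w$ along $\gamma_i$ from \eqref{e:analy_conti}, rewrite it using the twisted data $(t,s)$, and then fix the triangular form with the Wronskian.

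\textbf{Step 1: the multiplier of $w$ along $\gamma_1$.} Equation \eqref{e:analy_conti} gives
\[
w^{\gamma_1}(z_0)=w(z_0)\exp\bigl[\omega_1 h-\eta_1\widetilde{\sigma}_{\mathbf{n},\mathbf{p}}(\mathbf{a})+\Theta(\gamma_1)\bigr].
\]
Here the factor $\prod_\mu\sigma(z-a_\mu)$ contributes $\exp\bigl(\eta_1\sum_\mu(z_0-a_\mu+\tfrac12\omega_1)\bigr)$ with no sign ambiguity. The denominator contributes the analogous terms with $n_i$, together with the winding phase $\Theta(\gamma_1)$. Since $\sum n_i=n$, the $z_0$ and $\omega_1$ terms cancel. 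What remains is $-\eta_1\bigl(\sum a_\mu-\sum n_ip_i\bigr)$ from the sigma factors and $\omega_1h$ from the exponential.

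\textbf{Step 2: rewrite in terms of $(t,s)$.} Substitute $\widetilde{\sigma}=\tilde t\omega_1+\tilde s\omega_2$ and $h=\tilde t\eta_1+\tilde s\eta_2$ from Lemma~\ref{l:twisted_mono_data}. This gives
\[
\omega_1h-\eta_1\widetilde{\sigma}=\tilde s(\omega_1\eta_2-\eta_1\omega_2)=-2\pi\sqrt{-1}\,\tilde s
\]
by the Legendre relation. Hence $w^{\gamma_1}=\lambda_{\gamma_1}w$. The same computation along $\gamma_2$ gives $\omega_2h-\eta_2\widetilde{\sigma}=\tilde t(\omega_2\eta_1-\eta_2\omega_1)=2\pi\sqrt{-1}\,\tilde t$, so $w^{\gamma_2}=\lambda_{\gamma_2}w$. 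Both eigenvalues depend only on the classes $t,s\in\mathbb{C}/\mathbb{Z}$, so they are well defined. This is the lower-right entry of the matrix.

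\textbf{Step 3: the triangular form.} Analytic continuation along $\gamma_i$ maps the two-dimensional solution space of \eqref{e:GLE} to itself, because $Q$ is elliptic. Since $w$ is an eigenvector, the matrix in the ordered basis $(v,w)$ has a zero in the lower-left entry. It remains to show that the upper-left entry is $\lambda_{\gamma_i}^{-1}$. For this I use the Wronskian $W(v,w)=vw'-v'w$. It is a nonzero constant, since \eqref{e:GLE} has no first-order term, and it is single-valued, so it is invariant under continuation. Writing $v^{\gamma_i}=a v+c_{\gamma_i}w$ gives
\[
W(v^{\gamma_i},w^{\gamma_i})=a\lambda_{\gamma_i}W(v,w).
\]
Equating this with $W(v,w)$ forces $a=\lambda_{\gamma_i}^{-1}$.

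\textbf{Main obstacle.} The delicate step is the bookkeeping in Step 1. The branches of $\sigma(z-p_i)^{n_i}$ are multivalued, and I must check that $\Theta(\gamma_i)$ absorbs all the path dependence while the $\tfrac12\omega_1$ and $z_0$ terms cancel exactly, using $\sum n_i=n$. The representation-independence is already handled by Lemma~\ref{l:twisted_mono_data}. For the boundary case $n=0$ the same computation applies, with $\widetilde{\sigma}=-\sum n_ip_i$.
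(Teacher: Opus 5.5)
Your proposal is correct and follows the paper's proof essentially verbatim. The eigenvalue comes from \eqref{e:analy_conti} combined with the Legendre relation, where you write out the substitution the paper leaves implicit, and the constant Wronskian then forces the remaining diagonal entry to be $\lambda_{\gamma_i}^{-1}$. One small slip in your Step~1 gloss: each single-valued factor $\sigma(z-a_\mu)$ actually picks up $-e^{\eta_1(z_0-a_\mu+\frac12\omega_1)}$, so the numerator contributes an extra sign $(-1)^n$ that must be accounted for in the phase $e^{\Theta(\gamma_1)}$; since you invoke \eqref{e:analy_conti} as stated, this does not affect the argument.
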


\begin{proof}
From equation~\eqref{e:analy_conti} and the Legendre relation $\eta_1 \omega_2 - \eta_2 \omega_1 = 2\pi \sqrt{-1}$, we have $w^{\gamma_i}(z_0) = \lambda_{\gamma_i} w(z_0)$.

Since the generalized Lam\'e equation has no first-derivative term, its Wronskian is a global non-zero constant: 
\[
    W(z) = w(z) v'(z) - w'(z) v(z) = c.
\]
Analytic continuation of the Wronskian along $\gamma_i$ implies that the determinant of the monodromy matrix must be $1$. Since $w(z)$ is an eigenvector with eigenvalue $\lambda_{\gamma_i}$, the other diagonal entry must be $\lambda_{\gamma_i}^{-1}$, forcing the upper-triangular form.
\end{proof}

While an ansatz solution forces the monodromy representation to be upper-triangular, the existence of a pair of linearly independent ansatz solutions simultaneously diagonalizes the monodromy matrices. It also imposes constraints on both the local exponents of GLE and the monodromy factors $e^{\Theta(\gamma_i)}$.
\begin{proposition} \label{prop:monodromy}
Let $w_1(z)$ and $w_2(z)$ be two linearly independent GHH ansatz solutions corresponding to the points $[(\mathbf{a}_1,h_1)] \in \mathcal{Y}_{\mathbf{n},\mathbf{p}}(\tau)$ and $[(\mathbf{a}_2,h_2)] \in \mathcal{Y}_{\mathbf{m},\mathbf{p}}(\tau)$, respectively. Then for each $i$, the local parameters $n_i, m_i \in \tfrac{1}{2}\mathbb{Z} \setminus \{0, -\tfrac{1}{2}\}$, and satisfy either
\[
    n_i = m_i \in \tfrac{1}{2} \mathbb{N} \qquad \text{or} \qquad n_i + m_i = -1.
\]
Furthermore, let $(t,s)$ be the twisted monodromy data associated to $[(\mathbf{a}_1,h_1)]$. The monodromy matrices along the period cycles $\gamma_1, \gamma_2$ are strictly diagonal and given by:
\[
\begin{split}
    \begin{pmatrix} w_1(z_0) \\ w_2(z_0) \end{pmatrix}^{\gamma_1} &= \begin{pmatrix} e^{\Theta(\gamma_1)} e^{-2\pi \sqrt{-1}s} & 0 \\ 0 & e^{-\Theta(\gamma_1)} e^{2\pi \sqrt{-1} s} \end{pmatrix} \begin{pmatrix} w_1(z_0) \\ w_2(z_0) \end{pmatrix},
    \\
    \begin{pmatrix} w_1(z_0) \\ w_2(z_0) \end{pmatrix}^{\gamma_2} &= \begin{pmatrix} e^{\Theta(\gamma_2)} e^{2\pi \sqrt{-1}t} & 0 \\ 0 & e^{-\Theta(\gamma_2)} e^{-2\pi \sqrt{-1} t} \end{pmatrix} \begin{pmatrix} w_1(z_0) \\ w_2(z_0) \end{pmatrix},
\end{split}
\]
where $e^{\Theta(\gamma_i)} \in \{ \pm 1 \}$. Moreover, if $\mathbf{n} \in \mathbb{Z}^r$, we have $e^{\Theta(\gamma_i)} = 1$.
\end{proposition}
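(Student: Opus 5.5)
The plan is to argue locally at each pole $p_i$ first, and then globally via the period monodromy. Near $p_i$, the ansatz $w_1$ behaves like $c(z-p_i)^{-n_i}(1+O(z-p_i))$ with $c\neq 0$ whenever no root $a_\mu$ of $w_1$ sits at $p_i$; this holds because $[(\mathbf{a}_1,h_1)]\in\mathcal{Y}_{\mathbf{n},\mathbf{p}}(\tau)$ is an interior point. So $w_1$ is the Frobenius solution of exponent $-n_i$, and likewise $w_2$ has exponent $-m_i$. Both are exponents of the same equation, whose exponent set at $p_i$ is $\{-n_i,n_i+1\}$ and is invariant under $n_i\mapsto -n_i-1$. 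Hence either $-m_i=-n_i$ or $-m_i=n_i+1$, i.e. $m_i=n_i$ or $n_i+m_i=-1$.

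Next I use linear independence. The Wronskian $W=w_1w_2'-w_1'w_2$ is a nonzero constant, while locally $W\sim C(z-p_i)^{-n_i-m_i-1}$.
\begin{itemize}
\item If $m_i=n_i$, the two solutions share the exponent $-n_i$. For them to be independent (neither carrying a log, both pure local power series), the equation must admit a two-dimensional space of solutions with leading exponent $\le$ the other one. This is possible only if $n_i+1-(-n_i)=2n_i+1\in\mathbb{N}$ and the equation is log-free at $p_i$. In that case a suitable combination realizes exponent $n_i+1$. So $n_i\in\tfrac12\mathbb{N}$ ($n_i=-\tfrac12$ always forces a log, and $n_i\ne0$ by standing assumption).
\item If $n_i+m_i=-1$, the Wronskian's local exponent is $0$, which is consistent. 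To get half-integrality here I pass to the global step.
\end{itemize}

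For the global step, apply the previous proposition to $w_1$ with second solution $v=w_2$. Since $w_2$ is itself a quasi-periodic ansatz, $w_2^{\gamma_i}=\mu_i w_2$. Upper-triangularity plus $\det=1$ then forces $\mu_i=\lambda_{\gamma_i}^{-1}$ and $c_{\gamma_i}=0$ (when $\lambda\ne\lambda^{-1}$; otherwise diagonal directly, as both are eigenvectors). This gives the displayed diagonal matrices with entries $e^{\pm\Theta(\gamma_i)}e^{\mp2\pi\sqrt{-1}s}$, etc.

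It remains to show $e^{\Theta(\gamma_i)}\in\{\pm1\}$ and the half-integrality in the case $n_i+m_i=-1$. Apply the same computation \eqref{e:analy_conti} to $w_2$, so that $\mu_1=e^{\Theta_{\mathbf m}(\gamma_1)}e^{-2\pi\sqrt{-1}s'}$. The product $w_1w_2=e^{(h_1+h_2)z}\prod\sigma(z-a)\,/\prod\sigma(z-p_i)^{n_i+m_i}$ has monodromy $1$ along both cycles. With $m_i=n_i$ or $-n_i-1$, the exponents $n_i+m_i$ are integers in either case ($2n_i$ with $n_i\in\tfrac12\mathbb{N}$, or $-1$). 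The function $w_1w_2$ is then single-valued around each $p_i$, and this product identity relates $\Theta_{\mathbf n}+\Theta_{\mathbf m}$ to the (twisted) data.

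Finally, consider the local monodromy of $w_1$ around a small loop at $p_i$, which is $e^{-2\pi\sqrt{-1}n_i}$. Its inverse for the second solution is $e^{-2\pi\sqrt{-1}m_i}$; diagonal local monodromy of determinant $1$ forces $e^{-2\pi\sqrt{-1}(n_i+m_i)}=1$. In the case $m_i=-n_i-1$ I need more: simultaneous diagonalizability of global monodromy with the commutator relation $\prod_i M_{p_i}=[M_{\gamma_1},M_{\gamma_2}]=I$ forces the local monodromies $e^{\mp2\pi\sqrt{-1}n_i}$ to be compatible with the $\pm$ eigenvalue pairing. I expect this to yield $e^{4\pi\sqrt{-1}n_i}=1$, i.e. $n_i\in\tfrac12\mathbb{Z}$. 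This is the main obstacle: the commutator identity only gives a product constraint $\prod e^{2\pi\sqrt{-1}n_i}=1$, automatic since $n\in\mathbb{Z}$. So instead I would exploit that $w_1/w_2$ is a ratio of ansatz solutions with exponent difference $2n_i+1$ at $p_i$, and that any local solution $w_1+cw_2$ must be log-free. I would then show non-half-integral $n_i$ contradicts $w_2$ being an interior point of $\mathcal{Y}_{\mathbf m,\mathbf p}$; this part needs care, and I would first test it on $r=2$.

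Then $\Theta(\gamma_i)=\pi\sqrt{-1}\sum n_i\,\mathrm{Ind}$ with the winding indices of the reflected closed curves even-sum per pair. Together with $2n_i\in\mathbb{Z}$ this gives $e^{\Theta}\in\{\pm1\}$, and $e^{\Theta}=1$ when all $n_i\in\mathbb{Z}$.
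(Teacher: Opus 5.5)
Your argument is correct as far as it goes, and matches the paper in substance. This covers the dichotomy $m_i\in\{n_i,-n_i-1\}$ from the exponents at an interior point, the Frobenius argument giving $n_i\in\tfrac12\mathbb{N}$ when $m_i=n_i$, and diagonality from the triangular form of the preceding proposition. (The paper instead continues the Wronskian along every loop to get $M_1(\gamma)M_2(\gamma)=1$ and reads off the data $(-t,-s)$ for $w_2$; the two routes are equivalent.)

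The genuine gap is the step you flag yourself: half-integrality of $n_i$ when $n_i+m_i=-1$, and with it $e^{\Theta(\gamma_i)}\in\{\pm1\}$. Your planned contradiction argument cannot succeed, because this part of the statement is false once $r\ge 3$. The paper's proof does not close it either. Its loop argument yields only $e^{-2\pi\sqrt{-1}(n_i+m_i)}=1$, which is vacuous when $n_i+m_i=-1$, and the claim on $e^{\Theta(\gamma_i)}$ is simply asserted.

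Testing on $r=2$ would mislead you. There some index must have $m_i=n_i$ (otherwise $\sum_i m_i=-n-2<0$), so $n_i\in\tfrac12\mathbb{N}$, and $n_1+n_2\in\mathbb{Z}$ then puts the other weight in $\tfrac12\mathbb{Z}$. What can be proved in general is: $n_i=m_i\in\tfrac12\mathbb{N}$ on the indices with equal weights, $\sum_{\{i:\,n_i+m_i=-1\}}n_i\in\tfrac12\mathbb{Z}$, and the diagonal form with entries $\lambda_{\gamma_i}^{\pm1}$. Your closing remark about even winding indices is also unsupported.

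For a concrete counterexample, take $r=3$, $p_1=0$ and $\nu\notin\tfrac12\mathbb{Z}$. Let $\mathbf{n}=(\tfrac32,\nu,-\tfrac12-\nu)$ (weight $1$, one root $a$, $h_1=\sum_i n_i\zeta(a-p_i)$) and $\mathbf{m}=(\tfrac32,-\nu-1,\nu-\tfrac12)$ (weight $0$); note $m_i(m_i+1)=n_i(n_i+1)$.

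1. We have $W=w_1w_2'-w_1'w_2=-w_1w_2\,g$ with $g=(h_1-h_2)+\zeta(z-a)-(2\nu+1)\zeta(z-p_2)+2\nu\zeta(z-p_3)$. Here $w_1w_2$ is single-valued, with a triple pole at $0$ and simple zeros at $a,p_2,p_3$.
2. So $W$ is holomorphic and zero-free iff $g$ vanishes to order $3$ at $0$. The condition $g(0)=0$ fixes $h_2$. The conditions $g'(0)=g''(0)=0$ say $(\wp(a),\wp'(a))=(X,Y)$, with $X=(2\nu+1)\wp(p_2)-2\nu\wp(p_3)$ and $Y=(2\nu+1)\wp'(p_2)-2\nu\wp'(p_3)$.
3. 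Then $W=Ce^{\kappa z}$ and $w_1w_2(Q_2-Q_1)=W'=-\kappa\,w_1w_2\,g$, where $Q_j=w_j''/w_j$. Both $Q_j$ are regular at $a$ while $\mathrm{Res}_{z=a}\,g=1$, so $\kappa=0$. Hence $w_1,w_2$ are independent solutions of a single GLE.
4. The only remaining condition, $Y^2=4X^3-g_2X-g_3$, defines a divisor on $E_\tau\times E_\tau\ni(p_2,p_3)$. It is linearly equivalent to $6\{p_2=0\}+6\{p_3=0\}$, so its self-intersection is $72$.
5. This divisor contains the diagonal (where $a=p_2=p_3$). It does not contain the antidiagonal, where the condition reads $8\nu(2\nu+1)\wp'(p_2)^2=0$. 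It is not a multiple of the diagonal, whose self-intersection is $0$. So it has another component.
6. A generic point of that component has $p_3\neq\pm p_2$ and $p_2,p_3\neq 0$, hence $a\notin\{0,p_2,p_3\}$. It therefore gives two independent ansatz solutions with $n_2=\nu\notin\tfrac12\mathbb{Z}$.
7. Moreover, changing $\gamma_1$ by a loop around $p_2$ multiplies $e^{\Theta(\gamma_1)}$ by $e^{\mp2\pi\sqrt{-1}\nu}\notin\{\pm1\}$. So $e^{\Theta(\gamma_1)}\in\{\pm1\}$ fails as well.
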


\begin{proof}
As in the proof of the previous proposition, we perform analytic continuation on the Wronskian (which is a global constant):
\[
\begin{split}
    W^\gamma(z) &= w_1^\gamma(z) (w_2^\gamma)'(z) - (w_1^\gamma)'(z) w_2^\gamma(z) \\
    &= (M_1(\gamma) w_1(z))(M_2(\gamma) w_2'(z)) - (M_1(\gamma) w_1'(z))(M_2(\gamma) w_2(z)) \\
    &= M_1(\gamma) M_2(\gamma) W(z).
\end{split}
\]
We conclude that $M_1(\gamma)M_2(\gamma) =1$. Note that we can always choose a loop $\gamma'$ that winds once around $p_i$ for any $i$. We have
\[
    M_1(\gamma') M_2(\gamma') = M_1(\gamma) M_2(\gamma) e^{-2\pi (n_i+m_i)\sqrt{-1} } =1.
\]
When $n_i =m_i$, we have $n_i,m_i \in \tfrac{1}{2}\mathbb{Z}$. Furthermore, $n_i,m_i \in \tfrac{1}{2}\mathbb{N}$ by the Frobenius method. For $n_i\neq m_i$, from the local exponents of the generalized Lam\'e equation, we automatically have $n_i+m_i = -1$.

To establish the diagonal monodromy matrix for $w_2$, note that because $M_1(\gamma_i) M_2(\gamma_i) = 1$, the monodromy multipliers for $w_1$ and $w_2$ are inverses of each other. This implies that their associated parameters on the universal cover satisfy $h_1 + h_2 = 0$ and $\widetilde{\sigma}_{\mathbf{n},\mathbf{p}}(\mathbf{a}_1) + \widetilde{\sigma}_{\mathbf{m},\mathbf{p}}(\mathbf{a}_2) = 0$. We therefore have the linear system: 
\[
\begin{split}
\omega_1( h_1 + h_2 ) - \eta_1 \big(\widetilde{\sigma}_{\mathbf{n},\mathbf{p}}(\mathbf{a}_1) + \widetilde{\sigma}_{\mathbf{m},\mathbf{p}}(\mathbf{a}_2) \big) &= 0,
\\
\omega_2(h_1 + h_2 ) - \eta_2 \big( \widetilde{\sigma}_{\mathbf{n},\mathbf{p}}(\mathbf{a}_1) + \widetilde{\sigma}_{\mathbf{m},\mathbf{p}}(\mathbf{a}_2) \big) &= 0.
\end{split}
\]
By the Legendre relation, this forces the twisted monodromy data for $w_2$ to be exactly $(-t, -s)$, yielding the inverted exponents in the diagonal matrices. The proposition then follows.
\end{proof}

\begin{remark}
Geometrically, suppose $\gamma_i$ forms a simple loop when projected to $E_\tau$. Consider the translations of $\gamma_i$ by $\Lambda_\tau$ on $\mathbb{C}$, which cut $\mathbb{C}$ into a countable number of strips. The sign $e^{\Theta(\gamma_i)}$ is positive if the total weight of the poles $p_j$ lying in the union of alternating strips is an integer.
\end{remark}

\vspace{5mm}
Let $\mathbf{n} \in (\tfrac{1}{2} \mathbb{N})^r$. 
There are three scenarios for the fibers of the natural morphism $\overline{\mathcal{Y}}_{\mathbf{n},\mathbf{p}}(\tau) \rightarrow \overline{V}_{\mathbf{n},\mathbf{p}}(\tau)$. We classify these scenarios according to their monodromy representations: 

\begin{proposition} \label{generic-2-to-1} 
The fiber of the morphism consists of:
\begin{itemize}
    \item \textbf{Two distinct points:} if either (i) the generalized Lam\'e equation admits two linearly independent ansatz solutions and $(t, s) \notin (\tfrac{1}{2} \mathbb{Z} / \mathbb{Z})^2$, or (ii) it corresponds to a non-symmetric Type-I boundary point. 
    \item \textbf{A rational curve (isomorphic to $\mathbb{P}^1$):} if the generalized Lam\'e equation admits two linearly independent ansatz solutions and $(t, s) \in (\tfrac{1}{2} \mathbb{Z} / \mathbb{Z})^2$. 
    \item \textbf{A single ramified point of multiplicity two:} if either (i) the generalized Lam\'e equation admits only one ansatz solution and $(t, s) \in (\tfrac{1}{2} \mathbb{Z} / \mathbb{Z})^2$, or (ii) it corresponds to a symmetric Type-I boundary point. 
\end{itemize}
\end{proposition}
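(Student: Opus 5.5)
We analyze the fiber over a point $(\mathbf{A},B)\in\overline{V}_{\mathbf{n},\mathbf{p}}(\tau)$ by splitting into interior points ($B$ finite) and points on the infinity divisor. For the interior case, Proposition~\ref{exist_ansatz_sol} says every log-free equation has at least one GHH ansatz solution, possibly of lowered weight $\mathbf{n}'$. By the remark following it, a Type-II boundary point of $\overline{\mathcal{Y}}_{\mathbf{n},\mathbf{p}}(\tau)$ corresponds to such a lowered-weight solution. Hence the points of the fiber are exactly the one-dimensional subspaces of the solution space spanned by ansatz solutions, i.e.\ the common eigenlines of the monodromy group (a quasi-periodic solution with log-free local behaviour is an ansatz by Theorem~\ref{t:double_multi_sol}). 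Note that $h$ is recorded, so proportional solutions give the same point.

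\textbf{Interior, one ansatz solution.} If there is only one ansatz solution $w$, the monodromy is the non-diagonalizable upper-triangular form of the preceding proposition. The fiber is then a single point, and it remains to determine its multiplicity. I would compute the scheme-theoretic fiber via the involution $\mathbf{a}\mapsto\mathbf{a}'$ with $(t,s)\mapsto(-t,-s)$ suggested by Proposition~\ref{prop:monodromy}. Its fixed points are exactly $(t,s)\in(\tfrac12\mathbb{Z}/\mathbb{Z})^2$, where the map is ramified of order two. For $(t,s)\notin(\tfrac12\mathbb{Z}/\mathbb{Z})^2$ a second ansatz solution would be forced and we would be in the next case. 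The step I expect to be the main obstacle is showing that ramification occurs exactly at these fixed points. For this I would use the generic $2{:}1$ degree, which follows from Theorem~\ref{branch} and Theorem~\ref{thm:log_free_curveness}: the degree of $\overline{\mathcal{Y}}$ over $B=\infty$ is $F(\mathbf{n})$ against $\tfrac12F(\mathbf{n})$ for $\overline V$, so the map has degree $2$ onto its image.

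\textbf{Interior, two ansatz solutions.} If there are two independent ansatz solutions $w_1,w_2$, Proposition~\ref{prop:monodromy} makes the monodromy diagonal with eigenvalues $e^{\pm(\Theta-2\pi\sqrt{-1}s)}$ and $e^{\pm(\Theta+2\pi\sqrt{-1}t)}$, with $e^{\Theta(\gamma_i)}=\pm1$ and the data $(t,s),(-t,-s)$. The two eigenlines $w_1,w_2$ give two distinct points. When $(t,s)\equiv(-t,-s)$, i.e.\ $(t,s)\in(\tfrac12\mathbb{Z}/\mathbb{Z})^2$, the characters coincide and the monodromy is scalar ($\pm I$). Then every solution $c_1w_1+c_2w_2$ is quasi-periodic and log-free, hence an ansatz by Theorem~\ref{t:double_multi_sol}. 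The fiber is therefore the full projectivized solution space $\mathbb{P}^1$, embedded in $\overline{\mathcal{Y}}$ by $[c_1:c_2]\mapsto$ (zeros of $c_1w_1+c_2w_2$, $h$). Degenerate members, whose zeros collide at poles, land in the Type-II boundary, so the image is closed.

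\textbf{Infinity divisor.} For points on the infinity divisor I invoke Theorem~\ref{branch}. Each branch at infinity is hit by exactly the two Type-I points $\mathbf{a}_{\mathbf{k}}$ and $\mathbf{a}_{2\mathbf{n}-\mathbf{k}}$. These are distinct in the non-symmetric case, giving two points. In the symmetric case the two coincide, and the local parameter $\epsilon\mapsto-\epsilon$ of the branch at the orbifold point shows that $\pi$ is $\epsilon\mapsto\epsilon^2$ locally: a single point of multiplicity two. This matches the computation $B\sim\lambda^2/\epsilon^2$, $A_i\sim\lambda(k_i-n_i)/(n_i\epsilon)$ from Theorem~\ref{branch}, in which all $A_i$ are odd in $\epsilon$ when $k_i=n_i$.
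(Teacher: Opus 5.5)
\textbf{The gap is in the interior case with a single ansatz solution.} You assert that for $(t,s)\notin(\tfrac12\mathbb{Z}/\mathbb{Z})^2$ a second ansatz solution ``would be forced''. You also call the one-solution monodromy ``non-diagonalizable''. Neither follows from the upper-triangular form of the preceding proposition. An upper-triangular $M_{\gamma_1}$ with eigenvalues $\lambda^{\pm1}$, $\lambda\neq\pm1$, is diagonalizable, but its second eigenline need not be preserved by $M_{\gamma_2}$. In that situation there would be exactly one ansatz solution with non-half-period data.

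What rules this out is that the period monodromies commute. In $\pi_1(E_\tau\setminus\{p_1,\dots,p_r\},z_0)$ one has $[\gamma_1,\gamma_2]\prod_i\alpha_i=1$. The equation is log-free and the exponents $-n_i$, $n_i+1$ differ by the integer $2n_i+1$, so each local monodromy is scalar: $M_{\alpha_i}=(-1)^{2n_i}\mathrm{Id}$. Their product is $(-1)^{2n}\mathrm{Id}=\mathrm{Id}$, hence $M_{\gamma_1}M_{\gamma_2}=M_{\gamma_2}M_{\gamma_1}$.
\begin{itemize}
\item If some $\lambda_{\gamma_i}\neq\pm1$, the two commuting matrices are simultaneously diagonalizable. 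The second common eigenline is then a quasi-periodic, hence ansatz, solution.
\item So a unique ansatz solution forces $\lambda_{\gamma_1},\lambda_{\gamma_2}\in\{\pm1\}$, i.e.\ $(t,s)\in(\tfrac12\mathbb{Z}/\mathbb{Z})^2$.
\item In that case each $M_{\gamma_i}$ is either a Jordan block (one ansatz solution) or $\pm\mathrm{Id}$ (your $\mathbb{P}^1$ case).
\end{itemize}
This is the paper's argument, and it is absent from your proposal.

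\textbf{The degree-two count does not fill this hole.} It tells you that a one-point fiber of a finite degree-two map is a ramification point, i.e.\ a fixed point of the deck involution. It does not tell you that such points have half-period data. The involution $(t,s)\mapsto(-t,-s)$ of Proposition~\ref{prop:monodromy} is only defined where two independent ansatz solutions already exist. So the claim that its fixed points lie over $(\tfrac12\mathbb{Z}/\mathbb{Z})^2$ is exactly what needs proof. Your ``exactly'' also fails in the converse direction: half-period data with two ansatz solutions gives the $\mathbb{P}^1$ fiber, not a fixed point.

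You could salvage the global route by extending $(t,s)\circ\iota=-(t,s)$ from the dense two-solution locus by continuity of $(t,s)$ on the interior. That, however, needs finiteness of the map near the point and the later degree results. The commutativity argument is pointwise and works for every log-free equation.

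Your treatment of the infinity divisor via Theorem~\ref{branch} agrees with the paper. In the symmetric case, though, the leading-order computation only gives $A_i=O(1)$, not that the $A_i$ are odd in $\epsilon$.
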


\begin{proof}
The relation between Type-I boundary points and the boundary $\overline{V}_{\mathbf{n},\mathbf{p}}(\tau) \setminus V_{\mathbf{n},\mathbf{p}}(\tau)$ has been discussed in Theorem~\ref{branch}. It suffices to consider the affine part $V_{\mathbf{n},\mathbf{p}}(\tau)$. 

Let $\alpha_i \in \pi_1(E_\tau \setminus \{p_1,\dots,p_r\},z_0)$ be small simple loops encircling $p_i$ for $i=1,\dots,r$. Given any two linearly independent solutions $f_1$ and $f_2$ (not necessarily of the ansatz form), and any loop $\gamma$, let $M_{\gamma}$ denote the corresponding monodromy matrix. 

The topological relation of the fundamental group
\[
    [\gamma_1,\gamma_2] \prod_{i=1}^r \alpha_i = \mathrm{Id}
\]
implies
\[
    M_{\gamma_1} M_{\gamma_2} M_{\gamma_1}^{-1} M_{\gamma_2}^{-1} = \prod_{i=1}^r M_{\alpha_i} =  \prod_{i=1}^r \Big( (-1)^{2n_i} \mathrm{Id} \Big) = \mathrm{Id}. 
\]
By the Wronskian argument from the previous proposition, $M_{\gamma_i}$ has determinant $1$, so its eigenvalues are $\lambda_i$ and $\lambda_i^{-1}$. Furthermore, if $\lambda_i \neq \pm 1$ for some $i$, the eigenvalues are distinct, meaning the matrices can be diagonalized. Since $M_{\gamma_1}$ and $M_{\gamma_2}$ commute, they can be diagonalized simultaneously, with eigenvectors given by solutions of the ansatz form.

By Proposition~\ref{prop:monodromy}, the degenerate case happens only when $2s \equiv 2t \equiv 0 \pmod{1}$, which implies $(t,s) \in (\tfrac{1}{2}\mathbb{Z} / \mathbb{Z})^2$. To further distinguish the two scenarios, we look at the monodromy matrix $M_{\gamma_i}$ (where $\gamma_i$ is the path from $z$ to $z+ \omega_i$):
\begin{enumerate}
    \item If $M_{\gamma_i}$ is a non-diagonalizable Jordan block for any $i$, the generalized Lam\'e equation admits only one ansatz solution $w_1(z)$. By standard ODE theory, the second linearly independent solution can be obtained by 
    \[
    w_2(z) = w_1(z) \int_{z_0}^z \frac{d\xi}{w_1(\xi)^2}.
    \]
    \item If $M_{\gamma_i} \in \{ \pm \mathrm{Id} \}$ for all $i$, then the generalized Lam\'e equation admits two linearly independent ansatz solutions. Furthermore, any linear combination of ansatz solutions is still quasi-periodic and hence of the ansatz form (see the proof of Proposition~\ref{exist_ansatz_sol}). This continuous family of solutions naturally forms the $\mathbb{P}^1$ fiber.
\end{enumerate}
The proof is finished.
\end{proof}

The $\mathbb{P}^1$-fiber case is classified as follows:

\begin{proposition} \label{prop:P1_codim_3}
The morphism $\overline{\mathcal{Y}}_{\mathbf{n},\mathbf{p}}(\tau) \rightarrow \overline{V}_{\mathbf{n},\mathbf{p}}(\tau)$ has $\mathbb{P}^1$-fiber(s) over a locus of codimension $3$ in the parameter space of $(\mathbf{p}; \tau)$.
\end{proposition}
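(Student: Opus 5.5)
By Proposition~\ref{generic-2-to-1}, a $\mathbb{P}^1$-fiber over a point of $V_{\mathbf{n},\mathbf{p}}(\tau)$ occurs exactly when the generalized Lam\'e equation has two linearly independent ansatz solutions and $(t,s)\in(\tfrac12\mathbb{Z}/\mathbb{Z})^2$. In that case every monodromy matrix is $\pm\mathrm{Id}$: the $M_{\gamma_i}$ are, and the local monodromies $M_{\alpha_i}=(-1)^{2n_i}\mathrm{Id}$ are as well. So the monodromy representation is projectively trivial. The plan is to count this condition in the parameter space $(\mathbf{A},B,\mathbf{p},\tau)$ and then project to $(\mathbf{p};\tau)$.

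\textbf{Step 1 (the condition on the GLC).} Fix one of the four half-period classes $(t,s)=(t_0,s_0)\in(\tfrac12\mathbb{Z}/\mathbb{Z})^2$. By Lemma~\ref{l:twisted_mono_data}, a point $[(\mathbf{a},h)]\in\mathcal{Y}_{\mathbf{n},\mathbf{p}}(\tau)$ has twisted data $(t_0,s_0)$ exactly when two equations hold:
\[
\sigma_{\mathbf{n},\mathbf{p}}(\mathbf{a})=[t_0\omega_1+s_0\omega_2],\qquad h=t_0\eta_1+s_0\eta_2 \pmod{\text{the } \Lambda_\tau\text{-action}}.
\]
The first equation cuts the curve $\overline{\mathcal{Y}}_{\mathbf{n},\mathbf{p}}(\tau)$ down to finitely many points. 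By Theorem~\ref{thm_deg} there are generically $\deg\sigma_{\mathbf{n},\mathbf{p}}$ of them. The second equation is then one further condition on $(\mathbf{p},\tau)$. This gives a hypersurface $\mathcal{H}_{t_0,s_0}$ in the $(\mathbf{p};\tau)$-space, on which the equation has an ansatz solution with half-period twisted data. This is the expected zero locus of the deformed pre-modular form $\Phi_{\mathbf{n},\mathbf{p};t_0,s_0}(\tau)$.

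\textbf{Step 2 (two more conditions to kill the Jordan block).} On $\mathcal{H}_{t_0,s_0}$, write the second solution by reduction of order, $w_2=w_1\int w_1^{-2}$. Half-period data means $w_1^{-2}$ is elliptic up to the sign factors $e^{\Theta}$. Since the $n_i$ are half-integers, $w_1^{-2}$ is genuinely single-valued once the signs are taken into account. Its integral along each $\gamma_i$ is therefore a period of the meromorphic differential $w_1^{-2}\,dz$. The off-diagonal monodromy entry $c_{\gamma_i}$ is exactly this period.

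Requiring $M_{\gamma_i}=\pm\mathrm{Id}$ for $i=1,2$ means both periods must vanish. The residues of $w_1^{-2}dz$ at the zeros $a_\mu$ already vanish by the log-free property, and so do those at the poles $p_i$. Hence the class of $w_1^{-2}dz$ in $H^1_{dR}(E_\tau)$ is a well-defined element of a $2$-dimensional space. The two period conditions are therefore two independent equations, and together with Step 1 they cut out a locus of codimension $3$.

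\textbf{Step 3 (independence).} To show the three equations are independent, I would degenerate along the flat family of Theorem~\ref{thm:universal_flatness} to $\mathbf{p}\to\mathbf{0}$. By Corollary~\ref{Cor:special_fiber_p=0}, the family reduces there to classical Lam\'e curves $\overline{\mathcal{Y}}_{k,0}(\tau)$. For the classical Lam\'e equation, the $\mathbb{P}^1$ (unitary-trivial) locus is known to be finite, or empty for generic $\tau$. The codimension is then counted by semicontinuity.

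The main obstacle is this independence argument, specifically showing that the two period conditions in Step 2 are transverse to $\mathcal{H}_{t_0,s_0}$ rather than automatically satisfied along it. I would first try an explicit first-order computation near a symmetric Type-I boundary point, using the expansions of Theorem~\ref{branch}. Failing that, I would fall back on the Kawai symplectic structure. The Riemann--Hilbert map is a local diffeomorphism, and the point $\pm\mathrm{Id}$ has codimension $3$ in $SL_2$ relative to the centralizer orbit. This suggests pulling back that codimension count.
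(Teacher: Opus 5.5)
Your Step~1 is where the argument fails. The points of $\overline{\mathcal{Y}}_{\mathbf{n},\mathbf{p}}(\tau)$ with $(t,s)\in(\tfrac12\mathbb{Z}/\mathbb{Z})^2$ are the fixed points of the deck involution $(t,s)\mapsto(-t,-s)$ of the generically $2{:}1$ map to $\overline{V}_{\mathbf{n},\mathbf{p}}(\tau)$, i.e.\ the generalized Lam\'e functions. These exist for \emph{every} generic $(\mathbf{p};\tau)$. Theorem~\ref{t:GLF} counts $\prod_i(2n_i+1)$ of them, and the example following it shows that on $\sigma_{\mathbf{n},\mathbf{p}}^{-1}(0)$ the condition $h=0$ is not independent of the GLC equations ($P_1=YP_2$). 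So ``$h=t_0\eta_1+s_0\eta_2$'' is not an extra equation on $(\mathbf{p};\tau)$. The union of your loci $\mathcal{H}_{t_0,s_0}$ is the whole parameter space rather than a hypersurface, and Steps~1--2 impose only two conditions.

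Your final $3$ therefore comes out as $1+2$, while the bookkeeping consistent with the true answer is $0+2+1$. The two period conditions cut a codimension-$2$ set out of the $r$-dimensional locus of half-period points. That set consists of entire $\mathbb{P}^1$-fibers, since every point of such a fiber has the same multipliers $\pm1$, so it loses one more dimension when projected to $(\mathbf{p};\tau)$.

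More fundamentally, counting equations only bounds the dimension of the locus from below. The statement needs $\dim\le r-3$, and Step~3 cannot supply this. For the classical Lam\'e equation there are no $\mathbb{P}^1$-fibers at all, since a degree-$n$ map cannot have ramification index $2n+1$. Semicontinuity near $\mathbf{p}=\mathbf{0}$ at best keeps the locus away from a neighbourhood of that special fiber and says nothing about its dimension elsewhere.

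The missing idea is to use the projective triviality you already noted. If $w_1,w_2$ are independent ansatz solutions, then $f=w_1/w_2$ is a single-valued meromorphic function $E_\tau\to\mathbb{P}^1$ of degree $n$. It is ramified exactly at the $p_i$ with index $2n_i+1$ (the difference of local exponents), and unramified elsewhere because the Wronskian is a nonzero constant. It is determined up to $\mathrm{PGL}_2(\mathbb{C})$, and conversely $f$ recovers the equation as $\frac{d^2}{dz^2}+\frac12\{f,z\}$.

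Hence the $\mathbb{P}^1$-locus is the image of $\mathcal{H}_{1,n}(2n_1+1,\dots,2n_r+1)/\mathrm{PGL}_2(\mathbb{C})$. By Riemann existence this is \'etale over $\mathrm{Conf}_r(\mathbb{P}^1)/\mathrm{PGL}_2(\mathbb{C})$, so it has dimension $r-3$, and collisions of branch values only give smaller strata. The space of $(\mathbf{p}\bmod\text{translation};\tau)$ has dimension $r$, so the codimension is $3$ with no transversality left to check. This is the paper's proof.

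Your Step~2 criterion, $[w_1^{-2}dz]=0$ in $H^1_{dR}(E_\tau)$, is correct. It says exactly that $w_2/w_1=\int w_1^{-2}dz$ is elliptic, i.e.\ that $f$ exists, but on its own it does not produce the dimension count.
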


\begin{proof}
Let $w_1$ and $w_2$ be any two linearly independent ansatz solutions on the $\mathbb{P}^1$-fiber. Then the ratio $f := w_1/w_2$ defines a degree $n$ meromorphic function $E_\tau \rightarrow \mathbb{P}^1$, since the projectivized monodromy of $f$ is trivial. Furthermore, $f$ is ramified at $p_i$ with ramification index $2n_i+1$, which exactly corresponds to the difference of the local exponents of the equation. A change of basis for the pair of ansatz solutions corresponds to post-composing the map $f$ with a M\"obius transformation in $\mathrm{PGL}_2(\mathbb{C})$. 

In the generic case where the images $f(p_i)$ are distinct, this yields a well-defined element
\[
    [f] \in \mathcal{H}_{1,n}(2n_1+1,\dots,2n_r+1),
\]
where $\mathcal{H}_{g,d}(\text{profile})$ denotes the Hurwitz space, the moduli space parameterizing degree $d$ morphisms from a genus $g$ curve to $\mathbb{P}^1$ with a prescribed ramification profile. 

Conversely, any $f \in \mathcal{H}_{1,n}(2n_1+1,\dots,2n_r+1)$ yields a generalized Lam\'e equation via the Schwarzian derivative. Precisely, the differential operator is given by
\[
    \frac{d^2}{dz^2} + \frac{1}{2}\{ f,z \} := \frac{d^2}{dz^2} + \frac{1}{2} \Bigg( \frac{f'''}{f'} - \frac{3}{2}\bigg( \frac{f''}{f'} \bigg)^2 \Bigg). 
\]

By the Riemann existence theorem, there is an \'etale (local bi-holomorphic) morphism from this Hurwitz space to the configuration space of the branch points on $\mathbb{P}^1$:
\[
    \mathcal{H}_{1,n}(2n_1+1,\dots,2n_r+1) \rightarrow \mathrm{Conf}_r(\mathbb{P}^1) / \mathrm{PGL}_2(\mathbb{C}).
\]

For the strata where branch points collide, the corresponding Hurwitz space admits an \'etale morphism to $\mathrm{Conf}_s(\mathbb{P}^1) / \mathrm{PGL}_2(\mathbb{C})$, where $s < r$ is the number of distinct branch points, and the correspondence to the generalized Lam\'e equations follows by a parallel argument. Since all degenerate strata have strictly lower dimensions, we conclude that the Hurwitz space, and hence the space of corresponding generalized Lam\'e equations, has maximal complex dimension $r-3$.

On the other hand, the base space of punctured tori, parameterized by $\tau$ and the $r$ marked points $\mathbf{p}$ up to global translation, has complex dimension $1 + (r-1) = r$. This establishes that the generic locus where the equation admits a $\mathbb{P}^1$-fiber has codimension $3$.
\end{proof}

\subsection{Counting generalized Lam\'e function}
In this subsection, we assume $\mathbf{n} \in \tfrac{1}{2}\mathbb{N}^r$.
\begin{definition}
The ramified fiber over the affine part $V$ in the classical case is called the Lam\'e function. Here we call it generalized Lam\'e function. Denote $L_1(\mathbf{n},\mathbf{p})$ be the number of generalized Lam\'e function and $L_2(\mathbf{n},\mathbf{p})$ be the number of $\mathbb{P}^1$-fiber counting with multiplicity. 
\end{definition}

Given generic $\mathbf{p}$, by Proposition~\ref{prop:P1_codim_3}, we have $L_2(\mathbf{n},\mathbf{p}) =0$, and the morphism $\overline{\mathcal{Y}}_{\mathbf{n},\mathbf{p}}(\tau) \rightarrow \overline{V}_{\mathbf{n},\mathbf{p}}(\tau)$ is a degree 2 morphism. 

\begin{theorem}[Number of generalized Lam\'e function] \label{t:GLF} 
For generic $\mathbf{p}$, we have 
\[
    L_1(\mathbf{n},\mathbf{p}) = \prod_{i=1}^r (2n_i +1).
\]
\end{theorem}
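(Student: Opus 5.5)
The count of generalized Lam\'e functions will come from Riemann--Hurwitz applied to the degree-$2$ morphism $\pi\colon \overline{\mathcal{Y}}_{\mathbf{n},\mathbf{p}}(\tau) \to \overline{V}_{\mathbf{n},\mathbf{p}}(\tau)$. For generic $\mathbf{p}$, Proposition~\ref{prop:P1_codim_3} rules out $\mathbb{P}^1$-fibers, so $\pi$ is finite of degree $2$. By Proposition~\ref{generic-2-to-1}, its ramification points are the generalized Lam\'e functions in the affine part together with the symmetric Type-I boundary point, which exists exactly when $\mathbf{n}\in\mathbb{N}^r$. This gives
\[
  L_1(\mathbf{n},\mathbf{p}) = \#\mathrm{Ram}(\pi) - \delta_{\mathbf{n},\mathbb{N}^r}.
\]
It therefore suffices to compute the total ramification of $\pi$ from the arithmetic genera of source and target.

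\textbf{Target.} The genus of the target is already known from Corollary~\ref{Vn-genus}. Because of the orbifold point, I will work with the coarse curve. The symmetric branch through $[0:\dots:0:1]$ is exactly the place where the $\mathbb{Z}/2$ stabilizer interacts with the ramification, and the term $\delta_{\mathbf{n},\mathbb{N}^r}$ in $p_a$ should cancel the symmetric Type-I point.

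\textbf{Source.} I need $p_a(\overline{\mathcal{Y}}_{\mathbf{n},\mathbf{p}}(\tau))$. I would get it from the universal flatness theorem (Theorem~\ref{thm:universal_flatness}): arithmetic genus is constant in flat families, so I degenerate to $\mathbf{p}\to\mathbf{0}$. There Corollary~\ref{Cor:special_fiber_p=0} expresses the special fiber as a union of the classical Lam\'e curves $\overline{\mathcal{Y}}_{n-k,0}(\tau)$ with multiplicities $m_k(\mathbf{n})$, all glued at the common point $[(\mathbf{0},\infty)]$. The genera of the classical curves are known from \cite{Lin_Wang_2017}.

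This degeneration is the step I expect to be the main obstacle. Computing $p_a$ of a non-reduced, non-transversally glued special fiber requires controlling the local structure at the gluing point. As an alternative, I would compute $\chi(\mathcal{O})$ of $\overline{\mathcal{Y}}$ directly: use its embedding in $\mathcal{P}_n$ and the degree formula of Theorem~\ref{thm_deg}, and check the answer against the flat limit.

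\textbf{Riemann--Hurwitz and the fallback.} With both genera in hand, Riemann--Hurwitz for a double cover of possibly singular reduced curves,
\[
  2p_a(\text{source}) - 2 = 2\bigl(2p_a(\text{target}) - 2\bigr) + \#\mathrm{Ram},
\]
should yield $\#\mathrm{Ram} = \prod_i (2n_i+1) + \delta_{\mathbf{n},\mathbb{N}^r}$. Reducedness of the source is justified for generic $\mathbf{p}$ by the flatness proof; reducedness of the target is Theorem~\ref{thm:log_free_curveness}.

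The appearance of $\prod_i(2n_i+1)$ in $p_a(\overline V)$ makes this count plausible. As a fallback, I would verify the count by degenerating all weights, using Theorem~\ref{thm_decom}, to the case $n_i=\tfrac12$. In that case Lam\'e functions correspond to simultaneous solutions of the quadratic log-free conditions \eqref{e:log-free} with $A_i$ determined up to sign, giving $2^r=\prod(2n_i+1)$ choices, which is consistent with the claimed formula.
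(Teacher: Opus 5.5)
Your reduction to Riemann--Hurwitz is internally consistent, but it moves the entire content of the theorem into an input you never obtain. If you substitute Corollary~\ref{Vn-genus} into your formula, the terms $\prod_i(2n_i+1)$ and $\delta_{\mathbf{n},\mathbb{N}^r}$ cancel exactly. The theorem then becomes equivalent to the identity $p_a(\overline{\mathcal{Y}}_{\mathbf{n},\mathbf{p}}(\tau))=(n-1)F(\mathbf{n})+1$; for example, this gives genus $n$ for the classical Lam\'e curve and genus $1$ for $\mathbf{n}=(\tfrac12,\tfrac12)$. Nothing in the paper computes this genus, and neither of your routes does:
\begin{itemize}
\item The flat limit in Corollary~\ref{Cor:special_fiber_p=0} is a non-reduced scheme. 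Its components carry multiplicities $m_k(\mathbf{n})$ and all pass through the single point $[(\mathbf{0},\infty)]$. Its $\chi(\mathcal{O})$ depends on the nilpotent structure along the multiple components and on the local algebra at that common point. The degeneration theorem gives only supports and multiplicities, so it does not determine this.
\item The degree of $\sigma_{\mathbf{n},\mathbf{p}}$ from Theorem~\ref{thm_deg} does not fix the genus either, since applying Riemann--Hurwitz to $\sigma_{\mathbf{n},\mathbf{p}}$ needs its ramification, which is unknown.
\item Even granting $p_a$, for a double cover of possibly singular curves Riemann--Hurwitz only computes the degree of the branch divisor. Equating that degree with the number of Lam\'e functions needs smoothness, or at least simple ramification at the Lam\'e functions and no contribution from singular points (including at the orbifold point). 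This is not established for generic $\mathbf{p}$, and reducedness is not enough.
\end{itemize}

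Your fallback does not close the gap either. Theorem~\ref{thm_decom} moves $n_1,n_2$ off $\tfrac12\mathbb{N}$. There $V_{\mathbf{n},\mathbf{p}}(\tau)$ becomes a complete intersection of different dimension (Remark~\ref{r:CI}), and neither the double cover nor the notion of Lam\'e function is defined, so there is no conserved count to transport. For $\mathbf{n}=(\tfrac12)^r$, the quadrics \eqref{e:log-free} cut out a curve. The Lam\'e functions are isolated points on that curve singled out by $(t,s)\in(\tfrac12\mathbb{Z}/\mathbb{Z})^2$, not sign choices of the $A_i$.

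The paper avoids genera altogether. It collides $p_1\to p_2$ and uses Corollary~\ref{cor:dege_special}(4), the Clebsch--Gordan case, to obtain the recursion $L_1(\mathbf{n},\mathbf{p})=\sum_{k=0}^{2\min(n_1,n_2)}L_1(\mathbf{n}_k,\mathbf{p}_{12})$, starting from the classical count $L_1(n)=2n+1$. The ingredient your proposal has no analogue of is that ramified points persist under this deformation. A special-fiber point with $(t,s)=(0,0)$ deforms with $\sum_\mu a_\mu(\epsilon)=0$ and $h(\epsilon,\tau)\equiv 0$. This is proved by writing $h=\sum_m H_m(\tau)\epsilon^m$ and using that through every $\tau_0$ there is a path $\tau(\epsilon)$ preserving $(t,s)=(0,0)$; this forces each $H_m(\tau_0)=0$ inductively.
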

\begin{proof}
The proof relies on the degeneration formula. Let $\mathbf{n}_k = (n_1+n_2-k,n_3,\dots,n_r)$ and $\mathbf{p}_{12} = (p_{12},p_3,\dots,p_r)$. 
We claim the counting follows the recursive degeneration formula:
\[
    L_1(\mathbf{n},\mathbf{p}) = \sum_{k=0}^{2\min (n_1,n_2)} L_1(\mathbf{n}_k,\mathbf{p}_{12})
\]
with the initial condition that $L_1(n) = 2n+1$ for $n \geq 0$. This initial condition corresponds to the classical case studied in \cite{Chai_Lin_Wang_2015}. We emphasize that $L_1(\mathbf{m},\mathbf{p}) = \prod_{i=1}^r(2m_i +1)$ with some $m_i =0$ is conventionally included for the recursion.      

While the recursive formula is consistent with the degeneration formula of $\overline{\mathcal{Y}}_{\mathbf{n},\mathbf{p}}(\tau)$ in Corollary~\ref{cor:dege_special}, it is a nontrivial fact that this discrete counting of ramified points obeys the same recursion.

It suffices to consider the case $(t,s) = (0,0) \in ( \frac{1}{2}\mathbb{Z}/\mathbb{Z} )^2 $. We claim the following:

Given a point on the special fiber:
\[
    [(\mathbf{a}, h)] \in \{p_{12}^k\} \times \overline{\mathcal{Y}}_{\mathbf{n}_k,\mathbf{p}_{12}}^{c_k}(\tau),
\]
corresponding to the root configuration $\{ \mathbf{a} \} = \{p_{12}^k\} \times \{\mathbf{a}'\}$, where the fixed component $[(\mathbf{a}', h)]$ has $(t,s) = (0,0)$, the deformation 
\[
    a_\mu(\epsilon) = a_\mu + \sum_{m \geq 1} \alpha_{\mu m} \epsilon^m,
\]
induced by deforming the poles $p_i(\epsilon)$ subject to the vanishing sum condition 
\[
    \sum_{\mu=1}^n a_\mu(\epsilon) = \sum_{i=1}^r n_i p_i(\epsilon) = 0,
\]
must automatically satisfy $h(\epsilon,\tau) = h(0,\tau) = 0$. Note that this guarantees $(t,s) = 0$ in the deformation over $\epsilon$.

Expand $h$ as a power series $h(\epsilon, \tau) = \sum_{m=0}^\infty H_m(\tau) \epsilon^m$. By induction on the number of singularities, with the classical Lam\'e equation as the base case, a solution with $(t,s) = (0,0)$ exists on the special fiber ($\epsilon=0$) for all $\tau \in \mathbb{H}$. This existence implies the lowest-order term vanishes: $H_0(\tau) \equiv 0$ for all $\tau \in \mathbb{H}$.

We now proceed by induction on the power order $m$ to show $H_m(\tau) \equiv 0$. Assume $H_k(\tau) \equiv 0$ on $\mathbb{H}$ for all $k < m$. The series simplifies to:
\[
    h(\epsilon, \tau) = H_m(\tau)\epsilon^m + O(\epsilon^{m+1}).
\]
For any arbitrary initial modulus $\tau_0 \in \mathbb{H}$, there exists a (possible non-unique) path $\tau(\epsilon) = \tau_0 + O(\epsilon)$ along which the condition $(t,s) = (0,0)$ is maintained. Along this path, we have $h(\epsilon, \tau(\epsilon)) \equiv 0$ as a series in $\epsilon$.

Evaluate the series along this path yields:
\[
    h(\epsilon, \tau(\epsilon)) = H_m(\tau_0)\epsilon^m + O(\epsilon^{m+1}) \equiv 0.
\]
This forces the leading coefficient to vanish at the starting point: $H_m(\tau_0) = 0$.

Since $\tau_0$ was chosen arbitrarily from $\mathbb{H}$, we conclude that $H_m(\tau) \equiv 0$ across the entire upper half-plane. By this induction on $m$, we have $h(\epsilon, \tau) \equiv 0$ for all $\epsilon$, completing the proof of the claim.

For general $(t,s) \in (\tfrac{1}{2} \mathbb{Z} / \mathbb{Z})^2$, the argument is parallel for the shifted function $\tilde{h} := h - \eta \big( \sum_{\mu=1}^n a_\mu \big)$.
\end{proof}

\begin{remark} \label{rmk:representation_cg}
The degeneration formula $ L_1(\mathbf{n},\mathbf{p}) = \sum_{k=0}^n L_1(\mathbf{n}_k,\mathbf{p}')$ established in the proof of Theorem \ref{t:GLF} has a profound algebraic interpretation. It serves as the exact analytic realization of the Clebsch-Gordan decomposition for $\mathfrak{sl}_2(\mathbb{C})$ representations. 

In the representation-theoretic framework of the BGG category $\mathcal{O}$, the local algebraically completely integrable potentials correspond to the highest-weight modules $V_{2n_i}$ of dimension $2n_i + 1$. In the geometric limit where the singular sources $p_i$ collide, the global algebraic state space—defined by the tensor product $\bigotimes_{i=1}^r V_{2n_i}$—fractures into a direct sum of lower-weight irreducible representations. 

The linear recursion over $k$ captures this fracturing of the generalized Lam\'e curve into the lower-weight Lam\'e boundary strata. Because the total dimension is conserved during this flat deformation, the sum precisely yields the product $\prod_{i=1}^r (2n_i + 1)$. Thus, the function-theoretic scaling limit flawlessly mirrors the algebraic Clebsch-Gordan splitting.
\end{remark}

The above theorem gives interesting examples of non-complete intersection of elliptic equations. 
\begin{example}
For $\mathbf{n} = (\tfrac{1}{2})^4$ and $\mathbf{p}$ generic with $\sum_i p_i=0$, the underlying generalized Lam\'e curve is defined as
\[
    4 \zeta(a_1-a_2) - \sum_{i=1}^4\zeta(a_1-p_i) + \sum_{i=1}^4 \zeta(a_2-p_i)  =0,
\]
and the ramified points of the form $\{ a,-a \}$ is characterized by:
\[
\begin{split}
    4\zeta(2a) - \sum_{i=1}^4 \Big( \zeta(a-p_i) + \zeta(a+p_i)  \Big) =0;
    \\
    h = \frac{1}{4} \sum_{i=1}^4 \Big( \zeta(a-p_i) - \zeta(a + p_i) \Big)=0;
\end{split}
\]
The equations are intersect non-transversely for all $\mathbf{p}$, any solution of $a$ for the equation $h=0$ automatically solve the equation for GLC. This nontrivial statement follows directly from the existence of generalized Lam\'e function. We give an alternate proof.

Consider
\[
\begin{split}
 &\text{Elliptic curve } E: \quad y^2 = 4x^3 - g_2x - g_3.
 \\
 &\text{Parabola } Y: \quad y = Y(x) = Ax^2 + Bx + D.
    \\
 &\text{Generic points: } \quad \{ p_1,\dots,p_4 \} = Y \cap E.
\end{split}
\]
More precisely, $p_i = ( x_i, Y(x_i) )$ where the $x_i$ are the roots of the polynomial:
$$    P(x) = \prod_{i=1}^4 (x - x_i) := Y(x)^2 - (4x^3 - g_2x - g_3) = 0 .$$
Let $z = \wp(a)$. We rewrite the equation for GLC as:
\[
\begin{split}
 & 4\zeta(2a) - \sum_{i=1}^4 \Big( \zeta(a-p_i) + \zeta(a+p_i) \Big)
 \\
 & =\left( 8\zeta(a) + 2\frac{\wp''(a)}{\wp'(a)} \right) - \left( 8\zeta(a) + \wp'(a) \sum_{i=1}^4 \frac{1}{\wp(a)-\wp(p_i)} \right) =0
 \\
 & \Longleftrightarrow \frac{2\wp''(a)}{(\wp'(a))^2} = \sum_{i=1}^4 \frac{1}{\wp(a)-\wp(p_i)} \\
 &\Longleftrightarrow \frac{12z^2 - g_2}{4z^3 - g_2z - g_3} = \sum_{i=1}^4 \frac{1}{z - x_i}
 \\
 & \Longleftrightarrow P_{1}(z) := (12z^2 - g_2)P(z) - (4z^3 - g_2z - g_3)P'(z) = 0.
\end{split}
\]
Similarly, rewrite the $h=0$ equation as:
\[
\begin{split}
 & h = \frac{1}{4} \sum_{i=1}^4 \Big( \zeta(a-p_i) - \zeta(a + p_i) \Big) = 0
    \\
    & \Longleftrightarrow \frac{1}{4} \sum_{i=1}^4 \left( -2\zeta(p_i) + \frac{\wp'(p_i)}{\wp(a) - \wp(p_i)} \right) = 0
    \\
    & \Longleftrightarrow \sum_{i=1}^4 \frac{y_i}{z - x_i} = 2 \sum_{i=1}^4 \zeta(p_i) = -\frac{4}{A} \quad \left( \text{via the identity } \sum_{i=1}^4 \zeta(p_i) = -\frac{2}{A} \right)
    \\
    & \Longleftrightarrow P_2(z) := -\frac{4}{A} P(z) - S(z)= 0. \quad \Bigg(S(z) :=\sum_{i=1}^4 y_i \prod_{j \neq i} (z - x_j) \Bigg)
\end{split}
\]
A direct computation shows that 
\[
P_1(z) = Y(z) P_2(z).
\]
\end{example}

\subsection{Modular form}
When $(t,s) \in (\mathbb{C}/\mathbb{Z})^2 \setminus (\tfrac{1}{2} \mathbb{Z} / \mathbb{Z})^2$, the existence of solutions with prescribed monodromy data depends highly non-trivially on the modulus $\tau$. To solve this transcendental problem, the theory of pre-modular forms was developed in \cite{Lin_Wang_2017}.  In this subsection, we generalize the pre-modular form theory into $(\mathbf{n}, \mathbf{p})$-deformed version.

Formally, an analytic function $f(z;\tau)$ in $\mathbb{C} \times \mathbb{H}$ is called a pre-modular form of weight $k\in \mathbb{N}$ if it satisfies:
\begin{enumerate}
    \item For any fixed $\tau \in \mathbb{H}$, the function $f(z; \tau)$ is analytic in $z$ and depends only on $z \pmod{\Lambda_\tau} \in E_\tau$.
    \item For any fixed torsion type $z \pmod{\Lambda_\tau} \in E_\tau[N]$, the function $f(z; \tau)$ as a function of $\tau$ is a modular form of weight $k$ with respect to $\Gamma(N)$.
\end{enumerate}

The foundational example is the Hecke function. Let $z = \tilde{t}+\tilde{s}\tau$, it is defined as:
\[
Z(z;\tau) = Z_{\tilde{t},\tilde{s}}(\tau) := \zeta(\tilde{t}+\tilde{s}\tau; \tau) - \tilde{t} \eta_1(\tau) - \tilde{s} \eta_2(\tau).
\]
This function is double periodic in $\tilde{t}$ and $\tilde{s}$.
As a result, $Z_{t,s}(\tau)$ is well-defined and holomorphic on $t, s \in \mathbb{C}/\mathbb{Z}$, satisfying the criteria of a pre-modular form (of weight 1).

Define the $(\mathbf{n},\mathbf{p})$-deformed Hecke function:
\[
    Z_{\mathbf{n},\mathbf{p};t,s}(\tau):= \frac{1}{n} \zeta_{\mathbf{n},\mathbf{p}}(t+s\tau) - t \eta_1 -s\eta_2,
\]
which does not satisfy the criterion (2). Due to its modularity nature, we still call it a pre-modular form. See the remark below.
\begin{remark} \label{r:modular_form}
In the classical setting, for a fixed integer $N \geq 2$, one can construct a modular form from a pre-modular form $f(z;\tau)$ of weight $k$ by taking the product over primitive $N$-torsion points:
\[
    M_{f,N}(\tau) := \prod_{ \substack{0\leq k_1,k_2<N \\ \gcd(k_1,k_2,N)=1} } f \Big(\frac{k_1+k_2\tau}{N};\tau \Big).
\]

For the $(\mathbf{n},\mathbf{p})$-deformed Hecke function, the presence of a generic deformation parameter $\mathbf{p}$ breaks the Condition 2. Nevertheless, for a fixed $N \geq 2$, one can still associate a modular form to it by extending this product to simultaneously average over the $N$-torsions of $\mathbf{p}$. Because of this analogous construction, it justifies retaining the terminology ``pre-modular form'' for the deformed function.
\end{remark}

Define the $(\mathbf{n}, \mathbf{p})$-deformed fundamental rational function $\mathbf{z}$ as:
\[
\begin{split}
    \mathbf{z}_{\mathbf{n},\mathbf{p}}(*;\tau): \overline{\mathcal{Y}}_{\mathbf{n},\mathbf{p}}(\tau) \rightarrow \mathbb{C}, \quad
    [(\mathbf{a},h)]  \mapsto \frac{1}{n} \zeta_{\mathbf{n},\mathbf{p}}( \sigma_{\mathbf{n},\mathbf{p}}(\mathbf{a})) - h .
\end{split}
\]

We give a direct consequence of Proposition \ref{prop:monodromy}:
\begin{proposition} \label{p:GH}
For $\mathbf{n} \in \mathbb{Z}^r$, the existence of an ansatz solution corresponds to $[(\mathbf{a},h)] \in \overline{\mathcal{Y}}_{\mathbf{n},\mathbf{p}}(\tau)$ with monodromy data $(t,s) \in (\mathbb{C}/\mathbb{Z})^2$ is equivalent to
\begin{equation} \label{e:Green_Hecke}
    Z_{\mathbf{n},\mathbf{p};t,s} (\tau) = \mathbf{z}_{\mathbf{n},\mathbf{p}}([(\mathbf{a},h)];\tau).
\end{equation}
\end{proposition}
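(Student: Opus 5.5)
The plan is to unwind the two sides of \eqref{e:Green_Hecke} against the linear system of Lemma~\ref{l:twisted_mono_data}. Throughout I normalize $\omega_1 = 1$, $\omega_2 = \tau$. I read $\zeta_{\mathbf{n},\mathbf{p}}(x) := \sum_{i} n_i \zeta(x - p_i)$ when it is evaluated at a single point $x$. I also read the statement as concerning points $[(\mathbf{a},h)]$ lying over the fiber $\sigma_{\mathbf{n},\mathbf{p}}(\{[\mathbf{a}]\}) = [t + s\tau]$, since \eqref{e:Green_Hecke} is a single scalar equation and cannot by itself fix the addition map.

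The first step is a quasi-periodicity observation. Since $\sum_i n_i = n$, for $\omega \in \Lambda_\tau$ we have
\[
\tfrac{1}{n}\zeta_{\mathbf{n},\mathbf{p}}(x+\omega) = \tfrac{1}{n}\zeta_{\mathbf{n},\mathbf{p}}(x) + \eta(\omega).
\]
Consequently both $Z_{\mathbf{n},\mathbf{p};t,s}(\tau)$, as a function of a lift $(\tilde t,\tilde s)$, and $\mathbf{z}_{\mathbf{n},\mathbf{p}}$, evaluated via a lift $\widetilde{\sigma}_{\mathbf{n},\mathbf{p}}(\mathbf{a})$ together with the representative $h$, are invariant under the respective lattice actions. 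For $\mathbf{z}$ this uses the equivalence $(a_\mu,h)\sim(a_\mu+\omega,h+\eta(\omega))$. Hence both sides are well defined, and I may choose representatives so that $\widetilde{\sigma}_{\mathbf{n},\mathbf{p}}(\mathbf{a}) = \tilde t + \tilde s\tau$ exactly.

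With these representatives the $\zeta_{\mathbf{n},\mathbf{p}}$ terms on the two sides coincide. Equation \eqref{e:Green_Hecke} therefore collapses to
\[
h = \tilde t\,\eta_1 + \tilde s\,\eta_2 .
\]
Together with $\tilde t + \tilde s\tau = \widetilde{\sigma}_{\mathbf{n},\mathbf{p}}(\mathbf{a})$, this is precisely the linear system of Lemma~\ref{l:twisted_mono_data}. By the Legendre relation that system has a unique solution, and its class in $(\mathbb{C}/\mathbb{Z})^2$ is independent of the representative. So \eqref{e:Green_Hecke} holds if and only if the twisted monodromy data of $[(\mathbf{a},h)]$ equals $(t,s)$. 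The same computation run backwards gives the converse: if the data is $(t,s)$, substituting the system into $Z - \mathbf{z}$ gives zero.

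The last step upgrades twisted data to genuine monodromy, and this is where $\mathbf{n}\in\mathbb{Z}^r$ is used. For integer weights the winding factor $\Theta(\gamma_i) = \pi\sqrt{-1}\sum_i n_i\,\mathrm{Ind}_{\Gamma_{i,\cdot}}(\Lambda_\tau)$ lies in $2\pi\sqrt{-1}\,\mathbb{Z}$ modulo the parity of the winding numbers, and Proposition~\ref{prop:monodromy} asserts $e^{\Theta(\gamma_i)}=1$. The multipliers are then exactly $\lambda_{\gamma_1} = e^{-2\pi\sqrt{-1}s}$ and $\lambda_{\gamma_2}=e^{2\pi\sqrt{-1}t}$ with no extra sign, so the ansatz solution has monodromy data $(t,s)$ precisely when its twisted data is $(t,s)$.

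The main obstacle is bookkeeping at the boundary rather than in the interior. At Type-I points $h=\infty$, so I would verify separately that both sides are infinite there, or exclude these points because an ansatz solution with finite $(t,s)$ requires $h$ finite. At Type-II points I would use the lower-weight data $\mathbf{n}'$ given by Theorem~\ref{special-pt-II} and check that $\widetilde{\sigma}$ and $\zeta_{\mathbf{n},\mathbf{p}}$ agree with their $\mathbf{n}'$ counterparts up to the lattice shift absorbed in the first step. I would also double-check that Proposition~\ref{prop:monodromy}'s conclusion $e^{\Theta}=1$ applies with only one ansatz solution present, by rederiving it directly from the integrality of $\sum_i n_i\,\mathrm{Ind}$.
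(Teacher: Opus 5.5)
Your argument is correct and follows the paper's route. The paper's proof consists only of the remark that for $\mathbf{n}\in\mathbb{Z}^r$ the twist $e^{\Theta(\gamma_i)}$ is trivial, so twisted and genuine monodromy coincide; you additionally make explicit the reduction of \eqref{e:Green_Hecke} to the linear system of Lemma~\ref{l:twisted_mono_data}, together with the implicit fiber condition $\sigma_{\mathbf{n},\mathbf{p}}(\{[\mathbf{a}]\})=[t+s\tau]$, which the paper leaves unstated.

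For the step $e^{\Theta(\gamma_i)}=1$, neither Proposition~\ref{prop:monodromy} (whose two-solution hypothesis may fail, as you note) nor integrality of $\sum_i n_i\,\mathrm{Ind}$ (which only gives $\pm1$) suffices. Instead, use that the ansatz is single-valued for integer weights and apply $\sigma(z+\omega_k)=-e^{\eta_k(z+\omega_k/2)}\sigma(z)$ directly. The sign $(-1)^n$ from the numerator cancels the sign $(-1)^{\sum_i n_i}$ from the denominator, giving the multiplier $e^{\omega_k h-\eta_k\widetilde{\sigma}_{\mathbf{n},\mathbf{p}}(\mathbf{a})}$ exactly.
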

\begin{proof}
For $\mathbf{n} \in \mathbb{Z}^r$, the monodromy data is independent of the chosen fundamental periods $\gamma_i$. Furthermore, because the twist factor is trivial, $e^{\Theta(\gamma_i)} = 1$, the twisted monodromy data exactly coincides with the standard monodromy data.
\end{proof}

The construction of the pre-modular form works to general $\mathbf{n}$, although in the non-integer case it governs the twisted monodromy data. To construct it, we first study the characteristic polynomial of $\mathbf{z}_{\mathbf{n},\mathbf{p}}$. Denote 
\[
    \mathcal{K} := \mathbb{C}\Big(g_2, g_3, \{\wp(p_i), \wp'(p_i)\}_{i=1}^r, \wp(c), \wp'(c)\Big),
\]
and let $\pi: \overline{\mathcal{Y}}_{\mathbf{n},\mathbf{p}}(\tau) \rightarrow \overline{Y}_{\mathbf{n},\mathbf{p}}(\tau)$ be the natural projection. Recall that $\pi$ is an isomorphism over the interior and a finite morphism on the boundary. Because the following lemma computes the characteristic polynomial over a generic $c \in E_\tau$, working directly with $\overline{Y}_{\mathbf{n},\mathbf{p}}(\tau)$ does not lose any information. Consequently, we will abuse notation by writing $\mathbf{z}_{\mathbf{n},\mathbf{p}}(\mathbf{a};\tau)$ in place of $\mathbf{z}_{\mathbf{n},\mathbf{p}}([(\mathbf{a},h)];\tau)$.

\begin{lemma}[Characteristic polynomial of $\mathbf{z}_{\mathbf{n},\mathbf{p}}$] \label{l:char_poly}
Let $\sigma_{\mathbf{n},\mathbf{p}}: \overline{{Y}}_{\mathbf{n},\mathbf{p}}(\tau) \to E_{\tau}$ be the addition map of degree $N$. The variable $\mathbf{z}_{\mathbf{n},\mathbf{p}}$ generates a characteristic polynomial of degree $N$ over $\mathcal{K}$:
\[
\begin{split}
    W_{\mathbf{n},\mathbf{p}}(Z) &:= \prod_{k=1}^N \Big(Z - \mathbf{z}_{\mathbf{n},\mathbf{p}}(\mathbf{a}^{(k)}; \tau) \Big) \\
    &= Z^N + \sum_{j=1}^N (-1)^j W_j(c; \tau) Z^{N-j} \in \mathcal{K}[Z],
\end{split}
\]
where the product is taken over the $N$ generic preimages $\{[\mathbf{a}^{(k)}]\} \in \sigma_{\mathbf{n},\mathbf{p}}^{-1}(c)$. 
The coefficients $W_j(c; \tau)$ are elliptic functions in $c$ satisfying the following properties:
\begin{enumerate}
    \item $W_j(c; \tau)$ is weighted-homogeneous of degree $j$, where $\wp, \wp', g_2, g_3$ are of degree $2, 3, 4, 6$ respectively.
    \item $W_j$ is regular everywhere except at the divisor $D \subset E_{\tau}$, where
    \[
        D = \{[p_1], \dots, [p_r]\} \cup \left\{ \left[ \sum_{i=1}^r k_i p_i \right] \;\middle|\; \sum_{i=1}^r k_i = n, \text{ and } k_i \leq 2n_i \text{ whenever } n_i \in \tfrac{1}{2}\mathbb{N} \right\}.
    \]
\end{enumerate}
\end{lemma}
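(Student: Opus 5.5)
The coefficients $W_j(c;\tau)$ are elementary symmetric functions of $\mathbf{z}_{\mathbf{n},\mathbf{p}}$ over the fiber $\sigma_{\mathbf{n},\mathbf{p}}^{-1}(c)\cap \overline{Y}_{\mathbf{n},\mathbf{p}}(\tau)$. My plan is to treat them as traces of a finite map, prove regularity away from $D$, and then read off both the rationality over $\mathcal{K}$ and the homogeneity.

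\textbf{Step 1 (well-definedness and ellipticity).} Over generic $c$ the fiber consists of $N$ points by Theorem~\ref{thm_deg}. On the interior, $\mathbf{z}_{\mathbf{n},\mathbf{p}}$ is a regular function on $\mathcal{Y}_{\mathbf{n},\mathbf{p}}(\tau)$. Indeed, under a lattice shift $a_\mu\mapsto a_\mu+\omega$ we have $h\mapsto h+\eta(\omega)$, while $\frac1n\zeta_{\mathbf{n},\mathbf{p}}$ evaluated at the shifted addition value changes by the same quasi-period (the weights sum to $n$). Hence the difference is invariant, and it descends to the algebraic model $E^\natural$ exactly as in the comparison remark. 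The symmetric functions of its values on the fiber are therefore single-valued meromorphic functions of $c\in E_\tau$, i.e.\ elliptic.

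\textbf{Step 2 (rationality over $\mathcal{K}$).} The residue equations \eqref{e:Lame_alg_form} are rational in $\wp(a_\mu-p_i)$ and $\wp'(a_\mu-p_i)$. By the addition law, these are rational in $g_2,g_3,\wp(p_i),\wp'(p_i)$ and the coordinates of $a_\mu$. The condition $\sigma_{\mathbf{n},\mathbf{p}}=c$ is likewise algebraic over $\mathbb{C}(g_2,g_3,\wp(c),\wp'(c))$. Since $\mathbf{z}$ is algebraic on this scheme, the norm and trace coefficients of the finite map $\overline{Y}^c\to \mathrm{pt}$ lie in $\mathcal{K}$.

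\textbf{Step 3 (homogeneity).} Rescale the lattice by $\Lambda\mapsto\lambda\Lambda$. Then $a_\mu$, $p_i$ and $c$ scale by $\lambda$; $\zeta$, $\eta_i$ and $h$ scale by $\lambda^{-1}$; and $\wp,\wp',g_2,g_3$ scale by $\lambda^{-2},\lambda^{-3},\lambda^{-4},\lambda^{-6}$. So $\mathbf{z}$ has weight one and $W_j$ has weight $j$, which is property (1).

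\textbf{Step 4 (poles).} The only way a coefficient can blow up is if $\mathbf{z}$ becomes infinite at some fiber point, or if the finite map degenerates. By Proposition~\ref{prop_no_mixed} and Theorem~\ref{special-pt}, interior points and Type-II boundary points have $h$ finite. The function $\zeta_{\mathbf{n},\mathbf{p}}(\sigma)$ is finite unless $c\in\{[p_i]\}$. So $\mathbf{z}=\infty$ only at Type-I boundary points, whose addition value is $\sigma_{\mathbf{n},\mathbf{p}}(\mathbf{a})=\sum k_ip_i-\sum n_ip_i$. After matching the normalization of $c$, this is exactly the second part of $D$, with the constraint $k_i\le 2n_i$ coming from Theorem~\ref{special-pt}. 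Away from $D$, finiteness of $\sigma$ restricted to $\overline{Y}$ follows because $\overline{Y}$ is a projective curve with no component contracted by $\sigma$. The latter needs checking: I would argue that a contracted component would contradict the generic-degree count, or use the degeneration of Theorem~\ref{thm:universal_flatness}. Given that, boundedness of $\mathbf{z}$ gives regularity of $W_j$ by the Riemann extension theorem.

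\textbf{Main obstacle.} The main obstacle is Step 4: ruling out fiber points where $h$ escapes to infinity without lying over $D$, and ensuring no component of $\overline{Y}$ is contracted by $\sigma$. I would first try the local Puiseux expansions from the proof of Theorem~\ref{special-pt}. They show $h\sim \lambda/\epsilon$ only in Type-I configurations, so every such point maps into $D$.
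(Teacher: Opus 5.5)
Your Steps 3 and 4 are essentially the paper's proof. The paper only observes that $\mathbf{z}_{\mathbf{n},\mathbf{p}}$ has weight one and that the $W_j$ are its elementary symmetric functions. It then notes that poles can only come from $\frac1n\zeta_{\mathbf{n},\mathbf{p}}(c)$ at $c=[p_i]$, or from $h=\infty$, which by Theorem~\ref{special-pt} happens only at Type-I points. Your Step 1 correctly supplies the ellipticity that the paper leaves implicit.

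\textbf{The sub-claim you call the main obstacle is false.} It is not true that no component of $\overline{Y}_{\mathbf{n},\mathbf{p}}(\tau)$ is contracted by $\sigma_{\mathbf{n},\mathbf{p}}$. In Example~\ref{eg:GLC_half_periods}, the component $\{a_1\equiv -a_2\}$ lies in $\overline{Y}_{\mathbf{n},\mathbf{p}}(\tau)$, and $\sigma_{\mathbf{n},\mathbf{p}}$ is constant on it because $a_1+a_2\in\Lambda_\tau$ there. The generic degree $6$ from Theorem~\ref{thm_deg} is carried by $\Delta_1,\Delta_2,\Delta_3$ alone, each of degree $2$, which matches the factorization in the remark after the lemma. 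So the contradiction with the degree count that you hope to derive does not exist.

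Fortunately you do not need the claim. The generic preimages lie on the components dominating $E_\tau$. On the closure in $\mathcal{P}_n$ of their union, the map to $E_\tau$ is a non-constant morphism of projective curves, hence finite. So as $c\to c_0$, the $N$ points can only accumulate at the finitely many points of this dominant part lying over $c_0$. If $c_0\notin D$, none of these is a Type-I point, so $h$ stays bounded, every $W_j$ stays bounded, and $W_j$ is regular at $c_0$. Contracted components are simply invisible to $W_{\mathbf{n},\mathbf{p}}$.

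Two smaller points.

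\textbf{Normalization of $D$.} The Type-I images are $[\sum_i(k_i-n_i)p_i]$. The statement's $D$ is therefore correct only under the normalization $\sum_i n_ip_i=0$, which the paper uses elsewhere. Re-normalizing $c$ alone would instead move the part $\{[p_i]\}$ of $D$.

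\textbf{Step 2 is asserted rather than proved.} $\mathbf{z}_{\mathbf{n},\mathbf{p}}$ is built from $\zeta$, not from $\wp,\wp'$. Rewriting it via the addition formula leaves additive $\zeta$-constants. For instance, for $n=1$, $r=2$ and $s:=\sum_i n_ip_i\neq 0$, one finds $\mathbf{z}_{\mathbf{n},\mathbf{p}}=-\zeta(s)$ plus terms rational in $\wp,\wp'$. Moreover, for non-integral weights the fiber condition $\sigma_{\mathbf{n},\mathbf{p}}(\mathbf{a})=c$ involves the point $s$, which is not defined over $\mathcal{K}$. So membership of the $W_j$ in $\mathcal{K}$ requires an actual computation, and at least this normalization. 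The paper's proof does not address this part of the statement either.
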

\begin{proof}
For the weight statement, note that $\mathbf{z}_{\mathbf{n},\mathbf{p}}$ is of weight 1 and $W_j$ is the elementary symmetric polynomial of $\mathbf{z}_{\mathbf{n},\mathbf{p}}$. 

For the divisor $D$, note that in the definition of $\mathbf{z}_{\mathbf{n},\mathbf{p}}$, the first term gives a pole at $[p_i]$, and the second term $h$ gives poles at 
\[
        \left\{ \left[ \sum_{i=1}^r k_i p_i \right] \;\middle|\; \sum_{i=1}^r k_i = n, \text{ and } k_i \leq 2n_i \text{ whenever } n_i \in \tfrac{1}{2}\mathbb{N} \right\}
\]
by the classification of Type-I boundary points, see Theorem~\ref{special-pt}. 
\end{proof}

\begin{remark}
Unlike the classical $\mathbf{p}=\mathbf{0}$ case, $\mathbf{z}_{\mathbf{n},\mathbf{p}}$ might not be a primitive element with respect to the field extension $\mathcal{K}(E_\tau) \hookrightarrow \mathcal{K}(\overline{Y}_{\mathbf{n},\mathbf{p}}(\tau))$. In other words, its minimal polynomial takes the form $Q^k$ for $k\geq 2$. 

For instance, in Example~\ref{eg:GLC_half_periods}, the generalized Lam\'e curve decomposes as $\overline{Y}_{\mathbf{n},\mathbf{p}}(\tau) = 2\Delta_0 \cup \Delta_1 \cup \Delta_2 \cup \Delta_3$. On these components, $\mathbf{z}_{\mathbf{n},\mathbf{p}}$ is not a primitive element, and the characteristic polynomial factors as:
\[
\begin{split}
    W_{\mathbf{n},\mathbf{p}}(Z) &= \prod_{i=1}^3 \left( Z - \frac{1}{4} \frac{\wp''(c)}{\wp'(c)} + \frac{1}{2} \frac{\wp'(c)}{\wp(c) - e_i} \right)^2
    \\
    & = \prod_{i=1}^3 \left( Z - \frac{1}{4} \frac{\wp''(c - \omega_i/2)}{\wp'(c - \omega_i/2)} \right)^2.
\end{split}
\]
\end{remark}

\begin{definition}[Pre-modular form] 
Fix twisted monodromy data $(t,s) \in (\mathbb{C}/\mathbb{Z})^2$, and let $c = t + s\tau$. The pre-modular form is defined as:
\[
    \Phi_{\mathbf{n},\mathbf{p}; t, s}(\tau) := W_{\mathbf{n},\mathbf{p}} \Big( Z_{\mathbf{n},\mathbf{p};t,s}(\tau) \Big).
\]
\end{definition}
This corollary follows directly from the degeneration theorem:
\begin{corollary}\label{cor:modular_decomp} 
Let $\omega = m_1\omega_1 + m_2 \omega_2 \in \Lambda_\tau$. Then
\[
    \lim_{\epsilon \rightarrow 0} \Phi_{\mathbf{n},\mathbf{p}_{12, \epsilon}^\omega;t,s}(\tau) = \prod_{k=0}^{n-1} \Big( \Phi_{\mathbf{n}_k,\mathbf{p}_{12};t',s'}(\tau) \Big)^{m_k},
\]
where $\mathbf{n}_{12}^{(k)}$ and $\mathbf{p}_{12}^{\omega}$ are defined in Theorem~\ref{thm:dege_generic}, $m_k$ is the multiplicity of the corresponding component and
$t' = t + m_1 n_1, \ s'= s + m_2 n_1.$

In particular, we have
\[
    \lim_{\mathbf{p} \rightarrow \mathbf{0}} \Phi_{\mathbf{n},\mathbf{p};t,s}(\tau) = \prod_{k=0}^{n-1} \Big( \Phi_{n-k,0;t,s}(\tau) \Big)^{m_k(\mathbf{n})},
\]
where $m_k(\mathbf{n})$ is defined in Corollary~\ref{Cor:special_fiber_p=0}.
\end{corollary}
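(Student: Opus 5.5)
The plan is to read the factorization directly off the definition $\Phi_{\mathbf{n},\mathbf{p};t,s}(\tau)=W_{\mathbf{n},\mathbf{p}}(Z_{\mathbf{n},\mathbf{p};t,s}(\tau))$. Since $W_{\mathbf{n},\mathbf{p}}$ is the characteristic polynomial of $\mathbf{z}_{\mathbf{n},\mathbf{p}}$ over the fiber $\sigma^{-1}_{\mathbf{n},\mathbf{p}}(c)$, we can write
\[
\Phi_{\mathbf{n},\mathbf{p};t,s}(\tau)=\prod_{k=1}^N\Big(Z_{\mathbf{n},\mathbf{p};t,s}(\tau)-\mathbf{z}_{\mathbf{n},\mathbf{p}}([(\mathbf{a}^{(k)},h^{(k)})];\tau)\Big),
\]
with $c=t+s\tau$ and $N$ the degree in Theorem~\ref{thm_deg}. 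We then let $\mathbf{p}=\mathbf{p}^\omega_{12,\epsilon}$ with $\epsilon\to0$ and use Theorem~\ref{thm:dege_generic}, together with the flatness of the family over the curve $B$. Flatness gives that the $N$ preimages $[(\mathbf{a}^{(k)}(\epsilon),h^{(k)}(\epsilon))]$ converge, with multiplicity, to the points of the special fiber $\bigcup_k\{p_{12}^k\}\times S_{-n_1\eta(\omega)}(\overline{\mathcal{Y}}^{c_k}_{\mathbf{n}^{(k)}_{12},\mathbf{p}_{12}})$. Conservation of the degree gives $N=\sum_k m_k N_k$, where $N_k$ is the degree of the addition map on the $k$-th stratum and $m_k$ is the component multiplicity. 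The product therefore splits into blocks indexed by $k$, and each block is a product over one lower-weight fiber.

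Inside each block we need two limiting identities. First, the limit of $Z_{\mathbf{n},\mathbf{p}_\epsilon;t,s}-\mathbf{z}_{\mathbf{n},\mathbf{p}_\epsilon}(\mathbf{a}(\epsilon))$ should equal $Z_{\mathbf{n}_k,\mathbf{p}_{12};t',s'}-\mathbf{z}_{\mathbf{n}_k,\mathbf{p}_{12}}(\mathbf{a}')$. Second, the base point should transform as $c\mapsto c_k$, which is consistent with the monodromy data $(t',s')$. To get these, we work on the universal cover using Lemma~\ref{l:twisted_mono_data}. The lift of $p_1$ is $p_{12}+\omega+\epsilon$, so $\widetilde\sigma_{\mathbf{n},\mathbf{p}_\epsilon}$ picks up the term $-n_1\omega$. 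The shift $S_{-n_1\eta(\omega)}$ contributes $h\mapsto h-n_1\eta(\omega)$. Solving the linear system, $(\tilde t,\tilde s)$ changes by $(m_1n_1,m_2n_1)$ up to sign convention, which gives $t'=t+m_1n_1$ and $s'=s+m_2n_1$.

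The $k$ roots that collide at $p_{12}$ contribute to $\tfrac1n\zeta_{\mathbf{n},\mathbf{p}}$ only through $\epsilon^{-1}$ terms. By the first equation of the principal system (\ref{eqn:principal}), those terms cancel in $h$. The remaining $\zeta$-sums should recombine into $\tfrac1{n-k}\zeta_{\mathbf{n}_k,\mathbf{p}_{12}}(\mathbf{a}')$ plus the constant shift above, after using $\zeta(x+\omega)=\zeta(x)+\eta(\omega)$.

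The main obstacle is this normalization mismatch: the factor is $\tfrac1n$ before the limit but $\tfrac1{n-k}$ after it. I would first check directly, at the level of $h$ and the argument of $Z$, that the $k$ collided roots contribute exactly the defect needed. This uses the residue identity $nh=\zeta_{\mathbf{n},\mathbf{p}}(\mathbf{a})$ on both sides together with the vanishing of the $\epsilon^0$ correction coming from $\boldsymbol\alpha_{\mathrm{deform},2}=0$ in the proof of Theorem~\ref{thm:underlying_dege_generic}. If the two sides still differ by a nonzero constant, I would redefine the comparison through the twisted-data formulation: $Z-\mathbf{z}$ vanishes exactly when the point has data $(t,s)$, and this condition is intrinsic under the degeneration. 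The factor identity would then hold up to a unit, which is harmless for the vanishing loci.

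Finally, the multiplicities $m_k$ are inherited from the flat limit. The statement for $\mathbf{p}\to\mathbf{0}$ follows by iterating the argument, with $m_k(\mathbf{n})$ given by Corollary~\ref{Cor:special_fiber_p=0}.
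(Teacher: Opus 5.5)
Your proposal is correct and follows essentially the paper's route: factor $\Phi$ over the fiber, let the degeneration theorem split the product by strata, and read off $(t',s')$ from Lemma~\ref{l:twisted_mono_data} via $\widetilde\sigma\mapsto\widetilde\sigma+n_1\omega$ and $h\mapsto h+n_1\eta(\omega)$. The $\tfrac1n$ versus $\tfrac1{n-k}$ normalization you flag is not actually an obstacle. If you choose representatives with $\widetilde\sigma_{\mathbf{n},\mathbf{p}}(\mathbf{a})=\tilde t+\tilde s\tau$, the $\tfrac1n\zeta_{\mathbf{n},\mathbf{p}}$ terms cancel inside each factor, so $Z_{\mathbf{n},\mathbf{p};t,s}-\mathbf{z}_{\mathbf{n},\mathbf{p}}=h-\tilde t\eta_1-\tilde s\eta_2$, and this is unchanged under the simultaneous shift above. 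The factors therefore match exactly, and your ``up to a unit'' fallback is never needed.
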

\begin{proof}
Note that the product is up to $n-1$, since the degree of the addition map on the weight 0 (underlying) GLC is zero.

Recall that
\[
    \mathbf{p}_{12}^\omega = (p_{12} + \omega +\epsilon, p_{12}-\epsilon,p_3,\dots,p_r), \quad \mathbf{p}_{12} := \mathbf{p}_{12}^0.
\]
We have
\[
\begin{split}
    \widetilde{\sigma}_{\mathbf{n},\mathbf{p}_{12, \epsilon}}(\mathbf{a}) &= \widetilde{\sigma}_{\mathbf{n},\mathbf{p}_{12, \epsilon}^\omega}(\mathbf{a}) + n_1 \omega \\
    &= (\tilde{t} \omega_1 + \tilde{s} \omega_2) + n_1 (m_1 \omega_1 + m_2 \omega_2) \\
    &= (\tilde{t} + n_1 m_1) \omega_1 + (\tilde{s} + n_1 m_2) \omega_2.
\end{split}
\]
Similarly, applying the shift operator with $\eta(\omega) = m_1 \eta_1 + m_2 \eta_2$,
\[
\begin{split}
    h' &= S_{n_1 \eta(\omega)} h \\
    &= (\tilde{t} \eta_1 + \tilde{s} \eta_2) + n_1(m_1 \eta_1 + m_2 \eta_2) \\
    &= (\tilde{t} + n_1 m_1) \eta_1 + (\tilde{s} + n_1 m_2) \eta_2.
\end{split}
\]
We conclude
\[
    t' = t + n_1 m_1, \qquad
    s' = s + n_1 m_2.
\]
This completes the proof.
\end{proof}

\subsection{Twisted isomonodromic deformation}
The family of generalized Lam\'e curves provides the machinery to analyze twisted isomonodromic deformations.  Through the degeneration formula on special fibers, it relates different types of generalized Lam\'e equations within one continuous family.
\begin{remark}
The "twisted" terminology comes from two aspects. First, when $n_i \notin \mathbb{N}$, the data $(t,s)$ only recovers the monodromy matrices $M_{\gamma_1}$ and $M_{\gamma_2}$ up to a scaling factor. Second, in the degeneration formula, the $h$-shift in the special fiber results in the corresponding monodromy shift. 
\end{remark}

To our knowledge in the existing literature where isomonodromic deformation was studied, they are typically restricted to configurations with poles that remained distinct \cite{Kawai_2003}, or with small total weight, or with symmetric assumptions; see Example~\ref{eg:Hitchin}.

Our approach avoids these limitations entirely. The family of GLCs naturally allows for continuous deformations in a general setting, without relying on poles distinct or symmetric conditions. While a rigorous analysis—such as studying the associated modular forms or combinatorial models (via a correspondence between abelian differentials and flat surfaces)—is a clear direction for future research, it falls outside the scope of this paper. Consequently, we restrict ourselves to a general discussion and outline possible approaches.

To make this setup precise, we study the deformation of the following specific locus. For any fixed $(t,s) \in (\mathbb{C}/\mathbb{Z})^2 \setminus (\tfrac{1}{2} \mathbb{Z} / \mathbb{Z})^2$, we define
\[
\overline{\mathcal{Y}}^{t,s}_{\mathbf{n}, \mathbf{p}}(\tau) := 
\left\{
    [(\mathbf{a},h)] \in \overline{\mathcal{Y}}_{\mathbf{n},\mathbf{p}}(\tau)
    \;\middle|\;
    [(\mathbf{a},h)] \text{ has twisted monodromy } (t,s)
\right\}.
\]
Our objective is to fix the weight $\mathbf{n}$ and deform the configuration $\mathbf{p}$. Geometrically, this fits into a family over the parameter space $(\mathbf{p},\tau)$:
\[
\begin{tikzcd}
\overline{\mathcal{Y}}^{t,s}_{\mathbf{n},\mathbf{p}}(\tau) \arrow[r, hook] \arrow[d] & \overline{\mathcal{Y}}^{t,s}_{\mathbf{n}} \arrow[d] \\
(\mathbf{p}, \tau) \arrow[r, mapsto]                                                 & \mathbb{C}^r \times \mathbb{H}
\end{tikzcd}.
\]
For a fixed $\tau$ and generic $\mathbf{p}$, the fiber is $\overline{\mathcal{Y}}_{\mathbf{n},\mathbf{p}}^{t,s}(\tau)$. 

Conversely, when the points $\mathbf{p}$ collide, the corresponding special fiber is the union of components described in the degeneration formula. 

\begin{example}[Hitchin] \label{eg:Hitchin} Consider the case $\mathbf{n} = (\frac{1}{2}, \frac{1}{2})$ and, up to translation, $\mathbf{p} = (p, -p)$. The defining equation of $Y_{\mathbf{n},\mathbf{p}}(\tau)$ is simply
\[
    \frac{1}{2} \Big( \zeta(t+s\tau + p) + \zeta(t+s\tau -p) \Big) = t \eta_1(\tau) + s \eta_2(\tau).
\]
Using the identity $\zeta(u+v) - \zeta(u) - \zeta(v) = \frac{1}{2} \frac{\wp'(u) - \wp'(v)}{\wp(u)-\wp(v)}$, we have the relation between $p$ and $\tau$:
\[
    \wp(p) = \wp(t+s\tau) + \frac{\wp'(t+s\tau)}{2(\zeta(t+s\tau) - t \eta_1 - s\eta_2)}.
\]
Recognizing the term in the denominator as the Hecke function $Z_{t,s}(\tau)$, this provides a highly explicit algebraic characterization of the space. We can explicitly describe the total space over the base $\mathbb{C} \times \mathbb{H}$ as:
\[
\overline{\mathcal{Y}}^{t,s}_{\mathbf{n}} = \Bigg\{ (p, \tau, [ (\mathbf{a},h) ]) 
\;\Bigg|\; 2\Big( \wp(p) - \wp(t+s\tau) \Big) Z_{t,s}(\tau) = \wp'(t+s\tau) \Bigg\}.
\]
\end{example}

\vspace{5mm}

Beyond such exact cases, obtaining a closed algebraic relation between $\mathbf{p}$ and $\tau$ is structurally challenging in general. Nevertheless, their dependence is strictly governed by differential equations. Consider a smooth deformation path $\epsilon \mapsto \mathbf{p}(\epsilon; \tau) = (p_1(\epsilon; \tau), \dots, p_r(\epsilon; \tau))$. By applying the total derivative with respect to $\epsilon$, we formally refer to this governing equation, which describes the analytic continuation of $\tau$ along the path, as the twisted isomonodromy deformation:
\[\displaystyle
    \frac{d \tau}{d \epsilon} = - \frac{ \sum_{i=1}^r \frac{\partial \Phi_{\mathbf{n},\mathbf{p};t,s}}{\partial p_i} \frac{\partial p_i}{\partial \epsilon} }{ \frac{\partial \Phi_{\mathbf{n},\mathbf{p};t,s}}{\partial \tau} + \sum_{i=1}^r \frac{\partial \Phi_{\mathbf{n},\mathbf{p};t,s}}{\partial p_i} \frac{\partial p_i}{\partial \tau} }.
\]

To formulate another application of the degeneration theorem, we first analyze the boundary behavior at the cusps. The asymptotic expansion of the modular form of the classical Lam'e equation at the cusps $\tau \in\mathbb{Q}\cup \{\infty\}$ shows that the leading boundary constants vanish if and only if the (twisted) monodromy data lies in the $SL_2(\mathbb{Z})$-orbit of $s \in \frac{1}{2}\mathbb{Z}$. We define this locus as:$$\mathcal{B} := \left\{ (t,s) \in (\mathbb{C}/\mathbb{Z})^2 \;\middle|\; ct + ds \in \tfrac{1}{2}\mathbb{Z} / \mathbb{Z} \text{ for coprime integers } c,d \right\}.$$With this locus identified, we obtain the following corollary governing the existence of twisted isomonodromic deformations:

\begin{corollary}\label{cor:isomo_deform}
For $(t,s) \notin \mathcal{B}$, fix $[(\mathbf{a}_0, h_0)] \in \overline{\mathcal{Y}}^{t,s}_{\mathbf{n}, \mathbf{p}_0}(\tau_0)$. A twisted isomonodromy deformation exists along a generic path $\mathbf{p}_0 \to \mathbf{0}$, analytically continuing to a limit $[(\mathbf{a}^*, h^*)] \in \overline{\mathcal{Y}}^{t,s}_{k,0}(\tau^*)$ for some $0 \leq k \leq n$ and $\tau^* \in \mathbb{H}$.
\end{corollary}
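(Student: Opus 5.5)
The proof will treat the twisted isomonodromic deformation as the implicit curve $\Phi_{\mathbf{n},\mathbf{p};t,s}(\tau)=0$ over the path $\mathbf{p}(\epsilon)$. We then show that $\tau(\epsilon)$ extends to the endpoint $\mathbf{p}=\mathbf{0}$ inside $\mathbb{H}$, and identify the limit point using Corollary~\ref{cor:modular_decomp}.

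\textbf{Step 1: local continuation.} Fix $(t,s)\notin\mathcal{B}$ and the initial point $[(\mathbf{a}_0,h_0)]$. By Lemma~\ref{l:twisted_mono_data} and the definition of $\mathbf{z}_{\mathbf{n},\mathbf{p}}$, this point is a root of $W_{\mathbf{n},\mathbf{p}_0}(Z_{\mathbf{n},\mathbf{p}_0;t,s}(\tau_0))$, so $\Phi_{\mathbf{n},\mathbf{p}_0;t,s}(\tau_0)=0$. The zero set of $\Phi$ in $(\mathbf{p},\tau)$ is an analytic hypersurface. Over a generic path, its projection to the $\mathbf{p}$-coordinates is locally finite, with branching only along a proper analytic subset. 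Choosing the path generically, we avoid that subset and avoid the diagonals except at the terminal collisions. This lets us continue $\tau(\epsilon)$ analytically via the displayed differential equation, or by Puiseux continuation at isolated ramification points. The corresponding $[(\mathbf{a}(\epsilon),h(\epsilon))]$ is then recovered as the root of $W_{\mathbf{n},\mathbf{p}(\epsilon)}$ equal to $Z_{\mathbf{n},\mathbf{p}(\epsilon);t,s}$. By the universal flatness of Theorem~\ref{thm:universal_flatness}, this root varies continuously in the flat family $\overline{\mathcal{Y}}$.

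\textbf{Step 2: $\tau$ stays in a compact set of $\mathbb{H}$ (main obstacle).} We must rule out $\tau(\epsilon)$ escaping to a cusp $\mathbb{Q}\cup\{\infty\}$ before reaching $\mathbf{p}=\mathbf{0}$. Using $SL_2(\mathbb{Z})$, it suffices to treat the cusp $\infty$. As $\operatorname{Im}\tau\to\infty$ with $\mathbf{p}$ in a compact set, we expand $\zeta,\wp,\eta_i$ in $q$. The key claim is that the leading constant of $\Phi_{\mathbf{n},\mathbf{p};t,s}$ at the cusp vanishes only when $s\in\tfrac12\mathbb{Z}/\mathbb{Z}$. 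The planned argument is as follows. The leading $q$-term of $Z_{\mathbf{n},\mathbf{p};t,s}$ and of each root of $W$ are given by trigonometric degenerations. By Corollary~\ref{cor:modular_decomp}, applied in the limit (the factorization is compatible with $q$-expansion since it is an identity of limits of polynomials in $\mathcal{K}$), the leading term is a product of the classical Lamé leading constants. Those constants vanish exactly on the orbit described by $\mathcal{B}$. Hence for $(t,s)\notin\mathcal{B}$, $|\Phi|$ stays bounded away from $0$ in a neighbourhood of every cusp, uniformly in $\mathbf{p}$ on the compact path. Consequently the zero $\tau(\epsilon)$ cannot approach a cusp. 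The delicate point is uniformity of this bound as $\mathbf{p}$ itself degenerates near the end of the path. For that I would first apply the factorization to reduce to the product of classical factors, and then use the classical asymptotics of \cite{Lin_Wang_2017}.

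\textbf{Step 3: the limit.} Given boundedness, $\tau(\epsilon)$ has accumulation points $\tau^*\in\mathbb{H}$ as $\mathbf{p}\to\mathbf{0}$. By continuity of $\Phi$ on the flat family, $\tau^*$ is a zero of $\lim\Phi=\prod_k \Phi_{n-k,0;t,s}^{m_k(\mathbf{n})}$ from Corollary~\ref{cor:modular_decomp}, so $\Phi_{n-k,0;t,s}(\tau^*)=0$ for some $k$. Since the zeros of this limit are isolated, the accumulation set is a single point, and the limit exists. Finally, Corollary~\ref{Cor:special_fiber_p=0} tells us that the limit of $[(\mathbf{a}(\epsilon),h(\epsilon))]$ lies in a component $\{0^k\}\times\overline{\mathcal{Y}}_{n-k,0}(\tau^*)$ of the special fiber. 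Twisted monodromy is preserved, because $t,s$ are continuous functions of $(\mathbf{a},h)$ and the shift of Theorem~\ref{thm:dege_generic} is trivial for $\omega=0$. Therefore the limit lies in $\overline{\mathcal{Y}}^{t,s}_{k,0}(\tau^*)$ after relabelling the weight.
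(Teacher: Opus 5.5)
Your Step 2 has a genuine gap, located exactly where you place the main obstacle. Corollary~\ref{cor:modular_decomp} computes the limit $\mathbf{p}\to\mathbf{0}$ at a \emph{fixed} $\tau\in\mathbb{H}$. It says nothing about the cusp expansion of $\Phi_{\mathbf{n},\mathbf{p};t,s}$ at a fixed $\mathbf{p}$ in the interior of the path, and the two limits are different degenerations.

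When $\mathrm{Im}\,\tau\to\infty$ with $\mathbf{p}$ bounded, $E_\tau$ degenerates to a nodal cylinder. On it the poles stay at the distinct points $e^{2\pi i p_j}$ and do not collide, and roots that remain at bounded height interact with them through terms like $\cot(\pi(a_\mu-p_i))$. Already in Example~\ref{eg:Hitchin}, the cusp limit of $2(\wp(p)-\wp(t+s\tau))Z_{t,s}(\tau)-\wp'(t+s\tau)$ is $2\pi^3 i\,(2s-1)\csc^2(\pi p)$. There the zero set happens to remain $s=\tfrac12$, but the coefficient is visibly not a product of classical Lam\'e constants. For larger $\mathbf{n}$, nothing in your argument controls where this $\mathbf{p}$-dependent coefficient vanishes. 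So the claimed non-vanishing near every cusp, uniformly along the whole path, is not established.

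The reduction to the cusp $\infty$ via $SL_2(\mathbb{Z})$ is also unavailable as stated, because $\Phi_{\mathbf{n},\mathbf{p};t,s}$ is not modular at fixed $\mathbf{p}$ (Remark~\ref{r:modular_form}). If you transform $\mathbf{p}$ together with $\tau$, bounded poles near a rational cusp acquire unbounded torus coordinates. For non-integral weights, reducing them modulo $\Lambda_\tau$ then shifts $(t,s)$.

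The paper never needs such a uniform bound. It uses $(t,s)\notin\mathcal{B}$ only over $\mathbf{p}=\mathbf{0}$, where the fiber is classical and the cusp asymptotics of \cite{Lin_Wang_2017} apply. There no $\tau$-solution escapes to a cusp, so the number of finite points of the special fiber equals the number of sheets of the covering over a small neighbourhood $U\ni\mathbf{0}$. Away from $\mathbf{0}$, the paper lifts the path using only two facts: the irreducible component through $([(\mathbf{a}_0,h_0)],\tau_0)$ dominates the $\mathbf{p}$-space, and the path is generic.

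To repair your proof, restrict your factorization-plus-classical-asymptotics argument to $U$. Handle the rest of the path by the same genericity you already invoke in Step~1 for branching. Steps~1 and~3 then go through essentially as in the paper.

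A minor point in Step~1: $\mathbf{z}_{\mathbf{n},\mathbf{p}}$ need not be a primitive element. So the value $Z_{\mathbf{n},\mathbf{p};t,s}$ alone does not single out $[(\mathbf{a}(\epsilon),h(\epsilon))]$; lift by continuity inside the component rather than as ``the root of $W$''.
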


\begin{proof}
The total space $\overline{\mathcal{Y}}^{t,s}_{\mathbf{n}}$ is a quasi-projective variety over $\mathbf{p}$. The irreducible component containing the point $([(\mathbf{a}_0, h_0)], \tau_0)$ projects dominantly onto the base $\mathbf{p}$-space. 

By the definition of $\mathcal{B}$, the number of finite points of $\overline{\mathcal{Y}}^{t,s}_{\mathbf{n}}$ in the special fiber at $\mathbf{p} = \mathbf{0}$ exactly coincides with the number of sheets of the covering over a small open neighborhood $\mathbf{0} \in U$, since no solutions of $\tau$ will escape to the cusp at $\mathbf{p}=0$. Consequently, a generic path $\mathbf{p}_0 \to \mathbf{0}$ lifts to a continuous trajectory within this irreducible component. This lifted trajectory is the twisted isomonodromy deformation.

Furthermore, since the limit $p_i \to 0$ involves no lattice translations, $(t,s)$ remains constant. By the degeneration theorem, the special fiber at $\mathbf{p} = \mathbf{0}$ is the scheme-theoretic union of the classical and weight-zero spaces $\overline{\mathcal{Y}}^{t,s}_{k,0}(\tau^*)$ for $0 \leq k \leq n$ with corresponding multiplicity. The endpoint of the lifted path lies in one of these components. This finishes the proof.
\end{proof}

\begin{remark}
Consider the path $\mathbf{p}(\epsilon) = (0, \epsilon\omega_i, -\epsilon\omega_i)$ parameterized by $\epsilon$, with weights $\mathbf{n} = (n, \frac{1}{2}, \frac{1}{2})$. The degeneration formula explicitly characterizes the two special boundary fibers at $\epsilon=0$ and $\epsilon=\frac{1}{2}$; the limits at these two ends yield a continuous deformation that relates components of different weights.

\small
\begin{figure}[ht!]
\centering
\begin{tikzcd}[row sep=large, column sep=normal]
\begin{aligned} 
    &\overline{\mathcal{Y}}_{n+1, 0}^{t,s} \cup 2 \big(\{0\} \times \overline{\mathcal{Y}}_{n, 0}^{t,s}\big) \\ 
    &\quad \cup \big(\{0\}^2 \times \overline{\mathcal{Y}}_{n-1, 0}^{t,s}\big) 
\end{aligned} \arrow[r] \arrow[d]  
    & \overline{\mathcal{Y}}_{\mathbf{n}, \mathbf{p}(\epsilon)}^{t,s} \arrow[d]  
    & \begin{aligned} 
    &S_{-\frac{1}{2}\eta_i} \Big( \overline{\mathcal{Y}}_{(n,1), (0, \frac{1}{2}\omega_i)}^{t,s} \cup \\ 
    &\quad \big( \{\tfrac{1}{2}\omega_i\} \times \overline{\mathcal{Y}}_{n, 0}^{t,s} \big) \Big) 
\end{aligned} \arrow[d] \arrow[l] \\
\mathbf{p}(0) \arrow[r]
    & \mathbf{p}(\epsilon)  
    & \mathbf{p}(\tfrac{1}{2}) \arrow[l]
\end{tikzcd}
\end{figure}
\normalsize

Suppose we fix a monodromy data $(t,s)\notin \mathcal{B}$ that gives an ansatz solution on the boundary component $\overline{\mathcal{Y}}_{n+1, 0}^{t,s}$ at $\epsilon=0$. We deform this configuration into the generic fiber. As $\epsilon \to \frac{1}{2}$, there always exists a (possibly non-unique) choice of deformation direction such that the trajectory specializes to the $\overline{\mathcal{Y}}_{n, 0}^{t,s}$ component (with monodromy shift).

The existence of this continuous path connecting $\overline{\mathcal{Y}}_{n+1, 0}^{t,s}$ to $\mathcal{S}_v\Big(\overline{\mathcal{Y}}_{n, 0}^{t,s}\Big)$ for some explicit shift $v$ is analytically non-trivial, but it can be established using the combinatorial model of \cite{Wu_combinatorial}. This provides a continuous analytic foundation for the cell-attachment procedure discussed in \cite{Chou_Wang_Wu, Eremenko_Mondello_Panov_2023}. 

By iterating this process and applying Corollary~\ref{cor:isomo_deform}, we deduce the following remarkable statement: for any given $\mathbf{n}$ and fixed global monodromy $(t,s) \notin \mathcal{B}$, there always exists a twisted isomonodromic deformation to a limit $\overline{\mathcal{Y}}^{t,s}_{1,0}$ or $\overline{\mathcal{Y}}^{t,s}_{0,0}$.
\end{remark}

\section{Rational and elliptic symmetric equilibrium systems}
In this section, we propose a general framework for solving higher-order symmetric elliptic equilibrium systems. This framework follows directly from the proof used in the degree computation of the addition map. The core strategy relies on a continuous deformation of the target system. For a given symmetric elliptic system, we introduce generic constants to the equilibrium equations and study the asymptotic limit as these parameters are taken to infinity. In this case, the roots are forced to cluster around the singularities of the elliptic curve. By extracting the leading-order local behavior near these poles, the problem reduces to a symmetric rational equilibrium system at generic values, where the solutions can be explicitly counted. 

We then track these clustered solutions as the deformation is continuously pulled back to zero. In general, some configurations may degenerate or escape to the boundary during this limit. To determine the exact number of valid solutions, we systematically identify and subtract these boundary degenerations. This requires analyzing the localized rational system at the zero value. Using the general non-linear ODE method developed in the follwoing subsection, we can explicitly resolve these local systems to account for all lost solutions. As a primary application of this robust framework, we give a complete proof of the Treibich conjecture and discuss generalizations of these finite-gap potentials.
\subsection{ODE methods and the rational symmetric equilibrium system}
Consider a general set up involving $r$ variables $\alpha_1, \dots, \alpha_r$. Let $F_1$, $F_2$ and $U$ be rational functions. Consider the system:
\begin{equation} \label{eqn:rational_equilibrium}
    \sum_{\nu \neq \mu } \Big( F_1(\alpha_\mu - \alpha_\nu) + F_2(\alpha_\mu + \alpha_\nu) \Big) + U(\alpha_\mu), \text{ for }\mu=1,\dots,r.
\end{equation}
We describe a general method to solve this system. 

Let $q(z) = \prod_{i=1}^r (z- \alpha_i)$ be the polynomial with roots given by the system. 
Observe that 
\[
\begin{split}
    \sum_{\nu \neq \mu} (\alpha_\mu-\alpha_\nu)^l &= S_{1,l}(q',q'', \dots, q^{(l+1)}) (z = \alpha_\mu)
    \\
    \sum_{\nu \neq \mu} (\alpha_\mu + \alpha_\nu)^l &= S_{2,l}(q', q'',\dots,q^{(l+1)}) (z=\alpha_\mu),
\end{split}
\]
where $S_{1,l}$ and $S_{2,l}$ are polynomials with $l+1$-variables. $S_{1,l}$ and $S_{2,l}$ can be computed recursively:
\begin{lemma} Fix a root $\alpha_\mu$. Let 
\[
\begin{split}
c_j = \frac{(-1)^j}{(j+1)!} q^{(j+1)}(\alpha_\mu), \qquad
d_j = \frac{1}{j!} q^{(j)}(-\alpha_\mu), 
\end{split}
\]
with $c_j = d_j =0$ for $j<0$. 
\begin{itemize}
\item Inverse Power Sums ($k = m > 0$):
\[
\begin{split}
& m c_m + \sum_{i=1}^m c_{m-i} S_{1,i} = 0
\\
& m d_m + \sum_{i=1}^m d_{m-i} \tilde{S}_{2,i} = 0, \quad \text{where } S_{2,m} = \tilde{S}_{2,m} - \frac{1}{(2\alpha_\mu)^m}
\end{split}
\]
\item Positive Power Sums ($k = -m < 0$):$$m c_{r-1-m} + \sum_{i=1}^m c_{r-1-m+i} S_{1,-i} = 0$$$$m d_{r-m} + \sum_{i=1}^m d_{r-m+i} \tilde{S}_{2,-i} = 0, \quad \text{where } S_{2,-m} = \tilde{S}_{2,-m} - (2\alpha_\mu)^m$$    
\end{itemize}
\end{lemma}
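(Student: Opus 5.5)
The plan is to read both families of identities as the classical Newton identities for two auxiliary polynomials, obtained by recentering $q$ at $\alpha_\mu$ and at $-\alpha_\mu$. Fix a root $\alpha_\mu$ and set $x_\nu := \alpha_\mu - \alpha_\nu$ for $\nu \neq \mu$. Writing $q(z) = (z-\alpha_\mu)Q(z)$ with $Q(z) = \prod_{\nu\neq\mu}(z-\alpha_\nu)$ and Taylor expanding at $\alpha_\mu$ gives
\[
Q(\alpha_\mu - u) = \sum_{j\ge 0} \frac{(-1)^j q^{(j+1)}(\alpha_\mu)}{(j+1)!}\, u^j = \sum_{j=0}^{r-1} c_j u^j = \prod_{\nu\neq\mu} (x_\nu - u).
\]
So the $c_j$ are, up to sign, the elementary symmetric functions of the $x_\nu$. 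Here $S_{1,k} = \sum_{\nu\neq\mu} x_\nu^{-k}$, and for negative index $S_{1,-m} = \sum_{\nu\neq\mu} x_\nu^{m}$, which is the convention the lemma uses.

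For the inverse power sums, put $P(u) := \sum_j c_j u^j$. Near $u=0$ its logarithmic derivative is
\[
\frac{P'(u)}{P(u)} = -\sum_{\nu\neq\mu} \frac{1}{x_\nu - u} = -\sum_{m\ge 1} S_{1,m}\, u^{m-1},
\]
a valid expansion because the roots are distinct, so that $x_\nu \neq 0$. I then multiply through by $P$ and compare the coefficients of $u^{m-1}$ in $P' = -P\sum_m S_{1,m}u^{m-1}$. This gives $m c_m + \sum_{i=1}^m c_{m-i} S_{1,i} = 0$ directly, with $c_m = 0$ for $m > r-1$ handled automatically.

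For $S_{2}$ I would run the same argument on
\[
q(-\alpha_\mu + u) = \sum_j d_j u^j = \prod_{\nu=1}^r \bigl(u - (\alpha_\mu + \alpha_\nu)\bigr).
\]
The only new feature is that this product runs over all $\nu$, including $\nu=\mu$, which contributes the root $2\alpha_\mu$. The Newton identity therefore holds for the full sums $\tilde S_{2,m} = \sum_{\nu=1}^r (\alpha_\mu+\alpha_\nu)^{-m}$, and removing the $\nu=\mu$ term gives $S_{2,m} = \tilde S_{2,m} - (2\alpha_\mu)^{-m}$. This requires $\alpha_\mu \neq 0$ and $\alpha_\mu + \alpha_\nu \neq 0$, both of which hold generically.

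For the positive power sums I would expand the same logarithmic derivatives at $u = \infty$ instead, which amounts to Newton's identities for the reversed polynomials. For $P$, of degree $r-1$ with leading coefficient $c_{r-1}$, the expansion is $P'/P = \sum_{m\ge 0} (\pm S_{1,-m})\, u^{-m-1}$. Multiplying by $P$ and comparing the coefficient of $u^{r-2-m}$ gives $m c_{r-1-m} + \sum_{i=1}^m c_{r-1-m+i}S_{1,-i} = 0$. The same step applied to the degree-$r$ polynomial $\sum_j d_j u^j$ gives the $d_{r-m}$ identity for $\tilde S_{2,-m}$, and subtracting $(2\alpha_\mu)^m$ recovers $S_{2,-m}$.

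All of this is routine. The one step I expect to need care is the bookkeeping of signs: $\prod(x_\nu - u)$ versus $\prod(u - y_\nu)$, together with the sign $(-1)^j$ built into $c_j$, so that each identity comes out with exactly the signs stated and no stray overall $(-1)^m$. I would check this on the cases $r=2,3$ before writing out the general coefficient comparison.
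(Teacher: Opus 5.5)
Your proposal is correct. The paper gives no proof of this lemma to compare against. The statement is just Newton's identities for the recentred polynomials $\sum_j c_j u^j=\prod_{\nu\neq\mu}(x_\nu-u)$ and $\sum_j d_j u^j=\prod_{\nu=1}^r(u-\alpha_\mu-\alpha_\nu)$. Your logarithmic-derivative expansions at $u=0$ and $u=\infty$ are exactly the standard derivation the paper leaves implicit.

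The sign bookkeeping you flagged resolves cleanly, in three points.

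First, the convention $S_{1,k}=\sum_{\nu\neq\mu}(\alpha_\mu-\alpha_\nu)^{-k}$ for $k>0$ is forced, even though the paper's preceding display writes the exponent as $+l$. Already for $r=2$, the case $m=1$ reads $c_1+c_0S_{1,1}=0$ with $c_0=\alpha_\mu-\alpha_\nu$ and $c_1=-1$, which gives $S_{1,1}=(\alpha_\mu-\alpha_\nu)^{-1}$.

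Second, at infinity your $\pm$ is simply $+$. We have $P'/P=\sum_{\nu\neq\mu}(u-x_\nu)^{-1}=\sum_{m\ge 0}S_{1,-m}\,u^{-m-1}$ with $S_{1,0}=r-1$. This zeroth term combines with $(r-1-m)c_{r-1-m}$ on the left to leave $-m\,c_{r-1-m}$, which is where the factor $m$ comes from. Likewise $\tilde S_{2,0}=r$ turns $(r-m)d_{r-m}$ into $-m\,d_{r-m}$. No stray $(-1)^m$ appears.

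Third, the nonvanishing conditions you call generic are $\alpha_\mu\neq\alpha_\nu$, $\alpha_\mu\neq 0$ and $\alpha_\mu\neq-\alpha_\nu$. They are exactly the ones the paper imposes through $\mathcal{S}_q$. They are needed only for the inverse power sums; the positive-power identities hold unconditionally.
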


The formula is independent of the choice of root $\alpha_\mu$. We rewrite the system~\eqref{eqn:rational_equilibrium} as the following non-linear ODE:
\[
    \mathcal{P}[q](z) \equiv 0 \quad \text{mod}(q(z)). 
\]
This reduces the complicated system into a simpler system of reminder terms. Precisely:
\[
    \mathcal{P}[q](z) = Q(z) q(z) + R(z),
\]
where $R(z) = \sum_{i=0}^{r-1} R_i z^i$ is the reminder polynomial that are forced to be zero.

Let $q(z) = q^r + \sum_{i=1}^r c_i q^{r-i}$. The problem reduces to the Gr\"obner basis computation:
\[
\begin{split}
    \mathcal{J} = \Big\langle R_0, \dots, R_{k-1}, t \mathcal{S}_q -1\Big\rangle \subset \mathbb{C}[c_0,\dots,c_{r-1},t],
\end{split}
\]
where $\mathcal{S}_q$ is the product of at most three terms (depending on the system):
\begin{itemize}
    \item $\Delta_q$, the discriminant of $q$. This is equivalent to the condition $\alpha_\mu \neq \alpha_\nu$ for $\mu \neq \nu$.
    \item $c_r$. This is equivalent to assume $\alpha_\mu \neq 0$;
    \item ${\rm Result}_z (q(z), q(-z))$, the resultant of $q(z)$ and $q(-z)$. This is equivalent to assume $\alpha_\mu \neq - \alpha_\nu$ for $\mu \neq \nu$.
\end{itemize}
For example, if $F_1$ and $U$ has negative term, we take $S_q = \Delta_q  c_r$.

For the application later, we focus on the following specific example.
\begin{conjecture}[Verified for $r\leq 4$] \label{conj_Trei_rational}
The system 
\[
    \sum_{\nu\neq \mu}\Big( \frac{1}{(\alpha_\mu-\alpha_\nu)^3} + \frac{1}{(\alpha_\mu + \alpha_\nu)^3} \Big) + \frac{x}{\alpha_\mu^3}=0, \text{ for }\mu=1,\dots,r.
\]
has no solution for $x \in \mathbb{C} \setminus \{  \frac{1}{2}(r-1)^2 \}$. For $x = \frac{1}{2}(r-1)^2$, the solution is unique up to permutation and scaling and takes the form:
\[
    [\alpha_1,\dots,\alpha_r] = M [ 1, \zeta, \zeta^2, \dots, \zeta^{r-1} ]
\]
where $M \in \mathbb{C}^*$ and $\zeta = e^{\pi i / r}$ is a primitive $2r$-th root of unity.
\end{conjecture}

\subsection{Finite gap KdV hierarchy}
Recall some literature of elliptic finite gap potential. A potential $f(z)$ is called finite-gap if there exists an odd-order differential operator $P_{2g+1}$ commuting with the operator $\frac{d^2}{dz^2} - f(z)$.

In \cite{Airault_McKean_Moser_1977} Airault, McKean, and Moser studied potentials of the form  
\[
    I(z) = \sum_{i=1}^r 2 \wp(z- p_i),
\]
and show that $I(z) + B$ is a finite gap KdV hierarchy if and only if 
\[
    L_N := \Big\{ \mathbf{p} \mid \sum_{j =1, \neq i}^N \wp'(p_i-p_j) =0, \text{ for } i=1,\dots,N \Big\}.
\]
They conjectured that $L_N$ is of positive dimension if $N$ is a triangular number $T_N = \frac{n(n+1)}{2}$ and finite number of points otherwise.

Subsequently, Treibich and Verdier \cite{Treibich_Verdier_1990} formalized the geometry of $L_N$ and its compactification $\overline{L}_N$. In doing so, they rediscovered a broader class of potentials originally found by Darboux:
\[
    I(z) = \sum_{i=0}^3 n_i(n_i+1) \wp\Big(z- \frac{w_i}{2}\Big),
\]
where $w_i/2$ are the half-periods. They proved that $I(z)+B$ is a finite-gap KdV potential for \textit{any} choice of integers $n_i \in \mathbb{N}$. 
Not all such potentials lie in the compactification $\overline{L}$ of the AMM locus. By analyzing the boundary $\overline{L}_N \setminus L_N$, Treibich and Verdier determined exactly which of these multiple-pole configurations arise from the degeneration of $N = T_n$ poles with weight 1. Specifically, the potential associated with weights $(n_0, n_1, n_2, n_3)$ belongs to the boundary $\overline{L}_N$ if and only if:
\[
    T_n = \sum_{i=0}^3 T_{n_i}, \quad \text{with } \sum_{i=0}^3 n_i \equiv n \pmod 2.
\]

Later, the definitive characterization of such potentials was established. Building on the foundational proof by Gesztesy and Weikard \cite{Gesztesy_Weikard_Picard_1996} that \textit{every} elliptic finite-gap KdV potential must take the local form $\sum n_i(n_i+1) \wp(z-p_i)$, Gesztesy, Unterkofler, and Karl \cite{Gesztesy_Unterkofler_Karl_2006} systematically studied the general potential
\[
    I(z) = \sum_{i=1}^r n_i(n_i+1) \wp(z-p_i), \quad \text{where } n_i \in \mathbb{N}.
\]
They proved that $I(z)+B$ is a finite-gap KdV potential if and only if the poles $\{p_i\}$ satisfy the higher-order constraint equations:
\begin{equation} \label{e:KdV}
    \sum_{j=1,\neq i}^r \wp^{(2k-1)}(p_i-p_j) =0, \quad \text{for } 1\leq k \leq n_i, \ i=1,\dots,r.
\end{equation}

\begin{remark}
For a general $r \geq 2$ and arbitrary multiplicities $n_i$, there may not exist any valid configuration of poles. The expected dimension of the solution space for the system \eqref{e:KdV} is $r - \sum_{i=1}^r n_i + \min_j (n_j)$, which is strictly negative in many cases, implying that generic configurations do not exist.
\end{remark}

\subsection{Treibich's conjecture: proof and generalizations}
Very recently, Treibich \cite{Treibich_2025} considers the following potential with a counting problem. Let
\[
    I_{\mathbf{p}}(z) = \sum_{i=0}^3 n_i(n_i+1) \wp(z-\frac{w_i}{2}) + \sum_{\mu=1}^r 2 \Big(\wp(z-p_\mu) + \wp(z+p_\mu)  \Big)   .
\]
It is a finite gap KdV hierarchy if and only if for $\mu=1,\dots,r$
\begin{equation} \label{Eqn_finite_gap}
    \sum_{\nu =1, \neq \mu}^r \Big( \wp'(p_\mu-p_\nu) + \wp'(p_\mu+p_\nu) \Big) + \sum_{i=0}^3 (2n_i+1)^2 \wp'(p_\mu - \frac{w_i}{2}) =0.
\end{equation}
Note that the above system is of expected dimension zero. It is reasonable to count the number of the set $\{\pm p_1,\dots,\pm p_r\}$ (or equivalently, the potential $I_{\mathbf{p}}(z)$) satisfying the system. Let $T(n_0,n_1,n_2,n_3;r)$ be the corresponding number. 
\begin{example}\label{e:Treivich_r_1} For $r=1$, the equation is
\[
    \sum_{i=0}^4 (2n_i+1)^2 \wp'(p - \frac{w_i}{2}) =0.
\]
$T(n_0,\dots,n_3;1)= T(0,0,0,0;1) =6$. 
\begin{proof}
The equation has $12$ poles and hence $12$ zeros. If $p$ is a solution, which is not a half period, then so is $-p$. Hence there are $12/2$ sets of solutions $\{ \pm p \}$. This argument does not depend on $n_i$. 
\end{proof}
\end{example}
\begin{conjecture}[{Treibich, \cite{Treibich_2025}}]
For $(n_0,n_1,n_2,n_3) \in \mathbb{Z}_{\geq 0}$, 
\[
    T(n_0,n_1,n_2,n_3;2) = T(0,0,0,0;2) = 27.
\]
\end{conjecture}
The conjecture consists of two parts. First it is independent of $n_i$, and second the number equals to 27.

To prove the conjecture, note that when $a_\mu \rightarrow \frac{1}{2} w_i$, for some $\mu$ and $i$, the principal part of the system takes the following form:
\begin{equation} \label{eqn_finite_gap_principal}
    F_l(\alpha):=\sum_{\nu=1, \neq \mu}^r \Big(\frac{1}{(\alpha_\mu - \alpha_\nu)^l} + \frac{1}{(\alpha_\mu + \alpha_\nu)^l} \Big) + \frac{x}{\alpha_{\mu}^l} = \rho_{\mu}, \text{ for }\mu=1,\dots,r.
\end{equation}
We will take $l=3$ and $x = (2n_i+1)^2$ for $n_i \in \mathbb{N}$. The discussion below works for general case. 

\begin{theorem} Given $x \neq 0$, for generic $\rho_1,\dots, \rho_r$, the system~\eqref{eqn_finite_gap_principal} has $l^r(2r-1)!!$ solutions. Furthermore, for a smaller and still generic open set of $\rho_1,\dots,\rho_r$, on all the $l^r (2r-1)!!$ solutions, their Jacobian matrices are nonsingular.
\end{theorem}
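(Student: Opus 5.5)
We follow the scheme of the first step in the proof of Theorem~\ref{thm_deg}: send the right-hand side to infinity and count the clusters that form there. To make the count robust we first pass to a symmetric-invariant formulation.

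\textbf{Setting up the count.} The left-hand side of \eqref{eqn_finite_gap_principal} is odd under each $\alpha_\mu\mapsto -\alpha_\mu$ when $l$ is odd, and homogeneous of degree $-l$. A solution $\boldsymbol{\alpha}$ for $\boldsymbol{\rho}$ therefore rescales: $t\boldsymbol{\alpha}$ solves the system for $t^{-l}\boldsymbol{\rho}$. We count ordered tuples $\boldsymbol{\alpha}\in(\mathbb{C}^*)^r$ with $\alpha_\mu\neq\pm\alpha_\nu$.

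\textbf{Step 1: the clustering picture.} Take $\rho_\mu=\lambda\,\hat\rho_\mu$ with $\hat\rho$ generic and $\lambda\to\infty$. Every solution must then have some term blowing up at rate $\lambda$. This means each $\alpha_\mu$ approaches $0$, or approaches $\pm\alpha_\nu$, at scale $\lambda^{-1/l}$.

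\textbf{Step 2: homogeneity reduces everything to a common scale.} Because the system is homogeneous, we may rescale $\alpha=\lambda^{-1/l}\beta$. This turns the problem into solving $F_l(\beta)=\hat\rho$, which is just the original system again. Homogeneity alone is therefore not enough, and we instead degenerate $\hat\rho$ itself hierarchically: take $|\hat\rho_1|\gg|\hat\rho_2|\gg\cdots\gg|\hat\rho_r|$.

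\textbf{Step 3: induction on $r$, adding one root at a time.} Add the root $\alpha_r$ carrying the smallest right-hand side, and let the other $r-1$ roots sit at a scale much closer to $0$.
\begin{itemize}
\item Consider first the equation for $\mu=r$. If $\alpha_r$ is far from all other roots, this equation is approximately
\[
\bigl(x+2(r-1)\bigr)\alpha_r^{-l}=\rho_r .
\]
When $x+2(r-1)\neq0$ this gives $l$ solutions.
\item Alternatively, $\alpha_r$ can pair with one of the $2(r-1)$ points $\pm\alpha_\nu$ at a finer scale. Each such pairing contributes $l$ local solutions of the equation $u^{-l}=\mathrm{const}$.
\end{itemize}
This accounts for $2r-1$ placements, each with $l$ local branches, which yields the recursion
\[
N_r=l\,(2r-1)\,N_{r-1},\qquad N_1=l .
\]
The total is $l^r(2r-1)!!$, consistent with the multiplicity $3^{k}(3k-1)!!$-type counts announced in the introduction.

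\textbf{Step 4: counting is correct and nothing escapes.} We must check that each leading-order configuration lifts uniquely to an actual solution, and that no solutions escape to $\alpha_\mu=0$, $\alpha_\mu=\pm\alpha_\nu$, or $\infty$. For the lifting, we show the Jacobian of the leading-order (block-triangular) system is nonsingular, exactly as in \eqref{eqn:inductive_block}. At each level the new block is $-l\,c\,u^{-l-1}\neq0$, so the implicit function theorem gives a unique lift, and this unique lift is what makes the count exact. Nondegeneracy of all these Jacobians at the limit persists on a Zariski open set of $\boldsymbol{\rho}$, which yields the second assertion. The count itself equals the number of finite solutions of the system cleared of denominators, restricted to the complement of the diagonals. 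This number is locally constant on a Zariski open set of $\boldsymbol{\rho}$, provided we also show that no solutions are hidden at the boundary for generic $\boldsymbol{\rho}$.

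\textbf{Main obstacle.} The hardest point is ruling out spurious solutions and coincidences, namely clusters in which several roots collide simultaneously with mixed signs. I would handle these with a Newton-polygon / tropical scaling argument: assign exponents $\alpha_\mu\sim\lambda^{-e_\mu}$ and show that, for the hierarchical $\hat\rho$, the only balanced exponent patterns are the sequential pairings above. The degenerate case $x=-2k$ must be excluded, or else we use the given hypothesis that $x\neq0$ to guarantee that the first root always has $l$ branches.
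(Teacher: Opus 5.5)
\textbf{The recursion's second case is not a valid balance.} Your recursion $N_r=l(2r-1)N_{r-1}$ reproduces the number $l^r(2r-1)!!$, but under your ordering $|\hat\rho_1|\gg\cdots\gg|\hat\rho_r|$ the case where $\alpha_r$ ``pairs at a finer scale'' cannot occur.

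Suppose $\alpha_r=\pm\alpha_\nu+u$ with $|u|\ll\delta$, where $\delta$ is the scale of the cluster containing $\alpha_\nu$, fixed by some $|\rho_\kappa|\sim\delta^{-l}$. In the $r$-th equation all other terms are $O(\delta^{-l})$, and $|\rho_r|\ll|\rho_\kappa|\sim\delta^{-l}$. That equation therefore forces $|u|^{-l}=O(\delta^{-l})$, a contradiction: your local equation $u^{-l}=\mathrm{const}$ has no constant to balance against.

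So $\alpha_r$ joins the cluster at the same scale. The enlarged cluster must then be re-solved as a coupled system with normalized right-hand side $(1,0,\dots,0)$ and effective weight $x+2m$. Already for $r=2$, $l=3$, the six $t$-values of the joined cluster are the roots of $(1-t)^{-3}+(1+t)^{-3}+x=0$, with $t=\alpha_1/\alpha_2$. Equivalently $2+6t^2+x(1-t^2)^3=0$, whose roots approach $t=\pm1$ only as $|x|\to\infty$. The ``finer scale'' in your picture comes from a large $x$, not from the hierarchy in $\boldsymbol{\rho}$.

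What the recursion really needs is a block lemma: a cluster of $s$ roots with a single nonzero right-hand side has exactly $l^s(2s-2)!!$ solutions, all nondegenerate. You give no argument for it. For $l\ge2$ there is no analogue of the substitution $u_\mu=\alpha_\mu^2$ that turns it into a polynomial ODE problem. For the same reason, your block-triangular Jacobian argument (the entry $-l\,c\,u^{-l-1}$) covers only the singleton case.

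\textbf{The completeness step is also a genuine gap.} You must exclude clusters carrying no leading-order right-hand side, i.e.\ show that $F_l(\beta)=0$ has no solutions at the effective weights $x+2m$ that occur. For $l=3$ this is precisely Conjecture~\ref{conj_Trei_rational}, verified only for $r\le4$. For $l=3$ you must also exclude force-free clusters away from $0$, and these do exist at leading order: three roots at the vertices of an equilateral triangle satisfy $\sum_{\nu\ne\mu}(\beta_\mu-\beta_\nu)^{-3}=0$.

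Your fallback on $x\neq0$ cannot replace this. For $r=2$, $l=3$, $x=\tfrac12$, the joined-cluster equation becomes $(t^2+1)^2(t^2-5)=0$. Its double roots $t=\pm i$ (the configuration of that conjecture) give no solution, so the generic count is $9+6=15$, not $27$. Some genericity of $x$ is indispensable.

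\textbf{How the paper proceeds.} The paper avoids general-$l$ asymptotics altogether.
\begin{enumerate}
\item It proves the case $l=1$ by the degeneration $\rho_\mu=\epsilon^\mu$. Each block of a set partition is solved explicitly via $u_\mu=\alpha_\mu^2$ and a hypergeometric polynomial, giving $(2s-2)!!$ per block, and $\sum_{\mathcal P}\prod_B(2|B|-2)!!=(2r-1)!!$.
\item Driverless blocks are excluded by the zero-right-hand-side lemma, which reduces to Lemma~\ref{Lemma_sol_x}.
\item The factor $l^r$ then comes from the factorization $F_l=L_x\circ\Pi_l\circ\Psi$ and a B\'ezout-type intersection count.
\end{enumerate}
To repair your argument, you would need either the block lemma and the exclusion of force-free clusters for general $l$, or a switch to that reduction to $l=1$.
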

\begin{proof}
We start with the proof for $l=1$ and general case following from a transversality argument and a Bézout type counting. The $l=1$ case splits into the following steps. We formulate each step as a lemma. 
\begin{lemma} For $l=1$, $\rho_\mu=0$, and $x \in \mathbb{C} \setminus \{ -(r-1) \}$, the system has no solution.
For $x= -(r-1)$, then there are infinitely many solutions satisfying
\[
    [a_1^2: \dots :a_k^2] = C[1,\zeta,\dots, \zeta^{r-1}].
\]
\end{lemma}
\begin{proof}
Let $u_\mu := a_\mu^2$. Using the identity 
\[
    \frac{1}{a_\mu - a_\nu} + \frac{1}{a_\mu+a_\nu} = \frac{2a_\mu}{a_\mu^2 - a_\nu^2} = \frac{2a_\mu}{u_\mu - u_\nu},
\]
we divide the system by $a_\mu$ to obtain
\[
    \sum_{\nu=1,\neq \mu}^r\frac{2}{u_\mu -u_\nu} + \frac{x}{u_\mu} =0, \text{ for }\mu=1,\dots,r.
\]
The system has been studied in Lemma~\ref{Lemma_sol_x}. This proof the lemma.
\end{proof}

\begin{lemma}
For $\rho_1=1$, $\rho_2=\dots=\rho_r = 0$, and $x \in \mathbb{C} \setminus \{ 0, -1, \dots, -(2r-2) \}$, the system
\[
    \sum_{\nu=1, \neq \mu}^r \Big( \frac{1}{a_\mu - a_\nu} + \frac{1}{a_\mu+a_\nu} \Big) + \frac{x}{a_\mu} = \rho_\mu, \text{ for }\mu=1,\dots,r
\]
has $(2r-2)!!$ solutions. Furthermore, for a possibly smaller open set of $x$, the Jacobian matrices for all $(2r-2)!!$ solutions are nonsingular.
\end{lemma}
\begin{proof}
First, note that $(2r-2)!! = 2^{r-1} (r-1)!$. We claim that there is a unique solution to the system $\{ u_{\mu} = a_{\mu}^2 \}_{\mu=2}^{r}$ with $u_{\mu} \neq 0$. The counting then follows from the symmetry and the choice of square roots.

Let $a_1=1$. The remaining $r-1$ equations in the system reduce to
\[
    \sum_{\nu=2, \neq \mu}^r \frac{1}{u_\mu-u_\nu} + \frac{1}{u_\mu-1} + \frac{x}{u_\mu} = 0.
\]
If a solution exists, we define the polynomial $q(z) := \prod_{\mu=2}^r (z- u_\mu)$. It satisfies the following differential equation:
\[
    \Big( z(z-1) \frac{d^2}{dz^2} + 2(z + zx -x)\frac{d}{dz} + \lambda \Big) q(z) = 0.
\]
Solving this equation yields $\lambda = -(r-1)(r + 2x)$, and
\[
    q(z) = \sum_{j=0}^{r-1} \binom{r+2x-2}{r-1-j} \binom{r}{j} (-z)^j (1-z)^{r-1-j};
\]
which can be written in a different form:
\[
    q(z) = \binom{r+2x-2}{r-1} \cdot {}_2F_1\left(-r, r+2x-1; r; z^2\right).
\]
Note that $q(0) \neq 0$ implies 
\[
    x \notin \{ 0, -1, -2, -3, \dots, -(r-2) \}.
\]
and that $q(z)$ is a polynomial of degree $r-1$ implies 
\[
    x \notin \{ -r, -(r+1), \dots, -(2r-2) \}.
\]    
Finally, we rule out the case $x = -(r-1)$, since otherwise $c_1$ would be zero.

For the nonsingularity of Jacobian, note that each $a_\mu$ is a polynomial of $x$ and the determinant of the Jacobian of the solution is a rational function of $x$. In particular, it is nonzero since when $x>0$, the Jacobian is diagonally dominant. 
\end{proof}

\begin{lemma} For $l=1$, a fixed $x \neq 0$, and generic $\rho_\mu$, the system has $(2r-1)!!$ solutions. Furthermore, for a small and still generic open set of $\rho_\mu$, the Jacobian matrices for all $(2r-1)!!$ solutions are nonsingular.
\end{lemma}
\begin{proof}
We explicitly construct $(2r-1)!!$ distinct branches of solutions for the specific choice $\rho_\mu = \epsilon^\mu$ with $0 < |\epsilon| \ll 1$ and show that there are no other possibilities. 

Note first that the number of perfect pairings of $2r$ elements $\{\pm 1,\dots, \pm r\}$ is equal to $(2r-1)!!$. 
For generic $x$, we construct solutions parameterized by $t$ and stratify them using set partitions of $\{1, 2, \dots, r\}$.

Given a solution vector $(a_1, \dots, a_r)$, we group the indices based on their asymptotic growth rates as $\epsilon \to 0$. We say two indices $\mu$ and $\nu$ are connected (denoted $\mu \sim \nu$) if $|a_\mu / a_\nu|$ is bounded both below and above by non-zero constants independent of $t$. This equivalence relation yields a set partition $\mathcal{P} = \{B_1, \dots, B_m\}$ of $\{1, \dots, r\}$.

If a block in the partition is given by $B = \{l_1, l_2, \dots, l_s\}$, ordered such that $l_1 = \min(B)$, the corresponding coordinates share the asymptotic behavior:
\[
    a_{l_1} \sim a_{l_2} \sim \dots \sim a_{l_s} \sim \epsilon^{-l_1}.
\]
We parameterize these solutions as Laurent series in $\epsilon$:
\[
    a_{\mu} = \sum_{i \geq 0} \alpha_{\mu, i} \epsilon^{-l_1+i}, \quad \text{for } \mu \in B.
\]
Substituting this expansion into the $\mu$-th equation of the system for $\mu \in B$ and multiplying by $\epsilon^{-l_1}$, we isolate the leading coefficients $\alpha_{\mu, 0}$. Because variables in other blocks have different growth rates, their cross-terms are of strictly higher order in $\epsilon$. The leading coefficient system completely decouples into:
\[
    \sum_{\nu \in B, \neq \mu} \Big( \frac{1}{\alpha_{\mu, 0} - \alpha_{\nu, 0}} + \frac{1}{\alpha_{\mu, 0} + \alpha_{\nu, 0}} \Big) + \frac{x}{\alpha_{\mu, 0}} = \delta_{\mu, l_1}, \quad \text{for } \mu \in B.
\]
The Kronecker delta $\delta_{\mu, l_1}$ appears on the right because $d_{l_1} = \epsilon^{l_1}$ exactly matches the $\epsilon^{l_1}$ order of the left-hand side (yielding $1$), while for any larger index $\mu > l_1$ in the block, $d_\mu = \epsilon^\mu = o(\epsilon^{l_1})$ vanishes at the leading order.

This specific subsystem of size $s$ has been studied in the previous lemma, which guarantees exactly $(2s-2)!!$ solutions. For generic $x > 0$, the Jacobian of this leading-order system is non-singular, allowing us to solve for the higher-order coefficients $\alpha_{\mu, i}$ ($i \ge 1$) uniquely by the Implicit Function Theorem. Hence, the solvability of the leading coefficient system strictly corresponds to valid formal series solutions.

Since the number of solutions $(2s-2)!!$ matches the number of perfect pairings of $\{\pm l_1, \dots, \pm l_s\}$ that form a single connected component, summing over all possible set partitions exactly recovers the total number of unconstrained pairings:
\[
    \sum_{\mathcal{P} \vdash \{1, \dots, r\}} \prod_{B \in \mathcal{P}} (2|B|-2)!! = (2r-1)!!,
\]
where $\mathcal{P} \vdash \{1, \dots, r\}$ denotes a set partition.

If there were any other valid asymptotic growth rates, we could always find a sub-block whose leading coefficient system reduces to Lemma 3.1 with $d_\mu = 0$ for all $\mu$, which admits no solution. Therefore, this construction explicitly exhausts all $(2r-1)!!$ branches. This competes the proof for the number of solution.

For the Jacobian part, note that on each partition, we reduce the equation to previous lemma. The nonsingularity follows directly from the previous lemma.
\end{proof}

For general $l \geq 2$, let $\mathcal{U} = \{ \alpha \in \mathbb{C}^r \mid \alpha_\mu \neq 0 \text{ and } \alpha_\mu \neq \pm \alpha_\nu \text{ for } \mu \neq \nu \}$ be the open configuration space of valid roots. We compute the degree of the morphism $F_l: \mathcal{U} \to \mathbb{C}^r$ via intersection theory. We factor the system as the composition $F_l = L_x \circ \Pi_l \circ \Psi$ through the ambient space $\mathbb{C}^N$, where $N = 2r(r-1) + r$: 
\begin{enumerate}
    \item The rational embedding $\Psi: \mathcal{U} \hookrightarrow \mathbb{C}^N$ mapping the roots to their respective pole coordinates:
    \[
        \Psi(\alpha) = \left( \dots, \frac{1}{\alpha_\mu - \alpha_\nu}, \dots, \frac{1}{\alpha_\mu + \alpha_\nu}, \dots, \frac{1}{\alpha_\mu} \right).
    \]
    Let $M_r = \Psi(\mathcal{U})$ denote the $r$-dimensional image of this embedding.
    \item The $l$-th power map $\Pi_l(z) = (z_1^l, \dots, z_N^l)$.
    \item The linear projection $L_x: \mathbb{C}^N \to \mathbb{C}^r$ corresponding to the summations in the rational system.
\end{enumerate}

Solving $F_l(\alpha) = \rho_l$ is geometrically equivalent to intersecting $M_r$ with the fiber $\mathcal{F}_{\boldsymbol{\rho}} = \Pi_l^{-1}(L_x^{-1}(\boldsymbol{\rho}))$.

Note that the Jacobian determinant is not identically zero, since as $\alpha_1 \rightarrow 0$, it is strictly dominated by the diagonal term $-lx/ \alpha_1^{l+1}$. Hence, for a generic $\rho \in \mathbb{C}^r$, all preimages $F_l^{-1}(\rho_l)$ have strictly nonsingular Jacobians. This guarantees that $M_r$ intersects the fiber $\mathcal{F}_\rho$ transversely.

Since $L_x$ is linear, $L_x^{-1}(\boldsymbol{\rho})$ is the intersection of $r$ generic hyperplanes, $H_1 \cap \dots \cap H_r$. Its pullback $\mathcal{F}_{\boldsymbol{\rho}}$ is therefore the intersection of $r$ degree-$l$ hypersurfaces. 
Because the intersection is transverse, the intersection number exactly counts the distinct solutions. We obtain:
\[
    (l[H_1]) \cdot \dots \cdot (l[H_r]) \cdot [M_r] = l^r ([H_1]) \cdot \dots \cdot ([H_r]) \cdot [M_r]  = l^r (2r-1)!!.
\]
The last equality reduces to $l=1$ computation. This completes the proof.
\end{proof}

\begin{theorem} \label{thm:Treibich_conj}
For $r\leq 4$, the system 
\[
\begin{split}
    F_\mu = \sum_{\nu =1, \neq \mu}^r \Big( \wp'(p_\mu-p_\nu) + \wp'(p_\mu+p_\nu) \Big) + \sum_{i=0}^3 (2n_i+1)^2 & \wp'(p_\mu - \frac{w_i}{2}) =0, 
    \\
    &\text{ for }\mu=1,\dots,r.
\end{split}
\]
has $G(r) = 6^r (r+1)!$ solutions.
\end{theorem}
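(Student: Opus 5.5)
The count should follow the same two-step strategy as the proof of Theorem~\ref{thm_deg}. First deform the system to $F_\mu=\rho_\mu$ with generic constants, and count solutions in $E_\tau^r$ (ordered $p_\mu$, signs distinguished) in the regime $|\rho_\mu|\to\infty$ using the local rational theorem above. Then pull $\boldsymbol{\rho}\to 0$ and show that for $r\le 4$ no solution escapes to the boundary. The expected answer $G(r)=6^r(r+1)!$ is the ordered count: dividing by $2^r r!$ gives the $3^r(r+1)$ potentials of Corollary~\ref{cor:Treibich}.

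\textbf{Step 1: localization at infinity.} Each $F_\mu$ is elliptic in $p_\mu$, with poles of order $3$ only at $p_\mu\in\{\pm p_\nu\}\cup\{\tfrac{w_i}{2}\}$ (mod $\Lambda_\tau$). If all $|\rho_\mu|$ are large, every $p_\mu$ must be close to one of these loci. Group the indices into clusters by mutual collision.

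A cluster containing no half-period sits, after using $p\mapsto -p$, near a generic point. There only the difference terms $\wp'(p_\mu-p_\nu)$ blow up. These are odd, so summing the equations over the cluster cancels the leading singular part, while the corresponding sum of the $\rho_\mu$ is generic and nonzero. This is a contradiction, so such clusters cannot occur.

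Hence every cluster of size $k_i$ converges to a half-period $\tfrac{w_i}{2}$. Rescaling $p_\mu=\tfrac{w_i}{2}+\alpha_\mu\epsilon$ yields exactly the system \eqref{eqn_finite_gap_principal} with $l=3$ and $x=(2n_i+1)^2$, coupled to the other clusters only at higher order. By the preceding theorem, each cluster contributes $3^{k_i}(2k_i-1)!!$ solutions with nonsingular Jacobian, and the implicit function theorem lifts each of them uniquely. Summing over ordered assignments of the $r$ indices to the four half-periods gives
\[
\sum_{k_0+\dots+k_3=r}\binom{r}{k_0,\dots,k_3}\prod_{i=0}^3 3^{k_i}(2k_i-1)!! \;=\; r!\,[t^r]\,(1-6t)^{-2} \;=\; 6^r (r+1)!,
\]
using $\sum_k 3^k(2k-1)!!\,t^k/k!=(1-6t)^{-1/2}$. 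This value is visibly independent of the $n_i$.

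\textbf{Step 2: conservation of the count.} Regard $F=(F_\mu)$ as a map from $U:=E_\tau^r\setminus\{p_\mu=\pm p_\nu,\ p_\mu\in\tfrac12\Lambda_\tau\}$ to $\mathbb{C}^r$. Its generic degree is the number computed in Step 1. The number of solutions counted with multiplicity of $F=0$ equals this degree, provided no branch of $F^{-1}(s\boldsymbol{\rho})$ tends to the boundary of $U$ as $s\to 0$. Multiplicity one at $\boldsymbol{\rho}=0$ would then follow from the same Jacobian/transversality argument, or be verified directly.

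\textbf{Step 3: no boundary degenerations (main obstacle).} A branch escaping to the boundary at $\boldsymbol{\rho}=0$ must have a collapsing cluster. After rescaling, such a cluster gives a solution of a rational equilibrium system with right-hand side zero, possibly after peeling off several scales as in Case 2 of Proposition~\ref{prop_no_mixed}.

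\emph{Clusters at a half-period.} These give the system of Conjecture~\ref{conj_Trei_rational}, whose only solvable value is $x=\tfrac12(k-1)^2$. That value is never an odd square $(2n_i+1)^2$, since it is either a half-integer or an even square. So these clusters are excluded as long as the conjecture holds for cluster size $k\le r\le 4$, which is the verified range.

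\emph{Clusters away from the half-periods.} These give the pure difference system $\sum_{\nu\ne\mu}(\alpha_\mu-\alpha_\nu)^{-3}=0$, together with the $+$ terms when the cluster pairs with its negative. I would exclude these by the ODE/Gröbner method of the previous subsection. Encode the cluster by $q(z)=\prod(z-\alpha_\mu)$, reduce the system to the remainder conditions $R_i=0$, and show that the ideal $\langle R_0,\dots,R_{k-1},t\,\mathcal{S}_q-1\rangle$ is the unit ideal for $k\le 4$.

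\emph{Multi-scale configurations.} Where several scales appear, the innermost scale has vanishing effective right-hand side, so the same two exclusions apply scale by scale.

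Carrying out these finite computations for $k\le 4$ is the step I expect to be hardest, and it is exactly what prevents extending the argument to $r\ge 5$. Once it is done, $G(r)=6^r(r+1)!$ follows.
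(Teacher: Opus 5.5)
\textbf{Gap: Step 3, clusters away from the half-periods.} Your outline matches the paper's: deform to $F=\boldsymbol\rho$, count at $|\boldsymbol\rho|\to\infty$ via half-period clusters and the local rational theorem, then let $\boldsymbol\rho\to 0$. Steps 1--2 and the half-period part of Step 3 agree with it. The gap is the second bullet of Step 3. For clusters away from the half-periods the ideal is \emph{not} the unit ideal when $k=3$. After your sign flips, such a cluster always reduces to the pure difference system $\sum_{\nu\ne\mu}(\alpha_\mu-\alpha_\nu)^{-3}=0$. The equilateral triangle $\alpha_\mu=M\zeta_3^{\mu}$, with $\zeta_3=e^{2\pi i/3}$, solves it, since $(1-\zeta_3)^3=-3\sqrt{3}\,i=-(1-\zeta_3^{2})^3$. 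This is the rational Airault--McKean--Moser locus; its points are roots of Adler--Moser polynomials, here $z^3-M^3$. So for $r=3,4$, triple collisions at points $z_0$ with $2z_0\notin\Lambda_\tau$ cannot be excluded by a computation of the kind you propose. (Minor: $\tfrac12(k-1)^2$ is a half-integer or an even integer $2m^2$, not an even square; your conclusion there is unaffected.)

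\textbf{Why this is not a technicality.} For $r=3$, write $p_\mu=z+\beta_\mu$ with $\sum_\mu\beta_\mu=0$. Expand $\beta=a\,e_\omega+b\,e_{\omega^2}$ in the eigenbasis of the cyclic shift, $e_\omega=(1,\zeta_3,\zeta_3^2)$ and $e_{\omega^2}=(1,\zeta_3^2,\zeta_3)$.
\begin{itemize}
\item The averaged equation reads $2\wp'(2z)+\sum_i(2n_i+1)^2\wp'(z-\tfrac{w_i}{2})+O(a^3)=\tfrac13\sum_\mu\rho_\mu$. Its leading part is elliptic with poles only at the half-periods, so it has zeros $z_0$ away from them. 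At a simple zero, which is the generic situation, this equation determines $z$.
\item The linearized local system at the triangle sends $e_{\omega^2}$ to a nonzero multiple of $e_\omega$. Hence the $e_\omega$-component determines $b$ holomorphically in $(a,\boldsymbol\rho)$.
\item What remains is the $e_{\omega^2}$-component. At $\boldsymbol\rho=0$ it is a holomorphic function $g(a)$ with $g(0)=0$.
\end{itemize}
Either $g\equiv 0$, and then $F=0$ has a positive-dimensional solution set. Or $g=ca^m+\cdots$ with $c\neq 0$, and then $F=\boldsymbol\rho$ has $m$ solutions with $a\to 0$ as $\boldsymbol\rho\to 0$; these collapse onto the triangle. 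In neither case do you get exactly $G(3)$ isolated solutions at $\boldsymbol\rho=0$. The case $r=4$ is analogous, with a triple plus one free point.

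\textbf{What the argument actually proves.} As designed, it proves the count only for $r\le 2$, which is Treibich's original case. There a non-half-period cluster has size $2$, and $(\alpha_1-\alpha_2)^{-3}=0$ has no solution. For $r=3,4$ you would have to compute and subtract these escaping branches, or prove they cannot occur, rather than show the ideal is trivial.

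\textbf{Comparison with the paper.} The paper's own proof has the same hole. In its last step it invokes only Conjecture~\ref{conj_Trei_rational}, i.e.\ half-period clusters, and never addresses clusters away from the half-periods. You were right to flag this case, but your proposed resolution of it is false.
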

\begin{proof}
The technique is the same as the proof in Theorem~\ref{thm_deg}. First we deform the system to generic values $F_\mu = \rho_\mu$. 
We claim that by choosing a generic infinity $(\rho_1,\dots,\rho_r) \rightarrow \infty$ (to be precise later). The solutions cluster around the poles of equations (the half periods $\frac{\omega_i}{2}$).  

To see that, suppose a subset of variables indexed by $J \subset \{1, \dots, r\}$ collides at a generic point $z \in E$ and similarly $J'$ for the index of variables collides at $-z$. Summing over $J$ and $J'$ gives the same approximation:
\[
     \sum_{\mu \in J} \rho_\mu \approx \sum_{\mu \in J} \sum_{\nu' \in J'} \wp'(p_\mu + p_{\nu'}), \quad \sum_{\nu' \in J'} \rho_\nu \approx  \sum_{\nu' \in J'} \sum_{\mu \in J}\wp'(p_\mu + p_{\nu'}).
\]
We derive the asymptotic constraints $\sum_{\mu\in J} \rho_\mu - \sum_{\nu'\in J'} \rho_{\nu'} = O(1)$. 

For the generic infinity avoiding the finite collision web described above, 
let $\mathbf{k}$ be a partition of $r$ with $\# \mathbf{k} \leq 4$. $k_i$ associates to the number of $p_\mu$ converge to $\frac{w_i}{2}$. Sum over the contribution of each partition gives 
\[
\begin{split}
    G(r) &:= \sum_{\mathbf{k}} \frac{1}{{\rm Aut}(\mathbf{k})} \frac{4!}{( \#\mathbf{k})!}  \binom{r}{\mathbf{k}} \prod_i \Big(3^{k_i} (2k_i-1)!!\Big)
    \\
    & = {\rm Coeff}\Big(x^r; (1+A(x))^4\Big)
\end{split}
\]
where 
\[
    A(x) = \sum_{k=0}^{\infty} 3^k (2k-1)!! \frac{x^k}{k!} = (1 - 6x)^{-1/2}.
\]
In conclusion, we have
\[
    G(r) = {\rm Coeff}\Big(x^r;(1-6x)^{-2}\Big) = 6^r (r+1)!.
\]

Finally, for the generic limit $(\rho_1,\dots,\rho_r) \rightarrow  0$, we show that no solution degenerate. By Conjecture~\ref{conj_Trei_rational} (verified for $r\leq 4$), the degenerate happens only when $x \in \frac{1}{2} (k_i-1)^2$ which never happens in our coefficient $(2n_i+1)^2$. 
This finishes the proof.
\end{proof}

\begin{corollary}[Treibich conjecture and generalization] \label{cor:Treibich}  For $r \leq 4$, given $(n_0,n_1,n_2,n_3) \in \mathbb{Z}_{\geq 0}^4$, then 
\[
\begin{split}
    F(n_0,n_1,n_2,n_3,r) &= F(0,0,0,0,r) =: F(r)
    \\
    & = \frac{G(r)}{2^r r!} = 3^r (r+1).
\end{split}
\]
Furthermore, we conjecture that for formula works for all $r$.
\end{corollary}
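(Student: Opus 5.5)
The corollary follows from Theorem~\ref{thm:Treibich_conj} by passing from ordered tuples to unordered symmetric configurations. A solution of \eqref{Eqn_finite_gap} counted by $G(r)$ is an ordered tuple $(p_1,\dots,p_r)\in E_\tau^r$. The potential $I_{\mathbf{p}}(z)$ depends only on the unordered set $\{\pm p_1,\dots,\pm p_r\}$. The group $(\mathbb{Z}/2)^r\rtimes S_r$, of order $2^r r!$, acts on ordered solutions by permuting indices and flipping signs $p_\mu\mapsto -p_\mu$. The system is invariant under this action, because $\wp'$ is odd and the half-period terms satisfy $\wp'(-p-\tfrac{w_i}{2})=-\wp'(p-\tfrac{w_i}{2})$ (using $w_i/2\equiv -w_i/2$). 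So the first step is to check that the $\mu$-th equation is mapped to $\pm$ the corresponding equation, and hence the solution set is preserved.

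The second step is to show that the action is free on the solutions counted in Theorem~\ref{thm:Treibich_conj}. A nontrivial stabilizer would force one of three coincidences: $p_\mu=\pm p_\nu$ for some $\mu\neq\nu$, or $p_\mu=-p_\mu$, i.e.\ $p_\mu$ a half period. Each of these configurations is excluded. In the proof of Theorem~\ref{thm:Treibich_conj}, solutions are tracked from the clustered regime ($\rho\to\infty$) back to $\rho=0$, and the count stays valid only because no solution degenerates. That is exactly where Conjecture~\ref{conj_Trei_rational} (verified for $r\le4$) is used: the local rational system at the half periods has solutions only when $x=\tfrac12(k-1)^2$, and $x=(2n_i+1)^2$ never takes this value. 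The same exclusion applies to the other degenerations, since a collision $p_\mu\to\pm p_\nu$ at a generic point would produce a pole in the equations that cannot be balanced at $\rho=0$.

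With freeness in hand, the $G(r)$ ordered solutions fall into orbits of size $2^r r!$, and dividing gives $F(r)=G(r)/(2^r r!)=6^r(r+1)!/(2^r r!)=3^r(r+1)$. This value is independent of $(n_0,\dots,n_3)$ because $G(r)$ is. As a check, the case $r=1$ gives $6$, matching Example~\ref{e:Treivich_r_1}, and $r=2$ gives $27$, which is Treibich's conjecture.

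The main obstacle is the freeness step, in particular ruling out solutions with $p_\mu=\pm p_\nu$ or $p_\mu\in E_\tau[2]$ at $\rho=0$. I would handle it by the same leading-order cluster analysis used in Theorem~\ref{thm:Treibich_conj}, invoking Conjecture~\ref{conj_Trei_rational} for $r\le 4$. If any residual degenerate contributions remained they would have to be subtracted, and their absence is precisely what the verification for $r\le 4$ supplies. For larger $r$ the statement remains conjectural, pending Conjecture~\ref{conj_Trei_rational}.
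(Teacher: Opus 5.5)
Your proposal is correct and follows the paper's proof: take the $G(r)=6^r(r+1)!$ ordered solutions from Theorem~\ref{thm:Treibich_conj} and divide by the order $2^r r!$ of the signed-permutation group, a step the paper states without spelling out the invariance or freeness checks you supply. Freeness is immediate rather than the main obstacle: every term of the system is finite at a genuine solution, and the equations are singular exactly on the loci $p_\mu=\pm p_\nu$ and $p_\mu\in E_\tau[2]$, so every genuine solution lies where the action is free (that the $G(r)$ counted points are genuine is the non-degeneration already built into Theorem~\ref{thm:Treibich_conj}), and no further cluster analysis is needed.
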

\begin{proof}
We explain the denominator $2^r r!$. Any permutation and sign choose of a solution 
\[
    \{ \mu_1 p_{\sigma(1)}, \dots, \mu_r p_{\sigma(r)}   \}, \text{ where }\mu_r \in \{ \pm 1\}, \text{ and } \sigma\in S_r,
\]
will give the same set $\{ \pm p_1,\dots,\pm p_r \}$.
\end{proof}

\begin{remark}
For $r=1$, this is consistent with the argument in Example~\ref{e:Treivich_r_1}. For $r=2$, this is consistent with Treibich's computation.
\end{remark}
\appendix

\bibliographystyle{plain}
    
\bibliography{zbib}
\end{document}